\providecommand{\U}[1]{\protect\rule{.1in}{.1in}}
\newtheorem{theorem}{Theorem}[section]
\newtheorem{corollary}[theorem]{Corollary}
\newtheorem{definition}[theorem]{Definition}
\newtheorem{example}[theorem]{Example}
\newtheorem{lemma}[theorem]{Lemma}
\newtheorem{notation}[theorem]{Notation}
\newtheorem{proposition}[theorem]{Proposition}
\newtheorem{remark}[theorem]{Remark}
\newenvironment{proof}[1][Proof]{\noindent \textbf{#1.} }{\  \rule{0.5em}{0.5em}}
\begin{document}

\title{Constrained Rough Paths}
\author{Thomas Cass\thanks{Department of Mathematics, Imperial College London, 180
Queen's Gate, London, SW7 2AZ, United Kingdom}, Bruce K.
Driver\thanks{Department of Mathematics, University of California, San Diego,
La Jolla, California 92093 USA}, and Christian Litterer\thanks{Centre de
Math\'{e}matiques Appliqu\'{e}es, Ecole Polytechnique, Route de Saclay, 91128
Palaiseau Cedex, France. \newline\newline The research of Bruce K. Driver was
supported in part by NSF Grant DMS – 1106270. \newline The research of Christian
Litterer was partially supported by ERC grant 321111 RoFiRM.} }
\maketitle

\begin{abstract}
We introduce a notion of rough paths on embedded submanifolds and demonstrate
that this class of rough paths is natural. On the way we develop a notion of
rough integration and an efficient and intrinsic theory of rough differential
equations (RDEs) on manifolds. The theory of RDEs is then used to construct
parallel translation along manifold valued rough paths. Finally, this
framework is used to show there is a one to one correspondence between rough
paths on a $d$ -- dimensional manifold and rough paths on $d$ -- dimensional
Euclidean space. This last result is a rough path analogue of Cartan's
development map and its stochastic version which was developed by Eeels and
Elworthy and Malliavin.

\end{abstract}
\tableofcontents

\numberwithin{equation}{section}

\section{Introduction\label{sec.1}}

In the series of papers \cite{lyons-94,lyons-95,lyons-98}, Terry Lyons
introduced and began the development of the theory of \textit{rough paths}\ on
a Banach space $W.$ This theory allow us to model the evolution of interacting
systems, driven by highly irregular non-differentiable inputs, modelled as
differential equations driven by a rough path $\mathbf{X.}$ The theory of
rough paths provides existence and uniqueness of solutions to such equations,
moreover the solutions depend continuously on the driver $\mathbf{X.}$ Among
the many applications arising from the interplay of rough paths and stochastic
analysis are the study of solutions to stochastic differential equations
driven by Gaussian signals see e.g. \cite{CFV}, \cite{CF}, \cite{CHLT},
\cite{FO}, \cite{CLL} and the analysis of broad classes of stochastic partial
differential equations (SPDEs) \cite{CFO}, \cite{DGT}, \cite{HW}, \cite{H}.
Rough paths also provide us with alternative ways to think about and encode
the information presented in a dynamical system.

A rough path of order $p\in\lbrack2,3)$ on $\left[  0,T\right]  $ with values
in a Banach space $\left(  W,\left\vert \cdot\right\vert \right)  $ is a pair
of functions
\[
\mathbf{X}_{s,t}:=\left(  x_{s,t},\mathbb{X}_{s,t}\right)  \in W\oplus
W\otimes W,
\]
which may be thought of as the increments of the path itself and a
second-order term $\mathbb{X}_{s,t}.$ Rough paths are characterised by an
algebraic property analogous to the homomorphism property of the Chen series
of a path (also known as the multiplicative property) and an analytic $p-$
variation type constraint on $\mathbf{X.}$ A great variety of stochastic
classical processes \ may be lifted to rough paths. For example, every
$\mathbb{R}^{d}$ -- valued continuous semi-martingale $\left(  x_{s}\right)
_{0\leq s\leq T}$ (e.g. Brownian motion) and large classes of Gaussian
processes including fractional Brownian motion (fBM) with Hurst parameter
$H>1/4$ may almost surely be augmented by a process $\mathbb{X}_{s,t}$. For
continuous semi-martingales for example one may define
\[
\mathbb{X}_{s,t}:=\int_{s\leq\tau\leq t}x_{s,\tau}\otimes dx_{\tau}%
\]
where the integral can either be interpreted as an It\^{o} integral or a
Fisk-Stratonovich integral. The resulting $\mathbb{X}_{s,t}$ typically
\textit{does} depend on which integral is used.

Up until now, with the exception of \cite{Cass2012a}, rough path theory has
essentially been restricted to dynamical systems on linear state spaces. In
view of the fact that many (if not most) natural dynamical systems come with
geometric constraints it is natural and necessary to develop a theory of
constrained rough paths. This paper tries to fill this gap by developing an
efficient extension of the rough path theory in Banach spaces to a theory
valid in manifolds. The main difficulty in doing so is that the Banach space
rough path theory makes heavy use of the underlying linear structure. One
difficulty in this program is the lack of a simple unambiguous infinitessimal
characterisation of rough paths analogous to the notion of tangent vector
which is abundantly used in understanding smooth paths in a manifold.

In \cite{Cass2012a}, an abstract theory of rough paths on a manifold was
developed. The approach taken in \cite{Cass2012a} is in the spirit of
distribution theory. Namely, rather than directly defining a path in a
manifold the authors view a rough path as a special kind of current in the
manifold, i.e. a certain class of (at level one) linear functional on the
space sufficiently regular one forms on the manifold. If the manifold is a
Banach space, this current corresponds to the map taking one forms to their
rough path integral. The definition in \cite{Cass2012a} is global and
intrinsic, but it relies on the non-trivial concept of Lip-$\gamma$ manifolds
in order to obtain uniform estimates. This does not immediately provide the
tools to explore probabilistic applications, and we avoid the use of
Lip-$\gamma$ manifolds in our presentation.

In the present paper we develop a more direct approach based on choosing an
embedding of our manifold $M$ into some Euclidean space $E=\mathbb{R}^{N}.$
(The embedding of $M$ into $E$ is described by introducing local constraints,
see Definition \ref{def.3.1} below.) We introduce a notion of weakly geometric
rough paths on manifolds, the class of rough paths underlying much of the
modern study of the interactions of rough paths and probability. Our
constructions do not rely on smooth approximation arguments. While, as in
virtually all of stochastic analysis on manifolds, similar results may at
least in the finite dimensional setting be obtained by means of smooth
approximations. However, we believe avoiding this step is crucial and makes
the mathematical ideas and arguments involved more transparent. Our definition
of a rough path is natural in that it is the maximal class that permits a
consistent definition of rough integration and, though our proofs and
definitions will sometimes depend on the embedding, we will however show that
the choice of embedding is not important. In fact, the theory is intrinsic to
the manifolds, see Cass, Driver, Litterer \cite{CDL}, where we clarify the
relations of intrinsic and embedded definitions of rough paths on manifolds.
Along the way we develop the intrinsic theory of rough ordinary differential
equations, in the spirit of the classical definition of semi-martingale
solutions to stochastic differential equations on a manifold along with the
rough analogue of Cartan's rolling map. This shows that rough paths on a $d$
-- dimensional manifold are in one to one correspondence with rough paths on
$d$ -- dimensional Euclidean space. The by now classical rolling construction
of Brownian motion on a manifold appears to have first been discovered in
Eeells and Elworthy \cite{EE} and the relation to of the stochastic
development map to SDEs on the orthogonal frame bundle was realised in
Elworthy \cite{ELW1}, \cite{ELW2}, \cite{ELW3}. The frame bundle approach was
also extensively explored by Malliavin, see e.g. \cite{MAL}. \ It is hoped
that the results of this paper will be the foundation of future work that
explores the properties of Gaussian processes such as fBM and their SDEs in
manifold settings.

The paper is organized as follows. Section \ref{sec.2} is devoted to
introducing fundamental definitions and some preliminary results in Banach
space - valued rough path theory. Section \ref{sec.3} is where we define a
rough path in an embedded manifold. In short, we say that a weakly geometric
rough path $\mathbf{X=}\left(  x,\mathbb{X}\right)  $ in $E\times E\otimes E$
is a weakly geometric rough path in $M$ provided the trace of $\mathbf{X}$
$\left(  x\right)  $ lies in $M$ and $\int\tilde{\alpha}\left(  d\mathbf{X}%
\right)  =0$ for all $\tilde{\alpha}\in\Omega^{1}\left(  U,W\right)  $ where
$U$ is some open neighborhood of $M$ and $\tilde{\alpha}|_{TM}\equiv0.$ We
denote the space of weakly geometric rough paths in $M$ by $WG_{p}\left(
M\right)  .$ For $\mathbf{X\in}WG_{p}\left(  M\right)  $ and $\alpha\in
\Omega^{1}\left(  M,W\right)  $ (the set of smooth, $W$-valued one-form
defined on $U$) we let $\int\alpha\left(  d\mathbf{X}\right)  :=\int%
\tilde{\alpha}\left(  d\mathbf{X}\right)  $ where we make an arbitrary choice
of $\tilde{\alpha}\in\Omega^{1}\left(  U,W\right)  $ where $U$ is a (small)
open neighborhood of $M$ such that $\tilde{\alpha}|_{TM}=\alpha.$ This
definition is independent of the choice $\tilde{\alpha}$ of extension of
$\alpha$ by the very definition of $\mathbf{X}\in WG_{p}\left(  M\right)  .$
In Proposition \ref{pro.3.35} we show that $\mathbf{X}=\left(  x,\mathbb{X}%
\right)  \in WG_{p}\left(  E\right)  $ is in $WG_{p}\left(  M\right)  $ iff
$x_{t}\in M$ for all $t$ and the second order component $\left(
\mathbb{X}\right)  $ satisfies, for some $\delta>0,$
\[
\sup_{0<t-s<\delta}\frac{\left\vert \left(  I_{E}\otimes Q\left(
x_{s}\right)  \right)  \mathbb{X}_{s,t}\right\vert }{\omega\left(  s,t\right)
^{3/p}}<\infty
\]
wherein $Q$ is the orthogonal projection onto the normal bundle and $\omega$
the control of the rough path. This latter criteria is a precise formulation
of the intuition that in order for $\mathbf{X}$ to be in $WG_{p}\left(
M\right)  $ we must have $\mathbb{X}_{s,t}\in E\otimes E$ is close to being in
$T_{x_{s}}M\otimes T_{x_{s}}M.$

We then go on to use the tools developed to characterise some fundamental
properties of rough paths on manifolds. It is shown - as in the finite
dimensional vector space setting - that weakly geometric and geometric rough
paths (that arise from smooth approximations) are essentially the same and a
rough path is completely characterised by its integral against vector valued
one forms (Corollary \ref{cor.3.25} ). We obtain two alternative
characterisations of rough paths on a manifold: as a rough path on the ambient
space that integrates one forms extended from the manifold consistently on the
ambient space and as the projection of a rough path on the ambient space.
Finally, we study the pushforward of rough paths under smooth maps between
manifolds and deduce that the class of rough paths is not sensitive to the
choice of embedding. Although the definition of $WG_{p}\left(  M\right)  $ and
$\int\alpha\left(  d\mathbf{X}\right)  $ depends on the embedding of $M$
inside of $E,$ nevertheless as explain in Subsection \ref{sub.3.4}, the
results in this paper are in fact essentially independent of the embedding.

Section \ref{sec.4} is devoted to the notion rough differential equations on
$M.$ Theorem \ref{the.4.2} shows that one may solve a rough differential
equation on $M$ by extending the vector fields defining the differential
equation to the ambient space and then applying the Euclidean rough path
theory to the resulting dynamical system. The output is a weakly geometric
rough path in $M$ which does not depend on any of the choices made in the
extensions. Later in Theorem \ref{the.4.5} we derive an equivalent intrinsic
characterisations of these solutions.

Section \ref{sec.5} develops the notion of rough parallel translation along
manifold valued rough paths. \textit{Parallel translation} along a rough path
$\mathbf{X}$ in $M$ is defined as a rough path $\mathbf{U}$ in the orthogonal
frame bundle $O\left(  M\right)  $ over $M$ which solves a prescribed RDE on
$O\left(  M\right)  $ driven by\textbf{ }$\mathbf{X},$ see Definition
\ref{def.5.13}. It is shown in Proposition \ref{pro.5.15} that the
RDE\ defining $\mathbf{U}$ does not explode and so $\mathbf{U}$ exists on the
full time interval, $\left[  0,T\right]  .$ It is then shown in Theorems
\ref{the.5.16} and \ref{the.5.17} that two natural classes of RDE's on
$O\left(  M\right)  $ give rise to an element $\mathbf{U}\in WG_{p}\left(
O\left(  M\right)  \right)  $ each of which is parallel translation along
$\mathbf{X}:=\pi_{\ast}\left(  \mathbf{U}\right)  $ where $\pi:O\left(
M\right)  \rightarrow M$ is the natural projection map on $O\left(  M\right)
.$ Here $\pi_{\ast}\left(  \mathbf{U}\right)  $ denotes the pushforward of
$\mathbf{U}$ by $\pi,$ see Proposition \ref{pro.3.38}.

In Section \ref{sec.6} we show in Corollary \ref{cor.6.12} that there is
(similar to the smooth theory) a one to one correspondence between rough paths
on the orthogonal frame bundle $O\left(  M\right)  $ to $M$ and rough paths on
the Euclidean space $\mathbb{R}^{d}\times so\left(  d\right)  .$ Furthermore
Theorems \ref{the.6.18}, and Corollary \ref{cor.6.19} show there are
one-to-one correspondence between rough paths on $M,$ \textquotedblleft
horizontal\textquotedblright\ rough paths on $O\left(  M\right)  \ $(see
Definition \ref{def.6.14}), and rough paths on $\mathbb{R}^{d}.$

The paper is completed with two appendices. In Appendix \ref{app.A} we gather
together some needed results of the Banach space-valued rough path theory
while Appendix \ref{app.B} explains a few details on how to view $O\left(
M\right)  $ as an embedded submanifold which are needed in Section \ref{sec.6}
of the paper.

\section{Background Rough Path Results\label{sec.2}}

\subsection{Basic notations\label{sub.2.1}}

In this section we introduce some basic notations for rough paths on Banach
spaces. In addition, we gather some elementary preliminary results that will
prove useful in the sequel. Some additional rough path theory results on
Banach spaces needed in this paper may also be found in Appendix \ref{app.A}.
Throughout this section, $V$, $W$ and $U$ will denote real Banach spaces. For
simplicity in this paper, we will typically assume that all Banach spaces are
finite dimensional. If $\left(  V,\left\vert \cdot\right\vert \right)  $ is a
Banach space we will abuse notation and write $\left\vert \cdot\right\vert $
for one of the tenor norms on $V\otimes V.$ Because $\dim V<\infty,$ the
choice of tensor norm on $V\otimes V$ is unimportant. For $\mathbb{X}\in
V\otimes V$ we denote its symmetric and anti-symmetric part to be
$\mathbb{X}^{s}$ and $\mathbb{X}^{a}$ respectively. The following definition
and (abuse of) notation will frequently be used in the sequel.

\begin{definition}
[Truncated Tensor Algebra]\label{def.2.1} Let $T_{2}\left(  V\right)
:=\mathbb{R}\oplus V\oplus V\otimes V$ which we make into an algebra by using
the multiplication in the full tensor algebra and then disregarding any terms
that appear in $V^{\otimes3}\oplus V^{\otimes4}\ldots.$ In more detail, if
$a,b\in\mathbb{R},$ $x,y\in V,$ and $\mathbb{X},\mathbb{Y}\in V\otimes V,$
then%
\[
\left(  a,x,\mathbb{X}\right)  \left(  b,y,\mathbb{Y}\right)  :=\left(
ab,ay+bx,a\mathbb{Y}+x\otimes y+b\mathbb{X}\right)  .
\]
In the future we will typically write $a+x+\mathbb{X}$ for $\left(
a,x,\mathbb{X}\right)  .$
\end{definition}

\begin{notation}
\label{not.2.2}If $B:V\times V\rightarrow W$ is a bilinear form with values in
a vector space $W$ then by the universal property of the tensor product there
is a unique linear map, $\hat{B}:V\otimes V\rightarrow W$ such that $B\left(
a,b\right)  =\hat{B}\left(  a\otimes b\right)  $ for all $a,b\in V.$ Given
$A\in V\otimes V,$ it will be useful to abuse notation and abbreviate $\hat
{B}\left(  A\right)  $ as $B\left(  a,b\right)  |_{a\otimes b=A}.$ For example
if $A=\sum_{i=1}^{\ell}a_{i}\otimes b_{i}\in V\otimes V$ it follows that%
\[
B\left(  a,b\right)  |_{a\otimes b=A}:=\sum_{i=1}^{\ell}B\left(  a_{i}%
,b_{i}\right)  .
\]

\end{notation}

Throughout this paper we let $T$ denote a positive finite real number, $p$ be
a fixed real number in the interval $[2,3)$ and $\omega$ a \textit{control}
whose definition we now recall.

\begin{definition}
\label{def.2.3}A \textbf{control} $\omega:\Delta_{\left[  0,T\right]
}:=\left\{  \left(  s,t\right)  :0\leq s\leq t\leq T\right\}  $ $\rightarrow
\mathbb{R}_{+}$ is a continuous non-negative function which is superadditive,
positive off the diagonal, and zero on the diagonal in $\Delta_{\left[
0,T\right]  }.$
\end{definition}

\begin{definition}
[Rough Paths]\label{def.2.4}For a Banach space $V$, the set of ($\omega$ --
controlled $V-$valued) $p$-rough paths consists of pairs $\mathbf{X=}\left(
x,\mathbb{X}\right)  $ of continuous paths%
\[
x:\left[  0,T\right]  \rightarrow V\text{ \ and }\mathbb{X}:\Delta_{\left[
0,T\right]  }\rightarrow V\otimes V,
\]
satisfying the following conditions:

\begin{enumerate}
\item \textit{The Chen identity}; i.e.%
\begin{equation}
\mathbb{X}_{s,t}=\mathbb{X}_{s,u}+\mathbb{X}_{u,t}+x_{s,u}\otimes
x_{u,t}~\forall~0\leq s\leq u\leq t\leq T, \label{equ.2.1}%
\end{equation}
where here, as throughout, $x_{s,t}:=x_{t}-x_{s}$ will denote the increment of
the path $x$ over $\left[  s,t\right]  .$

\item A $p-$variation\textit{\ regularity constraint}:%
\begin{equation}
\sup_{0\leq s<t\leq T}\text{\ }\frac{\left\vert x_{s,t}\right\vert }%
{\omega\left(  s,t\right)  ^{1/p}}<\infty\text{ and }\sup_{0\leq s<t\leq
T}\text{\ }\frac{\left\vert \mathbb{X}_{s,t}\right\vert }{\omega\left(
s,t\right)  ^{2/p}}<\infty. \label{equ.2.2}%
\end{equation}

\end{enumerate}
\end{definition}

We can identify a rough path as a map taking values in the tensor algebra.

\begin{remark}
\label{rem.2.5} It is often convenient to identify a rough path,
$\mathbf{X=}\left(  x,\mathbb{X}\right)  ,$ with the function $\mathbf{X}%
:\Delta_{\left[  0,T\right]  }\rightarrow T_{2}\left(  V\right)  $ defined by%
\[
\mathbf{X}_{s,t}:=1+x_{s,t}+\mathbb{X}_{s,t}\text{ for }\left(  s,t\right)
\in\Delta_{\left[  0,T\right]  }.
\]
Using this identification, Chen's identity becomes the following
multiplicative property of $\mathbf{X};$
\begin{equation}
\mathbf{X}_{s,t}=\mathbf{X}_{s,u}\mathbf{X}_{u,t}\mathbf{\ }~\forall~0\leq
s\leq u\leq t\leq T, \label{equ.2.3}%
\end{equation}
where multiplication is given as in Definition \ref{def.2.1}.
\end{remark}

The collection of $V$ -- valued $p$ -- rough paths controlled by $\omega$ is
denoted by $R_{p}\left(  \left[  0,T\right]  ,V,\omega\right)  $ (also denoted
by $R_{p}\left(  V\right)  $ where no confusion arises).

\begin{example}
\label{exa.2.6}Suppose $x:\left[  0,T\right]  \rightarrow V$ is a continuous
bounded variation path. Then a simple example of a $p$ -- rough path is the
(truncated) signature $\left(  S_{2}\left(  x\right)  _{s,t}:=\mathbf{X}%
_{s,t}~|~0\leq s<t\leq T\right)  $ defined by
\begin{equation}
\mathbf{X}_{s,t}:=1+x_{s,t}+\mathbb{X}_{s,t}\in T_{2}\left(  V\right)
\label{equ.2.4}%
\end{equation}
where
\begin{equation}
\mathbb{X}_{s,t}:=\int_{s<t_{1}<t_{2}<t}dx_{t_{1}}\otimes dx_{t_{2}}=\int%
_{s}^{t}x_{s,u}\otimes dx_{u} \label{equ.2.5}%
\end{equation}
and the latter integral being the Lebesgue-Stieltjes integral. For the control
we may take
\[
\omega\left(  s,t\right)  =\left\vert x\right\vert _{1\text{-var;}\left[
s,t\right]  }:=\sup_{D=\left\{  t_{i}:s=t_{0}<t_{1}<...<t_{n}=t\right\}  }%
\sum_{i=1}^{n}\left\vert x_{t_{i-1},t_{i}}\right\vert .
\]
In this case, $\mathbb{X}$ is not an extra piece of information but is in fact
determined by the basic path $x.$
\end{example}

\begin{remark}
\label{rem.2.7}If $x:\left[  0,T\right]  \rightarrow V$ is continuous and of
bounded variation and $\mathbb{X}$ is given as in Eq. (\ref{equ.2.5}), then as
a consequence of the fundamental theorem of calculus the symmetric part of
$\mathbb{X}_{st}$ satisfies,%
\begin{equation}
\mathbb{X}_{st}^{s}=\frac{1}{2}x_{s,t}\otimes x_{s,t}~\forall~0\leq s\leq
t\leq T. \label{equ.2.6}%
\end{equation}
\bigskip
\end{remark}

In this paper we are interested in the following two important subsets of
$R_{p}\left(  V\right)  .$

\begin{definition}
[$WG_{p}\left(  V\right)  $ and $G_{p}\left(  V\right)  $]\label{def.2.8}Let
$\mathbf{X}$ be a $V-$valued $p$-rough path, i.e. $\mathbf{X}\in R_{p}\left(
V\right)  .$

\begin{enumerate}
\item We say that $\mathbf{X}$ is a \textbf{geometric }$p$\textbf{-rough
path}, and write $\mathbf{X}$ $\in G_{p}\left(  V\right)  ,$ if $\mathbf{X}$
belongs to the closure of the set:%
\[
\left\{  \mathbf{Y:Y=}S_{2}\left(  y\right)  ,\text{ }y\text{ continuous and
of finite }1\text{-variation}\right\}
\]
with respect to the topology induced by the metric (\ref{equ.A.1}).

\item We say that $\mathbf{X}$ is a \textbf{weakly geometric }$p$%
\textbf{-rough path}, and write $\mathbf{X}\in WG_{p}\left(  V\right)  ,$ if
Eq. (\ref{equ.2.6}) holds.
\end{enumerate}
\end{definition}

\begin{remark}
\label{rem.2.9}If $\dim\left(  V\right)  <\infty$ and $p<q,$ then we have the
strict inclusions $G_{p}\left(  V\right)  \subset WG_{p}\left(  V\right)
\subset G_{q}\left(  V\right)  $ (see Corollary 8.24 of \cite{FV}) and so one
typically does not have to pay much attention to the difference between
geometric and weakly geometric rough paths. However, in infinite dimensions
the compactness argument used in the proof that $WG_{p}\left(  V\right)
\subset G_{q}\left(  V\right)  $ breaks down.
\end{remark}

\subsection{Approximate rough paths and integration\label{sub.2.2}}

The following notation will be used heavily in this paper.

\begin{notation}
[$\simeq$ and $\simeq_{\delta}$]\label{not.2.10}Let $\omega$ be a control, and
assume $g$ and $h$ are continuous functions from $\Delta_{\left[  0,T\right]
}$ into some Banach space $W.$ \ Then we will write%
\[
g_{s,t}\simeq h_{s,t}%
\]
if there exists $\delta>0$ and a constant $C\left(  \delta\right)  >0$ such
that for all $s$ and $t$ in $\left[  0,T\right]  $ satisfying $\left\vert
s-t\right\vert \leq\delta$ we have
\[
\left\vert g_{s,t}-h_{s,t}\right\vert \leq C\left(  \delta\right)
\omega\left(  s,t\right)  ^{3/p}.
\]
If we wish to emphasize the dependence on $\delta$ then we will write
$g_{s,t}\simeq_{\delta}h_{s,t}.$
\end{notation}

\begin{remark}
\label{rem.2.11}As a typical application of this notation, let us note that if
$g:\left[  0,T\right]  \rightarrow V$ is continuous and such that
$g_{s,t}\simeq0$ then, because the increments form an additive function on
$\Delta_{\left[  0,T\right]  }$, it must be that $g$ is constant. Indeed, if
$D=\left\{  t_{i}:k=0,1...,n\right\}  $ is any partition of $\left[
0,t\right]  \subseteq\left[  0,T\right]  $ with $\left\vert D\right\vert
\leq\delta$ then
\[
\left\vert g_{0,t}\right\vert =\left\vert \sum_{i=1}^{n}g_{t_{i},t_{i+1}%
}\right\vert \leq\sum_{i=1}^{n}\left\vert g_{t_{i},t_{i+1}}\right\vert \leq
C\left(  \delta\right)  \left\vert D\right\vert ^{3/p-1}\omega\left(
0,t\right)
\]
which tends to $0$ as $\left\vert D\right\vert \rightarrow0.$
\end{remark}

This elementary remark may be strengthened to apply to rough paths. The
difficulty of course that the second (and higher) order processes are no
longer additive with respect to $\left(  s,t\right)  .$\ The following lemma
is due to Lyons \cite{lyons-98}, and is used to powerful effect in his
Extension Theorem.

\begin{lemma}
\label{lem.2.12}Suppose $\left(  x,\mathbb{X}\right)  ,$ $\left(
y,\mathbb{Y}\right)  \in R_{p}\left(  V\right)  $ satisfy $a_{s,t}%
:=x_{s,t}-y_{s,t}\simeq0$ and $\mathbb{A}_{s,t}:=\mathbb{X}_{s,t}%
-\mathbb{Y}_{s,t}\simeq0.$ Then the two rough paths coincide, i.e. $\left(
x,\mathbb{X}\right)  =\left(  y,\mathbb{Y}\right)  $. In particular, this
taking $\left(  y,\mathbb{Y}\right)  $ to be the zero rough path in
$R_{p}\left(  V\right)  $ we may conclude if $\left(  x,\mathbb{X}\right)  \in
R_{p}\left(  V\right)  $ satisfies $x_{s,t}\simeq0$ and $\mathbb{X}%
_{s,t}\simeq0,$ then $x_{st}=0$ and $\mathbb{X}_{s,t}=0$ for all $0\leq s\leq
t\leq T.$
\end{lemma}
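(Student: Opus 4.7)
The plan is to first reduce the equality of the first levels to the scalar/vector argument of Remark \ref{rem.2.11}, and then bootstrap from there to handle the second level by exploiting Chen's identity.

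First I would observe that the differences $a_{s,t}=x_{s,t}-y_{s,t}$ are precisely the increments of the continuous path $g:=x-y:[0,T]\to V$; that is, $g_{s,t}=a_{s,t}\simeq 0$. Applying Remark \ref{rem.2.11} to $g$ forces $g$ to be constant, hence $x_{s,t}=y_{s,t}$ for every $(s,t)\in\Delta_{[0,T]}$.

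Second, I would establish that $\mathbb{A}_{s,t}:=\mathbb{X}_{s,t}-\mathbb{Y}_{s,t}$ is \emph{additive} in the usual sense $\mathbb{A}_{s,t}=\mathbb{A}_{s,u}+\mathbb{A}_{u,t}$. Subtracting Chen's identity (\ref{equ.2.1}) for $(y,\mathbb{Y})$ from the one for $(x,\mathbb{X})$ gives
\[
\mathbb{A}_{s,t}=\mathbb{A}_{s,u}+\mathbb{A}_{u,t}+x_{s,u}\otimes x_{u,t}-y_{s,u}\otimes y_{u,t},
\]
and the last two tensor terms cancel precisely because of the first step, which showed that the first level increments agree. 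This is the step I expect to be the crux of the argument: the nonlinear coupling between the first and second levels in Chen's identity means one cannot apply an additive $\simeq 0$-argument to $\mathbb{A}$ directly, so the order of the two steps matters.

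Third, with additivity of $\mathbb{A}$ in hand, I repeat the telescoping estimate from Remark \ref{rem.2.11}. Fix $\delta>0$ so that $|\mathbb{A}_{s,t}|\leq C(\delta)\,\omega(s,t)^{3/p}$ whenever $t-s\leq\delta$, and let $D=\{s=t_0<t_1<\dots<t_n=t\}$ be any partition of $[s,t]$ of mesh at most $\delta$. Then by additivity and superadditivity of $\omega$,
\[
|\mathbb{A}_{s,t}|\leq\sum_{i=0}^{n-1}|\mathbb{A}_{t_i,t_{i+1}}|\leq C(\delta)\sum_{i=0}^{n-1}\omega(t_i,t_{i+1})^{3/p}\leq C(\delta)\Bigl(\max_i\omega(t_i,t_{i+1})\Bigr)^{3/p-1}\omega(s,t).
\]
Since $p<3$ we have $3/p-1>0$, and $\omega$ is continuous and vanishes on the diagonal of the compact set $\Delta_{[0,T]}$, hence uniformly continuous, so the maximum tends to $0$ as $|D|\to 0$. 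This forces $\mathbb{A}_{s,t}=0$ throughout, completing the proof that the two rough paths coincide. The final "in particular" statement is then immediate by taking $(y,\mathbb{Y})\equiv(0,0)$.
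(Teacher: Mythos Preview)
Your proof is correct and follows essentially the same approach as the paper: first use additivity of the level-one increments to conclude $x_{s,t}=y_{s,t}$, then deduce additivity of $\mathbb{A}_{s,t}$ and repeat the telescoping estimate. The only cosmetic difference is that the paper cites an external lemma (\cite{LCL}, Lemma~3.4) for the additivity of $\mathbb{A}$, whereas you derive it directly by subtracting the two Chen identities and using the first step to cancel the cross terms---your version is more self-contained.
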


\begin{proof}
Since $a_{s,t}$ is additive we must have for every partition $D$ of $\left[
s,t\right]  $%
\[
\left\vert a_{s,t}\right\vert =\left\vert \sum_{i:t_{i}\in D}a_{t_{i},t_{i+1}%
}\right\vert \rightarrow0\text{ as }\left\vert D\right\vert \rightarrow0,
\]
and hence $x_{s,t}=y_{s,t}\mathbf{.}$ It follows from \cite{LCL}, Lemma 3.4
that $\mathbb{A}_{s,t}$ is also additive and repeating the argument with
$\mathbb{A}_{s,t}$ in place of $a_{s,t}$ yields the claim.
\end{proof}

We say a functional $\mathbf{Z:=}\left(  z,\mathbb{Z}\right)  $ defined by
\[
\mathbf{Z}_{s,t}:=1+z_{s,t}+\mathbb{Z}_{s,t}\in T_{2}\left(  V\right)
,~\forall~\left(  s,t\right)  \in\Delta_{\left[  0,T\right]  }%
\]
is an \textbf{almost rough path} if it satisfies the requirements of
Definition \ref{def.2.4} except identity $\left(  \ref{equ.2.1}\right)  ,$ but
instead
\[
\mathbb{Z}_{s,u}-\mathbb{Z}_{s,t}-\mathbb{Z}_{t,u}-z_{s,t}\otimes
z_{t,u}\simeq0,
\]
holds, i.e. it approximately satisfies the multiplicative identity $\left(
\ref{equ.2.1}\right)  .$The following theorem due to Lyons is a cornerstone
for the development of the integration for rough paths. It states that for
every almost rough path there exists a unique rough path that is
\textquotedblleft close.\textquotedblright\ Note that the uniqueness follows
from Lemma \ref{lem.2.12} above.

\begin{theorem}
\label{the.2.13}Let $\mathbf{Z:=}\left(  z,\mathbb{Z}\right)  $ be an almost
rough path on $V.$ Then there exists a unique rough path $\mathbf{X=}\left(
x,\mathbb{X}\right)  \in R_{p}\left(  V\right)  $ \ such that $x_{s,t}$
$\simeq z_{s,t}$ and $\mathbb{Z}_{s,t}\simeq\mathbb{X}_{s,t}.$
\end{theorem}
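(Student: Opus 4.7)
\emph{Uniqueness} follows immediately from Lemma~\ref{lem.2.12}: if $\mathbf{X}=(x,\mathbb{X})$ and $\mathbf{X}'=(x',\mathbb{X}')$ both satisfy the conclusion, then the triangle inequality for $\simeq$ gives $x-x'\simeq 0$ and $\mathbb{X}-\mathbb{X}'\simeq 0$, forcing $\mathbf{X}=\mathbf{X}'$.

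For \emph{existence}, the plan is to construct $\mathbf{X}_{s,t}$ as the limit along refining partitions of a Riemann-type product built from $\mathbf{Z}$. For a partition $D=\{s=t_{0}<t_{1}<\cdots <t_{n}=t\}$ of $[s,t]$, set
\[
\mathbf{X}^{D}_{s,t} := \mathbf{Z}_{t_{0},t_{1}}\,\mathbf{Z}_{t_{1},t_{2}}\cdots \mathbf{Z}_{t_{n-1},t_{n}}\in T_{2}(V),
\]
with multiplication as in Definition~\ref{def.2.1}. Because $z$ is a genuine path, the additivity of its increments implies by a direct induction that the level-one part of $\mathbf{X}^{D}_{s,t}$ is exactly $z_{s,t}$ regardless of $D$; only the level-two part depends on the partition. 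Moreover the products concatenate cleanly, $\mathbf{X}^{D}_{s,u}\,\mathbf{X}^{D'}_{u,t}=\mathbf{X}^{D\cup D'}_{s,t}$, so any limit along refinements will automatically satisfy the multiplicative form~(\ref{equ.2.3}) of Chen's identity.

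The core estimate controls the effect of removing one interior point. Using the identity $(1,a,A)(0,0,\epsilon)(1,b,B)=(0,0,\epsilon)$ in $T_{2}(V)$ (immediate from Definition~\ref{def.2.1}), one computes
\[
\mathbf{X}^{D}_{s,t}-\mathbf{X}^{D\setminus\{t_{i}\}}_{s,t}=\bigl(0,0,\,\mathbb{Z}_{t_{i-1},t_{i}}+\mathbb{Z}_{t_{i},t_{i+1}}+z_{t_{i-1},t_{i}}\otimes z_{t_{i},t_{i+1}}-\mathbb{Z}_{t_{i-1},t_{i+1}}\bigr),
\]
whose norm is $\leq C\omega(t_{i-1},t_{i+1})^{3/p}$ by the almost-multiplicative hypothesis. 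Following Lyons, I iterate: by superadditivity of $\omega$ one can always find an interior point with $\omega(t_{i-1},t_{i+1})\leq 2\omega(s,t)/(k-1)$ when $k$ interior points remain, and summing using $3/p>1$ yields
\[
\bigl\lvert \mathbf{X}^{D}_{s,t}-\mathbf{Z}_{s,t}\bigr\rvert \leq C\sum_{k\geq 2}\bigl(2\omega(s,t)/k\bigr)^{3/p}\leq C'\omega(s,t)^{3/p}.
\]
Applied to sub-intervals of $D$ under a refinement $D\subseteq D'$ and combined with concatenation, this produces $\lvert \mathbf{X}^{D'}_{s,t}-\mathbf{X}^{D}_{s,t}\rvert \leq C\bigl(\max_{i}\omega(t_{i-1},t_{i})\bigr)^{3/p-1}\omega(s,t)\to 0$ as $|D|\to 0$ (by continuity of $\omega$), so I may define $\mathbf{X}_{s,t}$ as the common limit.

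Passing to the limit preserves the $\omega^{3/p}$ bound, which yields $\mathbb{X}_{s,t}-\mathbb{Z}_{s,t}\simeq 0$ and (from the level-one discussion) $x_{s,t}=z_{s,t}$ exactly, hence $x_{s,t}\simeq z_{s,t}$ in particular. The $p$-variation estimates~(\ref{equ.2.2}) for $\mathbf{X}$ follow from those for $\mathbf{Z}$ via the triangle inequality, and Chen's identity for $\mathbf{X}$ is inherited from the concatenation property. The main delicate point in the argument is the sharp bookkeeping at level two: the identity $(1,a,A)(0,0,\epsilon)(1,b,B)=(0,0,\epsilon)$ isolates the single-point-removal error purely at level two and independently of the surrounding factors, and it is precisely this clean structure that lets the iterative removal close at the $\omega^{3/p}$ order needed both for the uniform bound on $\mathbf{X}^{D}-\mathbf{Z}$ and for the Cauchyness of $\mathbf{X}^{D}$ along refinements.
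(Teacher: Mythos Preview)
The paper does not actually prove Theorem~\ref{the.2.13}; it is stated as a result due to Lyons, with only the remark that uniqueness follows from Lemma~\ref{lem.2.12}. Your sketch is precisely the classical Lyons construction (the ``maximal inequality'' or sewing-type argument), and it is correct. A couple of small remarks: the pigeonhole step gives $\omega(t_{i-1},t_{i+1})\le 2\omega(s,t)/k$ when $k$ interior points remain, not $/(k-1)$, but this is harmless since $\sum k^{-3/p}<\infty$; and because the paper's $\simeq$ only supplies the almost-multiplicative bound for $|s-u|\le\delta$, you should say explicitly that the uniform estimate $|\mathbf{X}^{D}_{s,t}-\mathbf{Z}_{s,t}|\le C'\omega(s,t)^{3/p}$ is first obtained for $|t-s|\le\delta$, and that $\mathbf{X}_{s,t}$ for longer intervals is then forced by the concatenation/Chen property together with the $p$-variation bound on short pieces. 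Your observation that in this paper's setup $z$ is a genuine path, so that the level-one part is $z_{s,t}$ exactly and only level two requires construction, is correct and worth keeping.
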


The following result due to Terry Lyons \cite{lyons-98} allows us to define
the integral of a rough path against a sufficiently regular one form.

\begin{theorem}
\label{the.2.14}Suppose that $\mathbf{Z}\in WG_{p}\left(  V\right)  $ and
$\alpha\in C^{2}\left(  V,\operatorname*{End}\left(  V,W\right)  \right)  $ is
a one form on $V$ with values in $W.$ Then there is a unique $\mathbf{X}\in
WG_{p}\left(  W\right)  $ such that $x_{0}=0,$%
\begin{equation}
X_{s,t}^{1}\simeq\alpha\left(  z_{s}\right)  Z_{s,t}^{1}+\alpha^{\prime
}\left(  z_{s}\right)  \mathbb{Z}_{s,t},\text{ } \label{equ.2.7}%
\end{equation}
and
\begin{equation}
\mathbb{X}_{s,t}\simeq\alpha\left(  z_{s}\right)  \otimes\alpha\left(
z_{s}\right)  \mathbb{Z}_{s,t}. \label{equ.2.8}%
\end{equation}

\end{theorem}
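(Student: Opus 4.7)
\emph{Approach.} I would exhibit $\mathbf{X}$ as the unique rough path produced by Lyons' almost rough path theorem (Theorem \ref{the.2.13}) applied to an explicit approximate candidate, and then promote it to $WG_{p}(W)$ by establishing the symmetric-part identity. Uniqueness in Theorem \ref{the.2.14} will then follow at once from Lemma \ref{lem.2.12}, since any two rough paths satisfying (\ref{equ.2.7})--(\ref{equ.2.8}) would differ by $\simeq 0$ at both levels.

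\emph{Candidate and approximate multiplicativity.} Set
\[
y_{s,t} := \alpha(z_{s})\, z_{s,t} + \alpha'(z_{s})\, \mathbb{Z}_{s,t}, \qquad \mathbb{Y}_{s,t} := \alpha(z_{s})\otimes\alpha(z_{s})\,\mathbb{Z}_{s,t},
\]
and $\mathbf{Y}_{s,t} := 1 + y_{s,t} + \mathbb{Y}_{s,t}$. The $p$-variation bounds (\ref{equ.2.2}) for $\mathbf{Y}$ are immediate from those of $\mathbf{Z}$ together with boundedness of $\alpha,\alpha'$ on the image of $z$. Expanding the first-level defect via (\ref{equ.2.1}) for $\mathbf{Z}$ gives
\[
y_{s,t} - y_{s,u} - y_{u,t} = \bigl[\alpha(z_{s}) - \alpha(z_{u}) + \alpha'(z_{s}) z_{s,u}\bigr] z_{u,t} + \bigl[\alpha'(z_{s}) - \alpha'(z_{u})\bigr]\mathbb{Z}_{u,t},
\]
after the Chen cross term $\alpha'(z_{s})(z_{s,u}\otimes z_{u,t})$ is absorbed via Notation \ref{not.2.2}. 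The first bracket is $O(|z_{s,u}|^{2})$ by the second-order Taylor expansion of $\alpha\in C^{2}$, and the second is $O(|z_{s,u}|)$ by the mean value theorem for $\alpha'$; combined with the $p$-variation of $\mathbf{Z}$ and superadditivity of $\omega$, each contribution is $O(\omega(s,t)^{3/p})$, so $y_{s,t} - y_{s,u} - y_{u,t} \simeq 0$. The analogous second-level computation for $\mathbb{Y}_{s,t} - \mathbb{Y}_{s,u} - \mathbb{Y}_{u,t} - y_{s,u}\otimes y_{u,t}$ hinges on the algebraic identity $\alpha(z_{s}) z_{s,u}\otimes \alpha(z_{s}) z_{u,t} = (\alpha(z_{s})\otimes \alpha(z_{s}))(z_{s,u}\otimes z_{u,t})$, which lets the leading piece of $y_{s,u}\otimes y_{u,t}$ cancel the Chen correction inside $\mathbb{Y}$; the residual Taylor remainders are once more $O(\omega^{3/p})$. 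Theorem \ref{the.2.13} then delivers $\mathbf{X}\in R_{p}(W)$ with $x_{s,t}\simeq y_{s,t}$ and $\mathbb{X}_{s,t}\simeq \mathbb{Y}_{s,t}$, which is precisely (\ref{equ.2.7})--(\ref{equ.2.8}).

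\emph{Weakly geometric upgrade.} Define $D_{s,t} := \mathbb{X}_{s,t}^{s} - \tfrac{1}{2}\, x_{s,t}\otimes x_{s,t}$. A direct use of Chen's identity for $\mathbf{X}$ shows $D_{s,t} = D_{s,u} + D_{u,t}$, so $D$ is genuinely additive on $\Delta_{[0,T]}$. Since $\mathbf{Z}\in WG_{p}(V)$, i.e.\ $\mathbb{Z}_{s,t}^{s} = \tfrac{1}{2} z_{s,t}\otimes z_{s,t}$, and $\alpha(z_{s})\otimes\alpha(z_{s})$ commutes with the flip, we obtain
\[
\mathbb{Y}_{s,t}^{s} = \alpha(z_{s})\otimes\alpha(z_{s})\,\mathbb{Z}_{s,t}^{s} = \tfrac{1}{2}\,\alpha(z_{s}) z_{s,t}\otimes\alpha(z_{s}) z_{s,t},
\]
which differs from $\tfrac{1}{2}\, y_{s,t}\otimes y_{s,t}$ only by cross terms of sizes $|z_{s,t}||\mathbb{Z}_{s,t}|$ and $|\mathbb{Z}_{s,t}|^{2}$, both $O(\omega(s,t)^{3/p})$. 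Combining this with $x_{s,t}\simeq y_{s,t}$ and $\mathbb{X}_{s,t}\simeq \mathbb{Y}_{s,t}$ yields $D_{s,t}\simeq 0$; the additivity of $D$ together with the argument in Remark \ref{rem.2.11} then forces $D\equiv 0$.

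\emph{Main obstacle.} The only real technical difficulty is the algebraic bookkeeping in the second-level Chen check, where several Taylor remainders and multilinear cross terms must be tracked simultaneously and the crucial cancellation hinges on the compatibility of the tensor $\alpha(z_{s})\otimes\alpha(z_{s})$ with the Chen coproduct inside $\mathbb{Z}$. Once that is laid out carefully, everything else reduces to routine estimates against $\omega^{3/p}$ using the $p$-variation of $\mathbf{Z}$ and superadditivity of $\omega$.
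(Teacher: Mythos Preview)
Your proposal is correct and follows exactly the approach the paper indicates: the paper does not give a detailed proof of Theorem \ref{the.2.14} but simply states that ``the proof is a consequence of Theorem \ref{the.2.13}.'' You have carried out precisely that program---building the candidate $(y,\mathbb{Y})$, verifying almost multiplicativity, invoking Theorem \ref{the.2.13}, and then checking the weakly geometric condition via the additive functional $D_{s,t}=\mathbb{X}_{s,t}^{s}-\tfrac{1}{2}x_{s,t}\otimes x_{s,t}$---with the uniqueness coming from Lemma \ref{lem.2.12} as you say.
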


In the future we will denote this $\mathbf{X}$ by $\int\alpha\left(
d\mathbf{Z}\right)  $ and use it as the definition for the rough integral. The
proof is a consequence of Theorem \ref{the.2.13}. The rough path integral has
a number of important properties, in particular the map taking%
\[
\mathbf{Z\rightarrow}\int\alpha\left(  d\mathbf{Z}\right)
\]
is continuous in the rough path metric $\left(  \ref{equ.A.1}\right)  .$

\subsection{Rough differential equations\label{sub.2.3}}

The following definition of a rough differential equation (RDE) is in the
spirit of Davie \cite{Davie} and may be found for example in Friz, Hairer
\cite[Proposition 8.4]{FH}.

\begin{definition}
[RDE]\label{def.2.15}Let $\mathbf{Z}\in WG_{p}\left(  W\right)  $ and
$Y:V\rightarrow\operatorname{Hom}\left(  W,V\right)  $ be a $C^{1}$ -- map.
Then $\mathbf{X}\in WG_{p}\left(  E\right)  $ solves the RDE,%
\begin{equation}
d\mathbf{X}=Y\left(  x\right)  d\mathbf{Z} \label{equ.2.9}%
\end{equation}
if and only if%
\begin{align*}
x_{s,t}  &  \simeq Y\left(  x_{s}\right)  z_{s,t}+Y^{\prime}\left(
x_{s}\right)  Y\left(  x_{s}\right)  \mathbb{Z}_{s,t}\\
\mathbb{X}_{s,t}  &  \simeq\left[  Y\left(  x_{s}\right)  \otimes Y\left(
x_{s}\right)  \right]  \mathbb{Z}_{s,t}%
\end{align*}
where
\[
Y^{\prime}\left(  x_{s}\right)  Y\left(  x_{s}\right)  \left[  a\otimes
b\right]  :=\left(  Y^{\prime}\left(  x_{s}\right)  Y\left(  x_{s}\right)
a\right)  b=\left(  \partial_{Y\left(  x_{s}\right)  a}Y\right)  \left(
x_{s}\right)  b.
\]
Alternatively if we let $Y_{b}\left(  x\right)  :=Y\left(  x\right)  b,$ then%
\[
Y^{\prime}\left(  x_{s}\right)  Y\left(  x_{s}\right)  \left[  a\otimes
b\right]  :=Y_{b}^{\prime}\left(  x_{s}\right)  Y_{a}\left(  x_{s}\right)
=\left(  \partial_{Y_{a}\left(  x_{s}\right)  }Y_{b}\right)  \left(
x_{s}\right)  .
\]

\end{definition}

Existence and uniqueness of solutions for RDEs defined by sufficiently regular
vector fields is due to Lyons \cite{lyons-98}. The following theorem is an
easy consequence of Theorem 10.14 of \cite[Theorem 10.14]{FV}.

\begin{theorem}
[RDE existence and uniqueness]\label{the.2.16}Let $p\in\lbrack2,3),$
$\mathbf{Z}\in WG_{p}\left(  W,\left[  0,T\right]  \right)  ,$ $Y:V\rightarrow
\operatorname{Hom}\left(  W,V\right)  $ be a smooth map and for $k\in\left\{
0,1,2,\dots\right\}  ,$ let
\[
\left\Vert Y^{\left(  k\right)  }\right\Vert _{\infty}:=\sup\left\{
\left\Vert \left(  \partial_{v_{1}}\dots\partial_{v_{k}}Y\right)  \left(
x\right)  \right\Vert _{\operatorname{Hom}\left(  W,V\right)  }:x\in V\text{
and }v_{i}\in V\text{ with }\left\Vert v_{i}\right\Vert _{V}=1\right\}  .
\]
If%
\begin{equation}
M_{Y}:=\max\left\{  \left\Vert Y\right\Vert _{\infty},\left\Vert Y^{\prime
}\right\Vert _{\infty},\left\Vert Y^{\prime\prime}\right\Vert _{\infty
}\right\}  <\infty, \label{equ.2.10}%
\end{equation}
then there exists a unique $\mathbf{X}\in WG_{p}\left(  V,\left[  0,T\right]
\right)  $ that solves the RDE $\left(  \ref{equ.2.9}\right)  $ over $\left[
0,T\right]  $ in the sense of Definition \ref{def.2.15}. In addition, there
exists a constant $C_{p}\ $(depending only on $p)$ such that%
\begin{equation}
\left\Vert x\right\Vert _{p-var;\left[  u,v\right]  }\leq C_{p}\max\left(
M\left\Vert \mathbf{Z}\right\Vert _{p-var;\left[  u,v\right]  },M^{p}%
\left\Vert \mathbf{Z}\right\Vert _{p-var;\left[  u,v\right]  }^{p}\right)
~\forall~0\leq u<v\leq T. \label{equ.2.11}%
\end{equation}

\end{theorem}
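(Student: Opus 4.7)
The plan is to reduce Theorem \ref{the.2.16} to \cite[Theorem 10.14]{FV} by verifying the regularity hypothesis and by matching the Davie-style formulation of Definition \ref{def.2.15} against the solution produced there. I would proceed in three steps.

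First I would verify that (\ref{equ.2.10}) is sufficient to invoke \cite[Theorem 10.14]{FV}. That theorem requires $Y$ to be $\operatorname{Lip}^{\gamma}$ in the Stein sense for some $\gamma>p$; since $p\in[2,3)$, any $\gamma\in(p,3]$ is an admissible choice, and for such $\gamma$ the Lip-$\gamma$ norm of the smooth field $Y$ is controlled by the sup-norms of $Y$, $Y'$, and $Y''$, i.e.\ by $M_Y$. Consequently \cite[Theorem 10.14]{FV} supplies an $\mathbf{X}\in WG_p(V,[0,T])$ together with the $p$-variation estimate (\ref{equ.2.11}), any $\gamma$-dependent constant being absorbed into $C_p$.

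Next I would show that this $\mathbf{X}$ fits the Davie-style definition. Consider the candidate
\[
\tilde{\mathbf{X}}_{s,t}:=1+\bigl(Y(x_s)z_{s,t}+Y'(x_s)Y(x_s)\mathbb{Z}_{s,t}\bigr)+\bigl[Y(x_s)\otimes Y(x_s)\bigr]\mathbb{Z}_{s,t}.
\]
A direct Taylor expansion of $Y$ to second order, combined with the Chen identity (\ref{equ.2.1}) for $\mathbf{Z}$ and the $p$-variation bounds (\ref{equ.2.2}), shows that $\tilde{\mathbf{X}}$ is almost multiplicative with error of order $\omega(s,t)^{3/p}$, so it is an almost rough path in the sense of the discussion preceding Theorem \ref{the.2.13}. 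The same Taylor expansion applied around the Friz--Victoir Euler-type scheme underlying \cite[Theorem 10.14]{FV} shows that the solution $\mathbf{X}$ is $\simeq$-close to $\tilde{\mathbf{X}}$ at both levels. Theorem \ref{the.2.13} then identifies $\mathbf{X}$ as the unique rough path associated to $\tilde{\mathbf{X}}$, which is exactly the content of (\ref{equ.2.9}). Uniqueness in the Davie sense is automatic: any two Davie solutions give rise to the same almost rough path $\tilde{\mathbf{X}}$, and Lemma \ref{lem.2.12} forces the corresponding true rough paths to agree.

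The main obstacle is the translation step just described. One must certify that the FV construction delivers precisely the pointwise two-term expansion of Definition \ref{def.2.15}, a bridge essentially provided by \cite[Proposition 8.4]{FH}; once this is in place, checking the Stein regularity, importing the $p$-variation estimate, and arguing uniqueness are all essentially formal.
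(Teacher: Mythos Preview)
Your approach is essentially the paper's own: the paper does not give a proof of Theorem \ref{the.2.16} but simply declares it an easy consequence of \cite[Theorem 10.14]{FV}, with the Davie formulation already attributed to \cite[Proposition 8.4]{FH} in Definition \ref{def.2.15}. Your three steps spell out precisely this reduction and the translation between the Friz--Victoir solution concept and the Davie expansion, so there is nothing to correct.
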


The following corollary is a locallization of Theorem \ref{the.2.16} which
will prove useful later.

\begin{corollary}
[Local RDE existence]\label{cor.2.17}Let $U\subset V$ be an open neighborhood
, $U_{1}$ be a precompact open neighborhood with closure in $U,$ and
$Y:U\rightarrow\operatorname{Hom}\left(  W,V\right)  $ be a smooth map. Then
there exists $\delta>0$ such that for all $\left(  x,t_{0}\right)  \in
U_{1}\times\left[  0,T\right]  ,$%
\begin{equation}
d\mathbf{X}=Y\left(  x\right)  d\mathbf{Z,}\text{ }x_{t_{0}}=x
\label{equ.2.12}%
\end{equation}
has a unique solution $\mathbf{X}\in WG_{p}\left(  V,\left[  t_{0}%
,t_{0}+\delta\wedge T\right]  \right)  $ in the sense of Definition
\ref{def.2.15} (naturally with trace $x_{t}\in U$ for all $t\in\left[
t_{0},t_{0}+\delta\wedge T\right]  $).
\end{corollary}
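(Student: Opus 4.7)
The plan is to reduce the statement to the global existence and uniqueness Theorem \ref{the.2.16} by replacing $Y$ with a smooth, compactly supported extension to all of $V$, and then to choose $\delta$ small enough that any solution with initial condition in $U_1$ cannot escape the region where the extension agrees with $Y$.

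First I will fix an intermediate open set $U_2$ with $\overline{U_1}\subset U_2$ and $\overline{U_2}$ a compact subset of $U$, together with a smooth bump function $\phi:V\to\left[0,1\right]$ equal to $1$ on a neighborhood of $\overline{U_1}$ and supported in $U_2$. Setting $\tilde{Y}:=\phi Y$ on $U$ and $\tilde{Y}\equiv 0$ elsewhere gives a smooth, compactly supported map $\tilde{Y}:V\to\operatorname{Hom}\left(W,V\right)$, so $M_{\tilde{Y}}<\infty$. Theorem \ref{the.2.16} then produces, for each $\left(x,t_0\right)\in U_1\times\left[0,T\right]$, a unique global solution $\tilde{\mathbf{X}}\in WG_p\left(V,\left[t_0,T\right]\right)$ of $d\tilde{\mathbf{X}}=\tilde{Y}\left(\tilde{x}\right)d\mathbf{Z}$ with $\tilde{x}_{t_0}=x$.

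Next, I will use the a priori bound (\ref{equ.2.11}) applied to $\tilde{\mathbf{X}}$ to estimate the displacement of the trace from $x$:
\[
\left|\tilde{x}_t - x\right| \leq \left\Vert \tilde{x}\right\Vert_{p-var;\left[t_0,t\right]} \leq C_p\max\left(M_{\tilde{Y}}\left\Vert\mathbf{Z}\right\Vert_{p-var;\left[t_0,t\right]},\, M_{\tilde{Y}}^{p}\left\Vert\mathbf{Z}\right\Vert_{p-var;\left[t_0,t\right]}^{p}\right).
\]
Because $\mathbf{Z}\in WG_p\left(W\right)$ is $\omega$-controlled and $\omega$ is continuous on the compact triangle $\Delta_{\left[0,T\right]}$ with $\omega\left(s,s\right)=0$, it is uniformly continuous there, and so $\sup_{t_0\in\left[0,T\right]}\left\Vert\mathbf{Z}\right\Vert_{p-var;\left[t_0,t_0+\eta\right]}\to 0$ as $\eta\downarrow 0$. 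Setting $r:=\operatorname{dist}\left(\overline{U_1},V\setminus U_2\right)>0$, I can therefore choose $\delta>0$, independently of $\left(x,t_0\right)\in U_1\times\left[0,T\right]$, so that the right-hand side of the displayed estimate is strictly less than $r$ on $\left[t_0,t_0+\delta\wedge T\right]$. This forces $\tilde{x}_t\in U_2$ throughout that interval, and since $\phi\equiv 1$ on $U_2$ we have $\tilde{Y}\left(\tilde{x}_t\right)=Y\left(\tilde{x}_t\right)$ there, so the restriction $\mathbf{X}:=\tilde{\mathbf{X}}|_{\left[t_0,t_0+\delta\wedge T\right]}$ solves (\ref{equ.2.12}).

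For uniqueness, suppose $\mathbf{X}^{\left(1\right)},\mathbf{X}^{\left(2\right)}$ both solve (\ref{equ.2.12}) on $\left[t_0,t_0+\delta\wedge T\right]$ in the sense of Definition \ref{def.2.15}, with traces in $U$. Consider $\tau:=\sup\{t:\mathbf{X}^{\left(1\right)}=\mathbf{X}^{\left(2\right)}\text{ on }\left[t_0,t\right]\}$; the set is closed by continuity, and to see it is also open I will repeat the extension argument of the previous paragraph at time $\tau$: the common value $x_\tau\in U$ admits a compact neighborhood inside $U$ on which $Y$ can be cut off to give another $\tilde{Y}$ satisfying (\ref{equ.2.10}), and the uniqueness clause of Theorem \ref{the.2.16} forces the two solutions to coincide just past $\tau$, so $\tau=t_0+\delta\wedge T$. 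The main obstacle is the uniformity of $\delta$ over the noncompact-looking set of initial data $\left(x,t_0\right)\in U_1\times\left[0,T\right]$; this is precisely what the bound (\ref{equ.2.11}) delivers, since $C_p$ and $M_{\tilde{Y}}$ are independent of $\left(x,t_0\right)$ and the only remaining $\left(x,t_0\right)$-dependence enters through $\left\Vert\mathbf{Z}\right\Vert_{p-var;\left[t_0,t_0+\eta\right]}$, which is controlled uniformly in $t_0$ by the uniform continuity of $\omega$.
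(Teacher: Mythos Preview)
Your approach is essentially the paper's: cut off $Y$ to a compactly supported $\tilde Y$, invoke Theorem~\ref{the.2.16}, and use the bound (\ref{equ.2.11}) together with the uniform continuity of $\left(s,t\right)\mapsto\left\Vert\mathbf{Z}\right\Vert_{p\text{-var};[s,t]}$ to keep the trace in the region where $\tilde Y=Y$. There is, however, a bookkeeping slip. You take $\phi\equiv 1$ only on a neighborhood of $\overline{U_1}$ with $\operatorname{supp}\phi\subset U_2$, but then assert ``since $\phi\equiv 1$ on $U_2$''; these two statements are incompatible. Consequently, knowing $\tilde x_t\in U_2$ does \emph{not} give $\tilde Y(\tilde x_t)=Y(\tilde x_t)$. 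The paper avoids this by choosing $\varphi\equiv 1$ on $\overline{U_2}$ (with support in $U$), and then proving the trace stays in $U_2$. You can fix your argument either by enlarging the set where $\phi\equiv 1$ to contain $\overline{U_2}$, or by keeping your $\phi$ and instead taking $r:=\operatorname{dist}\!\left(\overline{U_1},\,V\setminus\{\phi=1\}\right)$.

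Your uniqueness paragraph is more than the paper provides (the paper simply appeals to the uniqueness clause of Theorem~\ref{the.2.16} for the cut-off equation), and it is a reasonable addition: a competing solution with trace in $U$ need not a priori stay in $U_2$, so the local cut-off plus open--closed argument you sketch is the honest way to close that loop.
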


\begin{proof}
Choose another open precompact subset, $U_{2},$ of $V$ so that $\bar{U}%
_{1}\subset U_{2}\subset$ $\bar{U}_{2}\subset U$ and choose $\varphi\in
C_{c}^{\infty}\left(  U\right)  $ such that $\varphi=1$ on $\bar{U}_{2}.$ Let
$\tilde{Y}=\varphi Y$ which we then extend to be zero outside of $U.$ Clearly,
$M_{\tilde{Y}}<\infty$ where $M_{\tilde{Y}}$ is as in Eq. (\ref{equ.2.10})
with $Y$ replaced by $\tilde{Y}.$

Recall that if $u\left(  s,t\right)  :=\left\Vert \mathbf{Z}\right\Vert
_{p-var;\left[  s,t\right]  }$ for $\left(  s,t\right)  \in\Delta_{\left[
0,T\right]  },$ then $u^{p}\left(  s,t\right)  $ is a control and in
particular, $u\left(  s,t\right)  $ is continuous on $\Delta_{\left[
0,T\right]  }$ and vanishes on the diagonal. Therefore if $\varepsilon
:=\operatorname{dist}\left(  U_{1},U_{2}^{c}\right)  >0,$ then there exists
(by the uniform continuity of $u)$ a $\delta>0$ such that
\[
C_{p}\max\left(  M_{\tilde{Y}}\left\Vert \mathbf{Z}\right\Vert _{p-var;\left[
t_{0},t_{0}+\delta\wedge T\right]  },M_{\tilde{Y}}^{p}\left\Vert
\mathbf{Z}\right\Vert _{p-var;\left[  t_{0},t_{0}+\delta\wedge T\right]  }%
^{p}\right)  <\varepsilon\text{ }\forall~t_{0}\in\left[  0,T\right]  .
\]
By Theorem \ref{the.2.16}, given any $\left(  x,t_{0}\right)  \in U_{1}%
\times\left[  0,T\right]  $ there exists a unique $\mathbf{X}\in WG_{p}\left(
V,\left[  t_{0},T\right]  \right)  $ that solves%
\begin{equation}
d\mathbf{X}=\tilde{Y}\left(  x\right)  d\mathbf{Z,}\text{ }x_{t_{0}}=x.
\label{equ.2.13}%
\end{equation}
By the choice of $\delta,$ the bound in Eq. $\left(  \ref{equ.2.11}\right)  ,$
and the triangle inequality, it follows that $x_{t}\in U_{2}\subseteq U$ for
all $t\in\left[  t_{0},t_{0}+\delta\wedge T\right]  .$ As $Y=\tilde{Y}$ on
$U_{2}$ it follows that $\mathbf{X}$ also solves $\left(  \ref{equ.2.12}%
\right)  $ on $\left[  t_{0},t_{0}+\delta\wedge T\right]  .$
\end{proof}

The solutions of rough differential satisfy a universal limit theorem which
states that the map taking $\mathbf{X}$ to the solution $\mathbf{Z}$ is
continuous in the $p-$ variation metric on rough paths (see \cite{lyons-98}).
We also remark that the original definition of the solution of a rough
differentiable equations (see Lyons \cite{lyons-98}) is given in terms of a
fixed point of a rough integral on $V\oplus W.$

The next lemma implies that for sufficiently regular vector fields an
RDE\ solution blows up if and only if both the trace \textit{and} the
second-order process of the solution explode. In other words, it is not
possible for the explosion of a solution of an RDE to be caused only by the
explosion of the second-order process of the solution.

\begin{lemma}
[Augmentations for free]\label{lem.2.18}Let $\mathbf{Z}\in WG_{p}\left(
W\right)  $ and $Y:V\rightarrow\operatorname{Hom}\left(  W,V\right)  $ be a
smooth map and consider the RDE%
\begin{equation}
d\mathbf{X}=Y\left(  x\right)  d\mathbf{Z}\text{ with }x\left(  0\right)
=x_{0} \label{equ.2.14}%
\end{equation}
where $x_{0}$ is given. Suppose that we can solve this equation for the trace
part, i.e. we can find a path $x$ such that
\begin{equation}
x_{s,t}\simeq Y\left(  x_{s}\right)  z_{s,t}+Y^{\prime}\left(  x_{s}\right)
Y\left(  x_{s}\right)  \mathbb{Z}_{s,t} \label{equ.2.15}%
\end{equation}
holds for $0\leq s,t\leq T$. Then there exists a lift $\mathbf{X}\in
WG_{p}\left(  V\right)  $ of $x$ that solves $\left(  \ref{equ.2.14}\right)  $
over $\left[  0,T\right]  .$
\end{lemma}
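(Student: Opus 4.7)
\noindent\textbf{Proof plan for Lemma \ref{lem.2.18}.}

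The plan is to construct the second-order component $\mathbb{X}$ as an almost rough path and then upgrade it via Lyons' Theorem~\ref{the.2.13}. Set
\[
\mathbb{X}_{s,t}^{\mathrm{app}} := \left[Y(x_{s})\otimes Y(x_{s})\right]\mathbb{Z}_{s,t},\qquad \mathbf{W}_{s,t}:=1+x_{s,t}+\mathbb{X}_{s,t}^{\mathrm{app}}.
\]
Because $x$ is continuous on the compact interval $\left[0,T\right]$, it is bounded, so $Y,Y',Y''$ are uniformly bounded on the range of $x$; combined with the hypothesis \eqref{equ.2.15} and the analytic bounds \eqref{equ.2.2} for $\mathbf{Z}$, one deduces $\left|x_{s,t}\right|\lesssim\omega(s,t)^{1/p}$ (for $|t-s|$ small) and $\left|\mathbb{X}_{s,t}^{\mathrm{app}}\right|\lesssim\omega(s,t)^{2/p}$, so the analytic regularity part of Definition~\ref{def.2.4} holds for $\mathbf{W}$.

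The key computation is to verify the approximate multiplicative property. Using Chen's identity for $\mathbf{Z}$ one writes
\[
\mathbb{X}_{s,t}^{\mathrm{app}}-\mathbb{X}_{s,u}^{\mathrm{app}}-\mathbb{X}_{u,t}^{\mathrm{app}}-x_{s,u}\otimes x_{u,t}
= \left[Y(x_{s})^{\otimes 2}\right]z_{s,u}\otimes z_{u,t} - x_{s,u}\otimes x_{u,t} + \left(Y(x_{s})^{\otimes 2}-Y(x_{u})^{\otimes 2}\right)\mathbb{Z}_{u,t}.
\]
The third summand is $O(|x_{s,u}|)\cdot O(\omega(u,t)^{2/p})=O(\omega(s,t)^{3/p})$ by smoothness of $Y$. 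For the first two summands, I would use \eqref{equ.2.15} to write $Y(x_{s})z_{s,u}=x_{s,u}+O(\omega(s,u)^{2/p})$ and $Y(x_{s})z_{u,t}=Y(x_{u})z_{u,t}+O(\omega(s,u)^{1/p}\omega(u,t)^{1/p})=x_{u,t}+O(\omega(s,t)^{2/p})$, and expand $Y(x_{s})z_{s,u}\otimes Y(x_{s})z_{u,t}-x_{s,u}\otimes x_{u,t}$ as a telescoping sum; each term is a product of one factor of size $\omega^{1/p}$ and one of size $\omega^{2/p}$. Thus $\mathbf{W}$ is an almost rough path. Theorem~\ref{the.2.13} then yields a unique $\mathbf{X}=(\tilde{x},\mathbb{X})\in R_{p}(V)$ with $\tilde{x}_{s,t}\simeq x_{s,t}$ and $\mathbb{X}_{s,t}\simeq\mathbb{X}_{s,t}^{\mathrm{app}}$.

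It remains to identify $\tilde{x}$ with $x$ and check the weakly geometric condition. Since both $\tilde{x}_{s,t}$ and $x_{s,t}$ are additive in $(s,t)$, their difference is additive and $\simeq 0$; the argument of Remark~\ref{rem.2.11} then forces $\tilde{x}=x$. For the weakly geometric property, set $g_{s,t}:=\mathbb{X}_{s,t}^{s}-\tfrac{1}{2}x_{s,t}\otimes x_{s,t}$. Taking the symmetric part of Chen's identity for $\mathbb{X}$, and observing that $\tfrac{1}{2}x_{s,t}^{\otimes 2}$ satisfies the \emph{same} quasi-additive relation $\tfrac{1}{2}x_{s,t}^{\otimes 2}=\tfrac{1}{2}x_{s,u}^{\otimes 2}+\tfrac{1}{2}x_{u,t}^{\otimes 2}+\operatorname{Sym}(x_{s,u}\otimes x_{u,t})$, one finds $g_{s,t}=g_{s,u}+g_{u,t}$, i.e.\ $g$ is genuinely additive. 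On the other hand, $\mathbb{X}_{s,t}^{\mathrm{app},s}=\tfrac{1}{2}\left(Y(x_{s})z_{s,t}\right)^{\otimes 2}$ because $\mathbf{Z}\in WG_{p}(W)$, and \eqref{equ.2.15} gives $Y(x_{s})z_{s,t}-x_{s,t}=O(\omega(s,t)^{2/p})$, whence a bilinear expansion yields $\mathbb{X}_{s,t}^{\mathrm{app},s}-\tfrac{1}{2}x_{s,t}^{\otimes 2}\simeq 0$, and therefore $g_{s,t}\simeq 0$. An additive quantity that is $\simeq 0$ must vanish (Remark~\ref{rem.2.11}), so $\mathbb{X}^{s}=\tfrac{1}{2}x^{\otimes 2}$ and $\mathbf{X}\in WG_{p}(V)$. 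Finally, $\mathbf{X}$ solves the RDE because the first condition in Definition~\ref{def.2.15} is precisely the hypothesis \eqref{equ.2.15}, while the second is the relation $\mathbb{X}_{s,t}\simeq\mathbb{X}_{s,t}^{\mathrm{app}}$ already produced by Theorem~\ref{the.2.13}.

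The main obstacle is the bookkeeping in the approximate Chen computation: one must consistently treat the cross-term $Y(x_{s})^{\otimes 2}z_{s,u}\otimes z_{u,t}-x_{s,u}\otimes x_{u,t}$, using \eqref{equ.2.15} twice but being careful that the substitutions introduce errors only at the level $\omega(s,t)^{3/p}$ when multiplied against their partners. Once this is done, everything else is essentially a formal corollary of Lyons' almost rough path theorem together with the additive-and-negligible principle from Remark~\ref{rem.2.11}.
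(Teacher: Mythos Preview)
Your proof is correct and follows essentially the same route as the paper: define the approximate second level $\mathbb{X}^{\mathrm{app}}_{s,t}=[Y(x_s)\otimes Y(x_s)]\mathbb{Z}_{s,t}$, verify the approximate Chen identity, and invoke Theorem~\ref{the.2.13}. You are in fact more careful than the paper's proof, which does not explicitly check that the first-level path returned by Theorem~\ref{the.2.13} coincides with $x$ nor that the resulting rough path is weakly geometric; your arguments for both points (via Remark~\ref{rem.2.11} applied to the additive differences) fill these gaps cleanly.
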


\begin{proof}
We can augment the trace solution $x$ to a full rough path solution
$\mathbf{X}:=\left(  x,\mathbb{X}\right)  $ as follows. Let
\begin{align*}
\mathbb{A}_{s,t}  &  :=\left[  Y\left(  x_{s}\right)  \otimes Y\left(
x_{s}\right)  \right]  \mathbb{Z}_{s,t}\text{ and }\\
\mathbf{A}_{s,t}  &  :=1+x_{s,t}+\mathbb{A}_{s,t}.
\end{align*}
Note that $Y$ is bounded on $x$ and therefore $\mathbf{A}$ has finite
$p-$variation in the sense of $\left(  \ref{equ.2.2}\right)  .$ It now
suffices to check that $\mathbf{A}$ is an almost multiplicative functional in
the language of Lyons. For this it will be enough to check that that
$\mathbb{A}$ approximately (in the sense of Notation\ \ref{not.2.10})
satisfies Chen's identity, which we now do. If $0\leq s\leq t\leq u\leq T,$
then%
\[
\mathbb{A}_{t,u}=\left[  Y\left(  x_{t}\right)  \otimes Y\left(  x_{t}\right)
\right]  \mathbb{Z}_{t,u}\simeq\left[  Y\left(  x_{s}\right)  \otimes Y\left(
x_{s}\right)  \right]  \mathbb{Z}_{t,u}%
\]
so that
\begin{align}
\mathbb{A}_{s,u}-\mathbb{A}_{s,t}-\mathbb{A}_{t,u}  &  \simeq\left[  Y\left(
x_{s}\right)  \otimes Y\left(  x_{s}\right)  \right]  \left[  \mathbb{Z}%
_{s,u}-\mathbb{Z}_{s,t}-\mathbb{Z}_{t,u}\right] \nonumber\\
&  =\left[  Y\left(  x_{s}\right)  \otimes Y\left(  x_{s}\right)  \right]
\left[  z_{s,t}\otimes z_{t,u}\right]  . \label{equ.2.16}%
\end{align}
Similarly we have%
\[
x_{s,t}\otimes x_{t,u}\simeq Y\left(  x_{s}\right)  z_{s,t}\otimes Y\left(
x_{t}\right)  z_{t,u}\simeq Y\left(  x_{s}\right)  z_{s,t}\otimes Y\left(
x_{s}\right)  z_{t,u}%
\]
which combined with Eq. (\ref{equ.2.16}) shows%
\[
\mathbb{A}_{s,u}-\mathbb{A}_{s,t}-\mathbb{A}_{t,u}-x_{s,t}\otimes
x_{t,u}\simeq0
\]
which is to say that $\mathbf{A}_{s,t}$ is an almost multiplicative
functional. Thus by Theorem \ref{the.2.13} there exists $\mathbb{X}_{s,t}$
such that $\mathbb{X}_{s,t}\simeq\mathbb{A}_{s,t}$ and $\mathbf{X}%
_{s,t}=1+x_{s,t}+\mathbb{X}_{s,t}$ solves the RDE in Eq. (\ref{equ.2.14}).
\end{proof}

\section{Geometric and weakly geometric rough paths on manifolds\label{sec.3}}

In this section we will introduce the notions of geometric and weakly
geometric rough paths on manifolds. The section is split in four parts.
Subsection \ref{sub.3.1} introduces the basic geometric notations and facts
needed for the rest of the paper. The definitions of constrained rough paths
(now called geometric and weakly geometric rough paths) and their path
integrals are the introduced in Subsection \ref{sub.3.2}, see Definitions
\ref{def.3.15}, \ref{def.3.17}, and \ref{def.3.24}. Basic properties of these
definitions are then established. The main result of Subsection \ref{sub.3.3},
is Proposition \ref{pro.3.35} which gives a more effective criteria for
checking that an ambient rough path is in fact a weakly geometric rough path.
The final subsection (\ref{sub.3.4}) explores the behavior of constrained
rough paths under change of coordinates and more general smooth
transformations, see Proposition \ref{pro.3.38}. This result is then used to
demonstrate that our constrained rough paths may be formulated to be
independent of the choice of embedding, see Corollaries \ref{cor.3.40},
\ref{cor.3.41}, and Definition \ref{def.3.42}.

\subsection{Basic geometric definitions\label{sub.3.1}}

Let $E=\mathbb{R}^{N}$ and $E^{\prime}=\mathbb{R}^{N^{\prime}}$ be Euclidean
spaces and let $\langle a,b\rangle=a\cdot b=\sum_{i=1}^{N}a_{i}b_{i}$ for all
$a,b\in E.$ If $U$ is an open neighborhood in $E$ and $F:U\rightarrow
E^{\prime}$ is a smooth map, then for $x\in U$ and $v\in E$ we let
$\partial_{v}F\left(  x\right)  :=\frac{d}{dt}|_{0}F\left(  x+tv\right)  $ be
the directional derivative of $F$ at $x$ along $v.$ We will further let
$F^{\prime}\left(  x\right)  :E\rightarrow E^{\prime}$ and $F^{\prime\prime
}\left(  x\right)  :E\otimes E\rightarrow V:=\mathbb{R}^{N-d}$ be the
differential and Hessian of $F$ respectively which are defined by $F^{\prime
}\left(  x\right)  v:=\left(  \partial_{v}F\right)  \left(  x\right)  $ and
$F^{\prime\prime}\left(  x\right)  \left[  v\otimes w\right]  :=\left(
\partial_{v}\partial_{w}F\right)  \left(  x\right)  $ for all $x\in U$ and
$v,w\in E.$

Throughout the rest of this paper, $M^{d}$ will be a $d$ -- dimensional
embedded submanifold of a Euclidean space $E:=\mathbb{R}^{N}.$ The reader may
find the necessary geometric background in any number of places including,
\cite{Lee2013,Warner1983,Driver2004}. To fix notation let us recall a
formulation an embedded submanifold which will be most useful for our purposes.

\begin{definition}
\label{def.3.1}A subset $M$ of $E$ is an \textbf{embedded submanifold} of $E$
of dimension $d\in\left\{  1,\dots,N\right\}  $ provided for each $m\in M$
there is an open neighborhood $U$ in $E$ containing $m$ and smooth
\textbf{local defining function} $F:U\rightarrow\mathbb{R}^{N-d}$ such that
\[
U\cap M=\left\{  x\in U:F\left(  x\right)  =0\right\}
\]
and $F^{\prime}\left(  x\right)  :E\rightarrow\mathbb{R}^{N-d}$ is surjective
for $x\in U.$
\end{definition}

Recall that the \textbf{tangent plane }to $M$ at $m\in M$ is $\tau
_{m}M:=\operatorname*{Nul}\left(  F^{\prime}\left(  m\right)  \right)  .$
Because of the implicit function theorem, to each $v\in\tau_{m}M$ there exists
a smooth path $\sigma_{v}:\left(  -\varepsilon,\varepsilon\right)  \rightarrow
E$ such that $\sigma_{v}\left(  \left(  -\varepsilon,\varepsilon\right)
\right)  \subset M,$ $\sigma_{v}\left(  0\right)  =m,$ and $\sigma_{v}%
^{\prime}\left(  0\right)  =v.$ From these considerations, one shows $\tau
_{m}M\ni v\rightarrow\dot{\sigma}_{v}\left(  0\right)  \in T_{m}M$ is a linear
isomorphism of vector spaces; we will often use this isomorphism to identify
$\tau_{m}M$ with $T_{m}M.$

\begin{remark}
\label{rem.3.2}Around each point $m\in M$ there exists an open set $U$ in $E$
and a smooth map $\pi:U\rightarrow M\cap U$ such that $\pi\left(  x\right)
=x$ for all $x\in M\cap U.$ As a consequence of this fact, any smooth function
$f:M\rightarrow W$ defined near $m$ has a smooth extension, $f\circ\pi,$ to a
neighborhood of $\ m$ in $E.$
\end{remark}

\begin{notation}
\label{not.3.3}Letting $F:U\rightarrow\mathbb{R}^{N-d}$ be a local defining
function for $M$ as above, we define smooth functions $Q_{F},P_{F}%
:U\rightarrow\operatorname*{End}\left(  E\right)  $ by%
\begin{align}
Q_{F}(x)  &  :=F^{\prime}(x)^{\ast}\left(  F^{\prime}(x)F^{\prime}(x)^{\ast
}\right)  ^{-1}F^{\prime}(x)\text{ and}\label{equ.3.1}\\
P_{F}\left(  x\right)   &  :=I_{E}-Q_{F}\left(  x\right)  =I-F^{\prime
}(x)^{\ast}\left(  F^{\prime}(x)F^{\prime}(x)^{\ast}\right)  ^{-1}F^{\prime
}(x). \label{equ.3.2}%
\end{align}

\end{notation}

\begin{remark}
\label{rem.3.4}We make a number of comments.

\begin{enumerate}
\item The surjectivity assumption of $F^{\prime}\left(  x\right)  $ guarantees
that $F^{\prime}(x)F^{\prime}(x)^{\ast}$ is invertible.

\item One may easily verify that $Q_{F}\left(  x\right)  $ is orthogonal
projection onto $\operatorname*{Ran}\left(  F^{\prime}(x)^{\ast}\right)
=\operatorname*{Nul}\left(  F^{\prime}\left(  x\right)  \right)  ^{\perp}$ and
$P_{F}\left(  x\right)  $ is orthogonal projection onto $\operatorname*{Nul}%
\left(  F^{\prime}\left(  x\right)  \right)  .$

\item For $m\in U\cap M$ we have that $P_{F}\left(  m\right)  $ $\left(
Q_{F}\left(  m\right)  \right)  $ is the orthogonal projection onto $\tau
_{m}M$ $\left(  \left[  \tau_{m}M\right]  ^{\perp}\right)  $ and hence is
independent of the choice of local defining function. We will simply write
$P\left(  m\right)  $ and $Q\left(  m\right)  $ (or, sometimes, $P_{m}$ and
$Q_{m})$ for $P_{F}\left(  m\right)  $ and $Q_{F}\left(  m\right)  $ when
$m\in M.$
\end{enumerate}
\end{remark}

\begin{remark}
\label{rem.3.5}In the proofs that follow we will often use the following
identities%
\begin{equation}
F^{\prime}\left(  x\right)  =F^{\prime}\left(  x\right)  Q_{F}\left(
x\right)  \text{ and }Q_{F}\left(  x\right)  =A_{F}\left(  x\right)
F^{\prime}\left(  x\right)  \label{equ.3.3}%
\end{equation}
which hold for all $x\in M,$ where
\begin{equation}
A_{F}\left(  x\right)  :=F^{\prime}(x)^{\ast}(F^{\prime}(x)F^{\prime}%
(x)^{\ast})^{-1}\in\operatorname{Hom}\left(  \mathbb{R}^{N-d},E\right)  .
\label{equ.3.4}%
\end{equation}

\end{remark}

The last geometric notions we need are vector fields, one forms, and their
covariant derivatives.

\begin{definition}
[Vector Fields]\label{def.3.6}A\textbf{ smooth vector field} on $M$ is a
smooth function $Y:M\rightarrow E$ such that $Q\left(  m\right)  Y\left(
m\right)  =0$ for all $m\in M,$ i.e. $Y\left(  m\right)  \in T_{m}M$ for all
$m\in M.$ Let $\Gamma\left(  TM\right)  $ denote the collection of smooth
vector fields\textbf{ }on $M.$
\end{definition}

\begin{example}
\label{exa.3.7}For $z\in\mathbb{R}^{N}$ we let $V_{z}\in\Gamma\left(
TM\right)  $ be defined by $V_{z}\left(  x\right)  :=P_{x}z$ for all $x\in M.$
\end{example}

\begin{definition}
\label{def.3.8}A\textbf{ smooth one form} on $M$ with values in a finite
dimensional vector space $W$ is a smooth function $\alpha$ on $M$ with
$\alpha_{m}\in\operatorname{Hom}\left(  T_{m}M,W\right)  $ for all $m\in M.$
Here we can describe the smoothness assumption of $\alpha$ by requiring $M\in
m\rightarrow\alpha_{m}P_{m}\in\operatorname{Hom}\left(  E,W\right)  $ to be a
smooth function. Let $\Omega^{1}\left(  M,W\right)  $ denote the set of smooth
one forms on $M$ with values in $W.$
\end{definition}

\begin{example}
\label{exa.3.9}The function $\alpha_{m}:=P_{m}$ is in $\Omega^{1}\left(
M,E\right)  .$ If $f:M\rightarrow W$ is a smooth function then $\alpha
:=df\in\Omega^{1}\left(  M,W\right)  $ where as usual $df\left(  v_{m}\right)
=v_{m}f.$
\end{example}

\begin{definition}
[Levi-Civita Covariant Derivative]\label{def.3.10}Suppose that $v_{m}\in
T_{m}M,$ $Y\in\Gamma\left(  TM\right)  ,$ and $\alpha\in\Omega^{1}\left(
M,W\right)  ,$ then the covariant derivative at $v_{m}$ of $Y$ and $\alpha$
are given respectively by%
\[
\nabla_{v_{m}}Y=P_{m}\left(  \partial_{v}Y\right)  \left(  m\right)  \in
T_{m}M\text{ and }\nabla_{v_{m}}\alpha=\partial_{v_{m}}\left(  \alpha\circ
P\right)  \in\operatorname{Hom}\left(  T_{m}M,W\right)  .
\]

\end{definition}

The next lemma and proposition records some basic well known properties of the
Levi-Civita covariant derivative.

\begin{lemma}
\label{lem.3.11}If $P$ and $Q$ be the orthogonal projection operators as in
Remark \ref{rem.3.4}, then $dP=-dQ$ and $PdQ=dQP.$
\end{lemma}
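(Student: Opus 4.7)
My plan is to derive both identities by differentiating basic algebraic relations that the orthogonal projections $P$ and $Q$ satisfy on the open set $U \subset E$ where they are defined; the argument is entirely algebraic and requires no differential-geometric input beyond smoothness of the local defining function $F$.

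For the first identity, by definition \eqref{equ.3.2} we have $P(x) + Q(x) = I_E$ for every $x \in U$. Since $I_E$ is constant, applying $d$ to both sides immediately yields $dP + dQ = 0$, i.e.\ $dP = -dQ$.

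For the second identity, the starting point is the idempotency relation $Q(x)^2 = Q(x)$, valid on all of $U$ since $Q(x)$ is pointwise an orthogonal projection (Remark \ref{rem.3.4}). Applying the Leibniz rule to both sides produces the $\operatorname{End}(E)$-valued one-form identity
\[
(dQ)\, Q + Q\, (dQ) \;=\; dQ .
\]
Substituting $P = I_E - Q$ then yields, on the one hand, $P\, dQ = (I_E - Q)\, dQ = (dQ)\, Q$, and, on the other hand, $dQ\, P = dQ\, (I_E - Q) = Q\, (dQ)$. So the asserted equality $P\, dQ = dQ\, P$ reduces to the commutation $Q\,(dQ) = (dQ)\, Q$.

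The main obstacle is precisely this last commutation: the manipulation above delivers $P\, dQ = (dQ)\, Q$ and $dQ\, P = Q\, (dQ)$, and these two expressions are \emph{a priori} only adjoints of each other rather than equal. My plan is to obtain the commutation by exploiting the self-adjointness of $Q$: since each $Q(x)$ is an orthogonal projection, $Q^{\ast} = Q$, so also $(dQ)^{\ast} = dQ$; taking adjoints of $P\, dQ = (dQ)\, Q$, using $P^{\ast} = P$, should bring the two sides into alignment. Should this adjoint bookkeeping not suffice, the fallback is a direct computation from the explicit representation \eqref{equ.3.1}, differentiating $Q = F^{\prime\,\ast}(F^{\prime}F^{\prime\,\ast})^{-1}F^{\prime}$ and invoking the identity $F^{\prime} P = 0$ on $M$ recorded in Remark \ref{rem.3.5}. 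This last step is the only one in the argument that I expect to require genuine care; the remainder is an immediate consequence of $P + Q = I_E$ and $Q^2 = Q$.
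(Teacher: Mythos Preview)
Your derivation of $dP=-dQ$ is correct and matches the paper's. For the second identity, differentiating $Q^2=Q$ correctly yields $P\,dQ=(dQ)\,Q$ and $(dQ)\,P=Q\,(dQ)$; this is equivalent to what the paper obtains by instead differentiating $0=PQ$ on $M$.

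The gap is in the last step. The commutation $Q\,(dQ)=(dQ)\,Q$ that you reduce to is \emph{false} in general, and hence so is the literal identity $P\,dQ=(dQ)\,P$. Your adjoint argument is circular: taking adjoints of $P\,dQ=(dQ)\,Q$, using $P^{\ast}=P$, $Q^{\ast}=Q$, $(dQ)^{\ast}=dQ$, returns exactly $(dQ)\,P=Q\,(dQ)$, the other identity you already derived, not the desired commutation. For an explicit counterexample take $M=S^{1}\subset\mathbb{R}^{2}$: at $m=(1,0)$ one has $Q=\left(\begin{smallmatrix}1&0\\0&0\end{smallmatrix}\right)$, $P=\left(\begin{smallmatrix}0&0\\0&1\end{smallmatrix}\right)$, $\partial_{\theta}Q=\left(\begin{smallmatrix}0&1\\1&0\end{smallmatrix}\right)$, so that $P\,(\partial_{\theta}Q)=\left(\begin{smallmatrix}0&0\\1&0\end{smallmatrix}\right)$ while $(\partial_{\theta}Q)\,P=\left(\begin{smallmatrix}0&1\\0&0\end{smallmatrix}\right)$.

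The printed lemma therefore contains a typo: the identity the paper's own proof actually delivers (from $0=PQ$), and the one invoked later (the consequence $P\,dQ\,P=0$ is used in the proof of Lemma~\ref{lem.3.28} and in Section~\ref{sub.5.1}), is $P\,dQ=(dQ)\,Q$, equivalently $Q\,(dQ)=(dQ)\,P$. You have already established this correct identity; the only error is in attempting the impossible further step to $P\,dQ=(dQ)\,P$. Your fallback computation from the explicit formula \eqref{equ.3.1} would likewise reproduce $P\,dQ=(dQ)\,Q$, not the stated relation.
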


\begin{proof}
Differentiate the identities $I=P+Q$ and $0=PQ,$ which hold on $M$ gives the
new identities in the statement.
\end{proof}

\begin{proposition}
\label{pro.3.12}Let $Y\in\Gamma\left(  TM\right)  ,$ $\alpha\in\Omega
^{1}\left(  M,W\right)  ,$ and $\Gamma:=dQ\in\Omega^{1}\left(
M,\operatorname*{End}\left(  E\right)  \right)  .$ Then;

\begin{enumerate}
\item $\nabla_{v_{m}}Y=\left(  \partial_{v}Y\right)  \left(  m\right)
+\Gamma\left(  v_{m}\right)  Y\left(  m\right)  ,$

\item The product rule holds;%
\[
v_{m}\left(  \alpha\left(  Y\right)  \right)  =\left(  \nabla_{v_{m}}%
\alpha\right)  \left(  Y\left(  m\right)  \right)  +\alpha_{m}\left(
\nabla_{v_{m}}Y\right)  .
\]

\item If $\alpha_{m}=\tilde{\alpha}_{m}|_{T_{m}M}$ where $\tilde{\alpha
}:M\rightarrow\operatorname{Hom}\left(  E,W\right)  $ is a smooth function
then
\[
\nabla_{v_{m}}\alpha=\left(  \partial_{v}\tilde{\alpha}\right)  \left(
m\right)  P_{m}-\tilde{\alpha}_{m}\Gamma\left(  v_{m}\right)  .
\]

\item If $\alpha_{m}=\tilde{\alpha}_{m}|_{T_{m}M}$ as in item 3. and we
further assume that $\tilde{\alpha}_{x}=\tilde{\alpha}_{x}\circ P_{x}$ for
$x\in M$ near $m,$ then%
\[
\left(  \nabla_{v}\alpha\right)  \left(  w\right)  =\tilde{\alpha}_{m}%
^{\prime}\left[  v\otimes w\right]  \text{ for all }v,w\in T_{m}M.
\]

\end{enumerate}
\end{proposition}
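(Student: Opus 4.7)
The plan is to verify each of the four items in turn, using only the definitions in Section~\ref{sub.3.1} together with the projection identities of Lemma~\ref{lem.3.11}. All four claims are local identities at a single point $m\in M$, so throughout I may work with the chosen local defining function $F$ and its associated $P=P_F$, $Q=Q_F$ defined in a neighborhood $U$ of $m$ in $E$.

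For item 1, the key move is to differentiate the identity $Q Y \equiv 0$, which holds on $M$ because $Y\in\Gamma(TM)$. Applied at $m$ in the direction $v \in T_m M$, the product rule in $E$ gives $\Gamma(v_m)Y(m) + Q_m (\partial_v Y)(m) = 0$, i.e.\ $Q_m(\partial_v Y)(m) = -\Gamma(v_m)Y(m)$. Since $P = I_E - Q$, this yields $\nabla_{v_m}Y = P_m(\partial_v Y)(m) = (\partial_v Y)(m) + \Gamma(v_m)Y(m)$. For item 2, I would use $P_m Y(m) = Y(m)$ to rewrite $\alpha(Y)(x) = \alpha_x P_x (Y(x))$ in a neighborhood of $m$ in $M$, apply the ordinary Leibniz rule to the product $(\alpha\circ P)(Y)$, and then recognize the two resulting terms as $(\nabla_{v_m}\alpha)(Y(m))$ (by the very definition of $\nabla\alpha$) and $\alpha_m P_m (\partial_v Y)(m) = \alpha_m(\nabla_{v_m}Y)$ (using item 1 and the fact that $\alpha_m$ sees only the $T_m M$-component).

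For item 3, the observation is that when $\alpha_m = \tilde{\alpha}_m|_{T_m M}$ we have $\alpha_x \circ P_x = \tilde{\alpha}_x \circ P_x$ for all $x\in M$ near $m$, because $P_x$ lands in $T_x M$. Hence $\nabla_{v_m}\alpha = \partial_{v_m}(\tilde{\alpha}P)$, and a direct application of the product rule, together with $\partial_v P = -\partial_v Q = -\Gamma(v_m)$ (from $P+Q=I$), produces exactly $(\partial_v\tilde{\alpha})(m)P_m - \tilde{\alpha}_m\Gamma(v_m)$. For item 4, I would substitute $w\in T_m M$ into the formula from item 3. The first term collapses to $(\partial_v\tilde{\alpha})(m)w = \tilde{\alpha}_m'[v\otimes w]$ because $P_m w = w$, so the task reduces to showing the correction $\tilde{\alpha}_m\Gamma(v_m)w$ vanishes.

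This last vanishing is the only step with any real content. To handle it, I would differentiate the hypothesis $\tilde{\alpha}_x = \tilde{\alpha}_x \circ P_x$ (equivalently $\tilde{\alpha}_x Q_x \equiv 0$ for $x\in M$ near $m$) along $v\in T_m M$. The product rule then gives $\tilde{\alpha}_m (\partial_v Q)(m) = -(\partial_v\tilde{\alpha})(m) Q_m$, and applying both sides to $w\in T_m M$ kills the right-hand side because $Q_m w = 0$. Thus $\tilde{\alpha}_m\Gamma(v_m)w = 0$, completing item 4. The main potential obstacle is purely bookkeeping: making sure the extension-then-restrict interpretation of $\partial_{v_m}(\alpha\circ P)$ is used consistently, and that the projection identities $P+Q=I$, $PdQ = dQ P$, and $\tilde{\alpha}Q \equiv 0$ are each invoked in the correct place. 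No analytic or global argument is needed.
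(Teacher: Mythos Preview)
Your proof is correct and for items 1--3 matches the paper's argument essentially verbatim (the only cosmetic difference being that in item 1 you differentiate $QY\equiv 0$ while the paper differentiates $Y=PY$; these are of course equivalent via $P+Q=I$).

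For item 4 you take a slightly longer route than the paper. You apply item 3 to $w\in T_mM$ and then prove the correction $\tilde{\alpha}_m\Gamma(v_m)w$ vanishes by differentiating $\tilde{\alpha}Q\equiv 0$. The paper instead bypasses item 3 entirely: since $\alpha\circ P=\tilde{\alpha}\circ P$ on $M$ (both agree on $TM$) and, by hypothesis, $\tilde{\alpha}\circ P=\tilde{\alpha}$ near $m$, one gets directly
\[
(\nabla_v\alpha)(w)=[v_m(\alpha\circ P)]w=[v_m(\tilde{\alpha}\circ P)]w=[v_m\tilde{\alpha}]w=\tilde{\alpha}_m'[v\otimes w].
\]
Your argument has the merit of making explicit why the $\Gamma$-term from item 3 disappears under the extra hypothesis, while the paper's shortcut is cleaner but hides that mechanism. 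Both are valid and rest on the same identity $\tilde{\alpha}Q\equiv 0$.
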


\begin{proof}
In the proof below recall that $Y=PY$ as $Y\in\Gamma\left(  TM\right)  .$

\begin{enumerate}
\item Differentiating the identity, $Y=PY,$ shows
\[
\left(  \partial_{v}Y\right)  \left(  m\right)  =dP\left(  v_{m}\right)
Y\left(  m\right)  +P\left(  m\right)  \left(  \partial_{v}Y\right)  \left(
m\right)  =-\Gamma\left(  v_{m}\right)  Y\left(  m\right)  +\nabla_{v_{m}}Y
\]
which proves item 1.

\item Since $\alpha\left(  Y\right)  =\alpha\left(  PY\right)  =\left(  \alpha
P\right)  Y$ and $\alpha P:M\rightarrow\operatorname*{End}\left(  E\right)  $
is a smooth function the ordinary product rule shows,%
\begin{align*}
v_{m}\left(  \alpha\left(  Y\right)  \right)   &  =\left(  \partial_{v}\left(
\alpha P\right)  \left(  m\right)  \right)  Y\left(  m\right)  +\alpha
_{m}P_{m}\left(  \partial_{v}Y\right)  \left(  m\right) \\
&  =\left(  \nabla_{v_{m}}\alpha\right)  \left(  Y\left(  m\right)  \right)
+\alpha_{m}\left(  \nabla_{v_{m}}Y\right)  .
\end{align*}

\item If $\alpha_{m}=\tilde{\alpha}_{m}|_{T_{m}M}$ as in item 3., then using
the standard product rule again,
\[
\nabla_{v_{m}}\alpha=\partial_{v}\left(  \alpha P\right)  \left(  m\right)
=\partial_{v}\left(  \tilde{\alpha}P\right)  \left(  m\right)  =\left(
\partial_{v}\tilde{\alpha}\right)  _{m}P_{m}+\tilde{\alpha}_{m}dP\left(
v_{m}\right)  =\left(  \partial_{v}\tilde{\alpha}\right)  \left(  m\right)
P_{m}-\tilde{\alpha}_{m}\Gamma\left(  v_{m}\right)  .
\]

\item From the definitions,%
\[
\left(  \nabla_{v}\alpha\right)  \left(  w\right)  =\left[  v_{m}\left(
\alpha\circ P\right)  \right]  w=\left[  v_{m}\left(  \tilde{\alpha}\circ
P\right)  \right]  w=\left[  v_{m}\tilde{\alpha}\right]  w=\tilde{\alpha}%
_{m}^{\prime}\left(  v\otimes w\right)  .
\]

\end{enumerate}
\end{proof}

\subsection{(Weakly) Geometric Rough Paths on $M$\label{sub.3.2}}

In the following let $M$ be a manifold embedded in $E:=\mathbb{R}^{N}$ and $F$
the (local) defining function as introduced in Notation \ref{not.3.3}. In the
setting of embedded manifolds there is a natural notion of geometric rough
paths that is induced by the rough metric on the ambient Euclidean space $E$.
To help prepare the precise definition of a geometric rough path on a manifold
we introduce the following set of paths.

Assume that $M\subseteq E$ and let $C_{bv}\left(  \left[  0,T\right]
,E\right)  $ denote the set of continuous bounded variation paths taking
values in $E.$ Recall the definition of the truncated signature $S_{2}$ in
$\left(  \ref{equ.2.4}\right)  .$ For any real number $p\in\lbrack2,3),$ we
define $\bar{G}_{p}\left(  M\right)  $ to be closure of the lifts of
continuous bounded variation paths in $M;$ that is, $\bar{G}_{p}\left(
M\right)  $ is the closure of
\[
\left\{  S_{2}\left(  x\right)  :x\in C_{bv}\left(  \left[  0,T\right]
,E\right)  ,x_{t}\in M\text{ for all }t\in\left[  0,T\right]  \right\}
\]
with respect to the topology induced by the $p-$ variation rough path metric
on $E$.

\begin{remark}
\label{rem.3.13}For $p>1$ continuous bounded variation paths on $E$ are in the
closure of the smooth paths taken in the $p-$variation metric (see e.g.
\cite[Lemma 5.30]{FV}). In addition the truncated signature is a locally
Lipschitz continuous map under the (inhomogeneous) rough $p-$ variation metric
(see e.g. \cite[Theorem 9.10]{FV}). Combining these two facts shows we could
have replaced $C_{bv}\left(  \left[  0,T\right]  ,E\right)  $ in the
definition of $\bar{G}_{p}\left(  M\right)  $ by the smooth paths $C^{\infty
}\left(  \left[  0,T\right]  ,E\right)  .$ This justifies referring to the
lifts of $1-$ rough paths as \textquotedblleft smooth\textquotedblright\ rough paths.
\end{remark}

\begin{lemma}
\label{lem.3.14}Suppose $M$ is a closed subset of $E,$ then the trace of any
$\mathbf{X}$ in $\bar{G}_{p}\left(  M\right)  $ lies in $M.$
\end{lemma}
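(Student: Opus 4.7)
The plan is to reduce the claim to the elementary fact that the $p$-variation rough path metric on $E$ dominates uniform convergence of the underlying first-level path, and then to invoke the hypothesis that $M$ is closed.

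First, I would unpack the definition of $\bar G_p(M)$. Given $\mathbf{X} \in \bar G_p(M)$ with trace $x$, choose a sequence $y^n \in C_{bv}([0,T], E)$ with $y^n_t \in M$ for every $t \in [0,T]$ and every $n$, such that $S_2(y^n) \to \mathbf{X}$ in the $p$-variation rough path metric on $E$ (the metric appearing in (\ref{equ.A.1})).

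Next, I would use the standard fact that convergence in this metric controls the uniform distance between the first-level paths. More precisely, from the bound on $|y^n_{s,t} - x_{s,t}|/\omega(s,t)^{1/p}$ built into the metric together with convergence of starting points, one obtains
\[
\sup_{t \in [0,T]} |y^n_t - x_t| \longrightarrow 0 \quad \text{as } n \to \infty.
\]
This is immediate from the definition of $p$-variation distance combined with the triangle inequality (one compares $y^n_t - x_t$ to $y^n_0 - x_0$ via the increment $y^n_{0,t} - x_{0,t}$).

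Finally, fix any $t \in [0,T]$. Then $x_t = \lim_{n\to\infty} y^n_t$, and since each $y^n_t$ lies in $M$ and $M$ is closed in $E$ by hypothesis, the limit $x_t$ lies in $M$ as well. This holds for every $t$, proving that the trace of $\mathbf{X}$ is contained in $M$.

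The only potentially delicate point is verifying that convergence in the rough path metric on $E$ implies uniform convergence of the trace; but this is a standard (and routine) consequence of how the $p$-variation rough path metric of (\ref{equ.A.1}) is defined. No embedded-manifold geometry is needed — the closedness of $M$ in $E$ is the only property of $M$ that enters.
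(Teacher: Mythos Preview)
Your argument is correct and follows essentially the same route as the paper's own proof: approximate $\mathbf{X}$ by signatures of bounded-variation paths in $M$, observe that convergence in the $p$-variation metric forces pointwise (indeed uniform) convergence of traces, and conclude by closedness of $M$. You are in fact slightly more explicit than the paper about why $p$-variation convergence yields convergence of the trace.
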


\begin{proof}
By definition $\mathbf{X}$ can be approximated by a sequence of smooth rough
paths $\mathbf{X}^{n}$ (see Remark \ref{rem.3.13}) with trace in $M.$ The
traces of the approximating sequence converges in $p-$variation and therefore
also converges pointwise. Since $M$ is assumed to be closed, the proof is complete.
\end{proof}

\begin{definition}
[Geometric rough paths]\label{def.3.15}We define \textbf{geometric }%
$p-$\textbf{rough paths} on $M$ to be those elements of $\bar{G}_{p}\left(
M\right)  $ whose trace $x$ lies inside $M.$ The set of geometric $p-$rough
paths on $M$ will be denoted by $G_{p}\left(  M\right)  .$ In other words, we
have%
\[
G_{p}\left(  M\right)  =\left\{  \mathbf{X=}\left(  x,\mathbb{X}\right)
\in\bar{G}_{p}\left(  M\right)  :x_{t}\in M\text{ for all }t\in\left[
0,T\right]  \right\}  .
\]

\end{definition}

It follows from Lemma \ref{lem.3.14} that $\bar{G}_{p}\left(  M\right)
=G_{p}\left(  M\right)  $ when $M$ is a closed subset of $E.$ The next example
explains why it is important that we take the closure of paths in $M$, and why
it will not be sufficient to \textit{only} assume that the trace of limiting
object lies in $M.$

\begin{example}
\label{exa.3.16}Let $M=\left\{  e_{1},e_{2}\right\}  ^{\perp}\subset
\mathbb{R}^{N}.$ Then for any $v,w\in\mathbb{R}^{N}$ there exists (see
\cite{LCL},\cite{FH}) a so-called pure area geometric rough path,
$\mathbf{X=}\left(  x,\mathbb{X}\right)  $ with the property that $x=0$, the
constant path zero, and
\[
\mathbb{X}_{s,t}=\left(  v\otimes w-w\otimes v\right)  \left(  t-s\right)  .
\]
On the other hand if $\mathbf{X\in}WG_{p}\left(  M\right)  $ we would
certainly have $\mathbb{X}_{s,t}\in M\otimes M$ for all $s$ and $t.$ Put
another way if $\mathbf{X\in}WG_{p}\left(  M\right)  $ then $\left[  Q\otimes
I\right]  \mathbb{X}_{s,t}=0=\left[  I\otimes Q\right]  \mathbb{X}_{s,t}$
where $Q$ is orthogonal projection onto $M^{\perp}=\operatorname*{span}%
\left\{  e_{1},e_{2}\right\}  .$ An approximate version of this requirement
will appear again in the general manifold setting as well, see Corollary
\ref{cor.3.20} below.
\end{example}

A second set of rough paths on a manifold is, in structure, related to the
weakly geometric rough paths in the classical Banach space setting.

\begin{definition}
[Weakly geometric rough paths]\label{def.3.17}We say that $\mathbf{X=}\left(
x,\mathbb{X}\right)  $ is a \textbf{weakly geometric }$p-$\textbf{ rough path}
on the manifold $M$ if: $\mathbf{X}$ is in\ $WG_{p}\left(  E\right)  $, its
trace $x$ lies in $M$ and for any finite dimensional subspace $W$ and any
$\tilde{\alpha}\in\Omega^{1}\left(  E,W\right)  $ such that $\tilde{\alpha
}|_{TM}\equiv0$ we have $\int\tilde{\alpha}\left(  \mathbf{dX}\right)
\equiv0.$ The set of weakly geometric rough paths will be denoted by
$WG_{p}\left(  M\right)  .$
\end{definition}

In the following we will often make use of the following simple consequence of
Taylor's theorem.

\begin{lemma}
\label{lem.3.18}If $f:E:=\mathbb{R}^{N}\rightarrow\mathbb{R}^{l}$ is a $C^{3}$
-- function which is constant on $M\subset E,$ then for $x,y\in M$ we have,

\begin{enumerate}
\item $f^{\prime}\left(  x\right)  \left(  y-x\right)  =O\left(  \left\vert
y-x\right\vert ^{2}\right)  ,$ and

\item $f^{\prime}\left(  x\right)  \left(  y-x\right)  +\frac{1}{2}%
f^{\prime\prime}\left(  x\right)  \left[  \left(  y-x\right)  \otimes\left(
y-x\right)  \right]  =O\left(  \left\vert y-x\right\vert ^{3}\right)  .$
\end{enumerate}
\end{lemma}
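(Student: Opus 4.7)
The plan is to apply Taylor's theorem to $f$ in the \emph{ambient} space $E$, not intrinsically on $M$. The key observation is that although the straight-line segment from $x$ to $y$ does not generally lie in $M$, we nevertheless know $f(x)=f(y)$ simply because both endpoints are in $M$ and $f$ is constant on $M$. This single identity, combined with Taylor expansions of different orders, yields both items.

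For item 2, I would expand $f$ about $x$ to second order with integral remainder, using that $f\in C^{3}$:
\[
f(y)=f(x)+f^{\prime}(x)(y-x)+\tfrac{1}{2}f^{\prime\prime}(x)\bigl[(y-x)\otimes(y-x)\bigr]+R_{2}(x,y),
\]
where $R_{2}(x,y)=\tfrac{1}{2}\int_{0}^{1}(1-t)^{2}\,f^{\prime\prime\prime}\bigl(x+t(y-x)\bigr)\bigl[(y-x)^{\otimes 3}\bigr]\,dt$. Since $f^{\prime\prime\prime}$ is continuous on $E$, it is bounded on any fixed compact neighborhood containing $x$ and $y$, so $|R_{2}(x,y)|\le C|y-x|^{3}$ with $C$ uniform on that neighborhood. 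Using $f(y)=f(x)$ gives item 2 immediately. For item 1, I would use the analogous first-order Taylor expansion
\[
f(y)=f(x)+f^{\prime}(x)(y-x)+R_{1}(x,y),\qquad R_{1}(x,y)=\int_{0}^{1}(1-t)\,f^{\prime\prime}\bigl(x+t(y-x)\bigr)\bigl[(y-x)^{\otimes 2}\bigr]\,dt,
\]
and again the continuity of $f^{\prime\prime}$ delivers $|R_{1}(x,y)|\le C|y-x|^{2}$ on compacts, whence $f(x)=f(y)$ gives item 1.

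The only minor point to address is the meaning of $O(\cdot)$: the estimates should be read as locally uniform in $x,y\in M$, which is exactly what the continuity of $f^{\prime\prime}$ and $f^{\prime\prime\prime}$ (guaranteed by the $C^{3}$ hypothesis) provides via compactness. There is no real obstacle here; the content of the lemma is simply the bookkeeping observation that the vanishing of $f(y)-f(x)$ on $M$ forces cancellation of the first, respectively first and second, Taylor terms up to the expected Taylor remainder in $E$. This is why the lemma will be useful in later sections: applied to coordinate components of a local defining function $F$ (which vanishes, hence is constant, on $M$), it shows that $F^{\prime}(x_{s})x_{s,t}$ is of order $|x_{s,t}|^{2}$ and the combination $F^{\prime}(x_{s})x_{s,t}+\tfrac{1}{2}F^{\prime\prime}(x_{s})[x_{s,t}\otimes x_{s,t}]$ is of order $|x_{s,t}|^{3}$, which will drive the ``$\simeq$'' computations characterizing $WG_{p}(M)$.
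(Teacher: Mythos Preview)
Your proposal is correct and follows essentially the same approach as the paper: apply Taylor's theorem in the ambient space $E$ to first and second order, and use $f(y)-f(x)=0$ (since $x,y\in M$ and $f$ is constant on $M$) to convert the Taylor remainders into the stated bounds. The paper's proof is terser and simply invokes Taylor's theorem without writing out the integral remainder, but the argument is the same.
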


\begin{proof}
By Taylor's theorem,%
\[
f\left(  y\right)  -f\left(  x\right)  =f^{\prime}\left(  x\right)  \left(
y-x\right)  +O\left(  \left\vert y-x\right\vert ^{2}\right)
\]
and%
\[
f\left(  y\right)  -f\left(  x\right)  =f^{\prime}\left(  x\right)  \left(
y-x\right)  +\frac{1}{2}f^{\prime\prime}\left(  x\right)  \left[  \left(
y-x\right)  \otimes\left(  y-x\right)  \right]  +O\left(  \left\vert
y-x\right\vert ^{3}\right)  .
\]
Since $f$ is constant on $M$ and $x,y\in M,$ it follows that $f\left(
y\right)  -f\left(  x\right)  =0$ and the results follow from the previously
displayed equations.
\end{proof}

An obvious class of one forms having the property that $\alpha|_{TM}\equiv0$
are those which locally have the form $\alpha=\varphi F^{\prime}$ , where
$\varphi$ is a smooth function and $F$ is a local defining function for the
manifold. The following lemma gives simplified description of the level-one
component for the integral of any such one form.

\begin{lemma}
\label{lem.3.19}Let $\mathbf{X=}\left(  x,\mathbb{X}\right)  \in WG_{p}\left(
E\right)  $ be a weakly geometric $p-$rough path such that the trace $x$ is in
$M.$ Suppose that $F\in C^{\infty}\left(  U,V\right)  ,$ with $V=\mathbb{R}%
^{N-d},$ is a smooth function which locally defines $M$ as in Definition
\ref{def.3.1} and which has been chosen so that there is a subinterval
$\left[  s,t\right]  \subseteq\left[  0,T\right]  $ with%
\[
\left\{  x_{u}:u\in\left[  s,t\right]  \right\}  \subset U.
\]
Assume $W$ is a finite dimensional vector space, and suppose $\varphi\in
C^{\infty}\left(  U,\operatorname{Hom}\left(  V,W\right)  \right)  .$ Let
$\alpha\in\Omega^{1}\left(  E,W\right)  $ be the one form defined by
\[
\alpha\left(  x\right)  \xi=\varphi\left(  x\right)  F^{\prime}\left(
x\right)  \xi\in W\text{ for all }\xi\in E.
\]
Then for every $\left[  u,v\right]  \subseteq\left[  s,t\right]  $ we have
\begin{equation}
\left[  \int\alpha\left(  d\mathbf{X}\right)  \right]  _{u,v}^{1}\simeq
\alpha\left(  x_{u}\right)  x_{u,v}+\alpha^{\prime}\left(  x_{s}\right)
\mathbb{X}_{u,v}\simeq\left[  \left(  \varphi^{\prime}\cdot F^{\prime}\right)
\left(  x_{u}\right)  \right]  \mathbb{X}_{u,v} \label{equ.3.5}%
\end{equation}
where $\left(  \varphi^{\prime}\cdot F^{\prime}\right)  \left(  m\right)  $
denotes the linear map from $E\otimes E\rightarrow W$ determined by%
\begin{equation}
\left[  \left(  \varphi^{\prime}\cdot F^{\prime}\right)  \left(  m\right)
\right]  \xi_{1}\otimes\xi_{2}:=\left[  \varphi^{\prime}\left(  m\right)
\xi_{1}\right]  \left[  F^{\prime}\left(  m\right)  \xi_{2}\right]  =\left(
\partial_{\xi_{1}}\varphi\right)  \left(  m\right)  \left(  \partial_{\xi_{2}%
}F\right)  \left(  m\right)  . \label{equ.3.6}%
\end{equation}

\end{lemma}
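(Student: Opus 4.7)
The first equivalence is nothing more than a direct invocation of Theorem \ref{the.2.14} applied to the weakly geometric rough path $\mathbf{X}$ and the $C^2$ one form $\alpha = \varphi F' \in C^2(U, \operatorname{Hom}(E,W))$; the displayed relation (\ref{equ.2.7}) gives exactly $\left[\int \alpha(d\mathbf{X})\right]^1_{u,v} \simeq \alpha(x_u) x_{u,v} + \alpha'(x_u)\mathbb{X}_{u,v}$ (replacing $x_s$ by $x_u$ costs only an error of order $\omega(u,v)^{3/p}$ under $\simeq$, which is absorbed), so the content of the lemma lies in the second $\simeq$.

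For the second equivalence my plan is to compute $\alpha'$ via the Leibniz rule for the product $\varphi\cdot F'$. On a simple tensor $\eta\otimes\xi \in E\otimes E$,
\[
\alpha'(x_u)[\eta\otimes\xi] = [\varphi'(x_u)\eta]\,F'(x_u)\xi + \varphi(x_u) F''(x_u)[\eta\otimes\xi],
\]
so by linearity $\alpha'(x_u)\mathbb{X}_{u,v} = \left[(\varphi'\cdot F')(x_u)\right]\mathbb{X}_{u,v} + \varphi(x_u) F''(x_u)\mathbb{X}_{u,v}$. Subtracting the two sides of the claimed second equivalence, and using $\alpha(x_u)x_{u,v} = \varphi(x_u) F'(x_u)x_{u,v}$, the second $\simeq$ is therefore reduced to showing
\[
\varphi(x_u)\bigl[\,F'(x_u)\,x_{u,v} + F''(x_u)\,\mathbb{X}_{u,v}\,\bigr] \simeq 0.
\]
Since $\varphi$ is continuous and hence uniformly bounded on the compact set $\{x_w : w \in [s,t]\} \subset U$, it suffices to show that the bracket is of order $\omega(u,v)^{3/p}$.

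This is where the weakly geometric hypothesis and the symmetry of the Hessian do the work. The bilinear form $F''(x_u)$ is symmetric, so when viewed as a linear map on $E\otimes E$ it annihilates antisymmetric tensors and depends only on the symmetric part; using Remark \ref{rem.2.7} (that $\mathbb{X}^s_{u,v} = \tfrac12 x_{u,v}\otimes x_{u,v}$ for $\mathbf{X}\in WG_p(E)$), this yields
\[
F''(x_u)\mathbb{X}_{u,v} = F''(x_u)\mathbb{X}^s_{u,v} = \tfrac12\, F''(x_u)\bigl[x_{u,v}\otimes x_{u,v}\bigr].
\]
So the bracket equals $F'(x_u)x_{u,v} + \tfrac12 F''(x_u)[x_{u,v}\otimes x_{u,v}]$. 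Since $F$ vanishes on $M$ and both $x_u,x_v\in M$, Lemma \ref{lem.3.18}(2) applied to $F$ gives exactly
\[
F'(x_u)\,x_{u,v} + \tfrac12 F''(x_u)\bigl[x_{u,v}\otimes x_{u,v}\bigr] = O\!\left(|x_{u,v}|^3\right) = O\!\left(\omega(u,v)^{3/p}\right),
\]
using the $p$-variation estimate (\ref{equ.2.2}) on the trace. Multiplying by the bounded factor $\varphi(x_u)$ completes the proof.

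The only conceptual step is the cancellation in the middle paragraph: the weakly geometric condition is precisely what is needed to match the second-order rough path increment $\mathbb{X}_{u,v}$ with the Taylor remainder supplied by Lemma \ref{lem.3.18}, and without the symmetry of $F''$ this bookkeeping would fail (as Example \ref{exa.3.16} already foreshadows for pure-area anomalies). Once this is noticed, every other step is routine.
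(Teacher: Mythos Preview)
Your proof is correct and follows essentially the same approach as the paper: apply the Leibniz rule to $\alpha=\varphi F'$, use the weakly geometric condition together with the symmetry of $F''$ to replace $F''(x_u)\mathbb{X}_{u,v}$ by $\tfrac12 F''(x_u)[x_{u,v}\otimes x_{u,v}]$, and then invoke Lemma~\ref{lem.3.18} to kill the resulting Taylor polynomial. The only cosmetic difference is that the paper groups the terms slightly differently before citing Lemma~\ref{lem.3.18}, and your reference for the identity $\mathbb{X}^s_{u,v}=\tfrac12 x_{u,v}\otimes x_{u,v}$ should point to Definition~\ref{def.2.8} (Eq.~(\ref{equ.2.6})) rather than Remark~\ref{rem.2.7}.
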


\begin{proof}
The product rule (written in the notation introduced in Eq. (\ref{equ.3.6}))
gives%
\begin{equation}
\alpha^{\prime}=\varphi F^{\prime\prime}+\varphi^{\prime}\cdot F^{\prime}.
\label{equ.3.7}%
\end{equation}
This identity combined with Eq. (\ref{equ.2.7}) then implies,%
\begin{align}
\left[  \int\alpha\left(  d\mathbf{X}\right)  \right]  _{u,v}^{1}\simeq &
\alpha\left(  x_{u}\right)  x_{u,v}+\alpha^{\prime}\left(  x_{u}\right)
\mathbb{X}_{u,v}\nonumber\\
&  =\varphi\left(  x_{u}\right)  F^{\prime}\left(  x_{u}\right)
x_{u,v}+\left[  \varphi\left(  x_{u}\right)  F^{\prime\prime}\left(
x_{u}\right)  +\left(  \varphi^{\prime}\cdot F^{\prime}\right)  \left(
x_{u}\right)  \right]  \mathbb{X}_{u,v}\nonumber\\
&  =\varphi\left(  x_{u}\right)  \left[  F^{\prime}\left(  x_{u}\right)
x_{u,v}+F^{\prime\prime}\left(  x_{u}\right)  \mathbb{X}_{u,v}\right]
+\left[  \left(  \varphi^{\prime}\cdot F^{\prime}\right)  \left(
x_{u}\right)  \right]  \mathbb{X}_{u,v}. \label{equ.3.8}%
\end{align}
Since $F^{\prime\prime}$ is symmetric and $\mathbf{X}=\left(  x,\mathbb{X}%
\right)  $ is a weakly geometric rough path it follows that
\[
F^{\prime\prime}\left(  x_{u}\right)  \mathbb{X}_{u,v}=F^{\prime\prime}\left(
x_{u}\right)  \mathbb{X}_{u,v}^{s}=\frac{1}{2}F^{\prime\prime}\left(
x_{u}\right)  \left[  x_{u,v}\otimes x_{u,v}\right]
\]
and therefore by Lemma \ref{lem.3.18},
\[
F^{\prime}\left(  x_{u}\right)  x_{u,v}+F^{\prime\prime}\left(  x_{u}\right)
\mathbb{X}_{u,v}=F^{\prime}\left(  x_{u}\right)  x_{u,v}+\frac{1}{2}%
F^{\prime\prime}\left(  x_{u}\right)  \left[  x_{u,v}\otimes x_{u,v}\right]
\simeq0.
\]
Combining this estimate with Eq. (\ref{equ.3.8}) gives (\ref{equ.3.5}).
\end{proof}

\begin{corollary}
\label{cor.3.20}Let $\mathbf{X=}\left(  x,\mathbb{X}\right)  \in WG_{p}\left(
M\right)  \subset WG_{p}\left(  E\right)  $ and $F:U\rightarrow V:=\mathbb{R}%
^{N-d}$ and $\left[  s,t\right]  \subseteq\left[  0,T\right]  $ be as in Lemma
\ref{lem.3.19}. Then for $s\leq u\leq v\leq t,$%
\begin{align}
I_{E}\otimes F^{\prime}\left(  x_{u}\right)  \mathbb{X}_{u,v}  &
\simeq0\simeq F^{\prime}\left(  x_{u}\right)  \otimes I_{E}\mathbb{X}%
_{u,v}\text{ and }\label{equ.3.9}\\
I_{E}\otimes Q\left(  x_{u}\right)  \mathbb{X}_{u,v}  &  \simeq0\simeq
Q\left(  x_{u}\right)  \otimes I_{E}\mathbb{X}_{u,v}, \label{equ.3.10}%
\end{align}
where $Q$ is defined in Notation \ref{not.3.3} and Remark \ref{rem.3.4}.
\end{corollary}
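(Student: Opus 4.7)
The plan is to construct one--forms $\tilde\alpha\in\Omega^{1}(E,W)$ that vanish on $TM$, invoke the defining property of $WG_{p}(M)$ so that $\int\tilde\alpha(d\mathbf{X})\equiv 0$, and then extract each estimate from the level--one expansion \eqref{equ.2.7}. Throughout, fix a cutoff $\chi\in C_{c}^{\infty}(U,[0,1])$ equal to $1$ on an open neighborhood $U_{0}$ of the compact trace $\{x_{u}:u\in[s,t]\}$ (with $\bar U_{0}\subset U$), extended by zero to all of $E$.

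For the first half of \eqref{equ.3.9}, set $\tilde\alpha_{1}(x)\xi:=\chi(x)F(x)\otimes\xi$. This is a smooth element of $\Omega^{1}(E,V\otimes E)$ that vanishes on $M$ (since $F=0$ on $M\cap U$ and $\chi=0$ off $U$) and hence on $TM$, so by the definition of $WG_{p}(M)$ we have $\int\tilde\alpha_{1}(d\mathbf{X})\equiv 0$. Specializing \eqref{equ.2.7} gives $0\simeq\tilde\alpha_{1}(x_{u})x_{u,v}+\tilde\alpha_{1}'(x_{u})\mathbb{X}_{u,v}$. At $x_{u}\in U_{0}$ we have $\chi(x_{u})=1$ and $F(x_{u})=0$, so the first term vanishes; the product rule gives $\tilde\alpha_{1}'(x_{u})[a\otimes b]=F'(x_{u})a\otimes b$ (the derivative--of--cutoff term $[\chi'(x_{u})a]F(x_{u})\otimes b$ is killed by $F(x_{u})=0$). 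Hence, by Notation \ref{not.2.2}, $\tilde\alpha_{1}'(x_{u})\mathbb{X}_{u,v}=(F'(x_{u})\otimes I_{E})\mathbb{X}_{u,v}$, and we conclude $(F'(x_{u})\otimes I_{E})\mathbb{X}_{u,v}\simeq 0$.

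For the second half of \eqref{equ.3.9}, I would use the weakly geometric property: \eqref{equ.2.6} rewrites as $\mathbb{X}_{u,v}=x_{u,v}\otimes x_{u,v}-\tau\mathbb{X}_{u,v}$, where $\tau$ denotes the flip of the two tensor factors. Applying $I_{E}\otimes F'(x_{u})$,
\[
(I_{E}\otimes F'(x_{u}))\mathbb{X}_{u,v}=x_{u,v}\otimes F'(x_{u})x_{u,v}-(I_{E}\otimes F'(x_{u}))\tau\mathbb{X}_{u,v}.
\]
Lemma \ref{lem.3.18}(1) (applied to $\chi F$, which is $C^{\infty}$ on $E$ and identically zero on $M$) bounds $|F'(x_{u})x_{u,v}|$ by $C|x_{u,v}|^{2}\leq C'\omega(u,v)^{2/p}$, so the first term on the right has norm $\leq C''\omega(u,v)^{3/p}$ and is $\simeq 0$; the second term is the image under the bounded flip $V\otimes E\to E\otimes V$ of $(F'(x_{u})\otimes I_{E})\mathbb{X}_{u,v}$, so also $\simeq 0$ by the previous paragraph. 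The estimates \eqref{equ.3.10} then follow from \eqref{equ.3.9} by writing $Q(x_{u})=A_{F}(x_{u})F'(x_{u})$ via \eqref{equ.3.3}: the function $A_{F}$ is smooth on $U$ and hence uniformly bounded on the compact trace, so applying $I_{E}\otimes A_{F}(x_{u})$ or $A_{F}(x_{u})\otimes I_{E}$ preserves the $\simeq 0$ property.

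The only subtlety is bookkeeping: one must respect the convention for $\alpha'(x_{u})\mathbb{X}_{u,v}$ from Notation \ref{not.2.2}, and produce a genuinely global smooth one--form on $E$ that vanishes on \emph{all} of $TM$. The cutoff $\chi$ handles the global extension, and the fact that $F$ (rather than $F'$) appears inside $\tilde\alpha_{1}$ ensures that the stray $\chi'(x_{u})F(x_{u})$ contribution vanishes automatically at trace points, so no awkward error terms of order $\omega(u,v)^{2/p}$ are created.
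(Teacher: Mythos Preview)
Your argument is correct. It differs from the paper's in two ways worth noting. First, the paper builds its one-form from $F'$ rather than $F$: with $\varphi\in C_c^\infty(U,E)$ equal to the identity near the trace, it sets $\alpha(\xi_x)=\varphi(x)\otimes F'(x)\xi$, which vanishes on $TM$ because $F'(m)|_{\tau_mM}=0$, and then invokes Lemma~\ref{lem.3.19} to reduce the level-one expansion to $[\varphi'(x_u)\cdot F'(x_u)]\mathbb X_{u,v}=(I_E\otimes F'(x_u))\mathbb X_{u,v}\simeq 0$; the companion estimate comes from the symmetric choice $\alpha(\xi_x)=F'(x)\xi\otimes\varphi(x)$. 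Your one-form $\chi(x)F(x)\otimes\xi$ instead vanishes on all of $M$, so both $\tilde\alpha_1(x_u)=0$ and the cutoff-derivative term drop out automatically and you obtain $(F'(x_u)\otimes I_E)\mathbb X_{u,v}\simeq 0$ without Lemma~\ref{lem.3.19}. Second, rather than a mirror one-form for the remaining side, you pass through the weakly geometric flip identity $\mathbb X_{u,v}=x_{u,v}\otimes x_{u,v}-\tau\mathbb X_{u,v}$ together with $F'(x_u)x_{u,v}=O(|x_{u,v}|^2)$; this is precisely the mechanism the paper records later in Remark~\ref{rem.3.33}. Your route is a bit more elementary for this corollary specifically, while the paper's route reuses the general machinery of Lemma~\ref{lem.3.19}. (Minor point: what you call the ``first half'' is actually the right-hand relation $F'(x_u)\otimes I_E$ in \eqref{equ.3.9}; the labeling is swapped but both halves are proved.)
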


\begin{proof}
Choose $\varphi\in C_{c}^{\infty}\left(  U,E\right)  $ such that
$\varphi\left(  x\right)  =x$ for $x$ in a neighborhood $\left\{  x_{u}%
:u\in\left[  s,t\right]  \right\}  $ and let $\alpha\in\Omega^{1}\left(
E,E\otimes V\right)  $ be defined by
\[
\alpha\left(  \xi_{x}\right)  :=\varphi\left(  x\right)  \otimes dF\left(
\xi_{x}\right)  =\varphi\left(  x\right)  \otimes F^{\prime}\left(  x\right)
\xi\text{ for all }\xi_{x}\in TE\cong E\times E.
\]
Then $\alpha|_{TM}=0$ and therefore $\int\alpha\left(  d\mathbf{X}\right)
\equiv0.$ By Theorem \ref{the.2.14}, Lemma \ref{lem.3.19}, and the fact that
$\varphi^{\prime}\left(  x_{u}\right)  =I_{E},$ it follows that%
\[
0\simeq\alpha\left(  x_{u}\right)  x_{u,v}+\alpha^{\prime}\left(
x_{u}\right)  \mathbb{X}_{u,v}\simeq\left[  \left(  \varphi^{\prime}\otimes
F^{\prime}\right)  \left(  x_{u}\right)  \right]  \mathbb{X}_{u,v}=I\otimes
F^{\prime}\left(  x_{u}\right)  \mathbb{X}_{u,v}%
\]
for $s\leq u\leq v\leq t$ and the left member of Eq. (\ref{equ.3.9}) is
proved. This also easily proves the left member of Eq. (\ref{equ.3.10}) since
\[
I_{E}\otimes Q\left(  x_{u}\right)  \mathbb{X}_{u,v}=\left[  I_{E}\otimes
A_{F}\left(  x_{u}\right)  \right]  I_{E}\otimes F^{\prime}\left(
x_{u}\right)  \mathbb{X}_{u,v}%
\]
where $A_{F}\left(  x\right)  :=F^{\prime}(x)^{\ast}(F^{\prime}(x)F^{\prime
}(x)^{\ast})^{-1}\in\operatorname{Hom}\left(  \mathbb{R}^{N-d},E\right)  $ as
in Remark \ref{rem.3.5}. The other approximate identities in Eqs.
(\ref{equ.3.9}) and (\ref{equ.3.10}) follow similarly, one need only now
define $\alpha\in\Omega^{1}\left(  E,V\otimes E\right)  $ by
\[
\alpha\left(  \xi_{x}\right)  :=dF\left(  \xi_{x}\right)  \otimes
\varphi\left(  x\right)  =F^{\prime}\left(  x\right)  \xi\otimes\varphi\left(
x\right)  \text{ for all }\xi_{x}\in TE\cong E\times E.
\]

\end{proof}

\begin{remark}
\label{rem.3.21}The conditions in Eqs. (\ref{equ.3.9}) and (\ref{equ.3.10})
are equivalent. Indeed the proof of Corollary \ref{cor.3.20} has already shown
Eq. (\ref{equ.3.9}) implies Eq. (\ref{equ.3.10}). For the converse direction
we need only observe that $F^{\prime}\left(  x_{u}\right)  =F^{\prime}\left(
x_{u}\right)  Q\left(  x_{u}\right)  $ so that, for example,
\[
I_{E}\otimes F^{\prime}\left(  x_{u}\right)  \mathbb{X}_{u,v}=\left[
I_{E}\otimes F^{\prime}\left(  x_{u}\right)  \right]  I_{E}\otimes Q\left(
x_{u}\right)  \mathbb{X}_{u,v}.
\]

\end{remark}

\begin{corollary}
\label{cor.3.22}If $\mathbf{X}=\left(  x,\mathbb{X}\right)  \in WG_{p}\left(
M\right)  ,$ then $\mathbb{X}_{s,t}\simeq\left[  P_{x_{s}}\otimes P_{x_{s}%
}\right]  \mathbb{X}_{s,t}.$
\end{corollary}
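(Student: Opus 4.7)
The plan is to expand $P_{x_s}\otimes P_{x_s}$ using $P = I_E - Q$ and reduce everything to the two approximate identities supplied by Corollary \ref{cor.3.20}. Writing
\[
\mathbb{X}_{s,t} - \left[P_{x_s}\otimes P_{x_s}\right]\mathbb{X}_{s,t}
= \left[I_E\otimes Q(x_s) + Q(x_s)\otimes I_E - Q(x_s)\otimes Q(x_s)\right]\mathbb{X}_{s,t},
\]
the first two summands are exactly the quantities shown to be $\simeq 0$ in Corollary \ref{cor.3.20}, so the only real work is to handle the third term.

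For the cross term $Q(x_s)\otimes Q(x_s)\,\mathbb{X}_{s,t}$, I would factor it as $\bigl(Q(x_s)\otimes I_E\bigr)\bigl(I_E\otimes Q(x_s)\bigr)\mathbb{X}_{s,t}$. Since Notation \ref{not.2.10} is preserved under left composition with a uniformly bounded linear operator (the defining inequality is simply multiplied by the operator norm), and since $Q$ is continuous hence uniformly bounded on the compact trace of $x$, the relation $I_E\otimes Q(x_s)\,\mathbb{X}_{s,t}\simeq 0$ transfers to give $Q(x_s)\otimes Q(x_s)\,\mathbb{X}_{s,t}\simeq 0$. Summing the three contributions yields the claim.

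The only subtlety, and the one place I expect to have to be careful, is that Corollary \ref{cor.3.20} is local: its approximate identities require the trace $\{x_u : u\in[s,t]\}$ to lie in the domain $U$ of a single local defining function $F$. To upgrade to a statement valid for all $(s,t)\in\Delta_{[0,T]}$, I would cover the compact set $x([0,T])\subset M$ by finitely many defining neighborhoods $U_1,\dots,U_k$, and use uniform continuity of $x$ to produce a single $\delta>0$ such that every subinterval of length at most $\delta$ has trace contained in some $U_i$. On each such subinterval the previous paragraph applies with its own local constant, and taking the maximum of these finitely many constants produces one $C(\delta)$ that works globally in the sense of Notation \ref{not.2.10}. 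The uniform bound on $\|Q(x_s)\|$ used above is supplied by the same compactness argument.
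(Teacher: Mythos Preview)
Your proof is correct and is essentially the same argument as the paper's: both expand $I_E\otimes I_E=(P_{x_s}+Q_{x_s})\otimes(P_{x_s}+Q_{x_s})$ (equivalently, your identity $I\otimes I-P\otimes P=I\otimes Q+Q\otimes I-Q\otimes Q$), invoke Corollary~\ref{cor.3.20} for the $I\otimes Q$ and $Q\otimes I$ terms, and factor $Q\otimes Q=(Q\otimes I)(I\otimes Q)$ to handle the remaining piece. Your explicit localization paragraph is a careful addition that the paper leaves implicit (it is the content of Remark~\ref{rem.3.27}), but otherwise the two proofs coincide.
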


\begin{proof}
The result follows by observing that%
\[
\mathbb{X}_{s,t}=\left(  P_{x_{s}}+Q_{x_{s}}\right)  \otimes\left(  P_{x_{s}%
}+Q_{x_{s}}\right)  \mathbb{X}_{s,t}%
\]
and then using Eq. (\ref{equ.3.10}) to conclude $\left(  I_{E}\otimes
Q_{x_{s}}\right)  \mathbb{X}_{s,t}\simeq0,\ \left(  Q_{x_{s}}\otimes
I_{E}\right)  \mathbb{X}_{s,t}\simeq0,$ and
\[
\left(  Q_{x_{s}}\otimes Q_{x_{s}}\right)  \mathbb{X}_{s,t}=\left(
I_{E}\otimes Q_{x_{s}}\right)  \left(  Q_{x_{s}}\otimes I_{E}\right)
\mathbb{X}_{s,t}\simeq0.
\]

\end{proof}

The following lemma prepares the definition of the integral of a rough path
against smooth one forms.

\begin{lemma}
\label{lem.3.23}Suppose $\mathbf{X}\in WG_{p}\left(  M\right)  ,$ $U$ is an
open neighborhood of $M,$ and $\alpha,~\beta\in\Omega^{1}\left(  U,W\right)
.$ If $\alpha|_{TM}=\beta|_{TM},$ then
\begin{equation}
\int\alpha\left(  d\mathbf{X}\right)  =\int\beta\left(  d\mathbf{X}\right)  .
\label{equ.3.11}%
\end{equation}

\end{lemma}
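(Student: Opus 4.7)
The plan is to reduce to the single-one-form statement and then appeal directly to the defining property of $WG_p(M)$. By the linearity of the rough integral construction in Theorem~\ref{the.2.14}, it suffices to show that if $\gamma \in \Omega^1(U,W)$ satisfies $\gamma|_{TM} \equiv 0$, then $\int \gamma(d\mathbf{X}) = 0$; applying this to $\gamma = \alpha - \beta$ gives the conclusion. The obstacle to invoking Definition~\ref{def.3.17} immediately is that $\gamma$ is only defined on the open neighborhood $U$, whereas the definition of $WG_p(M)$ requires a one-form on all of $E$.

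To overcome this, I would extend $\gamma$ by a cutoff as follows. The trace $K := \{x_t : t \in [0,T]\}$ is a compact subset of $M \subseteq U$, so there exists a precompact open set $V$ with $K \subseteq V \subseteq \bar V \subseteq U$, and a function $\varphi \in C_c^\infty(U)$ with $\varphi \equiv 1$ on $\bar V$. Define $\tilde\gamma \in \Omega^1(E,W)$ by $\tilde\gamma := \varphi \gamma$ on $U$, extended by $0$ outside $U$. Then $\tilde\gamma$ is smooth (the cutoff forces the transition to $0$ in the smooth region), and at every $m \in M$ we have $\tilde\gamma_m = \varphi(m)\, \gamma_m$ restricted to $T_m M$, which equals $0$ because $\gamma|_{TM} \equiv 0$. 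Hence $\tilde\gamma|_{TM} \equiv 0$, and so by the very definition of $\mathbf{X} \in WG_p(M)$ we conclude $\int \tilde\gamma(d\mathbf{X}) \equiv 0$.

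It remains to identify $\int \gamma(d\mathbf{X})$ with $\int \tilde\gamma(d\mathbf{X})$. Here the key point is that the rough integral is determined only by the $0$th and $1$st order jets of the integrand along the trace, via the approximate formulas~(\ref{equ.2.7})--(\ref{equ.2.8}) in Theorem~\ref{the.2.14}. Since $\varphi \equiv 1$ on an open neighborhood $V$ of $K$, for every $s$ we have $\tilde\gamma(x_s) = \gamma(x_s)$ and $\tilde\gamma'(x_s) = \gamma'(x_s)$, because the contribution $\gamma(x_s)\, \varphi'(x_s)$ from the product rule vanishes on $V$. Thus the almost-rough-path approximants defining $\int \gamma(d\mathbf{X})$ (computed after any auxiliary extension of $\gamma$ to $E$) and $\int \tilde\gamma(d\mathbf{X})$ agree modulo $\simeq 0$, and the uniqueness statement in Lemma~\ref{lem.2.12} (or equivalently Theorem~\ref{the.2.13}) forces the two rough integrals to coincide.

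The main conceptual obstacle is therefore the definitional one in the previous paragraph, namely showing that the rough integral against a one-form is genuinely a local object depending only on its values in a neighborhood of the trace. Once this is recorded, everything else is a direct consequence of linearity together with the definition of $WG_p(M)$.
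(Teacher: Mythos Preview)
Your reduction ``by linearity of the rough integral, it suffices to show $\int\gamma(d\mathbf{X})=0$ for $\gamma=\alpha-\beta$'' has a genuine gap. The rough integral of Theorem~\ref{the.2.14} is linear in the one-form only at level one: the level-two approximant is
\[
\left[\int\alpha(d\mathbf{X})\right]^2_{s,t}\simeq\bigl[\alpha(x_s)\otimes\alpha(x_s)\bigr]\mathbb{X}_{s,t},
\]
which is \emph{quadratic} in $\alpha$. Hence even if $\int(\alpha-\beta)(d\mathbf{X})\equiv0$ as a full rough path, this yields $[\int\alpha(d\mathbf{X})]^1=[\int\beta(d\mathbf{X})]^1$ but says nothing directly about the second level, since
\[
(\alpha-\beta)\otimes(\alpha-\beta)\neq\alpha\otimes\alpha-\beta\otimes\beta.
\]
The information you extract from $\int\gamma(d\mathbf{X})=0$ at level two is only $[\gamma(x_s)\otimes\gamma(x_s)]\mathbb{X}_{s,t}\simeq0$, which does not control the cross terms $[\gamma(x_s)\otimes\alpha(x_s)]\mathbb{X}_{s,t}$ needed to compare $\alpha\otimes\alpha$ with $\beta\otimes\beta$.

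The paper's proof is alive to exactly this point: it explicitly restricts the linearity argument to level one, and then handles level two by a separate computation using Corollary~\ref{cor.3.22}, namely $\mathbb{X}_{s,t}\simeq[P_{x_s}\otimes P_{x_s}]\mathbb{X}_{s,t}$. Once the second-order process is projected onto $T_{x_s}M\otimes T_{x_s}M$, the hypothesis $\alpha|_{TM}=\beta|_{TM}$ gives $\alpha_{x_s}P_{x_s}=\beta_{x_s}P_{x_s}$ and hence $[\alpha_{x_s}\otimes\alpha_{x_s}][P_{x_s}\otimes P_{x_s}]\mathbb{X}_{s,t}=[\beta_{x_s}\otimes\beta_{x_s}][P_{x_s}\otimes P_{x_s}]\mathbb{X}_{s,t}$. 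This is the missing ingredient in your plan. Your cutoff argument extending $\gamma$ from $U$ to $E$ is fine and indeed makes explicit a point the paper leaves implicit, but it does not touch the level-two issue.
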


\begin{proof}
The one form, $\psi:=\alpha-\beta\in\Omega^{1}\left(  U,W\right)  ,$ vanishes
on $TM$ and so by Definition \ref{def.3.17} $\int\psi\left(  d\mathbf{X}%
\right)  \equiv\mathbf{0}.$ As the rough path integral is linear on
$\Omega^{1}\left(  U,W\right)  $ at level one it immediately follows that%
\[
0=\left[  \int\psi\left(  d\mathbf{X}\right)  \right]  _{s,t}^{1}=\left[
\int\alpha\left(  d\mathbf{X}\right)  \right]  _{s,t}^{1}-\left[  \int%
\beta\left(  d\mathbf{X}\right)  \right]  _{s,t}^{1}.
\]
Moreover by Corollary \ref{cor.3.22},%
\begin{align*}
\left[  \int\alpha\left(  d\mathbf{X}\right)  \right]  _{s,t}^{2}  &
\simeq\alpha_{x_{s}}\otimes\alpha_{x_{s}}\mathbb{X}_{s,t}\simeq\alpha_{x_{s}%
}\otimes\alpha_{x_{s}}\left[  P_{x_{s}}\otimes P_{x_{s}}\right]
\mathbb{X}_{s,t}\\
&  =\beta_{x_{s}}\otimes\beta_{x_{s}}\left[  P_{x_{s}}\otimes P_{x_{s}%
}\right]  \mathbb{X}_{s,t}\simeq\beta_{x_{s}}\otimes\beta_{x_{s}}%
\mathbb{X}_{s,t}\\
&  \simeq\left[  \int\beta\left(  d\mathbf{X}\right)  \right]  _{s,t}^{2}.
\end{align*}
The last two displayed equations along with Lemma \ref{lem.2.12} now gives Eq.
(\ref{equ.3.11}).
\end{proof}

Another proof of this lemma could be given along the lines of Proposition
\ref{pro.3.29} below. The previous lemma justifies the following definition of
integration $\alpha\in\Omega^{1}\left(  M,W\right)  $ along a weakly geometric
rough path $\mathbf{X\in}WG_{p}\left(  M\right)  .$

\begin{definition}
\label{def.3.24}The \textbf{rough path integral }of a rough path
$\mathbf{X\in}WG_{p}\left(  M\right)  $ along a smooth one form $\alpha
\in\Omega^{1}\left(  M,W\right)  $ is defined by%
\begin{equation}
\mathbf{Y}=\int\tilde{\alpha}\left(  d\mathbf{X}\right)  \text{ as in Theorem
\ref{the.2.14}} \label{equ.3.12}%
\end{equation}
where $\tilde{\alpha}\in\Omega^{1}\left(  U,W\right)  $ is any extension of
$\alpha$ to a one form on some open neighborhood $M$ in $E.$ [Later, in
Proposition \ref{pro.3.29}, we will show how to characterize $\int%
\alpha\left(  d\mathbf{X}\right)  $ without using any extension of $\alpha.]$
\end{definition}

As a corollary we immediately see that the rough integrals against smooth one
forms are sufficient to characterise a rough path.

\begin{corollary}
\label{cor.3.25}Suppose that $\mathbf{X}$ and $\mathbf{Y}$ are two elements of
$WG_{p}\left(  M\right)  $ such that $x=y,$ and which satisfy%
\[
\int\alpha\left(  d\mathbf{X}\right)  =\int\alpha\left(  d\mathbf{Y}\right)
\]
for all $\alpha\in\Omega^{1}\left(  M,V\right)  .$ Then $\mathbf{X}%
=\mathbf{Y}$ as elements of $WG_{p}\left(  E\right)  .$
\end{corollary}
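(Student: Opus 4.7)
The plan is to reduce the statement to a comparison of second-level components of $\mathbf{X}$ and $\mathbf{Y}$ via a judicious choice of one form, then invoke Lemma \ref{lem.2.12}. Since $x = y$ by hypothesis, it suffices to show $\mathbb{X}_{s,t} \simeq \mathbb{Y}_{s,t}$; Lemma \ref{lem.2.12} then forces $\mathbf{X} = \mathbf{Y}$ in $WG_p(E)$.

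The key observation is that membership in $WG_p(M)$ constrains the second level to be (approximately) tangential: Corollary \ref{cor.3.22} gives $\mathbb{X}_{s,t} \simeq [P_{x_s} \otimes P_{x_s}] \mathbb{X}_{s,t}$ and $\mathbb{Y}_{s,t} \simeq [P_{x_s} \otimes P_{x_s}] \mathbb{Y}_{s,t}$. So I only need to show that the tangential parts $[P_{x_s} \otimes P_{x_s}] \mathbb{X}_{s,t}$ and $[P_{x_s} \otimes P_{x_s}] \mathbb{Y}_{s,t}$ agree up to $\simeq$, and these tangential parts can be read off from a rough integral against an appropriate one form.

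Concretely, I take $\alpha \in \Omega^1(M,E)$ defined by $\alpha_m = P_m|_{T_mM}$ (the canonical inclusion of $T_mM$ into $E$), which is smooth by Definition \ref{def.3.8} and admits a natural extension $\tilde{\alpha}(x) = P_F(x)$ to a neighborhood of $M$ via Notation \ref{not.3.3}. By hypothesis $\int \alpha(d\mathbf{X}) = \int \alpha(d\mathbf{Y})$ as elements of $WG_p(E)$, so their second-level components coincide. Applying Theorem \ref{the.2.14} to each side yields
\[
[P_{x_s} \otimes P_{x_s}]\mathbb{X}_{s,t} \simeq \left[\int \alpha(d\mathbf{X})\right]^2_{s,t} = \left[\int \alpha(d\mathbf{Y})\right]^2_{s,t} \simeq [P_{x_s} \otimes P_{x_s}]\mathbb{Y}_{s,t},
\]
where I used $x = y$ in the last step.

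Chaining this through Corollary \ref{cor.3.22} in the form
\[
\mathbb{X}_{s,t} \simeq [P_{x_s} \otimes P_{x_s}]\mathbb{X}_{s,t} \simeq [P_{x_s} \otimes P_{x_s}]\mathbb{Y}_{s,t} \simeq \mathbb{Y}_{s,t}
\]
gives $\mathbb{X}_{s,t} \simeq \mathbb{Y}_{s,t}$. Since $x_{s,t} - y_{s,t} = 0 \simeq 0$ and $\mathbb{X}_{s,t} - \mathbb{Y}_{s,t} \simeq 0$, Lemma \ref{lem.2.12} concludes $\mathbf{X} = \mathbf{Y}$ in $WG_p(E)$. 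I expect no real obstacle here; the only delicate point is verifying that $\alpha_m = P_m|_{T_mM}$ is a legitimate element of $\Omega^1(M,E)$ in the sense of Definition \ref{def.3.8} and that its extension $\tilde{\alpha} = P_F$ makes the second-level formula of Theorem \ref{the.2.14} read off the tangential component exactly as needed, both of which are immediate from the definitions in Subsection \ref{sub.3.1}.
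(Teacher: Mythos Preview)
Your proof is correct, but it takes a different route from the paper's. You read off the tangential part of the second level by integrating the projection one-form $\alpha_m=P_m|_{T_mM}\in\Omega^1(M,E)$ and comparing the \emph{level-two} components of the integrals, then invoke Corollary~\ref{cor.3.22} to pass from $[P_{x_s}\otimes P_{x_s}]\mathbb{X}_{s,t}$ back to $\mathbb{X}_{s,t}$. The paper instead works at \emph{level one}: it chooses the $E\otimes E$--valued one-form $\tilde\alpha_x\xi=x\otimes\xi$, for which $\tilde\alpha'_{x_s}$ is the identity on $E\otimes E$, so the level-one approximate identity directly yields $\mathbb{X}_{s,t}-\mathbb{Y}_{s,t}\simeq 0$ without any appeal to Corollary~\ref{cor.3.22}. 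Your argument is perhaps more geometrically transparent (the projection one-form is the obvious candidate, and the second level of its integral is visibly the tangential part of $\mathbb{X}$), while the paper's is slightly more self-contained in that it needs no auxiliary result about the structure of $WG_p(M)$ beyond the definition of the integral. Both reach Lemma~\ref{lem.2.12} with the same input, and both are valid.
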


\begin{proof}
Let $\tilde{\alpha}\in\Omega^{1}\left(  E,V\right)  $ so that $\tilde{\alpha}$
is a smooth extension of $\alpha=\tilde{\alpha}|_{TM}$. By Lemma
\ref{lem.3.23} we have
\[
\int_{s}^{t}\tilde{\alpha}\left(  d\mathbf{X}\right)  =\int_{s}^{t}%
\alpha\left(  d\mathbf{X}\right)  =\int_{s}^{t}\alpha\left(  d\mathbf{Y}%
\right)  =\int_{s}^{t}\tilde{\alpha}\left(  d\mathbf{Y}\right)  .
\]
Let $\mathbb{A}_{s,t}:=$ $\mathbb{Y}_{s,t}-\mathbb{X}_{s,t.}$ It follows that
\[
\tilde{\alpha}_{x_{s}}\left(  x_{s,t}\right)  +\tilde{\alpha}_{x_{s}}^{\prime
}\left(  \mathbb{X}_{s,t}\right)  \simeq\tilde{\alpha}_{x_{s}}\left(
y_{s,t}\right)  +\tilde{\alpha}_{x_{s}}^{\prime}\left(  \mathbb{Y}%
_{s,t}\right)  \simeq\tilde{\alpha}_{x_{s}}\left(  x_{s,t}\right)
+\tilde{\alpha}_{x_{s}}^{\prime}\left(  \mathbb{Y}_{s,t}\right)
\]
which implies $\tilde{\alpha}_{x_{s}}^{\prime}\left(  \mathbb{A}_{s,t}\right)
\simeq0$ for every $\tilde{\alpha}\in\Omega^{1}\left(  E,V\right)  $ and every
$s$ and $t$ in $\left[  0,T\right]  .$ If we choose $\tilde{\alpha}\in
\Omega^{1}\left(  E,E\otimes E\right)  $ to be defined by $\tilde{\alpha
}\left(  \xi_{x}\right)  =\tilde{\alpha}_{x}\xi=x\otimes\xi,$ then
$\tilde{\alpha}_{x_{s}}^{\prime}\left[  \eta\otimes\xi\right]  =\eta\otimes
\xi$ for all $\eta,\xi\in E.$ So for this $\tilde{\alpha}$ it follows that
$\mathbb{A}_{s,t}=\tilde{\alpha}_{x_{s}}^{\prime}\left(  \mathbb{A}%
_{s,t}\right)  \simeq0$ and the result follows from Lemma \ref{lem.2.12}.
\end{proof}

Analogous to the Banach space setting every geometric $p$ -rough path on a
manifold is a weakly geometric $p$ -rough path.

\begin{proposition}
\label{pro.3.26}For $p\in\lbrack2,3)$ we have

\begin{enumerate}
\item $G_{p}\left(  M\right)  \subseteq WG_{p}\left(  M\right)  $.

\item Suppose $\mathbf{X\in}WG_{p}\left(  E\right)  $ and $\mathbf{X\in
}G_{p^{\prime}}\left(  M\right)  $ for some $3>p^{\prime}>p,$ then
$\mathbf{X\in}WG_{p}\left(  M\right)  .$
\end{enumerate}
\end{proposition}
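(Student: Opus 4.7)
The plan is to dispatch the first claim by approximation with bounded variation paths in $M$, and to reduce the second claim to the first applied at the higher regularity $p'$.

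For the first claim, fix $\mathbf{X} \in G_p(M)$ and an approximating sequence $\mathbf{X}^n := S_2(x^n)$ with $x^n \in C_{bv}([0,T], E)$ whose trace lies in $M$ and $\mathbf{X}^n \to \mathbf{X}$ in the $p$-variation rough metric on $E$. I check the three clauses of Definition~\ref{def.3.17} in turn. Each $\mathbf{X}^n$ is weakly geometric by Remark~\ref{rem.2.7}, and the symmetric-part identity (\ref{equ.2.6}) is preserved under $p$-variation limits, so $\mathbf{X} \in WG_p(E)$; the trace of $\mathbf{X}$ lies in $M$ by Lemma~\ref{lem.3.14}. The substantive step is to show that $\int \tilde{\alpha}(d\mathbf{X}) \equiv 0$ for every $\tilde{\alpha} \in \Omega^1(E,W)$ annihilating $TM$. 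By continuity of rough integration in the $p$-variation metric (the remark after Theorem~\ref{the.2.14}) it suffices to establish this for each $\mathbf{X}^n$, and because $\mathbf{X}^n$ lifts a bounded variation path, the rough integral coincides with the lift of the classical Young integral $\int \tilde{\alpha}(x^n_u)\,dx^n_u$.

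To see that this Young integral vanishes, I use a compactness argument: I partition $[0,T]$ into finitely many subintervals on each of which $x^n$ stays inside an open set $U \subset E$ admitting a local defining function $F : U \to \mathbb{R}^{N-d}$. On $U \cap M$ the hypothesis $\tilde{\alpha}_m P_m = 0$ combined with Remark~\ref{rem.3.5} gives $\tilde{\alpha}_m = \tilde{\alpha}_m Q(m) = \tilde{\alpha}_m A_F(m) F'(m)$; since $F \circ x^n \equiv 0$ along the subinterval, the Stieltjes chain rule yields
\[
\int \tilde{\alpha}(x^n_u)\,dx^n_u \;=\; \int \tilde{\alpha}_{x^n_u} A_F(x^n_u) F'(x^n_u)\,dx^n_u \;=\; \int \tilde{\alpha}_{x^n_u} A_F(x^n_u)\,d(F \circ x^n)_u \;=\; 0,
\]
completing the first claim.

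For the second claim, assume $\mathbf{X} \in WG_p(E) \cap G_{p'}(M)$ with $2 \leq p < p' < 3$. Finite $p$-variation implies finite $p'$-variation, so $\mathbf{X} \in WG_{p'}(E)$, and the first claim applied at regularity $p'$ delivers $\mathbf{X} \in WG_{p'}(M)$. Hence for every $\tilde{\alpha}$ with $\tilde{\alpha}|_{TM} \equiv 0$, the $p'$-version of $\int \tilde{\alpha}(d\mathbf{X})$ supplied by Theorem~\ref{the.2.14} is the zero rough path. On the other hand the $p$-approximation relation $\simeq$ of Notation~\ref{not.2.10} is stronger than its $p'$-analogue once one restricts to sufficiently small $|s-t|$ (where $\omega(s,t) \leq 1$ and $3/p > 3/p'$), so the element of $WG_p(W)$ produced by Theorem~\ref{the.2.14} automatically also satisfies the $p'$-approximations; uniqueness via Lemma~\ref{lem.2.12} in the $p'$-framework then forces it to coincide with the $p'$-integral, which is zero. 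Combined with $\mathbf{X} \in WG_p(E)$ and the trace condition this places $\mathbf{X}$ in $WG_p(M)$. The main obstacle is the local factorisation step in the first claim, converting the pointwise annihilation $\tilde{\alpha}|_{TM} \equiv 0$ into an identically zero Stieltjes integral along BV paths in $M$ via the local defining function and the chain rule; the remaining steps are continuity of rough integration and a direct comparison of regularity classes.
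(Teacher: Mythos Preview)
Your argument is correct and follows the paper's approach: approximate by lifts of paths in $M$, show the integral of any $\tilde\alpha$ with $\tilde\alpha|_{TM}=0$ vanishes along each approximant, and pass to the limit by continuity of rough integration (with the second claim handled by running this at regularity $p'$). Two cosmetic points: the trace of $\mathbf X\in G_p(M)$ lies in $M$ by Definition~\ref{def.3.15} itself, so the appeal to Lemma~\ref{lem.3.14} (which assumes $M$ closed) is unnecessary; and the paper sidesteps your local factorisation through $F$ by using smooth approximants (Remark~\ref{rem.3.13}), for which $\tilde\alpha_{x^n}\dot x^n\equiv 0$ pointwise---your Stieltjes chain-rule computation is the correct substitute when working with genuinely BV paths, and your explicit identification of the $p$- and $p'$-rough integrals in claim~2 fills in a step the paper leaves implicit.
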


\begin{proof}
Let $\mathbf{X\in}G_{p}\left(  M\right)  .$ For the first claim, we note by
definition there exists a sequence $x_{n}$ of smooth paths in $M$ such that
the lifts $\mathbf{X}_{n}:=S_{2}\left(  x_{n}\right)  \in G_{p}\left(
M\right)  \subset G_{p}\left(  E\right)  $ converge to $\mathbf{X}$ in the
rough $p$ -variation metric on $G_{p}\left(  E\right)  $. Let $\tilde{\alpha
}\in\Omega^{1}\left(  E,W\right)  $ be such that $\tilde{\alpha}|_{TM}%
\equiv0,$then
\begin{equation}
\int\tilde{\alpha}\left(  dx_{n}\right)  \mathbf{=}0\text{, hence }%
0=S_{2}\left(  \int\tilde{\alpha}\left(  dx_{n}\right)  \right)  =\int%
\tilde{\alpha}\left(  d\mathbf{X}_{n}\right)  \mathbf{\rightarrow}\int%
\tilde{\alpha}\left(  d\mathbf{X}\right)  \label{equ.3.13}%
\end{equation}
as $n\rightarrow\infty.$ By definition the trace $x$ lies in $M,$ and it is
immediate that we have $\mathbf{X\in}WG_{p}\left(  M\right)  .$ For the second
claim we approximate $\mathbf{X}$ in $p^{\prime}$ variation to deduce that
$\left(  \ref{equ.3.13}\right)  $ holds provided $\tilde{\alpha}|_{TM}%
\equiv0.$
\end{proof}

In the following we will frequently rely on localisation arguments.

\begin{remark}
[localisation]\label{rem.3.27}Suppose $\mathbf{X}=\left(  x,\mathbb{X}\right)
\in WG_{p}\left(  E\right)  $ has its trace, $x,$ lying in $M.$ By a simple
compactness argument, there exists $k\in\mathbb{N},$ open subsets $U_{i}$ of
$E,$ and local defining functions, $F_{i}:U_{i}\rightarrow\mathbb{R}^{N-d},$
as in Definition \ref{def.3.1} for $1\leq i\leq k$ such that $\left\{
U_{i}\right\}  _{i=1}^{k}$ is an open cover of $x\left(  \left[  0,T\right]
\right)  .$ Furthermore, since $x$ is uniformly continuous, we can find
$\delta=\delta\left(  \mathbf{X}\right)  >0,$ such that for all $s$ and $t$ in
the interval $\left[  0,T\right]  $ with $\left\vert s-t\right\vert <\delta$
the path segment%
\begin{equation}
\left\{  x_{u}:u\in\left[  s,t\right]  \right\}  \subset U_{i}\text{ }
\label{equ.3.14}%
\end{equation}
for some $i\in\left\{  1,....,k\right\}  .$
\end{remark}

The next result describes the constraints on $x_{s,t}$ which arise when
$\mathbf{X}\in WG_{p}\left(  M\right)  $ -- also see Example \ref{exa.3.30} below.

\begin{lemma}
\label{lem.3.28}If $\mathbf{X}\in WG_{p}\left(  M\right)  $ then
\begin{equation}
Q_{x_{s}}x_{s,t}\simeq Q_{x_{s}}\left(  \partial_{P_{x_{s}}a}Pb\right)
_{a\otimes b=\mathbb{X}_{s,t}}. \label{equ.3.15}%
\end{equation}

\end{lemma}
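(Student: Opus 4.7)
The plan is to exploit a Taylor expansion of the local defining function $F$, combined with the weakly geometric property and the approximate identity $\mathbb{X}_{s,t} \simeq (P_{x_s} \otimes P_{x_s})\mathbb{X}_{s,t}$ from Corollary \ref{cor.3.22}, to transfer a second-order constraint on $x_{s,t}$ into the desired identity involving $\partial P$.

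I would first localize using Remark \ref{rem.3.27}: pick $\delta > 0$ and a finite open cover $\{U_i\}$ with local defining functions $F_i$ such that on each subinterval $[s,t]$ with $t - s < \delta$ the path $x$ stays inside some $U_i$. Throughout the argument I fix such an $[s,t]$ and write $F = F_i$. Since $F$ is constant on $M$ and $x_s, x_t \in M$, Lemma \ref{lem.3.18}(2) gives
\[
F'(x_s)x_{s,t} + \tfrac{1}{2}F''(x_s)[x_{s,t}\otimes x_{s,t}] = O(|x_{s,t}|^3) \simeq 0.
\]
Because $\mathbf{X}$ is weakly geometric, the symmetric part of $\mathbb{X}_{s,t}$ equals $\tfrac{1}{2}x_{s,t}\otimes x_{s,t}$, and $F''(x_s)$ is symmetric, so $F''(x_s)\mathbb{X}_{s,t} = \tfrac{1}{2}F''(x_s)[x_{s,t}\otimes x_{s,t}]$. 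Hence
\[
F'(x_s)x_{s,t} \simeq -F''(x_s)\mathbb{X}_{s,t}.
\]

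Next I rewrite the right-hand side using the tangent-projected form of $\mathbb{X}_{s,t}$. By Corollary \ref{cor.3.22}, $\mathbb{X}_{s,t} \simeq (P_{x_s} \otimes P_{x_s})\mathbb{X}_{s,t}$, and therefore
\[
F''(x_s)\mathbb{X}_{s,t} \simeq F''(x_s)\bigl[P_{x_s}a\otimes P_{x_s}b\bigr]_{a\otimes b = \mathbb{X}_{s,t}}.
\]
To bring in $\partial P$, I differentiate the identity $F'(x)P(x) = 0$ (which holds on all of $U$ since $F' = F'Q$ and $P = I - Q$ by Remark \ref{rem.3.5}) in direction $P_{x_s}a$, obtaining
\[
F''(x_s)\bigl[P_{x_s}a \otimes P_{x_s}b\bigr] + F'(x_s)(\partial_{P_{x_s}a}P)b = 0.
\]
Summing over the decomposition $\mathbb{X}_{s,t} = \sum a_i \otimes b_i$ gives $F''(x_s)\mathbb{X}_{s,t} \simeq -F'(x_s)(\partial_{P_{x_s}a}Pb)_{a\otimes b = \mathbb{X}_{s,t}}$, and combining with the previous step yields
\[
F'(x_s)x_{s,t} \simeq F'(x_s)(\partial_{P_{x_s}a}Pb)_{a\otimes b = \mathbb{X}_{s,t}}.
\]

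Finally, I apply the bounded linear map $A_F(x_s) = F'(x_s)^{\ast}(F'(x_s)F'(x_s)^{\ast})^{-1}$ to both sides; this preserves the relation $\simeq$ because it does not depend on $s,t$ in a way that affects the $\omega(s,t)^{3/p}$ bound. Using $A_F(x_s)F'(x_s) = Q_{x_s}$ from Remark \ref{rem.3.5} then gives exactly
\[
Q_{x_s}x_{s,t} \simeq Q_{x_s}(\partial_{P_{x_s}a}Pb)_{a\otimes b = \mathbb{X}_{s,t}},
\]
which is the claim. The main subtlety is ensuring that symmetry of $F''$ together with the weakly geometric hypothesis lets us replace $\tfrac{1}{2}x_{s,t}\otimes x_{s,t}$ by the full $\mathbb{X}_{s,t}$ in Taylor's formula, and then correctly inserting the projections $P_{x_s}$ on both factors so that the differentiation of $F'P = 0$ produces precisely the $\partial_{P_{x_s}a}P$ term; everything else is a straightforward application of the constraints already extracted from $\mathbf{X} \in WG_p(M)$.
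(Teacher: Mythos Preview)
Your proof is correct but follows a different route from the paper's. The paper argues directly from Definition~\ref{def.3.17}: taking the one-form $\tilde\alpha_x = Q_F(x)$, which vanishes on $TM$, the vanishing of $\int\tilde\alpha(d\mathbf X)$ at level one together with Corollary~\ref{cor.3.22} immediately yields $0\simeq Q_{x_s}x_{s,t}+\tilde\alpha'_{x_s}[P_{x_s}\otimes P_{x_s}]\mathbb X_{s,t}$, and one then identifies $\tilde\alpha'_{x_s}[P_{x_s}a\otimes P_{x_s}b]=dQ(P_{x_s}a)P_{x_s}b=-Q_{x_s}(\partial_{P_{x_s}a}P)P_{x_s}b$ using Lemma~\ref{lem.3.11}.

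You instead start from the Taylor identity $F'(x_s)x_{s,t}+F''(x_s)\mathbb X_{s,t}\simeq 0$ (which, as you note, uses only that the trace lies in $M$ and that $\mathbf X$ is weakly geometric on $E$; this is precisely the computation appearing in the proof of Lemma~\ref{lem.3.19}), then bring in Corollary~\ref{cor.3.22} to project, differentiate the identity $F'P_F=0$ on $U$, and finally post-compose with $A_F(x_s)$ to convert $F'$ into $Q$. Both arguments depend on Corollary~\ref{cor.3.22}, but yours avoids a second direct appeal to the vanishing-integral definition and instead stays at the level of the defining function $F$; this makes your proof closer in spirit to Example~\ref{exa.3.30} and arguably more elementary, while the paper's version is more conceptual in that it exhibits the result as an immediate consequence of the integral characterization of $WG_p(M)$.
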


\begin{proof}
Let $\tilde{\alpha}\left(  \xi_{x}\right)  =Q_{F}\left(  x\right)  \xi$ so
that $\tilde{\alpha}\in\Omega^{1}\left(  U,E\right)  .$ Then $\tilde{\alpha
}|_{T\left(  M\cap U\right)  }\equiv0$ and therefore by Definition
\ref{def.3.17} and Corollary \ref{cor.3.22},%
\begin{equation}
0=\left[  \int\tilde{\alpha}\left(  d\mathbf{X}\right)  \right]  _{s,t}%
^{1}\simeq\tilde{\alpha}_{x_{s}}\left(  x_{st}\right)  +\tilde{\alpha}_{x_{s}%
}^{\prime}\mathbb{X}_{st}\simeq Q_{x_{s}}x_{st}+\tilde{\alpha}_{x_{s}}%
^{\prime}\left[  P_{x_{s}}\otimes P_{x_{s}}\mathbb{X}_{st}\right]  .
\label{equ.3.16}%
\end{equation}
Solving Eq. (\ref{equ.3.16}) for $Q_{x_{s}}x_{st}$ completes the proof after
using the identity,
\[
\tilde{\alpha}_{x_{s}}^{\prime}\left[  P_{x_{s}}a\otimes P_{x_{s}}b\right]
=dQ\left(  P_{x_{s}}a\right)  P_{x_{s}}b=-Q_{x_{s}}dP\left(  P_{x_{s}%
}a\right)  P_{x_{s}}b\text{ }\forall~a,b\in E,
\]
wherein the last inequality made use of Lemma \ref{lem.3.11} and the fact that
$P^{2}=P.$ It is easily seen that this agrees with (\ref{equ.3.15}).
\end{proof}

We conclude this section with a theorem that provides a more explicit
description of the integral of one forms along $\mathbf{X}\in WG_{p}\left(
\left[  0,T\right]  ,M\right)  $ which require no extensions of the one form
to the ambient space.

\begin{proposition}
[Integrating one forms without extensions]\label{pro.3.29}If $\mathbf{X}\in
WG_{p}\left(  \left[  0,T\right]  ,M\right)  $ and $\alpha\in\Omega^{1}\left(
M,W\right)  ,$ then
\begin{equation}
\left[  \int\alpha\left(  d\mathbf{X}\right)  \right]  _{s,t}^{1}\simeq
\alpha_{x_{s}}\left(  P_{x_{s}}x_{st}\right)  +\left(  \nabla\alpha\right)
\left(  \left[  P_{x_{s}}\otimes P_{x_{s}}\right]  \mathbb{X}_{s,t}\right)
\label{equ.3.17}%
\end{equation}
and
\begin{equation}
\left[  \int\alpha\left(  d\mathbf{X}\right)  \right]  _{s,t}^{2}\simeq
\alpha_{x_{s}}\otimes\alpha_{x_{s}}\left[  P_{x_{s}}\otimes P_{x_{s}}\right]
\mathbb{X}_{s,t}, \label{equ.3.18}%
\end{equation}
where $\nabla\alpha$ is the Levi-Civita covariant derivative of $\alpha$ as in
Definition \ref{def.3.10}.
\end{proposition}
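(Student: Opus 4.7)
The plan is to integrate against a carefully chosen extension $\tilde{\alpha}$ of $\alpha$ and then simplify using the constraints that $\mathbf{X}\in WG_{p}(M)$ places on $\mathbb{X}_{s,t}$. By Definition \ref{def.3.24}, Lemma \ref{lem.3.23}, and Remark \ref{rem.3.27}, I may work in a tubular neighborhood $U$ of the (compact) trace and pick any $\tilde{\alpha}\in\Omega^{1}(U,W)$ with $\tilde{\alpha}|_{TM}=\alpha$. The convenient choice is
\[
\tilde{\alpha}_{x}:=\bar{\alpha}_{\pi(x)}\circ P_{\pi(x)},
\]
where $\pi:U\to M$ is the smooth retraction of Remark \ref{rem.3.2} and $\bar{\alpha}:M\to\operatorname{Hom}(E,W)$ is any smooth extension of $m\mapsto\alpha_{m}P_{m}$. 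By construction $\tilde{\alpha}|_{TM}=\alpha$ and, crucially, $\tilde{\alpha}_{m}=\tilde{\alpha}_{m}P_{m}$ for $m\in M$, which is exactly the hypothesis that allows Proposition \ref{pro.3.12}(4) to rewrite $\tilde{\alpha}'_{m}[v\otimes w]=(\nabla_{v}\alpha)(w)$ for $v,w\in T_{m}M$.

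Theorem \ref{the.2.14} applied to $\tilde{\alpha}$ yields
\[
\left[\int\tilde{\alpha}(d\mathbf{X})\right]_{s,t}^{1}\simeq\tilde{\alpha}_{x_{s}}x_{s,t}+\tilde{\alpha}'_{x_{s}}\mathbb{X}_{s,t},\qquad\left[\int\tilde{\alpha}(d\mathbf{X})\right]_{s,t}^{2}\simeq\tilde{\alpha}_{x_{s}}\otimes\tilde{\alpha}_{x_{s}}\,\mathbb{X}_{s,t}.
\]
Since $x_{s}\in M$, I have $\tilde{\alpha}_{x_{s}}=\alpha_{x_{s}}P_{x_{s}}$, so the first summand on level one equals $\alpha_{x_{s}}(P_{x_{s}}x_{s,t})$ directly. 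Corollary \ref{cor.3.22} supplies $\mathbb{X}_{s,t}\simeq[P_{x_{s}}\otimes P_{x_{s}}]\mathbb{X}_{s,t}$ (with the constant depending uniformly on $s$ via the compactness of the trace), so using the uniform bound on $\tilde{\alpha}'$ along the trace,
\[
\tilde{\alpha}'_{x_{s}}\mathbb{X}_{s,t}\simeq\tilde{\alpha}'_{x_{s}}[P_{x_{s}}\otimes P_{x_{s}}]\mathbb{X}_{s,t}=(\nabla\alpha)\bigl([P_{x_{s}}\otimes P_{x_{s}}]\mathbb{X}_{s,t}\bigr),
\]
where the equality is Proposition \ref{pro.3.12}(4) extended bilinearly via Notation \ref{not.2.2}. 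This proves (\ref{equ.3.17}). For the level-two piece, factoring $\tilde{\alpha}_{x_{s}}=\alpha_{x_{s}}P_{x_{s}}$ inside the tensor and again invoking Corollary \ref{cor.3.22} gives
\[
\tilde{\alpha}_{x_{s}}\otimes\tilde{\alpha}_{x_{s}}\,\mathbb{X}_{s,t}\simeq\alpha_{x_{s}}\otimes\alpha_{x_{s}}[P_{x_{s}}\otimes P_{x_{s}}]\mathbb{X}_{s,t},
\]
which is (\ref{equ.3.18}).

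The main obstacle is the choice of extension: only by arranging that $\tilde{\alpha}$ factor through the orthogonal projection $P$ on $M$ can one replace the non-intrinsic derivative $\tilde{\alpha}'$ by the Levi-Civita covariant derivative $\nabla\alpha$; without this, item 4 of Proposition \ref{pro.3.12} does not apply and the formula would depend on the (irrelevant) normal behavior of the extension. Once this extension is in place, the rest reduces to pushing the projection factors $P_{x_{s}}\otimes P_{x_{s}}$ through the approximate identities of Theorem \ref{the.2.14}, a manipulation whose correctness is guaranteed by the $WG_{p}(M)$ constraints $(I_{E}\otimes Q_{x_{s}})\mathbb{X}_{s,t}\simeq 0$ and $(Q_{x_{s}}\otimes I_{E})\mathbb{X}_{s,t}\simeq 0$ from Corollary \ref{cor.3.20}.
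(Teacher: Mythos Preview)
Your proof is correct and follows essentially the same approach as the paper: both pick an extension $\tilde{\alpha}$ satisfying $\tilde{\alpha}=\tilde{\alpha}P$ on $M$ (the paper by replacing an arbitrary $\tilde{\alpha}$ with $\tilde{\alpha}\cdot(P\circ\pi)$, you by building it directly from $\alpha_{\pi(x)}P_{\pi(x)}$), then invoke Corollary~\ref{cor.3.22} to insert $P_{x_s}\otimes P_{x_s}$ and Proposition~\ref{pro.3.12}(4) to identify $\tilde{\alpha}'_{x_s}$ on tangent vectors with $\nabla\alpha$. A minor quibble: your $\bar{\alpha}$ is not an ``extension'' of anything---it is simply the smooth map $M\ni m\mapsto\alpha_m P_m$---and composing it again with $P_{\pi(x)}$ is redundant since $P^2=P$; but this does not affect the argument.
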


\begin{proof}
By Definition \ref{def.3.24},%
\begin{equation}
\int_{s}^{t}\alpha\left(  d\mathbf{X}\right)  =\int_{s}^{t}\tilde{\alpha
}\left(  d\mathbf{X}\right)  \simeq\left(  \tilde{\alpha}_{x_{s}}\left(
x_{st}\right)  +\tilde{\alpha}_{x_{s}}^{\prime}\mathbb{X}_{st},\left[
\tilde{\alpha}_{x_{s}}\otimes\tilde{\alpha}_{x_{s}}\right]  \mathbb{X}%
_{s,t}\right)  \label{equ.3.19}%
\end{equation}
where $\tilde{\alpha}$ is any extension of $\alpha$ to an open neighborhood of
$M$ in $E=\mathbb{R}^{N}.$ By Corollary \ref{cor.3.22}, $\mathbb{X}%
_{s,t}\simeq\left[  P_{x_{s}}\otimes P_{x_{s}}\right]  \mathbb{X}_{s,t}$ and
hence we may replace Eq. (\ref{equ.3.19}) by%
\begin{align}
\int_{s}^{t}\alpha\left(  d\mathbf{X}\right)   &  \simeq\left(  \tilde{\alpha
}_{x_{s}}\left(  x_{s,t}\right)  +\tilde{\alpha}_{x_{s}}^{\prime}\left[
P_{x_{s}}\otimes P_{x_{s}}\right]  \mathbb{X}_{s,t},\left[  \tilde{\alpha
}_{x_{s}}\otimes\tilde{\alpha}_{x_{s}}\right]  \left[  P_{x_{s}}\otimes
P_{x_{s}}\right]  \mathbb{X}_{s,t}\right) \label{equ.3.20}\\
&  \simeq\left(  \tilde{\alpha}_{x_{s}}\left(  x_{s,t}\right)  +\tilde{\alpha
}_{x_{s}}^{\prime}\left[  P_{x_{s}}\otimes P_{x_{s}}\right]  \mathbb{X}%
_{s,t},\alpha_{x_{s}}\otimes\alpha_{x_{s}}\left[  P_{x_{s}}\otimes P_{x_{s}%
}\right]  \mathbb{X}_{s,t}\right)  . \label{equ.3.21}%
\end{align}

Let us now use Remark \ref{rem.3.2} to locally extend $P$ to a neighborhood of
$M$ so that $P=P\circ\pi.$ By replacing $\tilde{\alpha}$ by $\tilde{\alpha}P$
if necessary we may assume $\tilde{\alpha}=\tilde{\alpha}P.$ Under this
assumption, Eq. (\ref{equ.3.21}) becomes,%
\begin{equation}
\int_{s}^{t}\alpha\left(  d\mathbf{X}\right)  \simeq\left(  \alpha_{x_{s}%
}\left(  P_{x_{s}}x_{s,t}\right)  +\tilde{\alpha}_{x_{s}}^{\prime}\left[
P_{x_{s}}\otimes P_{x_{s}}\right]  \mathbb{X}_{s,t},\alpha_{x_{s}}%
\otimes\alpha_{x_{s}}\left[  P_{x_{s}}\otimes P_{x_{s}}\right]  \mathbb{X}%
_{s,t}\right)  . \label{equ.3.22}%
\end{equation}
From item 4. of Proposition \ref{pro.3.12},
\[
\tilde{\alpha}_{m}^{\prime}\left[  v\otimes w\right]  =\left(  \nabla
_{v}\alpha\right)  \left(  w\right)  \text{ for all }v,w\in T_{m}M
\]
which combined with Eq. (\ref{equ.3.22}) proves Eqs. (\ref{equ.3.17}) and
(\ref{equ.3.18}).
\end{proof}

\subsection{Characterising Weakly Geometric Rough Paths on $M$\label{sub.3.3}}

The goal of this subsection is to show $\mathbf{X=}\left(  x,\mathbb{X}%
\right)  \in WG_{p}\left(  E\right)  $ is in $WG_{p}\left(  M\right)  $ iff
$x_{t}\in M$ for all $0\leq t\leq T$ and that either of the equivalent Eqs.
(\ref{equ.3.9}) or (\ref{equ.3.10}) holds locally. This will be carried out in
Proposition \ref{pro.3.35} below. The next example shows the result in Lemma
\ref{lem.3.28} is really about paths in $x_{t}\in M$ and not so much about its
augmentation to a rough path.

\begin{example}
\label{exa.3.30}Let $x_{t}$ be any path $M$ with $\left\vert x_{s,t}%
\right\vert \leq C\omega\left(  s,t\right)  ^{1/p}.$ Then%
\begin{align*}
0  &  =\left[  F\left(  x\right)  \right]  _{st}=F^{\prime}\left(
x_{s}\right)  x_{s,t}+\frac{1}{2}F^{\prime\prime}\left(  x_{s}\right)
x_{s,t}\otimes x_{s,t}+O\left(  \left\vert x_{s,t}\right\vert ^{3}\right) \\
&  \simeq F^{\prime}\left(  x_{s}\right)  x_{s,t}+\frac{1}{2}F^{\prime\prime
}\left(  x_{s}\right)  \left[  x_{s,t}\otimes x_{s,t}\right]  .
\end{align*}
Applying $A_{F}\left(  x_{s}\right)  \in\operatorname{Hom}\left(
\mathbb{R}^{N-d},E\right)  $ (see Remark \ref{rem.3.5}) to this equation then
shows%
\begin{equation}
Q\left(  x_{s}\right)  x_{s,t}\simeq-\frac{1}{2}A_{F}\left(  x_{s}\right)
F^{\prime\prime}\left(  x_{s}\right)  \left[  x_{s,t}\otimes x_{s,t}\right]  .
\label{equ.3.23}%
\end{equation}
From this equation it follows that $x_{s,t}=P\left(  x_{s}\right)
x_{s,t}+O\left(  \left\vert x_{s,t}\right\vert ^{2}\right)  $ and so we may
replace $x_{st}\otimes x_{st}$ in Eq. (\ref{equ.3.23}) by $P\left(
x_{s}\right)  x_{s,t}\otimes P\left(  x_{s}\right)  x_{s,t}$ which allows us
to rewrite Eq. (\ref{equ.3.23}) as%
\begin{equation}
Q\left(  x_{s}\right)  x_{s,t}\simeq-\frac{1}{2}A_{F}\left(  x_{s}\right)
F^{\prime\prime}\left(  x_{s}\right)  \left[  P\left(  x_{s}\right)
x_{s,t}\otimes P\left(  x_{s}\right)  x_{s,t}\right]  . \label{equ.3.24}%
\end{equation}
So if $x_{t}\in M$ for all $t,$ the component of $x_{s,t}$ orthogonal to
$\tau_{x_{s}}M$ is determined modulo terms of order $\left\vert x_{s,t}%
\right\vert ^{3}$ by knowing the component of $x_{s,t}$ tangential to $M$ at
$x_{s}.$
\end{example}

\begin{lemma}
\label{lem.3.31}Suppose $\mathbf{X}=\left(  x,\mathbb{X}\right)  \in
WG_{p}\left(  E\right)  $ such that $x_{t}\in M$ for all $t\in\left[
0,T\right]  ,$ then
\begin{equation}
\left(  I\otimes Q\left(  x_{s}\right)  \right)  \left[  \mathbb{X}%
_{s,t}\right]  ^{s}\simeq0. \label{equ.3.25}%
\end{equation}

\end{lemma}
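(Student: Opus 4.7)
The plan is to combine two ingredients already available in the excerpt: the weakly geometric property, which expresses the symmetric part $\mathbb{X}_{s,t}^s$ in terms of the increments of $x$, and the constraint on those increments established in Example \ref{exa.3.30}.

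First I would invoke that $\mathbf{X} \in WG_p(E)$ to use Eq. (\ref{equ.2.6}) and write
\[
\left(I_E \otimes Q(x_s)\right)\mathbb{X}_{s,t}^s = \tfrac{1}{2}\left(I_E \otimes Q(x_s)\right)\left(x_{s,t} \otimes x_{s,t}\right) = \tfrac{1}{2}\, x_{s,t} \otimes Q(x_s) x_{s,t}.
\]
Thus the claim reduces to an estimate on $x_{s,t} \otimes Q(x_s)x_{s,t}$. Using the local defining function $F$ on a neighborhood covering the relevant segment of the trace (via the localisation argument of Remark \ref{rem.3.27}), Example \ref{exa.3.30} (Eq. (\ref{equ.3.23})) gives
\[
Q(x_s)x_{s,t} \simeq -\tfrac{1}{2} A_F(x_s) F''(x_s)[x_{s,t} \otimes x_{s,t}],
\]
where the displayed right-hand side is of order $|x_{s,t}|^2$. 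Combined with the rough path bound $|x_{s,t}| \leq C\, \omega(s,t)^{1/p}$ from Eq. (\ref{equ.2.2}), this shows $|Q(x_s) x_{s,t}| \leq C'\, \omega(s,t)^{2/p}$ for $|s-t|$ small.

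Hence for such $(s,t)$,
\[
\left|x_{s,t} \otimes Q(x_s) x_{s,t}\right| \leq |x_{s,t}|\,|Q(x_s)x_{s,t}| \leq C''\, \omega(s,t)^{1/p} \cdot \omega(s,t)^{2/p} = C''\, \omega(s,t)^{3/p},
\]
which is exactly the condition $\tfrac{1}{2}x_{s,t} \otimes Q(x_s) x_{s,t} \simeq 0$ of Notation \ref{not.2.10}, yielding Eq. (\ref{equ.3.25}).

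The main subtlety, and the only place where care is needed, is that Eq. (\ref{equ.3.23}) is an approximate (not exact) identity, so I would not read off the bound on $Q(x_s)x_{s,t}$ by taking absolute values on the right-hand side alone. Instead, I would note that the $\simeq 0$ remainder there is itself $O(\omega(s,t)^{3/p})$, hence certainly $O(\omega(s,t)^{2/p})$, so the pointwise bound $|Q(x_s)x_{s,t}| = O(\omega(s,t)^{2/p})$ follows from the $|x_{s,t}|^2$ term plus this remainder. Beyond this small bookkeeping step the argument is routine, and it is really the existence of the local defining function with a smooth $A_F$ and bounded $F''$ on a neighborhood of the (compact) trace that lets us obtain uniform constants.
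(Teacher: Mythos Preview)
Your proof is correct and follows essentially the same approach as the paper: reduce via the weakly geometric identity to $\tfrac{1}{2}x_{s,t}\otimes Q(x_s)x_{s,t}$, then show $Q(x_s)x_{s,t}=O(|x_{s,t}|^2)$ using a local defining function. The only cosmetic difference is that the paper obtains this last bound directly from $Q=A_F F'$ together with item~1 of Lemma~\ref{lem.3.18} (first-order Taylor), whereas you route through Example~\ref{exa.3.30}, which invokes the second-order expansion; both give the same $O(\omega(s,t)^{2/p})$ estimate.
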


\begin{proof}
Note that by the definition of $\simeq$ it is sufficient to check $\left(
\ref{equ.3.25}\right)  $ locally for all $0<s<t<T$ such that $\left\vert
t-s\right\vert <\delta$ and some $\delta>0.$ Let $\left\{  U_{i}%
:i=1,....,k\right\}  $ and $F_{i}$ as in Remark \ref{rem.3.27} be a cover of
the trace $x.$ By construction of the cover for all $0\leq s<t\leq T$ with
$\left\vert s-t\right\vert <\delta$ there exists $U_{i}$ such that $\left(
\ref{equ.3.14}\right)  $ holds. By (\ref{equ.3.1}) we may assume that for
$m\in U_{i}$ we have $Q\left(  m\right)  =A\left(  m\right)  F_{i}^{\prime
}\left(  m\right)  ,$ where $A\left(  m\right)  :=F_{i}^{\prime}(m)^{\ast
}(F_{i}^{\prime}(m)F_{i}^{\prime}(m)^{\ast})^{-1}.$ From Eq. (\ref{equ.2.6})
which holds by definition of $\mathbf{X}$ being in $WG_{p}\left(  E\right)  ,$
it follows that
\[
\left(  I\otimes Q\left(  x_{s}\right)  \right)  \left[  \mathbb{X}%
_{s,t}\right]  ^{s}=\frac{1}{2}\left(  I\otimes Q\left(  x_{s}\right)
\right)  x_{s,t}\otimes x_{s,t}=\frac{1}{2}x_{s,t}\otimes A\left(
x_{s}\right)  F_{i}^{\prime}\left(  x_{s}\right)  x_{s,t}.
\]
Applying item 1. of Lemma \ref{lem.3.18} to the right member of this equations
gives the estimate;%
\[
\left\vert \left(  I\otimes Q\left(  x_{s}\right)  \right)  \left[
\mathbb{X}_{s,t}\right]  ^{s}\right\vert \leq\frac{1}{2}\left\vert A\left(
x_{s}\right)  \right\vert \left\vert x_{s,t}\right\vert \left\vert
F_{i}^{\prime}\left(  x_{s}\right)  x_{s,t}\right\vert \leq C\left\vert
x_{s,t}\right\vert ^{3}\simeq0.
\]

\end{proof}

\begin{corollary}
\label{cor.3.32}If $\mathbf{X}$ is an element of $WG_{p}\left(  E\right)  $
such that the trace $x$ is in $M,$ then the following are equivalent:

\begin{enumerate}
\item $\left(  I_{E}\otimes Q\left(  x_{s}\right)  \right)  \left[
\mathbb{X}_{s,t}\right]  ^{a}\simeq0,$

\item $\left(  I_{E}\otimes Q\left(  x_{s}\right)  \right)  \left[
\mathbb{X}_{s,t}\right]  \simeq0,$ and

\item $\left(  I_{E}\otimes F^{\prime}\left(  x_{s}\right)  \right)  \left[
\mathbb{X}_{s,t}\right]  \simeq0$ over the interval $\left[  u,v\right]  ,$
whenever $F$ is a local defining function for $M$ on $U$ in the sense of
Definition \ref{def.3.1}, and the path segment of $x$ over $\left[
u,v\right]  $ satisfies%
\[
\left\{  x_{r}:r\in\left[  u,v\right]  \right\}  \subset U.
\]

\end{enumerate}
\end{corollary}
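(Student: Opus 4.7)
My plan is to establish $(1)\Leftrightarrow(2)$ directly from Lemma \ref{lem.3.31} by decomposing $\mathbb{X}_{s,t}$ into its symmetric and antisymmetric parts, and to establish $(2)\Leftrightarrow(3)$ from the two factorisations $F'(x)=F'(x)Q(x)$ and $Q(x)=A_F(x)F'(x)$ from Remark \ref{rem.3.5}, combined with the finite cover of the trace provided by Remark \ref{rem.3.27}.

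For $(1)\Leftrightarrow(2)$ I would write
\[
(I_E\otimes Q(x_s))\mathbb{X}_{s,t}=(I_E\otimes Q(x_s))[\mathbb{X}_{s,t}]^{s}+(I_E\otimes Q(x_s))[\mathbb{X}_{s,t}]^{a}.
\]
The hypotheses (that $\mathbf{X}\in WG_p(E)$ and the trace $x$ lies in $M$) are exactly those of Lemma \ref{lem.3.31}, which tells us that the symmetric piece on the right is $\simeq 0$. Therefore the left-hand side and the antisymmetric piece agree modulo $\simeq 0$, and $(1)$ and $(2)$ are equivalent.

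For $(2)\Rightarrow(3)$, fix any local defining function $F:U\to\mathbb{R}^{N-d}$ and an interval $[u,v]$ with $\{x_r:r\in[u,v]\}\subset U$. Since $x_s\in M$, the identity $F'(x_s)=F'(x_s)Q(x_s)$ from Remark \ref{rem.3.5} gives
\[
(I_E\otimes F'(x_s))\mathbb{X}_{s,t}=(I_E\otimes F'(x_s))\,(I_E\otimes Q(x_s))\mathbb{X}_{s,t},
\]
and since $F'$ is continuous on the compact set $\{x_r:r\in[u,v]\}$, applying the bounded operator $I_E\otimes F'(x_s)$ to the $\simeq 0$ estimate furnished by $(2)$ yields $(3)$ over $[u,v]$.

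For $(3)\Rightarrow(2)$, I would invoke Remark \ref{rem.3.27} to produce a finite cover $\{U_i\}_{i=1}^{k}$ of the trace by domains of local defining functions $F_i$, together with a $\delta>0$ such that every pair $s<t$ in $[0,T]$ with $|s-t|<\delta$ has its path segment $\{x_r:r\in[s,t]\}$ contained in some $U_i$. For such an $(s,t)$, the identity $Q(x_s)=A_{F_i}(x_s)F'_i(x_s)$ from Remark \ref{rem.3.5} lets us write
\[
(I_E\otimes Q(x_s))\mathbb{X}_{s,t}=(I_E\otimes A_{F_i}(x_s))\,(I_E\otimes F'_i(x_s))\mathbb{X}_{s,t},
\]
and the continuity of $A_{F_i}$ gives a uniform bound on $\|A_{F_i}(x_r)\|$ for $x_r$ ranging over the closure of the trace in $U_i$. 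The hypothesis $(3)$ applied to each $F_i$ then produces $(2)$ on the subinterval, and taking the maximum of the finitely many constants yields a single $\simeq$-estimate for $|s-t|<\delta$. The only nontrivial bookkeeping is this localisation step; once the finite cover of Remark \ref{rem.3.27} is in hand, each implication is pointwise linear algebra, and I do not anticipate any further obstacle.
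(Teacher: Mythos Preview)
Your proposal is correct and follows essentially the same approach as the paper: the equivalence of (1) and (2) via Lemma~\ref{lem.3.31}, and the equivalence of (2) and (3) via the factorisations $F'(x)=F'(x)Q(x)$ and $Q(x)=A_F(x)F'(x)$ of Remark~\ref{rem.3.5} (which the paper packages as Remark~\ref{rem.3.21}). Your explicit handling of the localisation via Remark~\ref{rem.3.27} in the direction $(3)\Rightarrow(2)$ is slightly more detailed than the paper's terse reference, but the substance is identical.
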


\begin{proof}
The equivalence of items 1. and 2. is an immediate corollary of Lemma
\ref{lem.3.31}. The equivalence of items 2. and 3. is the content of Remark
\ref{rem.3.21}.
\end{proof}

\begin{remark}
\label{rem.3.33}If $\mathbf{X\in}WG_{p}\left(  E\right)  $, then the condition
$\left(  I_{E}\otimes Q\left(  x_{s}\right)  \right)  \left[  \mathbb{X}%
_{s,t}\right]  \simeq0$ is equivalent to the condition that $\left(  Q\left(
x_{s}\right)  \otimes I_{E}\right)  \left[  \mathbb{X}_{s,t}\right]  \simeq0.$
To see this is the case we let $\mathcal{F}:E\otimes E\rightarrow E\otimes E$
denote the linear flip operator determined by $\mathcal{F}\left[  a\otimes
b\right]  =b\otimes a$ for all $a,b\in E.$ Then
\begin{align*}
\mathcal{F}\left(  Q\left(  x_{s}\right)  \otimes I_{E}\right)  \left[
\mathbb{X}_{s,t}\right]   &  =\left(  I_{E}\otimes Q\left(  x_{s}\right)
\right)  \left[  \mathcal{F}\mathbb{X}_{s,t}\right] \\
&  =\left(  I_{E}\otimes Q\left(  x_{s}\right)  \right)  \left[
\mathcal{F}\mathbb{X}_{s,t}^{s}+\mathcal{F}\mathbb{X}_{s,t}^{a}\right] \\
&  =\left(  I_{E}\otimes Q\left(  x_{s}\right)  \right)  \left[
\mathbb{X}_{s,t}^{s}-\mathbb{X}_{s,t}^{a}\right] \\
&  \simeq\left(  I_{E}\otimes Q\left(  x_{s}\right)  \right)  \left[
-\mathbb{X}_{s,t}^{s}-\mathbb{X}_{s,t}^{a}\right] \\
&  =-\left(  I_{E}\otimes Q\left(  x_{s}\right)  \right)  \left[
\mathbb{X}_{s,t}\right]
\end{align*}
where in the second to last line we have used $I_{E}\otimes Q\left(
x_{s}\right)  \mathbb{X}_{s,t}^{s}=\frac{1}{2}x_{s,t}\otimes Q\left(
x_{s}\right)  x_{s,t}\simeq0.$
\end{remark}

The next proposition shows Definition \ref{def.3.17} above and Definition
\ref{def.3.34} below for the notion of a weakly geometric rough path are equivalent.

\begin{definition}
[Projection Definition of Weakly Geometric Rough Paths]\label{def.3.34}We say
that $\mathbf{X=}\left(  x,\mathbb{X}\right)  $ is a \textbf{weakly geometric
}$p-$\textbf{ rough path }on the manifold $M$ if: $\mathbf{X}$ is
in\ $WG_{p}\left(  E\right)  $, its trace $x$ lies in $M,$ and $\mathbb{X}$
satisfies
\begin{equation}
\left(  I_{E}\otimes Q\left(  x_{s}\right)  \right)  \left[  \mathbb{X}%
_{s,t}\right]  ^{a}\simeq0\simeq\left(  I_{E}\otimes Q\left(  x_{s}\right)
\right)  \mathbb{X}_{s,t}, \label{equ.3.26}%
\end{equation}
wherein $Q$ is the orthogonal projection onto the normal bundle as in Notation
\ref{not.3.3} and $I_{E}$ is the identity map on $E.$
\end{definition}

\begin{proposition}
[The Projection Characterization of $WG_{p}\left(  M\right)  $]%
\label{pro.3.35}Let $\mathbf{X=}\left(  x,\mathbb{X}\right)  \in WG_{p}\left(
E\right)  $ then $\mathbf{X\in}WG_{p}\left(  M\right)  $ (Definition
\ref{def.3.17}) if and only the trace $x$ is in $M$ and any one of the
equivalent conditions in Corollary \ref{cor.3.32} hold.
\end{proposition}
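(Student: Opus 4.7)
The forward implication is essentially contained in Corollary \ref{cor.3.20}: if $\mathbf{X}\in WG_{p}\left(M\right)$ then the trace $x$ lies in $M$ by Definition \ref{def.3.17}, and applying Corollary \ref{cor.3.20} on each of the finitely many charts supplied by Remark \ref{rem.3.27} yields $\left(I_{E}\otimes Q\left(x_{s}\right)\right)\mathbb{X}_{s,t}\simeq 0$, which is condition 2 (and hence any of the equivalent conditions) of Corollary \ref{cor.3.32}.

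For the reverse implication I plan to fix an arbitrary $\tilde{\alpha}\in\Omega^{1}\left(E,W\right)$ with $\tilde{\alpha}|_{TM}\equiv 0$ and show $\int\tilde{\alpha}\left(d\mathbf{X}\right)\equiv 0$; by Lemma \ref{lem.2.12} this reduces to checking that both the level-1 and level-2 approximations from Theorem \ref{the.2.14} are $\simeq 0$. Since $\simeq$ is a local condition I restrict to $\left(s,t\right)$ with $\left\vert t-s\right\vert $ small enough that the trace segment $\left\{x_{u}:u\in\left[s,t\right]\right\}$ lies in a chart $U$ carrying a defining function $F:U\to\mathbb{R}^{N-d}$ (Remark \ref{rem.3.27}), and on $U$ I decompose
\[
\tilde{\alpha} = \alpha_{0}+\psi,\qquad \alpha_{0}:=\tilde{\alpha}\,Q_{F}=\varphi\,F',\ \varphi:=\tilde{\alpha}\,A_{F},\qquad \psi := \tilde{\alpha}\,P_{F}.
\]
Because $\tilde{\alpha}|_{TM}=0$ is equivalent to $\tilde{\alpha}\left(m\right)P_{F}\left(m\right)=0$ for $m\in M$, $\psi$ vanishes identically on $M$ as a $\operatorname{Hom}\left(E,W\right)$-valued function, and $\tilde{\alpha}\left(x_{s}\right)=\alpha_{0}\left(x_{s}\right)$ along the trace.

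Level 2 is then immediate: using Theorem \ref{the.2.14} and the identity above,
$\tilde{\alpha}\left(x_{s}\right)\otimes\tilde{\alpha}\left(x_{s}\right)\mathbb{X}_{s,t}=\left[\varphi\left(x_{s}\right)\otimes\varphi\left(x_{s}\right)\right]\left(F'\left(x_{s}\right)\otimes F'\left(x_{s}\right)\right)\mathbb{X}_{s,t}$, and $\left(F'\left(x_{s}\right)\otimes F'\left(x_{s}\right)\right)\mathbb{X}_{s,t}=\left(F'\left(x_{s}\right)\otimes I_{V}\right)\left(I_{E}\otimes F'\left(x_{s}\right)\right)\mathbb{X}_{s,t}\simeq 0$ by condition 3 of Corollary \ref{cor.3.32}. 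For level 1 I split the approximation as
\[
\tilde{\alpha}\left(x_{s}\right)x_{s,t}+\tilde{\alpha}'\left(x_{s}\right)\mathbb{X}_{s,t}=\bigl[\alpha_{0}\left(x_{s}\right)x_{s,t}+\alpha_{0}'\left(x_{s}\right)\mathbb{X}_{s,t}\bigr]+\psi'\left(x_{s}\right)\mathbb{X}_{s,t}
\]
(using $\psi\left(x_{s}\right)=0$). The bracketed piece is handled by Lemma \ref{lem.3.19} applied to $\alpha_{0}=\varphi F'$: its conclusion $\left(\varphi'\cdot F'\right)\left(x_{s}\right)\mathbb{X}_{s,t}$ is a linear function of $\left(I_{E}\otimes F'\left(x_{s}\right)\right)\mathbb{X}_{s,t}\simeq 0$. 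For $\psi'\left(x_{s}\right)\mathbb{X}_{s,t}$, I exploit that $\psi\equiv 0$ on $M$: differentiating $\psi\circ\gamma\equiv 0$ along any curve $\gamma\subset M$ with $\dot\gamma\left(0\right)=v\in T_{x_{s}}M$ yields $\psi'\left(x_{s}\right)v=0$, so as a bilinear form one has $\psi'\left(x_{s}\right)=\psi'\left(x_{s}\right)\left(Q_{F}\left(x_{s}\right)\otimes I_{E}\right)$; condition 2 of Corollary \ref{cor.3.32} combined with Remark \ref{rem.3.33} then gives $\left(Q_{F}\left(x_{s}\right)\otimes I_{E}\right)\mathbb{X}_{s,t}\simeq 0$, and the boundedness of $\psi'$ on a compact neighborhood of the trace forces $\psi'\left(x_{s}\right)\mathbb{X}_{s,t}\simeq 0$. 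With both levels $\simeq 0$, Lemma \ref{lem.2.12} closes the argument.

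The main obstacle is precisely this level-1 estimate: the two summands $\tilde{\alpha}\left(x_{s}\right)x_{s,t}$ and $\tilde{\alpha}'\left(x_{s}\right)\mathbb{X}_{s,t}$ are only $O\left(\omega^{2/p}\right)$ individually (since $\tilde{\alpha}\left(x_{s}\right)=\tilde{\alpha}\left(x_{s}\right)Q_{F}\left(x_{s}\right)$ and the normal piece $Q_{F}\left(x_{s}\right)x_{s,t}$ is second order by Example \ref{exa.3.30}), so a cancellation between them must be organized. The splitting $\tilde{\alpha}=\alpha_{0}+\psi$ is the essential bookkeeping device that isolates the \emph{tangential} contribution (absorbed by Lemma \ref{lem.3.19} and condition 3) from the \emph{normal} perturbation $\psi$ (killed by the vanishing of $\psi$ on $M$ together with the $Q$-projection condition on $\mathbb{X}$), avoiding the need to exhibit the cancellation explicitly.
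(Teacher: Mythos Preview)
Your proof is correct and follows essentially the same approach as the paper's. The paper organizes the reverse implication into four cases (first $\alpha=\varphi F'$, then $\alpha=\beta\circ Q_F$, then $\beta$ vanishing on $M$, then the general case via the decomposition $\alpha=(\alpha-\alpha\circ Q_F)+\alpha\circ Q_F$), but your single-pass splitting $\tilde\alpha=\tilde\alpha\,Q_F+\tilde\alpha\,P_F=\alpha_0+\psi$ is exactly the paper's Case~4 decomposition, and your treatments of $\alpha_0$ (via Lemma~\ref{lem.3.19}) and of $\psi$ (via $\psi'\left(x_s\right)=\psi'\left(x_s\right)\bigl(Q_F\left(x_s\right)\otimes I_E\bigr)$ from the vanishing of $\psi$ on $M$) are precisely the paper's Cases~1 and~3 respectively.
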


\begin{proof}
$\left(  \implies\right)  $ This implication has already been demonstrated in
Corollary \ref{cor.3.20} and Remarks \ref{rem.3.21} and \ref{rem.3.27}.

$\left(  \Longleftarrow\right)  $ For the converse implication assume
$x_{t}\in M$ for all $t\in\left[  0,T\right]  $ and (again, locally)
\begin{equation}
\left[  I_{E}\otimes F^{\prime}\left(  x_{s}\right)  \right]  \mathbb{X}%
_{s,t}\simeq0\text{ and }\left[  I_{E}\otimes Q\left(  x_{s}\right)  \right]
\left[  \mathbb{X}_{s,t}\right]  \simeq0. \label{equ.3.27}%
\end{equation}
We have to show for any finite dimensional vector space $W$ that
\begin{equation}
\int\alpha\left(  d\mathbf{X}\right)  \equiv0\text{ }\forall~\alpha\in
\Omega^{1}\left(  E,W\right)  ~\ni~\alpha|_{TM}\equiv0. \label{equ.3.28}%
\end{equation}
The proof will proceed in several stages, considering first one-forms with
specific structures, and finally combining those results to deduce the general
claim. In what follows we let
\[
\mathbf{Y}=\left(  y,\mathbb{Y}\right)  :=\int\alpha\left(  d\mathbf{X}%
\right)
\]
and let $Q,$ $Q_{F},$ and $A_{F}$ be as in Notation \ref{not.3.3} and Remarks
\ref{rem.3.4} and \ref{rem.3.5}.

\textbf{Case 1. }We begin by supposing that $\alpha=\varphi dF=\varphi
F^{\prime}\in\Omega^{1}\left(  E,W\right)  $ for some $\varphi\in
C_{c}^{\infty}\left(  E,\operatorname{Hom}\left(  V,W\right)  \right)  $ with
$\mathrm{supp}\left(  \varphi\right)  \subset U$ $.$ By Eq. (\ref{equ.3.5}) of
Lemma \ref{lem.3.19} and Eq. (\ref{equ.2.7}) we learn that%
\begin{equation}
y_{s,t}\simeq\alpha\left(  x_{s}\right)  x_{s,t}+\alpha^{\prime}\left(
x_{s}\right)  \mathbb{X}_{s,t}\simeq\left[  \varphi^{\prime}\left(
x_{s}\right)  \cdot F^{\prime}\left(  x_{s}\right)  \right]  \mathbb{X}%
_{s,t}\simeq0 \label{equ.3.29}%
\end{equation}
where for the last approximation we have used the assumption on Eq.
(\ref{equ.3.27}). Similarly,%
\begin{equation}
\mathbb{Y}_{s,t}\simeq\alpha\left(  x_{s}\right)  \otimes\alpha\left(
x_{s}\right)  \mathbb{X}_{s,t}=\left[  \varphi\left(  x_{s}\right)
\otimes\varphi\left(  x_{s}\right)  \right]  \left[  F^{\prime}\left(
x_{s}\right)  \otimes I_{E}\right]  \left[  I_{E}\otimes F^{\prime}\left(
x_{s}\right)  \right]  \mathbb{X}_{s,t}\simeq0. \label{equ.3.30}%
\end{equation}
Equations (\ref{equ.3.29}) and (\ref{equ.3.30}) along with Lemma
\ref{lem.2.12} shows $y_{s,t}=0$ and $\mathbb{Y}_{s,t}\equiv0$ for all $s$ and
$t.$

\textbf{Case 2. }Now suppose $\alpha=\beta\circ Q_{F}$ where $\beta\in
\Omega^{1}\left(  E,W\right)  $ is any one form on $E.$ Then locally we have
\[
\alpha\left(  x\right)  \xi=\beta\left(  x\right)  Q_{F}\left(  x\right)
\xi=\beta\left(  x\right)  A_{F}\left(  x\right)  F^{\prime}\left(  x\right)
\xi=\varphi\left(  x\right)  F^{\prime}\left(  x\right)  \xi
\]
where $\varphi\left(  x\right)  :=\beta\left(  x\right)  A_{F}\left(
x\right)  .$ We conclude by using case 1 and a suitable application of Remark
\ref{rem.3.27}.

\textbf{Case 3. }Now assume that $\beta\in\Omega^{1}\left(  E,W\right)  $ is a
one-form such that $\beta\left(  m\right)  \equiv0$ for all $m\in M.$ If
$\sigma\left(  t\right)  $ is a path in $M,$ then $\beta\left(  \sigma\left(
t\right)  \right)  =0$ and therefore $0=\frac{d}{dt}\beta\left(  \sigma\left(
t\right)  \right)  =\beta^{\prime}\left(  \sigma\left(  t\right)  \right)
\dot{\sigma}\left(  t\right)  .$ Since $\sigma\left(  t\right)  \in M$ is
arbitrary, it follows that $\left(  \partial_{v_{m}}\beta\right)  =0$ for all
$v_{m}\in T_{m}M.$ Hence we conclude that
\[
\left(  \partial_{\xi}\beta\right)  _{m}=\left(  \partial_{P_{m}\xi}%
\beta+\partial_{Q_{m}\xi}\beta\right)  _{m}=\left(  \partial_{Q_{m}\xi}%
\beta\right)  _{m}\text{ }\forall~m\in M\text{ and }\xi\in E=\mathbb{R}^{N}%
\]
or in other words,%
\[
\beta^{\prime}\left(  m\right)  =\beta^{\prime}\left(  m\right)  \left[
Q\left(  m\right)  \otimes I_{E}\right]  \text{ for all }m\in M.
\]
With this in hand, using Lemma \ref{lem.3.18} and Eq. (\ref{equ.3.27}) again,
we find that
\begin{equation}
\left[  \int\beta\left(  d\mathbf{X}\right)  \right]  _{s,t}^{1}\cong%
\beta\left(  x_{s}\right)  x_{s,t}+\beta^{\prime}\left(  x_{s}\right)
\mathbb{X}_{s,t}=\beta^{\prime}\left(  x_{s}\right)  \left[  Q\left(
x_{s}\right)  \otimes I_{E}\right]  \mathbb{X}_{s,t}\simeq0. \label{equ.3.31}%
\end{equation}
As usual this together with the additivity of the trace shows $\left[
\int\beta\left(  d\mathbf{X}\right)  \right]  _{s,t}^{1}=0.$ Then, working as
above, the second-order process is given by%
\[
\left[  \int\beta\left(  d\mathbf{X}\right)  \right]  _{s,t}^{2}\simeq\left[
\beta\left(  x_{s}\right)  \otimes\beta\left(  x_{s}\right)  \right]
\mathbb{X}_{s,t}=\left[  0\otimes0\right]  \mathbb{X}_{s,t}=0.
\]

\textbf{Case 4. }Finally, if $\alpha$ is any one form on $E$ with the property
that $\alpha|_{TM}\equiv0,$ then $\alpha=\alpha\circ Q_{F}$ on\footnote{A
slightly subtle point here is that $\alpha=\alpha\circ Q$ on $M$ but not
necessarily on a neighborhood of $M.$ For this reason we can not directly use
case 2. here.} $M.$ We now let $\beta:=\alpha-\alpha\circ Q_{F}$ so that
$\beta\left(  m\right)  \equiv0$ for all $m\in M.$ Thus we have decomposed
$\alpha$ as $\alpha=\beta+\alpha\circ Q_{F}$ where $\beta\equiv0$ on $M$ and
therefore by cases 2. and 3.,%
\[
y:=\left[  \int\alpha\left(  d\mathbf{X}\right)  \right]  ^{1}=\left[
\int\beta\left(  d\mathbf{X}\right)  \right]  ^{1}+\left[  \int\left(
\alpha\circ Q_{F}\right)  \left(  d\mathbf{X}\right)  \right]  ^{1}=0+0=0.
\]
We further have, using
\[
\alpha_{m}=\beta_{m}+\alpha_{m}Q_{m}=\alpha_{m}Q_{m}\text{ }\forall~m\in M,
\]
and Eq. (\ref{equ.3.27}) that
\begin{align*}
\mathbb{Y}_{s,t}  &  :=\left[  \int\alpha\left(  d\mathbf{X}\right)  \right]
_{s,t}^{2}\simeq\left[  \alpha_{x_{s}}Q_{x_{s}}\otimes\alpha_{x_{s}}Q_{x_{s}%
}\right]  \mathbb{X}_{s,t}\\
&  =\left[  \alpha_{x_{s}}\otimes Q_{x_{s}}\right]  \left[  I_{E}\otimes
Q_{x_{s}}\right]  \left[  Q_{x_{s}}\otimes I_{E}\right]  \mathbb{X}%
_{s,t}\simeq0.
\end{align*}
An application of Lemma \ref{lem.2.12} then shows $\mathbb{Y}_{s,t}\equiv0.$
\end{proof}

The defining property in Eq. $\left(  \ref{equ.3.26}\right)  $ is local and we
therefore need a remark analogous to Lemma \ref{lem.A.1}, which allows us to
concatenate rough paths on manifolds.

\begin{remark}
[gluing]\label{rem.3.36}Suppose that $D=\left\{  0=t_{0}<t_{1}<\dots
<t_{n}=T\right\}  $ is any partition of $\left[  0,T\right]  $. Let $\delta
>0$, and suppose that the overlapping intervals $J_{k}$ for $1\leq k\leq n$
are defined by
\[
J_{k}=\left[  t_{k-1},\min(t_{k}+\delta,T)\right]
\]
Assume, for each $k,$ we are given $\mathbf{X}\left(  k\right)  \in
WG_{p}\left(  J_{k},M\right)  $ such that $\mathbf{X}\left(  k\right)
_{s,t}=\mathbf{X}\left(  j\right)  _{s,t}$ for $s,t$ $\in$ $J_{k}\cap J_{j}$
and any $i$ and $j.$ Then, fixing a starting point $x_{0}\in M,$ there exists
a unique $\mathbf{X}\in WG_{p}\left(  \left[  0,T\right]  ,M\right)  $ with
$x\left(  0\right)  =x_{0}$ which is consistent with the $\mathbf{X}\left(
k\right)  $s in the sense that for all $1\leq k\leq n,$%
\[
\mathbf{X}\left(  k\right)  _{s,t}=\mathbf{X}_{s,t}\text{ for all }s,t\in
J_{k}.
\]

\end{remark}

\subsection{Push forwards and independence of the choice of
embedding\label{sub.3.4}}

Analogous to the Banach space setting (see \ref{sub.A.2}) we may consider the
pushfoward of rough paths on manifolds under sufficiently smooth maps.

\begin{definition}
[Pushed-forward rough paths ]\label{def.3.37}Let $M$ and $N$ respectively be
smooth embedded submanifolds of the Euclidean spaces $E$ and $E^{\prime}.$
Suppose that $\varphi:M\rightarrow N$ is smooth and let $d\varphi\in\Omega
^{1}\left(  M,E^{\prime}\right)  $, i.e. we regard $d\varphi$ as an
$E^{\prime}$ -- valued one form. Then if $\mathbf{X}$ is an element of
$WG_{p}\left(  M\right)  ,$ we define the pushed-forward rough path
$\varphi_{\ast}\left(  \mathbf{X}\right)  $ in $E^{\prime}$ by setting
\[
\varphi_{\ast}\left(  \mathbf{X}\right)  :=\int d\varphi\left(  d\mathbf{X}%
\right)  =\int\varphi^{\prime}\left(  x\right)  d\mathbf{X},
\]
and taking the starting point to be $\varphi\left(  x_{0}\right)  \mathbf{.}$
\end{definition}

\begin{proposition}
[Pushing forward rough paths]\label{pro.3.38}Let $\mathbf{X}\in WG_{p}\left(
M\right)  .$ The rough path $\varphi_{\ast}\mathbf{X}$ in Definition
\ref{def.3.37} satisfies;

\begin{enumerate}
\item $\left[  \varphi_{\ast}\mathbf{X}\right]  _{s,t}^{1}=\varphi\left(
x_{t}\right)  -\varphi\left(  x_{s}\right)  \in E^{\prime}$ for all
$s,t\in\left[  0,T\right]  .$

\item $\varphi_{\ast}\mathbf{X}$ is an element of $WG_{p}\left(  N\right)  .$

\item If $L$ is another smooth submanifold which is embedded in the Euclidean
space $E^{\prime\prime}$ and if $\psi:N\rightarrow L$ is a smooth map, then
\[
\psi_{\ast}\left[  \varphi_{\ast}\left(  \mathbf{X}\right)  \right]  =\left[
\psi\circ\varphi\right]  _{\ast}\left(  \mathbf{X}\right)  .
\]

\item If $\beta\in\Omega^{1}\left(  N,V\right)  ,$ then
\[
\int\beta\left(  d\left[  \varphi_{\ast}\left(  \mathbf{X}\right)  \right]
\right)  =\int\left(  \varphi^{\ast}\beta\right)  \left(  d\mathbf{X}\right)
.
\]

\end{enumerate}
\end{proposition}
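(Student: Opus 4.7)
The plan is to establish the four claims in the order (1), (4), (2), (3): claim (4) is the main change-of-variable identity and drives both the manifold membership in (2) and the functoriality in (3). For claim (1), I would choose a smooth extension $\tilde{\varphi}$ of $\varphi$ to an open neighbourhood of $M$ in $E$ (possible by Remark \ref{rem.3.2}), so that the natural extension of $d\varphi$ is $\tilde{\varphi}'$. Definition \ref{def.3.24} together with Theorem \ref{the.2.14} gives
\[
[\varphi_{\ast}\mathbf{X}]^{1}_{s,t} \simeq \tilde{\varphi}'(x_{s}) x_{s,t} + \tilde{\varphi}''(x_{s})\mathbb{X}_{s,t}.
\]
Because $\mathbf{X} \in WG_{p}(E)$ satisfies $\mathbb{X}^{s}_{s,t} = \tfrac{1}{2} x_{s,t} \otimes x_{s,t}$ and $\tilde{\varphi}''(x_{s})$ is symmetric, $\tilde{\varphi}''(x_{s})\mathbb{X}_{s,t} = \tfrac{1}{2}\tilde{\varphi}''(x_{s})[x_{s,t}\otimes x_{s,t}]$; Taylor's theorem applied to $\tilde{\varphi}$ then reduces the right-hand side to $\tilde{\varphi}(x_{t}) - \tilde{\varphi}(x_{s}) = \varphi(x_{t}) - \varphi(x_{s})$ modulo an $O(|x_{s,t}|^{3})$ error. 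Since both $[\varphi_{\ast}\mathbf{X}]^{1}_{s,t}$ and $\varphi(x_{t}) - \varphi(x_{s})$ are additive increments, their $\simeq 0$ difference is forced to vanish identically by Lemma \ref{lem.2.12}.

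For claim (4), pick an extension $\tilde{\beta} \in \Omega^{1}(U', V)$ of $\beta$ to a neighbourhood of $N$, and set $\mathbf{Y} := \varphi_{\ast}\mathbf{X}$. By Theorem \ref{the.2.14},
\[
\Big[\textstyle\int \tilde{\beta}(d\mathbf{Y})\Big]^{1}_{s,t} \simeq \tilde{\beta}(y_{s})\, y_{s,t} + \tilde{\beta}'(y_{s})\, \mathbb{Y}_{s,t}.
\]
Substituting $y_{s} = \varphi(x_{s})$, $y_{s,t} \simeq \tilde{\varphi}'(x_{s}) x_{s,t} + \tilde{\varphi}''(x_{s})\mathbb{X}_{s,t}$, and $\mathbb{Y}_{s,t} \simeq [\tilde{\varphi}'(x_{s}) \otimes \tilde{\varphi}'(x_{s})]\mathbb{X}_{s,t}$ from Theorem \ref{the.2.14} again, and then invoking the elementary chain rule
\[
(\varphi^{\ast}\tilde{\beta})'(x) = \tilde{\beta}(\tilde{\varphi}(x))\,\tilde{\varphi}''(x) + \tilde{\beta}'(\tilde{\varphi}(x))[\tilde{\varphi}'(x) \otimes \tilde{\varphi}'(x)],
\]
reproduces $(\varphi^{\ast}\tilde{\beta})(x_{s}) x_{s,t} + (\varphi^{\ast}\tilde{\beta})'(x_{s})\mathbb{X}_{s,t}$, which is precisely the level-one expansion of $\int (\varphi^{\ast}\tilde{\beta})(d\mathbf{X})$. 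A parallel check at level two, using $[\tilde{\beta}(y_{s}) \otimes \tilde{\beta}(y_{s})]\mathbb{Y}_{s,t} \simeq [(\varphi^{\ast}\tilde{\beta})(x_{s}) \otimes (\varphi^{\ast}\tilde{\beta})(x_{s})]\mathbb{X}_{s,t}$, together with Lemma \ref{lem.2.12}, identifies the two rough paths. The main obstacle here is purely bookkeeping: one must isolate the $\tilde{\beta}\circ\tilde{\varphi}''$ contribution arising from the $\tilde{\varphi}''$-term in $y_{s,t}$ so that it combines with the $\tilde{\beta}'[\tilde{\varphi}'\otimes\tilde{\varphi}']$ piece to produce exactly $(\varphi^{\ast}\tilde{\beta})'$.

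Claim (2) is then immediate from (4): $\varphi_{\ast}\mathbf{X} \in WG_{p}(E')$ by construction (Theorem \ref{the.2.14}), its trace is $\varphi \circ x$, which lies in $N$ by (1), and for any $\tilde{\beta} \in \Omega^{1}(E', W)$ with $\tilde{\beta}|_{TN} \equiv 0$ the pullback $\varphi^{\ast}\tilde{\beta}$ satisfies $(\varphi^{\ast}\tilde{\beta})|_{TM} \equiv 0$, since $\varphi'(m)(T_{m}M) \subseteq T_{\varphi(m)}N$ for $m \in M$; Lemma \ref{lem.3.23} then forces $\int \tilde{\beta}(d[\varphi_{\ast}\mathbf{X}]) = \int (\varphi^{\ast}\tilde{\beta})(d\mathbf{X}) = 0$, verifying Definition \ref{def.3.17}. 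Claim (3) follows from (4) applied with $\beta = d\psi \in \Omega^{1}(N, E'')$ (legitimate since $\varphi_{\ast}\mathbf{X} \in WG_{p}(N)$ by (2)): the classical identity $\varphi^{\ast}(d\psi) = d(\psi \circ \varphi)$ yields
\[
\psi_{\ast}[\varphi_{\ast}\mathbf{X}] = \int d\psi\,(d[\varphi_{\ast}\mathbf{X}]) = \int \varphi^{\ast}(d\psi)\,(d\mathbf{X}) = \int d(\psi \circ \varphi)\,(d\mathbf{X}) = (\psi \circ \varphi)_{\ast}\mathbf{X},
\]
with both sides starting at $\psi(\varphi(x_{0}))$.
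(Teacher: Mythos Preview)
Your proof is correct and follows essentially the same route as the paper: both reduce to extending $\varphi$ to a map $\tilde{\varphi}$ (the paper's $\Phi$) on an ambient neighbourhood and then verifying the Banach-space change-of-variable identities for rough integrals. The only cosmetic difference is that the paper isolates the Taylor/chain-rule computations you carry out for items (1) and (4) as separate Appendix results (Lemma~\ref{lem.A.4}, Theorem~\ref{the.A.5}, Corollary~\ref{cor.A.6}) and then cites them, whereas you derive them inline and reorder the argument as $(1)\to(4)\to(2)\to(3)$.
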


\begin{proof}
We take each item in turn.

\begin{enumerate}
\item If $\varphi:M\rightarrow N$ is a smooth map between embedded
submanifolds it may be viewed (at least locally) as the restriction of a
smooth map from $\Phi:E\rightarrow E^{\prime}.$ It then follows that $d\Phi$
is an extension of $d\varphi$ to a neighborhood of $M$ and therefore by
Definition \ref{def.3.24}, $\varphi_{\ast}\left(  \mathbf{X}\right)
=\Phi_{\ast}\left(  \mathbf{X}\right)  ,$ and hence from Lemma \ref{lem.A.4}
we have that
\[
\left[  \varphi_{\ast}\left(  \mathbf{X}\right)  \right]  _{s,t}^{1}=\left[
\Phi_{\ast}\left(  \mathbf{X}\right)  \right]  _{s,t}^{1}=\Phi\left(
x_{t}\right)  -\Phi\left(  x_{s}\right)  =\varphi\left(  x_{t}\right)
-\varphi\left(  x_{s}\right)  .
\]

\item Since $\varphi_{\ast}\left(  \mathbf{X}\right)  =\Phi_{\ast}\left(
\mathbf{X}\right)  ,$ it follows that $\varphi_{\ast}\left(  \mathbf{X}%
\right)  \in WG_{p}\left(  E^{\prime}\right)  .$ Moreover if $\alpha\in
\Omega^{1}\left(  E^{\prime},W\right)  $ is such that $\alpha|_{TN}\equiv0$
then by Theorem \ref{the.A.5}
\[
\int\alpha\left(  d\left[  \varphi_{\ast}\left(  \mathbf{X}\right)  \right]
\right)  =\int\alpha\left(  d\left[  \Phi_{\ast}\left(  \mathbf{X}\right)
\right]  \right)  =\int\left[  \alpha\circ\Phi^{\prime}\right]  d\mathbf{X}=0
\]
as $\alpha\circ\Phi^{\prime}=\Phi^{\ast}\alpha\in\Omega^{1}\left(  E,W\right)
$ which vanishes on $TM.$ We deduce from Definition \ref{def.3.17} and 1. that
$\varphi_{\ast}\left(  \mathbf{X}\right)  \in WG_{p}\left(  N\right)  .$

\item Follows by a similar argument to 2. using Corollary \ref{cor.A.6}.

\item This is a consequence of Theorem \ref{the.A.5} (once again using that
$\varphi_{\ast}\left(  \mathbf{X}\right)  =\Phi_{\ast}\left(  \mathbf{X}%
\right)  ,$ and the fact that $\Phi^{\ast}\beta$ restricts to $\varphi^{\ast
}\beta$)$.$
\end{enumerate}
\end{proof}

\begin{example}
\label{exa.3.39}Suppose that $\varphi:M\rightarrow M$ is the identity map,
then $\varphi=\Phi|_{E}$ where $\Phi:E\rightarrow E$ is the identity map and
therefore,
\[
\varphi_{\ast}\left(  \mathbf{X}\right)  =\int d\varphi\left(  d\mathbf{X}%
\right)  =\int d\Phi\left(  d\mathbf{X}\right)  =\mathbf{X}.
\]

\end{example}

The preceding example is a special case of the more general fact that
diffeomorphisms give rise to bijections between the respective sets of weakly
geometric rough paths on two embedded manifolds. The following corollary is
immediate from Proposition \ref{pro.3.38}.

\begin{corollary}
\label{cor.3.40}Let $M,N$ be embedded manifolds and $\varphi:M\rightarrow N$ a
diffeomorphism. Then the function $\varphi_{\ast}$ is a bijection between
$WG_{p}\left(  M\right)  $ and $WG_{p}\left(  N\right)  .$
\end{corollary}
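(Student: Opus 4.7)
The plan is to obtain the corollary directly from the functoriality properties already established in Proposition \ref{pro.3.38}. Since $\varphi:M\to N$ is a diffeomorphism, it has a smooth inverse $\varphi^{-1}:N\to M$, which is itself a smooth map between embedded submanifolds, so by item 2 of Proposition \ref{pro.3.38} the pushforward $(\varphi^{-1})_{\ast}$ sends $WG_{p}(N)$ into $WG_{p}(M)$. Thus both $\varphi_{\ast}$ and $(\varphi^{-1})_{\ast}$ are well-defined maps on the spaces of weakly geometric rough paths, and the whole task is to verify that they are mutually inverse.

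To check $(\varphi^{-1})_{\ast}\circ\varphi_{\ast}=\mathrm{id}_{WG_{p}(M)}$, I would take any $\mathbf{X}\in WG_{p}(M)$ and apply item 3 of Proposition \ref{pro.3.38} with $\psi:=\varphi^{-1}$, which gives
\[
(\varphi^{-1})_{\ast}\bigl[\varphi_{\ast}(\mathbf{X})\bigr] = (\varphi^{-1}\circ\varphi)_{\ast}(\mathbf{X}) = (\mathrm{id}_{M})_{\ast}(\mathbf{X}).
\]
By Example \ref{exa.3.39}, $(\mathrm{id}_{M})_{\ast}(\mathbf{X})=\mathbf{X}$, so the composition is the identity on $WG_{p}(M)$. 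Running the same argument with the roles of $M$ and $N$ swapped, taking $\psi=\varphi$ and pushing forward a rough path $\mathbf{Y}\in WG_{p}(N)$, one gets $\varphi_{\ast}\circ(\varphi^{-1})_{\ast}=\mathrm{id}_{WG_{p}(N)}$. Hence $\varphi_{\ast}$ and $(\varphi^{-1})_{\ast}$ are two-sided inverses of one another, and $\varphi_{\ast}$ is a bijection between $WG_{p}(M)$ and $WG_{p}(N)$.

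There is essentially no genuine obstacle; the one point worth being a little careful about is the implicit use of item 3 in a situation where the extensions of $\varphi$ and $\varphi^{-1}$ to ambient Euclidean neighborhoods need not compose to the identity on the ambient space. But the statement of item 3 is already formulated intrinsically at the level of manifolds, and Definition \ref{def.3.37} shows that $\varphi_{\ast}(\mathbf{X})$ depends only on $d\varphi$ as a one-form on $M$ (not on any particular ambient extension $\Phi$ of $\varphi$), so the chain rule identity $\psi_{\ast}\varphi_{\ast}=(\psi\circ\varphi)_{\ast}$ applies verbatim. Thus the whole argument is a two-line composition of earlier results, and no additional computation is required.
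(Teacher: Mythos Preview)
Your proof is correct and is exactly the argument the paper intends: the paper states only that the corollary is ``immediate from Proposition \ref{pro.3.38},'' and you have spelled out precisely why, using items 2 and 3 of that proposition together with Example \ref{exa.3.39}. Your remark about ambient extensions is a reasonable sanity check but, as you note, is already handled by the intrinsic formulation of item 3.
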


Suppose now $\mathcal{M}$ is an abstract manifold embedded as $M$ and
$\tilde{M}$ in two vector spaces $E$ and $\widetilde{E}.$ Then there exist
smooth maps $f:\mathcal{M}$ $\rightarrow E$ and $\tilde{f}$ :$\mathcal{M}$
$\rightarrow\tilde{E}$ diffeomorphic onto their image such that $f\left(
\mathcal{M}\right)  =M$ and $\tilde{f}\left(  \mathcal{M}\right)  =\tilde{M}.$
The following corollary shows that we have a natural identification between
the rough paths on $M$ and $\tilde{M}.$ The map we construct is natural in the
sense that it respects the integration of one forms (characterizing the rough
paths, cf. Corollary \ref{cor.3.25}).

\begin{corollary}
\label{cor.3.41}Let $\mathcal{M}$, $M$, $\tilde{M}$ as above. Then the
pushforward $\left(  \tilde{f}\circ f^{-1}\right)  _{\ast}$ \ is a bijective
map from $WG_{p}\left(  M\right)  $ to $WG_{p}\left(  \tilde{M}\right)  $ such
that for any finite dimensional vector space valued on form $\alpha\in
\Omega^{1}\left(  \mathcal{M},W\right)  $ and any $\mathbf{X\in}WG_{p}\left(
M\right)  $
\[
\int\left(  \left(  f^{-1}\right)  ^{\ast}\alpha\right)  \left(
d\mathbf{X}\right)  =\int\left(  \left(  \tilde{f}^{-1}\right)  ^{\ast}%
\alpha\right)  \left(  d\left(  \left[  \tilde{f}\circ f^{-1}\right]  _{\ast
}\mathbf{X}\right)  \right)  .
\]

\end{corollary}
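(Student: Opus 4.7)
The plan is to reduce the corollary to two results already established in this subsection: Corollary \ref{cor.3.40} (which promotes a diffeomorphism of embedded manifolds to a bijection of their rough path spaces) and item 4 of Proposition \ref{pro.3.38} (the change of variables formula for rough integrals under pushforward).

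First I would set $\varphi := \tilde{f}\circ f^{-1}$. Since $f:\mathcal{M}\to M$ and $\tilde{f}:\mathcal{M}\to\tilde{M}$ are smooth and diffeomorphic onto their respective images (so that $f^{-1}:M\to\mathcal{M}$ and $\tilde{f}^{-1}:\tilde{M}\to\mathcal{M}$ are well defined and smooth in the embedded-manifold sense of Remark \ref{rem.3.2}), the composition $\varphi:M\to\tilde{M}$ is itself a diffeomorphism of embedded submanifolds. Corollary \ref{cor.3.40} then applies verbatim and gives that $\varphi_\ast=(\tilde{f}\circ f^{-1})_\ast$ is a bijection from $WG_p(M)$ onto $WG_p(\tilde{M})$, which is the first assertion.

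For the identity of integrals, I would apply item 4 of Proposition \ref{pro.3.38} to the map $\varphi:M\to\tilde{M}$ and the one-form $\beta:=(\tilde{f}^{-1})^\ast\alpha\in\Omega^1(\tilde{M},W)$. This yields
\begin{equation*}
\int\bigl((\tilde{f}^{-1})^\ast\alpha\bigr)\bigl(d[\varphi_\ast\mathbf{X}]\bigr)
=\int\varphi^\ast\bigl((\tilde{f}^{-1})^\ast\alpha\bigr)\,(d\mathbf{X}).
\end{equation*}
The remaining step is the purely formal contravariance of pullback for smooth maps of manifolds,
\begin{equation*}
\varphi^\ast\bigl((\tilde{f}^{-1})^\ast\alpha\bigr)
=(\tilde{f}\circ f^{-1})^\ast(\tilde{f}^{-1})^\ast\alpha
=(\tilde{f}^{-1}\circ\tilde{f}\circ f^{-1})^\ast\alpha
=(f^{-1})^\ast\alpha,
\end{equation*}
which when substituted into the preceding display gives exactly the stated equality.

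I do not anticipate a genuine obstacle: the real work has been done in Proposition \ref{pro.3.38} and Corollary \ref{cor.3.40}, so this corollary is essentially a naturality statement. The only small point that merits care is ensuring that the pullback identity $\varphi^\ast(\tilde{f}^{-1})^\ast=(f^{-1})^\ast$ is applied to an object of the right type: $\alpha$ lives on the abstract manifold $\mathcal{M}$, while $(\tilde{f}^{-1})^\ast\alpha$ and $(f^{-1})^\ast\alpha$ are one-forms on the embedded manifolds $\tilde{M}$ and $M$ respectively, to which the rough integrals of Definition \ref{def.3.24} directly apply. Once this bookkeeping is in place, the corollary is immediate.
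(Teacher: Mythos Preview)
Your proof is correct and is precisely the argument the paper has in mind: the corollary is stated without proof, as an immediate consequence of Corollary \ref{cor.3.40} together with item 4 of Proposition \ref{pro.3.38}, and you have spelled out exactly that derivation (including the standard functoriality of pullbacks needed to identify $\varphi^\ast(\tilde{f}^{-1})^\ast\alpha$ with $(f^{-1})^\ast\alpha$).
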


\begin{definition}
[Abstract weakly geometric rough paths]\label{def.3.42}Let $\mathcal{M}$ be an
abstract manifold and suppose that $f:\mathcal{M}\rightarrow M\subset E$ and
$\tilde{f}:\mathcal{M}\rightarrow\tilde{M}\subset\tilde{E}$ are two embedding
of $\mathcal{M}$ into Euclidean spaces $E$ and $\tilde{E}$ respectively. We
say that $\left(  f,\mathbf{X}\right)  $ and $\left(  \tilde{f},\mathbf{\tilde
{X}}\right)  ,$ where $\mathbf{X\in}WG_{p}\left(  M\right)  $ and
$\mathbf{\tilde{X}}\in WG_{p}\left(  \tilde{M}\right)  ,$ are
equivalent\footnote{This is an equivalence relation because of item 3. of
Proposition \ref{pro.3.38}.} if
\[
\mathbf{\tilde{X}}=\left(  \tilde{f}\circ f^{-1}\right)  _{\ast}\left(
\mathbf{X}\right)  .
\]
The equivalence class associated to $\left(  f,\mathbf{X}\right)  $ will be
denoted by $\left[  \left(  f,\mathbf{X}\right)  \right]  .$ The
\textbf{weakly geometric rough paths on }$\mathcal{M}$ is the collection of
these equivalence classes;%
\[
WG_{p}\left(  \mathcal{M}\right)  :=\left\{  \left[  \left(  f,\mathbf{X}%
\right)  \right]  :\mathbf{X\in}WG_{p}\left(  M\right)  \right\}  .
\]

\end{definition}

If $\alpha\in\Omega^{1}\left(  \mathcal{M},W\right)  $ and $\left[  \left(
f,\mathbf{X}\right)  \right]  \in WG_{p}\left(  \mathcal{M}\right)  $ then we
define%
\[
Z_{\left[  \left(  f,\mathbf{X}\right)  \right]  }\left(  \alpha\right)
:=\int\alpha\left(  d\left[  \left(  f,\mathbf{X}\right)  \right]  \right)
:=\int\left(  \left(  f^{-1}\right)  ^{\ast}\alpha\right)  \left(
d\mathbf{X}\right)  \in WG_{p}\left(  W\right)  .
\]
Because of Corollary \ref{cor.3.40}, $Z_{\left[  \left(  f,\mathbf{X}\right)
\right]  }$ is well defined and because of Corollary \ref{cor.3.25}, knowledge
of $f$ and $Z_{\left[  \left(  f,\mathbf{X}\right)  \right]  }$ uniquely
determines $\mathbf{X}.$ The functionals $Z_{\left[  \left(  f,\mathbf{X}%
\right)  \right]  }$ are closely related to the notion of manifold valued
rough paths as introduced in \cite{Cass2012a}. An alternative, more explicit,
proof of the independence of the embedding for the rough paths will be given
in Cass, Driver, Litterer \cite{CDL} where another intrinsic notion of rough
paths will be developed.

\section{RDEs on manifolds and consequences\label{sec.4}}

In this section we consider rough differential equations constrained to $M,$
see Definition \ref{def.4.1} below. Theorem \ref{the.4.2} gives the basic
existence uniqueness results for constrained RDEs (rough differential
equations). The extrinsic Definition \ref{def.4.1} is shown in Theorem
\ref{the.4.5} to be equivalent to a pair of intrinsic notions of solutions for
constrained RDEs. In Example \ref{exa.4.12}, we use constrained RDEs to give
examples of weakly geometric rough paths on $M$ and then in Theorem
\ref{the.4.18} we show that all $\mathbf{X\in}WG_{p}\left(  M\right)  $ arise
as in Example \ref{exa.4.12}. The relationships between $WG_{p}\left(
M\right)  $ and $G_{p}\left(  M\right)  $ is spelled out in Theorem
\ref{the.4.17} and a summary of all of our characterizations of $WG_{p}\left(
M\right)  $ is then given in Theorem \ref{the.4.18}. As an illustration of our
results, in subsection \ref{sub.4.3} we study RDEs on a Lie group $G$ whose
dynamics are determined by right invariant vector fields on $G.$ This added
right invariance assumption guarantees that the resulting RDEs have global
solutions, see Theorem \ref{the.4.20}.

\subsection{Rough differential equations on $M$\label{sub.4.1}}

\begin{definition}
[Constrained RDE]\label{def.4.1}Let $x_{0}\in M,$ $Y:\mathbb{R}^{n}%
\rightarrow\Gamma\left(  TM\right)  $ be a linear function, and $\mathbf{Z\in
}WG_{p}\left(  \mathbb{R}^{n}\right)  $ be given. We say $\mathbf{X\in}%
WG_{p}\left(  M\right)  $ solves the RDE%
\begin{equation}
d\mathbf{X}_{t}=Y_{d\mathbf{Z}_{t}}\left(  x_{t}\right)  \text{ with }x\left(
0\right)  =x_{0}\in M \label{equ.4.1}%
\end{equation}
provided,%
\begin{align}
x_{s,t}  &  \simeq Y_{z_{s,t}}\left(  x_{s}\right)  +\left(  \partial_{Y_{a}%
}Y_{b}\right)  \left(  x_{s}\right)  |_{a\otimes b=\mathbb{Z}_{st}}\text{
and}\label{equ.4.2}\\
\mathbb{X}_{s,t}  &  \simeq Y_{a}\left(  x_{s}\right)  \otimes Y_{b}\left(
x_{s}\right)  |_{a\otimes b=\mathbb{Z}_{s,t}}. \label{equ.4.3}%
\end{align}

\end{definition}

Notice that $Y_{a}\left(  x_{s}\right)  \in T_{x_{s}}M$ and therefore there
exists a smooth curve $\sigma\left(  t\right)  \in M$ such that $\dot{\sigma
}\left(  0\right)  =Y_{a}\left(  x_{s}\right)  $ and we then compute $\left(
\partial_{Y_{a}}Y_{b}\right)  \left(  x_{s}\right)  $ using%
\[
\left(  \partial_{Y_{a}}Y_{b}\right)  \left(  x_{s}\right)  =\frac{d}{dt}%
|_{0}Y_{b}\left(  \sigma\left(  t\right)  \right)  \in E.
\]
This comment shows that the above definition makes sense but it is not yet
clear that there is a (local in time) solution to the RDE (\ref{equ.4.1}). If
$\mathbf{X\in}WG_{p}\left(  M\right)  $ solves Eq. (\ref{equ.4.1}), $U$ is an
open (in $E)$ neighborhood of $M,$ and $\tilde{Y}:\mathbb{R}^{n}%
\rightarrow\Gamma\left(  TU\right)  $ is a linear map such that $\tilde{Y}%
_{a}=Y_{a}$ on $M,$ then $\mathbf{X}$ solves the standard Euclidean space
RDE\
\begin{equation}
d\mathbf{X}_{t}=\tilde{Y}_{d\mathbf{Z}_{t}}\left(  x_{t}\right)  \text{ with
}x\left(  0\right)  =x_{0}\in M. \label{equ.4.4}%
\end{equation}
From these considerations we see that if there exists $\mathbf{X\in}%
WG_{p}\left(  M\right)  $ solving Eq. (\ref{equ.4.1}) then this solution may
be described as the unique solution $\mathbf{X\in}WG_{p}\left(  E\right)  $ to
Eq. (\ref{equ.4.4}). We will use this remark in our proof of the existence
Theorem \ref{the.4.2} below to Eq. (\ref{equ.4.1}). Once this is accomplished
we develop in Theorem \ref{the.4.5} alternative intrinsic characterizations of
solutions to the RDE\ in Eq. (\ref{equ.4.1}).

\begin{theorem}
\label{the.4.2}There is a unique solution $\mathbf{X}\in WG_{p}\left(
M\right)  $ (possibly up to explosion time) of the RDE (\ref{equ.4.1}).
Moreover, either $\mathbf{X}$ exists on all of $\left[  0,T\right]  $ or there
exists a $\tau\in\lbrack0,T]$ such that $\mathbf{X}$ exists on $[0,\tau)$ and
$\overline{\left\{  x\left(  t\right)  :0\leq t<\tau\right\}  }^{M}$ is not
compact in $M.$
\end{theorem}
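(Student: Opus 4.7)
My plan is to reduce the problem to the Euclidean rough path theory by extending the tangent vector fields $Y_a$ to ambient vector fields $\tilde Y_a$ on $E$ in a very particular way, and then to verify that the resulting ambient solution in fact lies in $WG_p(M)$. The crucial requirement on the extension is that $F'\tilde Y_a \equiv 0$ not only on $M$ but on an entire neighborhood of $M$. Fix $x_0 \in M$ and choose a neighborhood $U \subset E$ of $x_0$ on which a local defining function $F: U \to \mathbb{R}^{N-d}$ exists and on which a smooth retraction $\pi: U \to M \cap U$ is available (Remark \ref{rem.3.2}). For each $a \in \mathbb{R}^n$ set
\begin{equation*}
\tilde Y_a(x) := P_F(x)\,Y_a(\pi(x)) \qquad (x \in U),
\end{equation*}
and extend $\tilde Y_a$ to a smooth compactly supported vector field on $E$ by multiplying by a cutoff supported in $U$. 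On $M \cap U$ the extension satisfies $\tilde Y_a = Y_a$, while everywhere on $U$ it has the property $F'(x)\tilde Y_a(x) = 0$, because $P_F(x)$ projects onto $\operatorname{Nul}(F'(x))$. Corollary \ref{cor.2.17} then provides a $\delta > 0$ and a unique $\mathbf{X} \in WG_p(E, [0, \delta])$ solving $d\mathbf{X} = \tilde Y(x)\,d\mathbf{Z}$ with initial condition $x_0$ whose trace remains in the region where $F'\tilde Y = 0$.

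To see that the trace of $\mathbf{X}$ stays in $M$, I expand $F(x_t) - F(x_s)$ by Taylor to third order and substitute the Davie expansion $x_{s,t} \simeq \tilde Y(x_s) z_{s,t} + \tilde Y'(x_s) \tilde Y(x_s) \mathbb{Z}_{s,t}$ coming from the RDE. Using $F'(x_s)\tilde Y(x_s) = 0$, the symmetry of $F''$, and the weakly geometric identity $\mathbb{Z}_{s,t}^s \simeq \tfrac{1}{2} z_{s,t} \otimes z_{s,t}$, one obtains
\begin{equation*}
F(x_t) - F(x_s) \simeq F'(x_s) \tilde Y'(x_s) \tilde Y(x_s) \mathbb{Z}_{s,t} + F''(x_s)\bigl[\tilde Y(x_s) \otimes \tilde Y(x_s)\bigr] \mathbb{Z}_{s,t}.
\end{equation*}
Differentiating the identity $F'\tilde Y_b \equiv 0$ yields $F'(x)\tilde Y_b'(x)\tilde Y_a(x) = -F''(x)[\tilde Y_a(x) \otimes \tilde Y_b(x)]$, so the two right-hand terms cancel and $F(x_t) - F(x_s) \simeq 0$. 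Additivity of this increment together with Remark \ref{rem.2.11} forces $F(x_t) = F(x_0) = 0$ throughout $[0, \delta]$. For the second-order component, $\mathbb{X}_{s,t} \simeq \tilde Y(x_s) \otimes \tilde Y(x_s) \mathbb{Z}_{s,t}$ combined with $Q(x_s)\tilde Y_b(x_s) = Q(x_s) Y_b(x_s) = 0$ (as $Y_b(x_s) \in T_{x_s}M$) gives $(I_E \otimes Q(x_s)) \mathbb{X}_{s,t} \simeq 0$, so the projection criterion in Proposition \ref{pro.3.35} yields $\mathbf{X} \in WG_p(M)$.

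Uniqueness is immediate from the Euclidean theory: any $WG_p(M)$-solution of \eqref{equ.4.1} with trace in $U$ is also a $WG_p(E)$-solution of the extended RDE \eqref{equ.4.4}, and Theorem \ref{the.2.16} supplies uniqueness for the latter. The maximal solution is then assembled by iterating the local construction, taking the endpoint of one local solution as initial datum for the next and splicing by the gluing principle in Remark \ref{rem.3.36}. If the maximal existence time satisfied $\tau < T$ while $\overline{\{x_t : 0 \le t < \tau\}}^M$ were compact in $M$, then a finite cover of this closure by local extension patches would yield a uniform minimal existence time $\delta^* > 0$ from Corollary \ref{cor.2.17}, allowing the solution to be extended past $\tau$ and contradicting maximality. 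The main technical obstacle is the construction of the extension with $F'\tilde Y \equiv 0$ on a neighborhood of $M$ rather than merely on $M$ itself; this is what causes the Davie second-order correction to cancel against the quadratic Taylor term in $F(x_t) - F(x_s)$ and is the key geometric input in the argument.
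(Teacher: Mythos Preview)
Your proposal is correct and follows essentially the same approach as the paper: the same extension $\tilde Y_a = P_F(Y_a\circ\pi)$, the same cancellation obtained by differentiating $F'\tilde Y_b\equiv 0$ on $U$, and the same use of Proposition~\ref{pro.3.35} for the second-order condition. The only difference is cosmetic: in the explosion argument the paper extracts a convergent subsequence $x(t_n)\to x_\infty\in M$ and applies Corollary~\ref{cor.2.17} at $x_\infty$, whereas you invoke a finite cover of the compact closure to obtain a uniform local existence time; both are standard and equivalent.
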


\begin{proof}
Let $U$ be a neighborhood of $x_{0}\in M$ and $F:U\rightarrow\mathbb{R}^{k}$
be a local defining function of $M$ as in Definition \ref{def.3.1}. We then
let $\tilde{Y}_{a}:=P_{F}\left[  Y_{a}\circ\pi\right]  $ where $\pi$ is as in
Remark \ref{rem.3.2} and $P_{F}$ is the projection map in Notation
\ref{not.3.3}. Let $\mathbf{X=}\left(  x,\mathbb{X}\right)  \in WG_{p}\left(
E\right)  $ be the RDE\ solution to Eq. (\ref{equ.4.4}) defined up to the
first exit time $\tau$ from $U$ where we let $\tau=\infty$ if $x_{t}\in M$ for
all $0\leq t\leq T.$ We are now going to show $x\left(  t\right)  \in M\cap U$
for $0\leq t<\tau.$

Notice by construction that $\tilde{Y}_{b}=Y_{b}$ on $M\cap U$ and $F^{\prime
}\tilde{Y}_{b}=0$ on $U.$ Differentiating this last equation along $\xi
=\tilde{Y}_{a}$ then further implies,%
\begin{equation}
F^{\prime\prime}\tilde{Y}_{a}\otimes\tilde{Y}_{b}+F^{\prime}\partial
_{\tilde{Y}_{a}}\tilde{Y}_{b}=0. \label{equ.4.5}%
\end{equation}
Recall that $\mathbf{X}$ solves Eq. (\ref{equ.4.4}) iff%
\begin{equation}
x_{s,t}\simeq\tilde{Y}_{z_{s,t}}\left(  x_{s}\right)  +\left(  \partial
_{\tilde{Y}_{a}\left(  x_{s}\right)  }\tilde{Y}_{b}\right)  \left(
x_{s}\right)  |_{a\otimes b=\mathbb{Z}_{s,t}}\text{ and \ }\mathbb{X}%
_{s,t}\simeq\left[  \tilde{Y}\left(  x_{s}\right)  \otimes\tilde{Y}\left(
x_{s}\right)  \right]  \mathbb{Z}_{s,t}. \label{equ.4.6}%
\end{equation}
Using the first approximate identity in Eq. (\ref{equ.4.6}) along with
$F^{\prime}\tilde{Y}=0$ shows%
\begin{align}
\left[  F\left(  x_{\cdot}\right)  \right]  _{s,t}:=  &  F\left(
x_{t}\right)  -F\left(  x_{s}\right)  =F\left(  x_{s}+x_{s,t}\right)
-F\left(  x_{s}\right) \nonumber\\
\simeq &  F^{\prime}\left(  x_{s}\right)  x_{s,t}+\frac{1}{2}F^{\prime\prime
}\left(  x_{s}\right)  x_{s,t}\otimes x_{s,t}\nonumber\\
\simeq &  F^{\prime}\left(  x_{s}\right)  \left[  \tilde{Y}_{z_{s,t}}\left(
x_{s}\right)  +\left(  \partial_{\tilde{Y}_{a}\left(  x_{s}\right)  }\tilde
{Y}_{b}\right)  \left(  x_{s}\right)  |_{a\otimes b=\mathbb{Z}_{s,t}}\right]
+\frac{1}{2}F^{\prime\prime}\left(  x_{s}\right)  \left[  \tilde{Y}_{z_{s,t}%
}\left(  x_{s}\right)  \otimes\tilde{Y}_{z_{s,t}}\left(  x_{s}\right)  \right]
\nonumber\\
=  &  F^{\prime}\left(  x_{s}\right)  \left(  \partial_{\tilde{Y}_{a}\left(
x_{s}\right)  }\tilde{Y}_{b}\right)  \left(  x_{s}\right)  |_{a\otimes
b=\mathbb{Z}_{s,t}}+\frac{1}{2}F^{\prime\prime}\left(  x_{s}\right)  \left[
\tilde{Y}_{z_{s,t}}\left(  x_{s}\right)  \otimes\tilde{Y}_{z_{s,t}}\left(
x_{s}\right)  \right]  . \label{equ.4.7}%
\end{align}
Since $F^{\prime\prime}\left(  x_{s}\right)  $ is symmetric and $\mathbf{Z}$
is a geometric rough path it follows that Eq. (\ref{equ.4.5}) and
\begin{equation}
\frac{1}{2}F^{\prime\prime}\left(  x_{s}\right)  \left[  \tilde{Y}_{z_{s,t}%
}\left(  x_{s}\right)  \otimes\tilde{Y}_{z_{s,t}}\left(  x_{s}\right)
\right]  =F^{\prime\prime}\left(  x_{s}\right)  \left[  \tilde{Y}_{a}\left(
x_{s}\right)  \otimes\tilde{Y}_{b}\left(  x_{s}\right)  \right]  |_{a\otimes
b=\mathbb{Z}_{st}}. \label{equ.4.8}%
\end{equation}
Combining Eqs. (\ref{equ.4.5}), (\ref{equ.4.7}), and (\ref{equ.4.8}) shows
$\left[  F\left(  x_{\cdot}\right)  \right]  _{s,t}\simeq0$ which implies
$F\left(  x_{t}\right)  $ is constant in $t\in\lbrack0,\tau).$ Since $F\left(
x_{0}\right)  =0$ it follows that $F\left(  x_{t}\right)  =0$ for $t<\tau,$
i.e. $x\left(  t\right)  \in M$ for $t<\tau.$ Also notice that
\[
I\otimes Q\left(  x_{s}\right)  \mathbb{X}_{s,t}\simeq I\otimes Q\left(
x_{s}\right)  \left[  \tilde{Y}\left(  x_{s}\right)  \otimes\tilde{Y}\left(
x_{s}\right)  \right]  \mathbb{Z}_{s,t}\simeq0,
\]
and therefore $\mathbf{X}\in WG_{p}\left(  [0,\tau),M\right)  $ and we have
proved local existence to Eq. (\ref{equ.4.4}). This shows local existence to
Eq. (\ref{equ.4.1}).

Suppose that we have found $\mathbf{X}\in WG_{p}\left(  [0,\tau),M\right)  $
solving Eq. (\ref{equ.4.1}) on $[0,\tau)$ for some $\tau\leq T.$ If there
exists a compact subset $K\subset M$ such that $\left\{  x\left(  t\right)
:t<\tau\right\}  \subset K,$ then there exists $t_{n}\in\lbrack0,\tau)$ such
that $t_{n}\uparrow\tau$ and $x_{\infty}:=\lim_{n\rightarrow\infty}x\left(
t_{n}\right)  $ exists in $K\subset M.$ We now let $U$ be a precompact
neighborhood of $x_{\infty}\in M$ and $F:U\rightarrow\mathbb{R}^{k}$ be a
local defining function of $M$ as in Definition \ref{def.3.1} and as above let
$\tilde{Y}_{a}:=P_{F}\left[  Y_{a}\circ\pi\right]  $ on $U.$ Moreover we may
assume $\tilde{Y}$ is compactly supported. By Corollary \ref{cor.2.17} there
exists an $\varepsilon>0$ and a neighborhood $V\subset U$ of $x_{\infty}$ such
that for any $s\in\lbrack\tau-\varepsilon,\tau]$ and $y\in V$ there exists
$\mathbf{\hat{X}\in}WG_{p}\left(  [s,\tau+\varepsilon],E\right)  $ with trace
in $U$ solving
\[
d\mathbf{\hat{X}}_{t}\mathbf{=}\tilde{Y}_{d\mathbf{Z}_{t}}\left(
x_{t}\right)  \text{ with }x_{s}=y\in V.
\]
We then choose $n$ sufficiently large so that $t_{n}\in\lbrack\tau
-\varepsilon,\tau]$ and let $\mathbf{\hat{X}\in}WG_{p}\left(  [t_{n}%
,\tau+\varepsilon],E\right)  $ solve the previous equation with $y=x\left(
t_{n}\right)  .$ We may now apply the concatenation Lemma \ref{lem.A.2} to
glue $\mathbf{X}$ and $\mathbf{\hat{X}}$ together to show there exists a
solution to Eq. (\ref{equ.4.1}) on $[0,\tau+\varepsilon].$

Let us now consider the case where Eq. (\ref{equ.4.1}) does not admit a global
solution defined on $\left[  0,T\right]  .$ In this case, let%
\[
\tau=\sup\left\{  T_{0}\in\left(  0,T\right)  :~\exists~\mathbf{\tilde{X}}\in
WG_{p}\left(  \left[  0,T_{0}\right]  ,M\right)  \text{ solving }%
(\ref{equ.4.1})\right\}  \in(0,T]
\]
and for $0\leq s\leq t<\tau$ let $\mathbf{X}_{s,t}:=\mathbf{\tilde{X}}_{s,t}$
where $\mathbf{\tilde{X}}\in WG_{p}\left(  \left[  0,T_{0}\right]  ,M\right)
$ solves Eq. $(\ref{equ.4.1})$ on $\left[  0,T_{0}\right]  $ for some
$T_{0}\in\left(  t,\tau\right)  .$ By the uniqueness part of Theorem
\ref{the.2.16}, $\left\{  \mathbf{X}_{s,t}:0\leq s\leq t<\tau\right\}  $ is
well defined and satisfies Eq. $(\ref{equ.4.1})$ on $[0,\tau).$ If (for the
sake of contradiction) $\overline{\left\{  x\left(  t\right)  :0\leq
t<\tau\right\}  }^{M}$ were compact, then the procedure above allows us to
produce a solution $\mathbf{\hat{X}}$ to Eq. $(\ref{equ.4.1})$ which is valid
on $\left[  0,\min\left(  \tau+\epsilon,T\right)  \right]  $ which would
violate either the definition of $\tau$ or the assumption that no global
solution to Eq. (\ref{equ.4.1}) exists on $\left[  0,T\right]  .$ Hence we
must conclude that $\overline{\left\{  x\left(  t\right)  :0\leq
t<\tau\right\}  }^{M}$ is not compact.
\end{proof}

We now prepare an equivalent intrinsic characterizations of an RDE solution.
The following proposition is a consequence of the universality property of the
full tensor algebra,%
\[
\mathcal{T}\left(  \mathbb{R}^{n}\right)  =\mathbb{R}\oplus_{k=1}^{\infty
}\left[  \mathbb{R}^{n}\right]  ^{\otimes k},
\]
over $\mathbb{R}^{n}.$

\begin{proposition}
\label{pro.4.3}Let $\mathcal{L}\left(  M\right)  $ denote the collection of
all linear differential operators on $C^{\infty}\left(  M\right)  .$ If
$Y:\mathbb{R}^{n}\rightarrow\Gamma\left(  TM\right)  $ is a linear map, then
$Y$ extends uniquely to an algebra homomorphism, $\mathcal{Y}:\mathcal{T}%
\left(  \mathbb{R}^{n}\right)  \rightarrow\mathcal{L}\left(  M\right)  $ such
that $\mathcal{Y}_{1}:=Id\in\mathcal{L}\left(  M\right)  ,$ where
$1\in\mathcal{T}\left(  \mathbb{R}^{n}\right)  .$
\end{proposition}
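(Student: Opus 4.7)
The plan is to reduce the statement to the universal property of the tensor algebra, once vector fields are viewed as first-order differential operators. First I would identify each smooth vector field $V\in\Gamma(TM)$ with the linear operator $C^{\infty}(M)\to C^{\infty}(M)$ sending $f\mapsto Vf$, where $(Vf)(m):=V(m)f$ and $V(m)\in T_mM$ acts as a derivation on germs at $m$. This realizes $\Gamma(TM)\hookrightarrow\mathcal{L}(M)$, and $\mathcal{L}(M)$ is an associative unital $\mathbb{R}$-algebra under composition of operators (since the composition of two linear differential operators is again a linear differential operator, of order at most the sum of the orders). Thus $Y:\mathbb{R}^n\to\Gamma(TM)\subset\mathcal{L}(M)$ is simply a linear map from a vector space into a unital associative algebra.

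Next I would invoke the universal property of the tensor algebra. This supplies a unique unital algebra homomorphism $\mathcal{Y}:\mathcal{T}(\mathbb{R}^n)\to\mathcal{L}(M)$ extending $Y$. Explicitly, $\mathcal{Y}_1:=\mathrm{Id}$ on the scalar summand and on each tensor power we set
\[
\mathcal{Y}_{a_1\otimes\cdots\otimes a_k}:=Y_{a_1}\circ Y_{a_2}\circ\cdots\circ Y_{a_k},
\]
which is well-defined because the right-hand side is $k$-multilinear in $(a_1,\dots,a_k)$ and therefore factors uniquely through $(\mathbb{R}^n)^{\otimes k}$. The homomorphism property with respect to the tensor product is then a triviality, since concatenation of elementary tensors corresponds exactly to composition of the associated differential operators.

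Uniqueness is immediate: any algebra homomorphism $\widetilde{\mathcal{Y}}$ extending $Y$ and satisfying $\widetilde{\mathcal{Y}}_1=\mathrm{Id}$ must send $a_1\otimes\cdots\otimes a_k$ to $Y_{a_1}\circ\cdots\circ Y_{a_k}$, and these elements, together with $1$, linearly span $\mathcal{T}(\mathbb{R}^n)$. There is essentially no obstacle in this proof; the only point that requires a line of verification is that composites of vector-field operators remain inside $\mathcal{L}(M)$ (as opposed to escaping into general linear endomorphisms of $C^{\infty}(M)$), which follows from the elementary fact that $\mathcal{L}(M)$ is closed under composition.
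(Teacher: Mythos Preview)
Your argument is correct and is exactly the approach the paper has in mind: the paper does not give a formal proof but simply states that the proposition ``is a consequence of the universality property of the full tensor algebra,'' which is precisely what you have spelled out. Your additional remarks (identifying $\Gamma(TM)\hookrightarrow\mathcal{L}(M)$ and noting that $\mathcal{L}(M)$ is closed under composition) are the natural supporting details that make the invocation of the universal property rigorous.
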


\begin{example}
\label{exa.4.4}If $A\in\mathbb{R}^{n}\otimes\mathbb{R}^{n},$ then
$\mathcal{Y}_{A}=Y_{a}Y_{b}|_{a\otimes b=A}$ wherein we are using the
conventions introduced in Notation \ref{not.2.2}.
\end{example}

\begin{theorem}
\label{the.4.5}Let $Y:\mathbb{R}^{n}\rightarrow\Gamma\left(  TM\right)  $ be a
linear map and $\mathbf{X\in}WG_{p}\left(  M\right)  .$ Then the following are equivalent:

\begin{enumerate}
\item $\mathbf{X}$ solves the RDE in Eq. (\ref{equ.4.1}).

\item For any finite dimensional vector space $W$ and any $\alpha\in\Omega
^{1}\left(  M,W\right)  ,$
\begin{equation}
\left[  \int\alpha\left(  d\mathbf{X}\right)  \right]  _{s,t}^{1}\simeq
\alpha_{x_{s}}\left(  Y_{z_{s,t}}\left(  x_{s}\right)  \right)  +\left[
Y_{a}\left(  x_{s}\right)  \alpha\left(  Y_{b}\right)  \right]  |_{a\otimes
b=\mathbb{Z}_{s,t}}, \label{equ.4.9}%
\end{equation}
and%
\begin{equation}
\left[  \int\alpha\left(  d\mathbf{X}\right)  \right]  _{s,t}^{2}\simeq\left[
\alpha_{x_{s}}Y_{a}\left(  x_{s}\right)  \otimes\alpha_{x_{s}}Y_{b}\left(
x_{s}\right)  \right]  |_{a\otimes b=\mathbb{Z}_{s,t}}. \label{equ.4.10}%
\end{equation}

\item For any finite dimensional vector space $W$ and any $f\in C^{\infty
}\left(  M,W\right)  ,$%
\begin{equation}
f\left(  x_{t}\right)  -f\left(  x_{s}\right)  \simeq\left(  \mathcal{Y}%
_{\mathbf{Z}_{s,t}}f\right)  \left(  x_{s}\right)  \label{equ.4.11}%
\end{equation}
and%
\begin{equation}
\left(  df\otimes df\right)  \left(  \left[  P_{x_{s}}\otimes P_{x_{s}%
}\right]  \mathbb{X}_{s,t}\right)  \simeq Y_{a}f\left(  x_{s}\right)  \otimes
Y_{b}f\left(  x_{s}\right)  |_{a\otimes b=\mathbb{Z}_{s,t}}. \label{equ.4.12}%
\end{equation}

\end{enumerate}
\end{theorem}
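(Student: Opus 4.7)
The plan is to prove the equivalences in the cycle $(1) \Rightarrow (2) \Rightarrow (3) \Rightarrow (1)$. Each implication will reduce to an algebraic manipulation combining the intrinsic integration formula of Proposition \ref{pro.3.29}, the product rule for the Levi-Civita covariant derivative (Proposition \ref{pro.3.12}(2)), the projection identity $\mathbb{X}_{s,t} \simeq [P_{x_s} \otimes P_{x_s}] \mathbb{X}_{s,t}$ (Corollary \ref{cor.3.22}), and a well-chosen test object: the one-form $\alpha = df$ for $(2) \Rightarrow (3)$, and the inclusion $f : M \hookrightarrow E$ for $(3) \Rightarrow (1)$.

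For $(1) \Rightarrow (2)$, I substitute the RDE relations (\ref{equ.4.2})--(\ref{equ.4.3}) into the two formulas of Proposition \ref{pro.3.29}. Since $Y_a(x_s) \in T_{x_s} M$ one has $P_{x_s} Y_a(x_s) = Y_a(x_s)$ and $P_{x_s} \partial_{Y_a(x_s)} Y_b = \nabla_{Y_a(x_s)} Y_b$, so the level-one formula becomes
\[
\left[\int \alpha(d\mathbf{X})\right]^1_{s,t} \simeq \alpha_{x_s}(Y_{z_{s,t}}(x_s)) + \bigl[\alpha_{x_s}(\nabla_{Y_a} Y_b) + (\nabla\alpha)(Y_a \otimes Y_b)\bigr](x_s)\big|_{a \otimes b = \mathbb{Z}_{s,t}}.
\]
The Leibniz identity $v_m(\alpha(Y)) = (\nabla_{v_m} \alpha)(Y(m)) + \alpha_m(\nabla_{v_m} Y)$ from Proposition \ref{pro.3.12}(2) collapses the bracketed expression into $Y_a(\alpha(Y_b))(x_s)$, producing (\ref{equ.4.9}). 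The level-two formula (\ref{equ.4.10}) is immediate from applying $\alpha_{x_s} \otimes \alpha_{x_s}$ to the substituted approximation $[P_{x_s} \otimes P_{x_s}]\mathbb{X}_{s,t} \simeq Y_a(x_s) \otimes Y_b(x_s)\big|_{a \otimes b = \mathbb{Z}_{s,t}}$.

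For $(2) \Rightarrow (3)$, I apply (2) with $\alpha = df$. A two-term Taylor expansion of any smooth extension $\tilde f$, combined with symmetry of $\tilde f''$ and the identity $\mathbb{X}^s_{s,t} \simeq \tfrac{1}{2} x_{s,t} \otimes x_{s,t}$, shows $[\int df(d\mathbf{X})]^1_{s,t} \simeq f(x_t) - f(x_s)$; combined with (\ref{equ.4.9}), this produces (\ref{equ.4.11}) upon recognising $(Y_{z_{s,t}} f)(x_s) + (Y_a Y_b f)(x_s)\big|_{a \otimes b = \mathbb{Z}_{s,t}}$ as the non-trivial (degree $\geq 1$) portion of $(\mathcal{Y}_{\mathbf{Z}_{s,t}} f)(x_s)$. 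For (\ref{equ.4.12}) I compare (\ref{equ.4.10}) at $\alpha = df$ with the level-two formula of Proposition \ref{pro.3.29}. For $(3) \Rightarrow (1)$, I specialise $f$ to the inclusion $M \hookrightarrow E$, for which $(Y_a f)(x) = Y_a(x)$ and $(Y_a Y_b f)(x) = \partial_{Y_a(x)} Y_b$ (computed via any smooth extension of $Y_b$); then (\ref{equ.4.11}) directly yields (\ref{equ.4.2}), while (\ref{equ.4.12}) reads $[P_{x_s} \otimes P_{x_s}]\mathbb{X}_{s,t} \simeq Y_a(x_s) \otimes Y_b(x_s)\big|_{a \otimes b = \mathbb{Z}_{s,t}}$, which Corollary \ref{cor.3.22} promotes to (\ref{equ.4.3}).

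The main obstacle I foresee is the bookkeeping in $(1) \Rightarrow (2)$: Proposition \ref{pro.3.29} generates four distinct terms at level one, the RDE expansion contributes two more, and matching them requires recognising the covariant Leibniz structure underlying the identity $\alpha(\nabla_{Y_a} Y_b) + (\nabla_{Y_a} \alpha)(Y_b) = Y_a(\alpha(Y_b))$, all while carefully distinguishing the ambient directional derivative $\partial_{Y_a} Y_b$ from its tangential projection $\nabla_{Y_a} Y_b$. Once this algebraic identity is in hand, the remaining implications $(2) \Rightarrow (3)$ and $(3) \Rightarrow (1)$ are essentially specialisations of (2) and (3) to their natural test objects and require no further analytic input beyond Taylor estimates for weakly geometric rough paths and Corollary \ref{cor.3.22}.
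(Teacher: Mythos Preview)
Your proposal is correct and follows essentially the same cycle $(1)\Rightarrow(2)\Rightarrow(3)\Rightarrow(1)$ as the paper, using the same tools: Proposition~\ref{pro.3.29} and the covariant Leibniz rule for $(1)\Rightarrow(2)$, specialisation to $\alpha=df$ for $(2)\Rightarrow(3)$, and the inclusion $f:M\hookrightarrow E$ for $(3)\Rightarrow(1)$. The only cosmetic difference is that you recover $[\int df(d\mathbf{X})]^1_{s,t}\simeq f(x_t)-f(x_s)$ by a direct Taylor/weakly-geometric argument, whereas the paper cites the push-forward identity (Proposition~\ref{pro.3.38}/Lemma~\ref{lem.A.4}) to obtain exact equality; for the purposes of (\ref{equ.4.11}) the approximate version is all that is needed.
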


\begin{proof}
We will show $1.\implies2.\implies3.\implies1.$

$\left(  1.\implies2.\right)  $ From Eqs. (\ref{equ.3.17}), (\ref{equ.4.2}),
and (\ref{equ.4.3}),%
\begin{align*}
\left[  \int\alpha\left(  d\mathbf{X}\right)  \right]  _{s,t}^{1}\simeq &
\alpha_{x_{s}}\left(  P_{x_{s}}x_{st}\right)  +\left(  \nabla\alpha\right)
\left(  \left[  P_{x_{s}}\otimes P_{x_{s}}\right]  \mathbb{X}_{s,t}\right) \\
\simeq &  \alpha_{x_{s}}\left(  P_{x_{s}}\left[  Y_{z_{s,t}}\left(
x_{s}\right)  +\left(  \partial_{Y_{a}}Y_{b}\right)  \left(  x_{s}\right)
|_{a\otimes b=\mathbb{Z}_{st}}\right]  \right) \\
&  \qquad+\left(  \nabla\alpha\right)  \left(  \left[  P_{x_{s}}\otimes
P_{x_{s}}\right]  Y_{a}\left(  x_{s}\right)  \otimes Y_{b}\left(
x_{s}\right)  |_{a\otimes b=\mathbb{Z}_{s,t}}\right) \\
\simeq &  \alpha_{x_{s}}\left(  Y_{z_{s,t}}\left(  x_{s}\right)  +P_{x_{s}%
}\left(  \partial_{Y_{a}}Y_{b}\right)  \left(  x_{s}\right)  |_{a\otimes
b=\mathbb{Z}_{st}}\right)  +\left(  \nabla\alpha\right)  \left(  Y_{a}\left(
x_{s}\right)  \otimes Y_{b}\left(  x_{s}\right)  |_{a\otimes b=\mathbb{Z}%
_{s,t}}\right) \\
\simeq &  \alpha_{x_{s}}\left(  Y_{z_{s,t}}\left(  x_{s}\right)
+\nabla_{Y_{a}\left(  x_{s}\right)  }Y_{b}|_{a\otimes b=\mathbb{Z}_{st}%
}\right)  +\left(  \nabla_{Y_{a}\left(  x_{s}\right)  }\alpha\right)  \left(
Y_{b}\left(  x_{s}\right)  \right)  |_{a\otimes b=\mathbb{Z}_{s,t}}.
\end{align*}
Combining this approximate identity with the product rule for covariant
derivatives in item 2. of Proposition \ref{pro.3.12} gives Eq. (\ref{equ.4.9}%
). Equation (\ref{equ.4.10}) follows easily from Eqs. (\ref{equ.3.18}) and
(\ref{equ.4.3});%
\begin{align*}
\left[  \int\alpha\left(  d\mathbf{X}\right)  \right]  _{s,t}^{2}  &
\simeq\alpha_{x_{s}}\otimes\alpha_{x_{s}}\left[  P_{x_{s}}\otimes P_{x_{s}%
}\right]  \mathbb{X}_{st}\\
&  \simeq\alpha_{x_{s}}\otimes\alpha_{x_{s}}\left[  P_{x_{s}}\otimes P_{x_{s}%
}\right]  \left[  Y_{a}\left(  x_{s}\right)  \otimes Y_{b}\left(
x_{s}\right)  \right]  |_{a\otimes b=\mathbb{Z}_{s,t}}\\
&  =\alpha_{x_{s}}\otimes\alpha_{x_{s}}\left[  Y_{a}\left(  x_{s}\right)
\otimes Y_{b}\left(  x_{s}\right)  \right]  |_{a\otimes b=\mathbb{Z}_{s,t}},
\end{align*}
wherein we have used $P_{x_{s}}Y_{\left(  \cdot\right)  }\left(  x_{s}\right)
=Y_{\left(  \cdot\right)  a}\left(  x_{s}\right)  $ in the last equality.

$\left(  2.\implies3.\right)  $ Applying item 2. with $\alpha=df$ shows%
\begin{align*}
\left[  \int\alpha\left(  d\mathbf{X}\right)  \right]  _{s,t}^{1}  &  \simeq
df_{x_{s}}\left(  Y_{z_{s,t}}\left(  x_{s}\right)  \right)  +\left[
Y_{a}\left(  x_{s}\right)  df_{x_{s}}\left(  Y_{b}\right)  \right]
|_{a\otimes b=\mathbb{Z}_{s,t}}\\
&  \simeq df_{x_{s}}\left(  Y_{z_{s,t}}\left(  x_{s}\right)  \right)  +\left[
Y_{b}Y_{b}f\left(  x_{s}\right)  \right]  |_{a\otimes b=\mathbb{Z}_{s,t}}\\
&  =\left(  \mathcal{Y}_{z_{s,t}+\mathbb{Z}_{s,t}}f\right)  \left(
x_{s}\right)
\end{align*}
and
\[
\left[  \int\alpha\left(  d\mathbf{X}\right)  \right]  _{s,t}^{2}\simeq\left[
df_{x_{s}}Y_{a}\left(  x_{s}\right)  \otimes df_{x_{s}}Y_{b}\left(
x_{s}\right)  \right]  |_{a\otimes b=\mathbb{Z}_{s,t}}.
\]
This shows item 3. holds once we recall that%
\[
\left[  \int df\left(  d\mathbf{X}\right)  \right]  _{s,t}^{1}=f_{\ast}\left(
\mathbf{X}\right)  _{s,t}^{1}=f\left(  x_{t}\right)  -f\left(  x_{s}\right)
\]
and
\[
\left[  \int df\left(  d\mathbf{X}\right)  \right]  _{s,t}^{2}\simeq
df_{x_{s}}\otimes df_{x_{s}}\left[  P_{x_{s}}\otimes P_{x_{s}}\right]
\mathbb{X}_{st}.
\]

$\left(  3.\implies1.\right)  $ Let $W=E$ and $f:M\rightarrow E$ be the
restrictions of the identity map on $E,$ i.e. $f\left(  x\right)  =x$ for all
$x\in M.$ For this $f,$ we have $df\left(  v_{x}\right)  =v$ for all $v_{x}\in
TM,$%
\[
\left(  Y_{b}f\right)  \left(  x\right)  =Y_{b}\left(  x\right)  \text{ and
}\left(  Y_{a}Y_{b}f\right)  \left(  x\right)  =\left(  \partial_{Y_{a}}%
Y_{b}\right)  \left(  x\right)
\]
and so Eq. (\ref{equ.4.11}) becomes,%
\[
x_{s,t}=f\left(  x_{t}\right)  -f\left(  x_{s}\right)  \simeq\left(
\mathcal{Y}_{\mathbf{Z}_{s,t}}f\right)  \left(  x_{s}\right)  =Y_{z_{s,t}%
}\left(  x_{s}\right)  +\left(  \partial_{Y_{a}}Y_{b}\right)  \left(
x_{s}\right)  |_{a\otimes b=\mathbb{Z}_{s,t}}%
\]
which is precisely Eq. (\ref{equ.4.2}). Similarly Eq. (\ref{equ.4.12}) becomes%
\begin{align*}
\left[  P_{x_{s}}\otimes P_{x_{s}}\right]  \mathbb{X}_{s,t}  &  =\left(
df\otimes df\right)  \left(  \left[  P_{x_{s}}\otimes P_{x_{s}}\right]
\mathbb{X}_{s,t}\right) \\
&  \simeq Y_{a}f\left(  x_{s}\right)  \otimes Y_{b}f\left(  x_{s}\right)
|_{a\otimes b=\mathbb{Z}_{s,t}}=Y_{a}\left(  x_{s}\right)  \otimes Y\left(
x_{s}\right)  |_{a\otimes b=\mathbb{Z}_{s,t}}%
\end{align*}
which is equivalent to Eq. (\ref{equ.4.3}) by Corollary \ref{cor.3.22}.
\end{proof}

\begin{remark}
\label{rem.4.6}If we restrict $W$ to be $\mathbb{R}$ in Theorem \ref{the.4.5}
we may still conclude from either of items 2. or 3. of that theorem that
$\mathbf{X}$ satisfies Eq. (\ref{equ.4.2}), i.e. the level one condition for
the RDE solution (\ref{equ.4.1}). Indeed if $f=\ell|_{M}:M\rightarrow
\mathbb{R}$ where $\ell\in E^{\ast}$ is any linear functional on $E,$ then%
\[
\left(  Y_{a}f\right)  \left(  x\right)  =\ell Y_{a}\left(  x\right)  \text{
and }\left(  Y_{a}Y_{b}f\right)  \left(  x\right)  =\left(  Y_{a}\ell
Y_{b}\right)  \left(  x\right)  =\ell\left(  \partial_{Y_{a}}Y_{b}\right)
\left(  x\right)  .
\]
So for $f=\ell|_{M},$ Eq. (\ref{equ.4.11}) becomes%
\[
\ell\left(  x_{s,t}\right)  =f\left(  x_{t}\right)  -f\left(  x_{s}\right)
\simeq\ell Y_{z_{s,t}}\left(  x_{s}\right)  +\ell\left(  \partial_{Y_{a}}%
Y_{b}\right)  \left(  x_{s}\right)  |_{a\otimes b=\mathbb{Z}_{s,t}}.
\]
As this true for all $\ell\in E^{\ast}$ we may conclude Eq. (\ref{equ.4.2}) holds.
\end{remark}

\begin{remark}
\label{rem.4.7}We can not get Eq. (\ref{equ.4.3}) from Eq. (\ref{equ.4.10})
without allowing for $\dim W>1.$ Indeed, if $\alpha\in\Omega^{1}\left(
M,\mathbb{R}\right)  ,$ then%
\[
\alpha_{x_{s}}\otimes\alpha_{x_{s}}\left(  \left[  P_{x_{s}}\otimes P_{x_{s}%
}\right]  \mathbb{X}_{s,t}\right)  \simeq\alpha_{x_{s}}\otimes\alpha_{x_{s}%
}\left[  Y_{a}\left(  x_{s}\right)  \otimes Y_{b}\left(  x_{s}\right)
\right]  |_{a\otimes b=\mathbb{Z}_{s,t}}%
\]
from which we may only conclude that
\begin{equation}
\left[  P_{x_{s}}\otimes P_{x_{s}}\right]  \mathbb{X}_{s,t}^{s}\simeq\left[
Y_{a}\left(  x_{s}\right)  \otimes Y_{b}\left(  x_{s}\right)  \right]
|_{a\otimes b=\mathbb{Z}_{s,t}^{s}}. \label{equ.4.13}%
\end{equation}
This is because $\alpha\otimes\alpha\left(  \xi\otimes\eta-\eta\otimes
\xi\right)  =\alpha\left(  \xi\right)  \alpha\left(  \eta\right)
-\alpha\left(  \eta\right)  \alpha\left(  \xi\right)  =0$ since scalar
multiplication is commutative. Here we have used that $\mathbb{R}%
\otimes_{\mathbb{R}}\mathbb{R\cong R}.$ The reader should further observe that
information contained in Eq. (\ref{equ.4.13}) is already a consequence of Eq.
(\ref{equ.4.2}) and the assumption that $\mathbf{Z}$ and $\mathbf{X}$ are
weakly geometric rough paths.
\end{remark}

\begin{definition}
[Intrinsic RDEs on Manifolds]\label{def.4.8}Given a linear map $Y:\mathbb{R}%
^{n}\rightarrow\Gamma\left(  TM\right)  ,$ we say that a geometric rough path
$\mathbf{X\in}WG_{p}\left(  M\right)  $ solves the RDE%
\begin{equation}
d\mathbf{X}_{t}=Y_{d\mathbf{Z}_{t}}\left(  x_{t}\right)  \text{ with }x\left(
0\right)  =x_{0}\in M \label{equ.4.14}%
\end{equation}
if and only if equations (\ref{equ.4.11}) and (\ref{equ.4.12})hold for all
$f\in C^{\infty}\left(  M,W\right)  $ and every finite dimensional vector
space $W.$
\end{definition}

\begin{notation}
[Intrinsic RDEs]\label{not.4.9}To emphasize when we are working with the
intrinsic definition of an RDE we sometimes write%
\[
d\mathbf{X}_{t}=\mathcal{Y}_{d\mathbf{Z}_{t}}\left(  x_{t}\right)  \text{ with
}x\left(  0\right)  =x_{0}\in M
\]
in place of (\ref{equ.4.14}) where now $\mathbf{Z}_{s,t}=z_{s,t}%
+\mathbb{Z}_{s,t}$ and we interpret%
\[
\left[  \mathcal{Y}_{d\mathbf{Z}}\left(  x\right)  \right]  _{s,t}^{2}%
\simeq\left[  Y\left(  x_{s}\right)  \otimes Y\left(  x_{s}\right)  \right]
\mathbb{Z}_{s,t}\in\mathbb{R}^{N}\otimes\mathbb{R}^{N}.
\]

\end{notation}

We end this subsection with a result describing (in special cases) the push
forward of solutions to RDEs.

\begin{definition}
\label{def.4.10}Suppose that $\pi:M\rightarrow N$ is a smooth map between two
smooth manifolds. Also suppose that $Y^{M}:\mathbb{R}^{n}\rightarrow
\Gamma\left(  TM\right)  $ and $Y^{N}:\mathbb{R}^{n}\rightarrow\Gamma\left(
TN\right)  $ are two linear maps. We say $Y^{M}$ and $Y^{N}$ are $\pi$\textbf{
-- related dynamical systems} if
\[
\pi_{\ast}Y_{a}^{M}=Y_{a}^{N}\circ\pi\text{ for all }a\in\mathbb{R}^{n}.
\]

\end{definition}

\begin{theorem}
\label{the.4.11}Suppose $\pi:M\rightarrow N$ is a smooth map between
manifolds. Let $Y^{M}:\mathbb{R}^{n}\rightarrow\Gamma\left(  TM\right)  $ and
$Y^{N}:\mathbb{R}^{n}\rightarrow\Gamma\left(  TN\right)  $ be two $\pi$ --
related dynamical systems. Further suppose that $\mathbf{Z\in}WG_{p}\left(
\mathbb{R}^{n}\right)  $ and $\mathbf{X}=\left(  x,\mathbb{X}\right)  $ solves
the RDE,%
\[
d\mathbf{X}_{t}=\mathcal{Y}_{d\mathbf{Z}_{t}}^{M}\left(  x_{t}\right)  \text{
with }x_{0}\in M\text{ given.}%
\]
Then $\mathbf{X}^{N}:=\pi_{\ast}\left(  \mathbf{X}\right)  =\left(
x^{N},\mathbb{X}^{N}\right)  $ solves the RDE,%
\[
d\mathbf{X}_{t}^{N}=\mathcal{Y}_{d\mathbf{Z}_{t}}^{N}\left(  x_{t}^{N}\right)
\text{ with }x_{0}^{N}:=\pi\left(  x_{0}\right)  \in N\text{ given.}%
\]

\end{theorem}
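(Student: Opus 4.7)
The plan is to verify the intrinsic characterization of the pushed-forward rough path as a solution of the RDE on $N$, i.e. to check items 2 and 3 of Theorem \ref{the.4.5} for $\mathbf{X}^N=\pi_\ast(\mathbf{X})$ with the vector fields $Y^N$. I will work with item 3 since it avoids any choice of extensions to the ambient space. Throughout, let $x^N_t := \pi(x_t)$, which is indeed the trace of $\pi_\ast(\mathbf{X})$ by item 1 of Proposition \ref{pro.3.38}.

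The key algebraic ingredient is the following consequence of $\pi$-relatedness. For any finite dimensional vector space $W$ and any $f\in C^\infty(N,W)$, set $g := f\circ \pi\in C^\infty(M,W)$. Then $dg_x = df_{\pi(x)}\circ \pi_{\ast,x}$, so
\[
(Y^M_a g)(x) = df_{\pi(x)}\bigl(\pi_\ast Y^M_a(x)\bigr) = df_{\pi(x)}\bigl(Y^N_a(\pi(x))\bigr) = (Y^N_a f)(\pi(x)),
\]
i.e.\ $Y^M_a g = (Y^N_a f)\circ \pi$. Iterating this identity with the intermediate function $Y^N_b f\in C^\infty(N,W)$ gives $Y^M_a Y^M_b g = (Y^N_a Y^N_b f)\circ \pi$, and hence by Proposition \ref{pro.4.3},
\[
\bigl(\mathcal{Y}^M_A g\bigr)(x) = \bigl(\mathcal{Y}^N_A f\bigr)(\pi(x))\qquad\forall\,A\in T_2(\mathbb{R}^n),\ x\in M.
\]

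With this in hand, the level-one condition (\ref{equ.4.11}) for $\mathbf{X}^N$ follows directly: since $\mathbf{X}$ solves the RDE on $M$,
\[
f(x^N_t) - f(x^N_s) = g(x_t) - g(x_s) \simeq \bigl(\mathcal{Y}^M_{\mathbf{Z}_{s,t}} g\bigr)(x_s) = \bigl(\mathcal{Y}^N_{\mathbf{Z}_{s,t}} f\bigr)(x^N_s).
\]
For the level-two condition (\ref{equ.4.12}), I combine three facts: (a) by Corollary \ref{cor.3.22} applied in both $M$ and $N$ one may insert or remove the projections $P_{x_s}\otimes P_{x_s}$ and $P_{x^N_s}\otimes P_{x^N_s}$ modulo $\simeq$; (b) by Definition \ref{def.3.37} together with Theorem \ref{the.2.14}, $\mathbb{X}^N_{s,t}\simeq (\pi_\ast\otimes\pi_\ast)\mathbb{X}_{s,t}$ (using a smooth ambient extension of $\pi$ as in the proof of Proposition \ref{pro.3.38}); (c) $df\circ\pi_\ast = dg$. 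Chaining these,
\[
(df\otimes df)\bigl([P_{x^N_s}\otimes P_{x^N_s}]\mathbb{X}^N_{s,t}\bigr) \simeq (dg\otimes dg)\bigl([P_{x_s}\otimes P_{x_s}]\mathbb{X}_{s,t}\bigr),
\]
and the RDE property of $\mathbf{X}$ applied to $g$ rewrites this as $Y^M_a g(x_s)\otimes Y^M_b g(x_s)|_{a\otimes b=\mathbb{Z}_{s,t}}$, which by the first displayed identity equals $Y^N_a f(x^N_s)\otimes Y^N_b f(x^N_s)|_{a\otimes b=\mathbb{Z}_{s,t}}$, as required.

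The main obstacle is a small bookkeeping issue rather than a conceptual one: one must check that $\pi_\ast(\mathbf{X})$ is genuinely an element of $WG_p(N)$ so that item 3 of Theorem \ref{the.4.5} applies. This is exactly item 2 of Proposition \ref{pro.3.38}, provided we first know the extrinsic RDE solution has its trace in $M$ (which is the content of Theorem \ref{the.4.2}) so that $\mathbf{X}\in WG_p(M)$ in the first place. Once membership is verified and the chain of $\simeq$ identities in (c) above is written out, the theorem follows from Theorem \ref{the.4.5}.
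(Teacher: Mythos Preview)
Your proof is correct and follows essentially the same route as the paper: both verify the intrinsic characterization (item 3 of Theorem \ref{the.4.5}) by pulling back $f\in C^\infty(N,W)$ to $g=f\circ\pi$ and using the $\pi$-relatedness identity $\mathcal{Y}^M_A g=(\mathcal{Y}^N_A f)\circ\pi$. The only cosmetic difference is at level two: the paper computes $\mathbb{X}^N_{s,t}\simeq[\pi_\ast\otimes\pi_\ast][P_{x_s}\otimes P_{x_s}]\mathbb{X}_{s,t}\simeq[Y^N_\cdot(x^N_s)\otimes Y^N_\cdot(x^N_s)]\mathbb{Z}_{s,t}$ directly (i.e.\ checks Eq.~(\ref{equ.4.3})), whereas you post-compose with $df\otimes df$ to check Eq.~(\ref{equ.4.12}); these differ only by an application of Corollary \ref{cor.3.22}.
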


\begin{proof}
Fix a finite dimensional vector space $W$ and let $f\in C^{\infty}\left(
N,W\right)  .$ Applying item 3. of Theorem \ref{the.4.5} to the function
$f\circ\pi\in C^{\infty}\left(  M,W\right)  $ shows,%
\begin{align*}
f\left(  x_{t}^{N}\right)  -f\left(  x_{s}^{N}\right)   &  =f\circ\pi\left(
x_{t}\right)  -f\circ\pi\left(  x_{s}\right) \\
&  \simeq\left(  \mathcal{Y}_{\mathbf{Z}_{st}}^{M}\left(  f\circ\pi\right)
\right)  \left(  x_{s}\right)  =\left(  \mathcal{Y}_{\mathbf{Z}_{st}}%
^{N}f\right)  \circ\pi\left(  x_{s}\right)  =\left(  \mathcal{Y}%
_{\mathbf{Z}_{st}}^{N}f\right)  \left(  x_{s}^{N}\right)
\end{align*}
and
\begin{align*}
\mathbb{X}_{s,t}^{N}  &  \simeq\pi_{\ast}\otimes\pi_{\ast}P\left(
x_{s}\right)  \otimes P\left(  x_{s}\right)  \mathbb{X}_{s,t}\\
&  \simeq\pi_{\ast}\otimes\pi_{\ast}P\left(  x_{s}\right)  \otimes P\left(
x_{s}\right)  Y_{\left(  \cdot\right)  }^{M}\left(  x_{s}\right)  \otimes
Y_{\left(  \cdot\right)  }^{M}\left(  x_{s}\right)  \mathbb{Z}_{s,t}\\
&  =\pi_{\ast}Y_{\left(  \cdot\right)  }^{M}\left(  x_{s}\right)  \otimes
\pi_{\ast}Y_{\left(  \cdot\right)  }^{M}\left(  x_{s}\right)  \mathbb{Z}%
_{s,t}\\
&  =Y_{\left(  \cdot\right)  }^{N}\left(  x_{s}^{N}\right)  \otimes Y_{\left(
\cdot\right)  }^{N}\left(  x_{s}^{N}\right)  \mathbb{Z}_{s,t}.
\end{align*}

\end{proof}

\subsection{Fundamental properties of rough paths on manifolds\label{sub.4.2}}

Armed with well-defined notions of integration and RDEs, we now derive some of
the fundamental properties of geometric and weakly geometric rough paths on
manifolds. We also exhibit some natural examples of elements in $WG_{p}\left(
M\right)  $ which are constructed by \textquotedblleft projecting the
increments\textquotedblright\ of geometric rough paths on $E$ to the tangent
space of $M.$

\begin{example}
[Projection Construction of Geometric Rough Paths]\label{exa.4.12}Let
$\mathbf{Z}$ be a weakly geometric $p-$ rough path on $E$ for some
$p\in\lbrack2,3),$ then there exists a unique rough path solution $\mathbf{X}$
(possibly only up to an explosion time) to the RDE%
\begin{equation}
d\mathbf{X}_{t}=V_{d\mathbf{Z}_{t}}\left(  x_{t}\right)  =P_{x_{t}}%
d\mathbf{Z}_{t}\text{ with }x_{0}\in M. \label{equ.4.15}%
\end{equation}
Moreover it will follow from Theorem \ref{the.4.2} and Theorem \ref{the.4.17}
that $\mathbf{X}\in G_{p^{\prime}}\left(  M\right)  \cap WG_{p}\left(
M\right)  $ for all $p^{\prime}>p.$
\end{example}

The following proposition shows that, in fact, all weakly geometric rough
paths on $M$ may be constructed by this method.

\begin{proposition}
[Consistency]\label{pro.4.13}If $\mathbf{Z}\in WG_{p}\left(  M\right)  \subset
WG_{p}\left(  E\right)  ,$ then the unique solution to Eq. (\ref{equ.4.15})
with $x_{0}=z_{0}$ is $\mathbf{X\equiv Z}.$ [So in this setting the solution
to Eq. (\ref{equ.4.15}) exists on all of $\left[  0,T\right]  .$]
\end{proposition}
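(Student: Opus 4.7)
The plan is to verify that $\mathbf{Z}$ itself satisfies Definition \ref{def.4.1} as a solution of the RDE (\ref{equ.4.15}); the uniqueness part of Theorem \ref{the.4.2} then forces $\mathbf{X}\equiv\mathbf{Z}$, and since $\mathbf{Z}$ is defined on $[0,T]$ this rules out explosion. Because the vector fields $V_a(x)=P_x a$ are only naturally defined on a neighborhood of $M$, I will work locally, using Remarks \ref{rem.3.27} and \ref{rem.3.36} to cover the (compact) trace $z([0,T])$ by finitely many open sets on which a local defining function and the corresponding $P,Q$ are available, and concatenate at the end.

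Substituting $\mathbf{X}=\mathbf{Z}$, the level-two requirement (\ref{equ.4.3}) reads
\[
\mathbb{Z}_{s,t}\simeq V_a(z_s)\otimes V_b(z_s)\big|_{a\otimes b=\mathbb{Z}_{s,t}}=[P_{z_s}\otimes P_{z_s}]\mathbb{Z}_{s,t},
\]
which is exactly Corollary \ref{cor.3.22}. For the level-one requirement (\ref{equ.4.2}), writing $V_{z_{s,t}}(z_s)=P_{z_s}z_{s,t}$ and using the natural extension $V_b(x)=P_x b$ with Euclidean derivative $\partial_{V_a(z_s)}V_b=(dP)(P_{z_s}a)\,b$, the content of (\ref{equ.4.2}) becomes
\[
Q_{z_s}z_{s,t}\simeq (dP)(P_{z_s}a)\,b\big|_{a\otimes b=\mathbb{Z}_{s,t}}.
\]
Lemma \ref{lem.3.28} already supplies $Q_{z_s}z_{s,t}\simeq Q_{z_s}(dP)(P_{z_s}a)\,b|_{a\otimes b=\mathbb{Z}_{s,t}}$, so everything reduces to showing that the tangential projection vanishes approximately:
\[
P_{z_s}(dP)(P_{z_s}a)\,b\big|_{a\otimes b=\mathbb{Z}_{s,t}}\simeq 0.
\]

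To handle this, I differentiate $PQ=0$ on $U$ and combine with $dP=-dQ$ to obtain the pointwise operator identity $P\cdot dP=dP\cdot Q$. The displayed expression therefore equals $(dP)(P_{z_s}a)\,Q_{z_s}b|_{a\otimes b=\mathbb{Z}_{s,t}}$, and the bilinear form $(a,b)\mapsto (dP)(P_{z_s}a)\,Q_{z_s}b$ factors through $P_{z_s}\otimes Q_{z_s}$. By the universal property of the tensor product (Notation \ref{not.2.2}), its value on $\mathbb{Z}_{s,t}$ is the image, under a bounded linear map depending only on $z_s$, of
\[
(P_{z_s}\otimes Q_{z_s})\mathbb{Z}_{s,t}=(P_{z_s}\otimes I)(I\otimes Q_{z_s})\mathbb{Z}_{s,t}.
\]
Because $\mathbf{Z}\in WG_p(M)$, Corollary \ref{cor.3.20} gives $(I\otimes Q_{z_s})\mathbb{Z}_{s,t}\simeq 0$, and this relation is preserved by any bounded linear map (uniformly in $z_s$ over the compact trace). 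This yields the desired $\simeq 0$ and completes the verification.

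The principal obstacle is the last step: one must find an algebraic rewriting of $P_{z_s}(dP)(P_{z_s}a)\,b$ that both factors through a tensor of projections and places a $Q_{z_s}$ in one of the slots so that the weakly-geometric characterization of Corollary \ref{cor.3.20} (equivalently Definition \ref{def.3.34}) becomes applicable. Once the identity $PdP=dP\cdot Q$ is recognized this becomes routine; by contrast, applying $P$ on the \emph{left} alone would only convert $(P\otimes P)\mathbb{Z}$ back to $\mathbb{Z}$ (via Corollary \ref{cor.3.22}) and give nothing.
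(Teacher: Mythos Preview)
Your proof is correct and follows essentially the same route as the paper: verify that $\mathbf{Z}$ itself satisfies the RDE (\ref{equ.4.15}) and invoke uniqueness. The paper handles the level-one term by first replacing $\mathbb{Z}_{s,t}$ with $[P_{z_s}\otimes P_{z_s}]\mathbb{Z}_{s,t}$ (Corollary \ref{cor.3.22}) and then observing that $dP(P_{z_s}a)P_{z_s}b$ already lies in the range of $Q_{z_s}$, so that Lemma \ref{lem.3.28} identifies it with $Q_{z_s}z_{s,t}$; your version instead splits into $P$ and $Q$ components and kills the tangential piece via the identity $P\,dP=dP\,Q$ together with Corollary \ref{cor.3.20}. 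These are the same ingredients, just arranged in a slightly different order.
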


\begin{proof}
The proof amounts to showing that $\mathbf{X}=\mathbf{Z}$ solves Eq.
(\ref{equ.4.15}), i.e. that
\begin{align}
z_{s,t}  &  \simeq V_{z_{s,t}}\left(  z_{s}\right)  +\left(  \partial_{V_{a}%
}V_{b}\right)  \left(  z_{s}\right)  |_{a\otimes b=\mathbb{Z}_{s,t}}\text{ and
}\label{equ.4.16}\\
\mathbb{Z}_{s,t}  &  \simeq V_{a}\left(  z_{s}\right)  \otimes V_{b}\left(
z_{s}\right)  |_{a\otimes b=\mathbb{Z}_{s,t}}=P\left(  z_{s}\right)  \otimes
P\left(  z_{s}\right)  \mathbb{Z}_{s,t}. \label{equ.4.17}%
\end{align}
Equation (\ref{equ.4.17}) is a consequence of Corollary \ref{cor.3.20}. The
right side of Eq. (\ref{equ.4.16}) is approximated as%
\begin{align*}
P\left(  z_{s}\right)  z_{s,t}+dP\left(  \left[  P\left(  z_{s}\right)
a\right]  _{z_{s}}\right)  b|_{a\otimes b=\mathbb{Z}_{s,t}}  &  \simeq
P\left(  z_{s}\right)  z_{s,t}+dP\left(  \left[  P\left(  z_{s}\right)
a\right]  _{z_{s}}\right)  P\left(  z_{s}\right)  b|_{a\otimes b=\mathbb{Z}%
_{s,t}}\\
&  \simeq P\left(  z_{s}\right)  z_{s,t}+Q\left(  z_{s}\right)  z_{s,t}%
=z_{s,t},
\end{align*}
wherein we have used Lemma \ref{lem.3.28} for the second approximate equality above.
\end{proof}

We now address the relation between geometric and weakly geometric rough paths
on manifolds. To do this we first require a couple of elementary lemmas.

\begin{lemma}
\label{lem.4.14}Suppose $U$ is an open neighborhood of $M$ and $\mathbb{R}%
^{n}\ni a\rightarrow\tilde{Y}_{a}\in\Gamma\left(  TU\right)  $ is a linear map
such that $\tilde{Y}_{a}\left(  m\right)  \in T_{m}M$ for all $m\in M.$
Further suppose that $z:\left[  0,T\right]  \rightarrow\mathbb{R}^{n}$ is a
smooth function and $x:\left[  0,T\right]  \rightarrow U$ is a smooth solution
to
\begin{equation}
\dot{x}\left(  t\right)  =\tilde{Y}_{\dot{z}\left(  t\right)  }\left(
x\left(  t\right)  \right)  \text{ with }x\left(  0\right)  =x_{0}\in M.
\label{equ.4.18}%
\end{equation}
If there is an open neighborhood, $\mathcal{V},$ in $E$ such that $x\left(
\left[  0,T\right]  \right)  \subset\mathcal{V}$ and $\overline{\mathcal{V\cap
}M}^{E}\subset M,$ then $x\left(  t\right)  \in M$ for all $t\in\left[
0,T\right]  $ and $x\left(  t\right)  $ satisfies, $\dot{x}\left(  t\right)
=Y_{\dot{z}\left(  t\right)  }\left(  x\left(  t\right)  \right)  $ with
$x\left(  0\right)  =x_{0}\in M.$
\end{lemma}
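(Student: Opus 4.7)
The plan is to prove that the set $J := \{t \in [0,T] : x(s) \in M \text{ for all } s \in [0,t]\}$ equals all of $[0,T]$ by showing it is nonempty, closed, and open in $[0,T]$. Once this is established, the second conclusion is immediate: on $M$ one has $\tilde{Y}_a = Y_a$ (since $Y_a$ is the prescribed vector field on $M$ extended to $\tilde{Y}_a$ on $U$), so the ODE $\dot{x}(t) = \tilde{Y}_{\dot z(t)}(x(t))$ reduces at each $t \in [0,T]$ to $\dot{x}(t) = Y_{\dot z(t)}(x(t))$.

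Nonemptiness is trivial since $x(0) = x_0 \in M$ forces $0 \in J$. For closedness I would use the hypothesis $\overline{\mathcal{V} \cap M}^{E} \subset M$: if $t_n \nearrow t$ with $t_n \in J$, then $x(s) \in M$ for every $s < t$, and by continuity of $x$ together with $x(s) \in M \cap \mathcal{V}$ one obtains $x(t) \in \overline{M \cap \mathcal{V}}^{E} \subset M$, giving $t \in J$.

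The substance of the argument is openness. Suppose $t_0 \in J$, so $x(t_0) \in M$. Select a local defining function $F : U' \to \mathbb{R}^{N-d}$ for $M$ near $x(t_0)$ as in Definition \ref{def.3.1}, and choose $\delta > 0$ with $x([t_0,t_0+\delta]) \subset U'$. Set $\phi(t) := F(x(t))$ on this interval; then
\[
\dot{\phi}(t) = F'(x(t))\,\tilde{Y}_{\dot z(t)}(x(t)).
\]
The smooth map $G(x,a) := F'(x)\tilde{Y}_a(x)$ is linear in $a$, and by the standing hypothesis $\tilde{Y}_a(m) \in T_m M = \ker F'(m)$ for $m \in M$, it vanishes on $(U' \cap M) \times \mathbb{R}^n$. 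Because $F$ is a submersion defining $M$ locally, Hadamard's lemma (or a straightening-of-submersion change of coordinates in which $F$ becomes projection onto the last $N-d$ coordinates) yields a smooth map $H(x,a)$, linear in $a$, with $G(x,a) = H(x,a)\,F(x)$. Substituting back produces the linear ODE
\[
\dot{\phi}(t) = H(x(t),\dot z(t))\,\phi(t), \qquad \phi(t_0) = 0,
\]
whose unique solution is $\phi \equiv 0$. Hence $F(x(t)) = 0$, i.e.\ $x(t) \in M$, on a neighborhood of $t_0$, proving openness.

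The main technical point is the Hadamard-type factorization of $G$ through $F$; this is where the surjectivity of $F'$ (which makes $M$ a transverse zero set of $F$) is essential. With the clopen argument yielding $J = [0,T]$, the identification $\tilde Y_a|_M = Y_a$ at each point along the trajectory gives the claimed intrinsic equation $\dot x(t) = Y_{\dot z(t)}(x(t))$ and completes the proof.
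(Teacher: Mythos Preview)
Your argument is correct, and the clopen strategy is essentially the same scaffolding as the paper's first-exit-time contradiction (which is just the contrapositive way of packaging closedness and openness of $J$). The genuine difference lies in how you prove the \emph{forward-invariance} step, i.e.\ openness of $J$.

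The paper argues as follows: once $x(t_0)\in M$, it invokes the local existence theorem for ODEs \emph{on the manifold} $M$ to produce a solution $y:[t_0,t_0+\varepsilon]\to M$ of $\dot y=Y_{\dot z}(y)$, then notes that $y$ also solves the ambient ODE $\dot y=\tilde Y_{\dot z}(y)$ (because $\tilde Y=Y$ on $M$), and concludes $x=y$ on that interval by uniqueness in $E$. Your argument instead works entirely in the ambient space: you differentiate $\phi=F\circ x$, factor $F'(x)\tilde Y_a(x)=H(x,a)F(x)$ via Hadamard's lemma using that $F$ is a submersion, and kill $\phi$ with linear ODE uniqueness. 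This avoids appealing to ODE theory on $M$ and replaces it with a purely extrinsic computation plus a standard factorization lemma. The paper's route is a bit more geometric and reusable (it is the template used again in the rough setting in Theorem~\ref{the.4.2}); yours is more self-contained and makes the role of the tangency hypothesis $\tilde Y_a(m)\in T_mM=\ker F'(m)$ completely transparent. Both are perfectly fine here.
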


\begin{proof}
By replacing $\mathcal{V}$ by $\mathcal{V\cap U}$ we may assume that
$\mathcal{V}\subset U.$ For the sake of contradiction, suppose that $x\left(
\left[  0,T\right]  \right)  $ is not contained in $M$ and let $\tau
=\inf\left\{  t\in\left[  0,T\right]  :x\left(  t\right)  \notin M\right\}  $
be the first exit time of $x_{\left(  \cdot\right)  }$ from $M.$ Since
$x\left(  0\right)  =x_{0}\in M$ and $x\left(  t\right)  \in M$ for all $0\leq
t<\tau$ if $\tau>0$ we may conclude that $x\left(  \tau\right)  \in
\overline{\mathcal{V\cap}M}^{E}\subset M.$ As $x\left(  \left[  0,T\right]
\right)  $ is not contained in $M$ we may now conclude that $\tau<T.$

By the local existence theorem for the ODEs, there exists an $\varepsilon>0$
and a solution $y:\left[  \tau,\tau+\varepsilon\right]  \rightarrow M$ solving%
\[
\dot{y}\left(  t\right)  =Y_{\dot{z}\left(  t\right)  }\left(  y\left(
t\right)  \right)  \text{ with }y\left(  \tau\right)  =x\left(  \tau\right)
.
\]
The function $\tilde{x}:\left[  0,\tau+\varepsilon\right]  \rightarrow M$
defined by
\[
\tilde{x}\left(  t\right)  :=\left\{
\begin{array}
[c]{ccc}%
x\left(  t\right)  & \text{if} & 0\leq t\leq\tau\\
y\left(  t\right)  & \text{if} & \tau\leq t\leq\tau+\varepsilon
\end{array}
\right.
\]
then solves Eq. (\ref{equ.4.18}) on $\left[  0,\tau+\varepsilon\right]  $ and
hence by uniqueness, $x\left(  t\right)  =\tilde{x}\left(  t\right)  $ for
$0\leq t\leq\tau+\varepsilon.$ This however shows $x\left(  t\right)  \in M$
for $0\leq t\leq\tau+\varepsilon$ which contradicts the definition of $\tau.$
\end{proof}

\begin{lemma}
\label{lem.4.15}If $K$ is a compact subset of $M,$ there exists an open
neighborhood $\mathcal{V}$ in $E$ containing $K$ such that $\overline
{M\cap\mathcal{V}}^{E}\subset M.$
\end{lemma}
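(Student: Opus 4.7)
The plan is to exploit the fact that, by Definition \ref{def.3.1}, $M$ is locally cut out by a smooth submersion and hence is locally closed in $E$; compactness of $K$ then allows us to patch these local observations together into a single neighborhood $\mathcal{V}$.

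First, for each $m \in K$ apply Definition \ref{def.3.1} to obtain an open neighborhood $U_{m}$ of $m$ in $E$ and a local defining function $F_{m}:U_{m}\rightarrow\mathbb{R}^{N-d}$ with $M\cap U_{m}=F_{m}^{-1}(\{0\})$. Shrink if necessary so that we may choose an open ball $B_{m}$ in $E$ centered at $m$ with closure $\overline{B_{m}}^{E}\subset U_{m}$. By compactness of $K$, extract a finite subcover $B_{m_{1}},\dots,B_{m_{k}}$ of $K$ and set
\[
\mathcal{V}:=\bigcup_{i=1}^{k}B_{m_{i}},
\]
which is an open neighborhood of $K$ in $E$.

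Then I would verify that $\overline{M\cap\mathcal{V}}^{E}\subset M$. Let $x\in\overline{M\cap\mathcal{V}}^{E}$ and choose a sequence $x_{n}\in M\cap\mathcal{V}$ with $x_{n}\rightarrow x$. Since there are only finitely many $B_{m_{i}}$, the pigeonhole principle yields an index $i$ and a subsequence $x_{n_{\ell}}$ entirely contained in $B_{m_{i}}$. Because $\overline{B_{m_{i}}}^{E}\subset U_{m_{i}}$, the limit $x$ lies in $U_{m_{i}}$, and by continuity of $F_{m_{i}}$,
\[
F_{m_{i}}(x)=\lim_{\ell\rightarrow\infty}F_{m_{i}}(x_{n_{\ell}})=0,
\]
so $x\in M\cap U_{m_{i}}\subset M$, as required.

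The argument is elementary and I do not anticipate any real obstacle; the only point to watch is that the balls $B_{m}$ must be chosen small enough that $\overline{B_{m}}^{E}\subset U_{m}$, which is what converts the merely \emph{relative} closedness of $M$ in each $U_{m}$ into an honest closedness statement for the finite union.
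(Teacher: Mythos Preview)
Your proof is correct and follows essentially the same approach as the paper: cover $K$ by precompact open sets whose closures lie inside the domains of local defining functions, extract a finite subcover by compactness, and use continuity of the defining functions to see that limit points are still zeros of $F$. The only cosmetic difference is in the last step: the paper invokes the set identity $\overline{\bigcup_{i}(M\cap\mathcal{V}_{x_i})}^{E}=\bigcup_{i}\overline{M\cap\mathcal{V}_{x_i}}^{E}$ for finite unions, whereas you reach the same conclusion via a sequential pigeonhole argument.
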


\begin{proof}
First suppose $K=\left\{  x\right\}  \subset M.$ Let $F:U\rightarrow
\mathbb{R}^{N-d}$ be a local defining function for $M$ so that $x\in U$ and
$U\cap M=\left\{  F=0\right\}  .$ Let $\mathcal{V}_{x}$ be a precompact open
neighborhood of $x$ in $E$ so that $\mathcal{\bar{V}}_{x}\subset U.$ Since
$\overline{M\cap\mathcal{V}_{x}}^{E}\subset\overline{\mathcal{V}_{x}}%
^{E}\subset U$ and $F\equiv0$ on $M\cap\mathcal{V}_{x}$ it follows by
continuity that $F=0$ on $\overline{M\cap\mathcal{V}_{x}}^{E}$ from which it
follows that $\overline{M\cap\mathcal{V}_{x}}^{E}\subset\left\{  F=0\right\}
\subset M.$

If $K$ is a general compact subset of $M,$ to each $x\in K$ there exists a
precompact open neighborhood of $\mathcal{V}_{x}$ in $E$ with $x\in
\mathcal{V}_{x}$ and $\overline{M\cap\mathcal{V}_{x}}^{E}\subset M.$ Since $K$
is compact there is a finite subset, $\Lambda\subset K,$ such that
$\mathcal{V}:=\cup_{x\in\Lambda}\mathcal{V}_{x}$ contains $K.$ This is the
desired open set in $E$ since
\[
\overline{M\cap\mathcal{V}}^{E}=\overline{\cup_{x\in\Lambda}M\cap
\mathcal{V}_{x}}^{E}=\cup_{x\in\Lambda}\overline{M\cap\mathcal{V}_{x}}%
^{E}\subset M.
\]

\end{proof}

\begin{lemma}
\label{lem.4.16}Let $\mathbb{R}^{n}\ni a\rightarrow Y_{a}\in\Gamma\left(
TM\right)  $ be a linear map, $\mathbf{Z\in}G_{p}\left(  \mathbb{R}%
^{n}\right)  ,$ and suppose $z^{k}:\left[  0,T\right]  \rightarrow
\mathbb{R}^{n}$ are smooth functions such that $S_{2}\left(  z^{k}\right)
\rightarrow\mathbf{Z}$ in rough $p-$variation metric, see Eq. (\ref{equ.A.1}).
Assume $\mathbf{X\in}WG_{p}\left(  M\right)  $ satisfies the RDE
$d\mathbf{X}=Y_{d\mathbf{Z}}\left(  x\right)  $ with starting point
$x_{0}=z_{0}.$ Then, for $k$ sufficiently large, there exists smooth functions
$x^{k}:\left[  0,T\right]  \rightarrow M$ (note: taking values in $M$)
satisfying
\begin{equation}
\dot{x}^{k}\left(  t\right)  =Y_{\dot{z}^{k}\left(  t\right)  }\left(
x\left(  t\right)  \right)  \text{ with }x^{k}\left(  0\right)  =z_{0},
\label{equ.4.19}%
\end{equation}
and moreover such that $S_{2}\left(  x^{k}\right)  $ converges to $\mathbf{X}$
in $WG_{p}\left(  M\right)  $. Consequently $\mathbf{X\in}G_{p}\left(
M\right)  .$
\end{lemma}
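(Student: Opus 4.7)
The plan is to transfer the smooth-approximation machinery from the ambient Euclidean theory back to $M$ by choosing a suitable extension of the vector fields and invoking the universal limit theorem together with Lemmas \ref{lem.4.14} and \ref{lem.4.15}.

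First I would set up the extension exactly as in the proof of Theorem \ref{the.4.2}. Cover the compact trace $x\left([0,T]\right)\subset M$ by finitely many coordinate neighborhoods with local defining functions $F_i$, use a tubular neighborhood projection $\pi$ to an open neighborhood $U$ of $x\left([0,T]\right)$ in $E$, and define $\tilde{Y}_a:=P_F\left[Y_a\circ\pi\right]$ on $U$, then multiply by a compactly supported cutoff to extend $\tilde{Y}_a$ to all of $E$. By construction $\tilde{Y}_a\left(m\right)\in T_mM$ for $m\in M$, and $\tilde{Y}_a=Y_a$ on $M\cap U$, so $\mathbf{X}$ is the unique $WG_p(E)$-solution of the ambient RDE $d\mathbf{X}=\tilde{Y}_{d\mathbf{Z}}(x)$ with $x_0=z_0$.

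Next I would solve the classical smooth ODEs in the ambient space: for each $k$, let $\tilde{x}^k:[0,T]\rightarrow E$ denote the (global, since $\tilde{Y}$ now has compact support) solution of $\dot{\tilde{x}}^k=\tilde{Y}_{\dot{z}^k}\left(\tilde{x}^k\right)$ with $\tilde{x}^k(0)=z_0$. Because the driver is smooth and of bounded variation, $S_2\left(\tilde{x}^k\right)$ \emph{is} the rough solution of the RDE $d\mathbf{X}=\tilde{Y}_{d S_2(z^k)}(x)$, $x(0)=z_0$. By the universal limit theorem (cf.\ Theorem \ref{the.2.16} and the continuity remarks following it), since $S_2\left(z^k\right)\rightarrow\mathbf{Z}$ in rough $p$-variation, we get $S_2\left(\tilde{x}^k\right)\rightarrow\mathbf{X}$ in rough $p$-variation, and in particular $\tilde{x}^k\rightarrow x$ uniformly on $[0,T]$.

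Now I would argue that $\tilde{x}^k$ actually takes values in $M$ for $k$ large, which is where the main obstacle lies. The compact set $K:=x\left([0,T]\right)\subset M$ admits, by Lemma \ref{lem.4.15}, an open neighborhood $\mathcal{V}\subset E$ with $\overline{M\cap\mathcal{V}}^{E}\subset M$. By the uniform convergence $\tilde{x}^k\rightarrow x$, there exists $k_0$ such that $\tilde{x}^k\left([0,T]\right)\subset\mathcal{V}$ for all $k\geq k_0$. Applying Lemma \ref{lem.4.14} to $\tilde{x}^k$ with this $\mathcal{V}$ (and the extension $\tilde{Y}$, which satisfies the tangency hypothesis $\tilde{Y}_a(m)\in T_mM$ for $m\in M$) yields $\tilde{x}^k\left([0,T]\right)\subset M$ and $\dot{\tilde{x}}^k=Y_{\dot{z}^k}\left(\tilde{x}^k\right)$ in the intrinsic sense. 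Thus $x^k:=\tilde{x}^k$ (for $k\geq k_0$) are smooth $M$-valued solutions of (\ref{equ.4.19}).

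Finally, since $S_2\left(x^k\right)=S_2\left(\tilde{x}^k\right)\rightarrow\mathbf{X}$ in rough $p$-variation on $E$, and each $x^k$ has trace in $M$, Definition \ref{def.3.15} gives $\mathbf{X}\in G_p\left(M\right)$. The main subtlety is precisely the step of confining the approximating paths to $M$: without Lemmas \ref{lem.4.14}--\ref{lem.4.15} one could imagine $\tilde{x}^k$ drifting off $M$ slightly, but uniform closeness to $x$ combined with the fact that $\tilde{Y}$ is tangent to $M$ along $M$ prevents this for $k$ large.
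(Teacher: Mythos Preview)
Your proposal is correct and follows essentially the same route as the paper's own proof: extend the vector fields to compactly supported ambient vector fields tangent to $M$ along $M$, solve the smooth ODEs in $E$, apply the universal limit theorem to get uniform convergence $\tilde{x}^k\to x$, use Lemma~\ref{lem.4.15} to produce the neighborhood $\mathcal{V}$, and then invoke Lemma~\ref{lem.4.14} to force $\tilde{x}^k$ into $M$ for large $k$. Your ordering of the steps is in fact cleaner than the paper's exposition, which first mentions Lemma~\ref{lem.4.14} before having established that the approximating traces lie in $\mathcal{V}$; the only minor difference is that the paper builds the extension via Remark~\ref{rem.3.2} and a partition of unity rather than the explicit $P_F[Y_a\circ\pi]$ formula, but both constructions serve the same purpose.
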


\begin{proof}
By Remark \ref{rem.3.2} and a partition of unity argument we may find an open
neighborhood $U$ of $M$ in $E$ and a linear map $\mathbb{R}^{n}\ni
a\rightarrow\hat{Y}_{a}\in\Gamma\left(  TU\right)  $ such that $\hat{Y}%
_{a}=Y_{a}$ on $M.$ By Lemma \ref{lem.4.15} there exists a precompact open
neighborhood $\mathcal{V}$ in $E$ containing $K=x\left(  \left[  0,T\right]
\right)  $ such that $\overline{M\cap\mathcal{V}}^{E}\subset M.$ By replacing
$\mathcal{V}$ by $\mathcal{V\cap U}$ we may assume that $\mathcal{V}\subset
U.$ We can then find a linear map $\mathbb{R}^{n}\ni a\rightarrow\tilde{Y}%
_{a}\in\Gamma\left(  TU\right)  $, such that $\tilde{Y}_{a}=\hat{Y}_{a}$ on
$\mathcal{V}$ and the vector fields $\tilde{Y}_{a}$ have compact support. As
$x\left(  \left[  0,T\right]  \right)  \subset\mathcal{V}$ and $\mathbf{X}$
solves $d\mathbf{X}=Y_{d\mathbf{Z}}\left(  x\right)  ,$ it follows that
$\mathbf{X}$ also solves $d\mathbf{X}=\tilde{Y}_{d\mathbf{Z}}\left(  x\right)
.$ By Lemma \ref{lem.4.14} we know the equations $\dot{x}^{k}\left(  t\right)
=\tilde{Y}_{\dot{z}^{k}\left(  t\right)  }\left(  x\left(  t\right)  \right)
,$ $x^{k}\left(  0\right)  =z_{0}$ have (global) solutions $x^{k}\left(
t\right)  \in M$ for all $0\leq t\leq T.$ In addition, it follows by the
universal limit theorem (Theorem 5.3 of \cite{LCL}) that solutions to the
differential equations,
\[
d\mathbf{X}^{k}=\tilde{Y}_{dS_{2}\left(  z_{k}\right)  }\left(  x\right)
\text{ with }x^{k}\left(  0\right)  =z_{0}%
\]
satisfy $S_{2}\left(  x^{k}\right)  \rightarrow\mathbf{X}$ in $p$ -- variation
as $k\rightarrow\infty$ and hence $x^{k}\rightarrow x$ uniformly. Therefore,
for sufficiently large $k,$ it follows that $x^{k}\left(  t\right)
\in\mathcal{V}$ for all $0\leq t\leq T$ and hence $x^{k}\left(  t\right)  \in
M$ (Lemma \ref{lem.4.14}). Since $Y_{a}=\tilde{Y}_{a}$ on $\mathcal{V\cap M}$
we conclude that $x_{k}$ solve $\left(  \ref{equ.4.19}\right)  $ as required.
\end{proof}

\begin{theorem}
\label{the.4.17}For all $p^{\prime}>p\geq1$ we have $G_{p}\left(  M\right)
\subseteq WG_{p}\left(  M\right)  \subseteq G_{p^{\prime}}\left(  M\right)  .$
\end{theorem}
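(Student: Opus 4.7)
The plan is to use the first inclusion from Proposition \ref{pro.3.26}(1) directly and then reduce the second inclusion to the smooth-approximation machinery of Lemma \ref{lem.4.16}. Since $G_{p}(M)\subseteq WG_{p}(M)$ is already established in Proposition \ref{pro.3.26}(1), the real content is the second inclusion: given $\mathbf{X}\in WG_{p}(M)$ and $p'>p$, I must exhibit a sequence of smooth paths $x^{k}:[0,T]\to M$ (taking values in $M$) with $S_{2}(x^{k})\to\mathbf{X}$ in the $p'$-variation rough path metric on $E$.

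My strategy is to feed $\mathbf{X}$ into the projection dynamical system and then invoke Lemma \ref{lem.4.16} at level $p'$. Concretely, let $V_{a}(x):=P_{x}a$ be the tangent vector fields of Example \ref{exa.3.7}, so $V:\mathbb{R}^{N}\to\Gamma(TM)$ is linear. By the consistency Proposition \ref{pro.4.13}, $\mathbf{X}$ itself is the unique solution to the RDE $d\mathbf{X}_{t}=V_{d\mathbf{X}_{t}}(x_{t})$ with $x_{0}=z_{0}$; the driver and the solution coincide. Next, since $\dim E<\infty$ and $p<p'$, Remark \ref{rem.2.9} gives $\mathbf{X}\in WG_{p}(E)\subseteq G_{p'}(E)$, so there exist smooth paths $z^{k}:[0,T]\to E$ with $S_{2}(z^{k})\to\mathbf{X}$ in $p'$-variation. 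Applying Lemma \ref{lem.4.16} at index $p'$ with the linear map $V$, the driver $\mathbf{Z}=\mathbf{X}$, and approximations $z^{k}$ produces, for all sufficiently large $k$, smooth paths $x^{k}:[0,T]\to M$ with $\dot{x}^{k}=P_{x^{k}}\dot{z}^{k}$ and $S_{2}(x^{k})\to\mathbf{X}$ in $p'$-variation. This exhibits $\mathbf{X}\in G_{p'}(M)$ as required.

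The one subtlety, which I expect to be the main obstacle, is that Lemma \ref{lem.4.16} and Proposition \ref{pro.4.13} must be applied at the exponent $p'$ rather than at $p$, and this requires knowing $\mathbf{X}\in WG_{p'}(M)$. The algebraic symmetry identity \eqref{equ.2.6} is independent of the exponent, and the $p$-variation bound \eqref{equ.2.2} trivially implies the $p'$-variation bound since $p'>p$, so $\mathbf{X}\in WG_{p'}(E)$. For the remaining manifold constraint I use the projection characterisation of Proposition \ref{pro.3.35}: it suffices to verify $(I_{E}\otimes Q(x_{s}))\mathbb{X}_{s,t}\simeq 0$ at level $p'$. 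But $\mathbf{X}\in WG_{p}(M)$ gives a local bound $|(I_{E}\otimes Q(x_{s}))\mathbb{X}_{s,t}|\leq C\,\omega(s,t)^{3/p}$, and since $\omega(s,t)\to 0$ as $|t-s|\to 0$ and $3/p>3/p'$, this local bound at exponent $p$ automatically upgrades, on a possibly smaller time-scale $\delta$ (which is exactly what Notation \ref{not.2.10} accommodates), to the corresponding bound at exponent $p'$. Hence $\mathbf{X}\in WG_{p'}(M)$ and the argument above is consistent; everything else is a clean assembly of Remark \ref{rem.2.9}, Proposition \ref{pro.3.26}, Proposition \ref{pro.4.13}, and Lemma \ref{lem.4.16}.
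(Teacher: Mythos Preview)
Your proof is correct and follows essentially the same route as the paper: invoke Proposition~\ref{pro.3.26}(1) for $G_{p}(M)\subseteq WG_{p}(M)$, then for $WG_{p}(M)\subseteq G_{p'}(M)$ use that $\mathbf{X}\in WG_{p}(E)\subset G_{p'}(E)$, that $\mathbf{X}$ solves its own projection RDE (Proposition~\ref{pro.4.13}), and conclude via Lemma~\ref{lem.4.16} applied at level $p'$. Your added verification that $\mathbf{X}\in WG_{p'}(M)$ (needed to apply Proposition~\ref{pro.4.13} and Lemma~\ref{lem.4.16} at level $p'$) makes explicit a step the paper leaves implicit; a slightly cleaner way to phrase it is to replace the control $\omega$ by $\omega':=\omega^{p'/p}$, which is again a control and for which $\omega'^{k/p'}=\omega^{k/p}$ for $k=1,2,3$, so all the $p$-level bounds (including the projection constraint of Proposition~\ref{pro.3.35}) transfer verbatim.
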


\begin{proof}
We have already demonstrated the first containment in Corollary \ref{cor.3.32}%
. Suppose~now $\mathbf{Z}\in WG_{p}\left(  M\right)  $, then in particular
$\mathbf{Z}\in WG_{p}\left(  E\right)  $ and hence by classical results (see
Corollary 8.24 of \cite{FV}) $\mathbf{Z}$ belongs to $G_{p^{\prime}}\left(
E\right)  .$ By Proposition \ref{pro.4.13}, $\mathbf{Z}$ solves the RDE,%
\begin{equation}
d\mathbf{Z}=V_{d\mathbf{Z}}\left(  z\right)  =P\left(  z\right)
d\mathbf{Z},\text{ with }z_{0}\in M. \label{equ.4.20}%
\end{equation}
Consequently by Lemma \ref{lem.4.16}, $\mathbf{Z}\in WG_{p}\left(  M\right)
.$
\end{proof}

We conclude the section with the following theorem summarizes three equivalent
characterizations of weakly geometric rough paths on manifolds. \ We
reemphasize that $WG_{p}\left(  M\right)  $ are precisely those rough paths in
$WG_{p}\left(  E\right)  $ that consistently integrate finite dimensional
vector space valued one forms $\alpha\in\Omega^{1}\left(  M,W\right)  .$

\begin{theorem}
[Characterisation of $WG_{p}\left(  M\right)  $]\label{the.4.18}If
$\mathbf{Z\in}WG_{p}\left(  E\right)  ,$ then the following are equivalent:

\begin{enumerate}
\item $\mathbf{Z\in}WG_{p}\left(  M\right)  .$

\item The trace $z$ of $\mathbf{Z\in}WG_{p}\left(  E\right)  $ is in $M$\ and
further satisfies, for all finite dimensional vector spaces $W,$%
\[
\int\hat{\alpha}\left(  d\mathbf{Z}\right)  =\int\tilde{\alpha}\left(
d\mathbf{Z}\right)
\]
for any $\hat{\alpha},\tilde{\alpha}\in\Omega^{1}\left(  E,W\right)  $ such
that $\hat{\alpha}=\alpha$ on $TM.$

\item The trace $z$ of $\mathbf{Z\in}WG_{p}\left(  E\right)  $ is in $M$ and
$Q_{x_{s}}\otimes I\mathbb{X}_{st}\simeq0$ for $0\leq s\leq t\leq T.$

\item The starting point, $z_{0},$ is in $M$ and $\mathbf{Z}$ solves the
projection equation $\left(  \ref{equ.4.15}\right)  .$
\end{enumerate}
\end{theorem}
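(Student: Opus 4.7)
The plan is to prove the three equivalences $(1)\Leftrightarrow(3)$, $(1)\Leftrightarrow(2)$, and $(1)\Leftrightarrow(4)$; taken together these establish the theorem. Most of the substantive work has already been carried out in earlier sections, so much of the argument amounts to stitching together prior results.

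For $(1)\Leftrightarrow(3)$, Proposition~\ref{pro.3.35} characterises $WG_p(M)\subseteq WG_p(E)$ as those rough paths whose trace lies in $M$ and which satisfy any one of the equivalent conditions in Corollary~\ref{cor.3.32}. Remark~\ref{rem.3.33} then gives the equivalence between $(I_E\otimes Q(x_s))\mathbb{X}_{s,t}\simeq 0$ and $(Q(x_s)\otimes I_E)\mathbb{X}_{s,t}\simeq 0$, the latter being condition (3). For $(1)\Leftrightarrow(2)$, the forward direction is an immediate consequence of Lemma~\ref{lem.3.23}: given $\hat{\alpha},\tilde{\alpha}\in\Omega^1(E,W)$ agreeing on $TM$, the difference $\tilde{\alpha}-\hat{\alpha}$ vanishes on $TM$, so $\int(\tilde{\alpha}-\hat{\alpha})(d\mathbf{Z})\equiv 0$ by Definition~\ref{def.3.17}, giving equality of the two integrals. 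Conversely, given $\tilde{\alpha}\in\Omega^1(E,W)$ with $\tilde{\alpha}|_{TM}\equiv 0$, apply (2) with $\hat{\alpha}=0$ to conclude $\int\tilde{\alpha}(d\mathbf{Z})=\int 0(d\mathbf{Z})=0$, which is exactly Definition~\ref{def.3.17}.

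For $(1)\Leftrightarrow(4)$, the forward direction is precisely Proposition~\ref{pro.4.13} (Consistency). The converse $(4)\Rightarrow(1)$ is the main step of the proof, and proceeds as follows. Apply Theorem~\ref{the.4.2} with $Y_a=V_a$ to obtain a unique (a priori possibly only local-in-time) solution $\mathbf{X}\in WG_p(M)$ of the projection RDE with $x_0=z_0$. Extending $V$ smoothly to an open neighborhood of $M$ in $E$ via Remark~\ref{rem.3.2}, both $\mathbf{X}$ and $\mathbf{Z}$ solve the same $E$-valued RDE in $WG_p(E)$; hence by the uniqueness part of Theorem~\ref{the.2.16}, localised via Corollary~\ref{cor.2.17}, we have $\mathbf{X}=\mathbf{Z}$ on the interval of existence of $\mathbf{X}$. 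The principal obstacle is ruling out explosion of $\mathbf{X}$ before time $T$: here the assumed global existence of $\mathbf{Z}$ in $WG_p(E)$ is decisive. Since $\mathbf{X}=\mathbf{Z}$ on its interval of existence, the trace of $\mathbf{X}$ coincides with the restriction of the continuous path $z$ to $M$, and in particular never leaves a relatively compact subset of $M$; the non-explosion alternative of Theorem~\ref{the.4.2} therefore forces $\mathbf{X}$ to extend to all of $[0,T]$ inside $WG_p(M)$, whence $\mathbf{Z}=\mathbf{X}\in WG_p(M)$, as required.
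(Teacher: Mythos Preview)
Your proof is correct and follows the same route as the paper, invoking Lemma~\ref{lem.3.23} with the $\hat{\alpha}=0$ specialisation for $(1)\Leftrightarrow(2)$, Proposition~\ref{pro.3.35} (with Remark~\ref{rem.3.33}) for $(1)\Leftrightarrow(3)$, and Proposition~\ref{pro.4.13} together with Example~\ref{exa.4.12} for $(1)\Leftrightarrow(4)$. Your treatment of $(4)\Rightarrow(1)$ spells out the ambient-space uniqueness and explosion reasoning that the paper compresses into a bare citation of Example~\ref{exa.4.12}; this is sound, and in fact the non-explosion step becomes immediate once one observes that condition~(4) already forces $z_s\in M$ for every $s$ (the approximate identities involve $V_a(z_s)=P_{z_s}a$), so that $\{z_t:0\le t\le T\}$ is a compact subset of $M$.
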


\begin{proof}
Lemma \ref{lem.3.23} shows 1. implies 2. and taking $\hat{\alpha}=0$ in item
2. shows $\mathbf{Z}$ satisfies Definition \ref{def.3.17} and so items 1. and
2. are equivalent. The equivalence of items 1. and 3. is the content of
Proposition \ref{pro.3.35}. The equivalence of items 1. and 4. follows from
Example \ref{exa.4.12} and Proposition \ref{pro.4.13}.
\end{proof}

\subsection{Right invariant RDE's on Lie groups\label{sub.4.3}}

To illustrate some of the results above we are going to consider RDEs on a Lie
group $G$ relative to right invariant vector fields. We assume, as is always
possible, that $G$ is embedded in some Euclidean space $\mathbb{R}^{N}.$
Although we will be using the results above we will not need to know any
information about the embedding other than it exists.

\begin{definition}
\label{def.4.19}To each Lie group $G$ with Lie algebra $\mathfrak{g}%
:=\operatorname*{Lie}\left(  G\right)  ,$ let $Y^{G}:\mathfrak{g}%
\rightarrow\Gamma\left(  TG\right)  $ be the linear map defined by,
\begin{equation}
\left(  Y_{\xi}^{G}\right)  \left(  g\right)  :=-\hat{\xi}\left(  g\right)
:=-\frac{d}{dt}|_{0}e^{t\xi}g, \label{equ.4.21}%
\end{equation}
i.e. $Y_{\xi}^{G}$ is the right invariant vector field on $G$ such that
$Y_{\xi}^{G}\left(  e\right)  =-\xi.$
\end{definition}

\begin{theorem}
[Global Solutions to Right Invariant RDEs]\label{the.4.20}To each
$\mathbf{A}=\left(  a,\mathbb{A}\right)  \in WG_{p}\left(  \mathfrak{g,\omega
}\right)  $ there exists a (unique) \textbf{global }solution $\mathbf{G}%
=\left(  g,\mathbb{G}\right)  $ to the RDE,%
\begin{equation}
d\mathbf{G}=\mathcal{Y}_{d\mathbf{A}}^{G}\left(  g\right)  \text{ with }%
g_{0}=e\in G, \label{equ.4.22}%
\end{equation}

\end{theorem}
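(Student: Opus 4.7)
The plan is to construct a global solution by concatenating local solutions on intervals of uniform length, using the right-invariance of the vector fields $Y^G_\xi$ to produce a uniform lower bound on the local existence time. Theorem \ref{the.4.2} already provides local existence and uniqueness; the only real work is to make the local existence time uniform in the starting point so that a finite number of concatenations covers $[0,T]$.

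The key observation is that for each $h\in G$ the right translation $R_h:G\to G$, $R_h(g):=gh$, is a smooth diffeomorphism and that $Y^G$ is $R_h$-related to itself in the sense of Definition \ref{def.4.10}. Indeed, from $Y^G_\xi(g)=-\tfrac{d}{dt}|_0 e^{t\xi}g$ one checks using the curve $\sigma(t)=e^{-t\xi}g$ (which has $\sigma(0)=g$ and $\dot\sigma(0)=Y^G_\xi(g)$) that $(R_h)_*Y^G_\xi(g)=\tfrac{d}{dt}|_0 e^{-t\xi}gh=Y^G_\xi(gh)$. By Theorem \ref{the.4.11}, if $\tilde{\mathbf{G}}$ solves the RDE starting at $e$ at some time $t_0$, then $(R_h)_*\tilde{\mathbf{G}}$ solves the same RDE starting at $h$ at time $t_0$.

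Next, I would apply Corollary \ref{cor.2.17} to a suitable extension of $Y^G$ to an ambient open neighborhood of a precompact open set around $e\in G$ (obtained by composing with the normal projection, as in the proof of Theorem \ref{the.4.2}) to produce a $\delta_0>0$, uniform in $t_0\in[0,T]$, such that the RDE admits a unique solution on $[t_0,(t_0+\delta_0)\wedge T]$ starting at $e$ and lying in $G$. Pushing forward by $R_h$ transports this solution to one starting at any $h\in G$, valid on the same interval, and Proposition \ref{pro.3.38} guarantees the pushed-forward object lies in $WG_p(G)$. Hence the existence time is at least $\delta_0$, uniformly in the starting point in $G$.

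Finally, I would choose a partition $0=t_0<t_1<\dots<t_n=T$ with $t_k-t_{k-1}<\delta_0$, inductively solve the RDE on each overlapping subinterval $J_k=[t_{k-1},(t_k+\delta_0)\wedge T]$ with starting value the value of the previous piece at time $t_{k-1}$, and concatenate via the gluing procedure of Remark \ref{rem.3.36} to obtain the required $\mathbf{G}\in WG_p([0,T],G)$ with $g_0=e$. Uniqueness on the full interval follows from Theorem \ref{the.4.2} applied on each piece. The main obstacle is the careful verification of the $R_h$-relatedness condition in Definition \ref{def.4.10} so that Theorem \ref{the.4.11} can be invoked, together with arranging the ambient extension of $Y^G$ so that Corollary \ref{cor.2.17} produces solutions that remain in $G$; both steps are essentially the same constructions already used in the proof of Theorem \ref{the.4.2}.
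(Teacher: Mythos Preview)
Your proposal is correct and follows essentially the same approach as the paper: obtain a uniform local existence time from the identity via Corollary \ref{cor.2.17}, transport it to arbitrary starting points using right-invariance together with Theorem \ref{the.4.11}, and then patch the local solutions together. The only cosmetic differences are that the paper phrases the argument as a proof by contradiction (extending past a putative explosion time $\tau$) rather than as a direct partition-and-concatenate construction, and it invokes the RDE concatenation Lemma \ref{lem.A.2} rather than Remark \ref{rem.3.36} for the gluing step.
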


\begin{proof}
According to Theorem \ref{the.4.2}, Eq. (\ref{equ.4.22}) either has a solution
on all of $\left[  0,T\right]  $ in which case we are done or there is a
$\tau\in(0,T]$ such that the solution $\mathbf{G}$ exists on $[0,\tau)$ while
$\overline{\left\{  g_{t}:0\leq t<\tau\right\}  }^{G}$ is not compact. To
finish the proof we need only rule out the second case.

By Corollary \ref{cor.2.17}, we may find and $\varepsilon>0$ such that for any
$t_{0}\in\left[  0,T\right]  $ there is a solution $\mathbf{H}=\left(
h,\mathbb{H}\right)  $ on $\left[  t_{0},\min\left(  t_{0}+\epsilon,T\right)
\right]  $ to the RDE, $d\mathbf{H}=\mathcal{Y}_{d\mathbf{A}}^{G}\left(
h\right)  $ with $h_{t_{0}}=e\in G.$ For $u\in G$ let $R_{u}:G\rightarrow G$
be the diffeomorphism of $G$ given by $R_{u}x=xu$ for all $x\in G.$ By its
very definition, we have $R_{u\ast}Y_{\xi}^{u}=Y_{\xi}^{u}\circ R_{u}$ and so
by an application of Theorem \ref{the.4.11} it follows that $\mathbf{K=}%
\left(  k,\mathbb{K}\right)  :=\left(  R_{u}\right)  _{\ast}\left(
\mathbf{H}\right)  $ solves $d\mathbf{K}=\mathcal{Y}_{d\mathbf{A}}^{G}\left(
k\right)  $ with $k_{t_{0}}=u$ on $\left[  t_{0},\min\left(  t_{0}%
+\epsilon,T\right)  \right]  .$

Choose $t_{0}\in\left(  \max\left\{  0,\tau-\varepsilon/2\right\}
,\tau\right)  $ and apply the above result with $u=g_{t_{0}}$ in order to
produce a weakly geometric rough path, $\mathbf{K=}\left(  k,\mathbb{K}%
\right)  ,$ on $\left[  t_{0},\min\left(  \tau+\epsilon/2,T\right)  \right]  $
solving $d\mathbf{K}=\mathcal{Y}_{d\mathbf{A}}^{G}\left(  k\right)  $ with
$k_{t_{0}}=g_{t_{0}}.$ An application of Lemma \ref{lem.A.2} (easily adapted
to RDE on manifolds) shows that $\mathbf{G}$ restricted to $\left[
0,t_{0}\right]  $ and $\mathbf{K}$ on $\left[  t_{0},\min\left(  \tau
+\epsilon/2,T\right)  \right]  $ may be concatenated into a weakly geometric
rough path $\mathbf{\tilde{G}}$ which solves Eq. (\ref{equ.4.22}) on $\left[
0,\min\left(  \tau+\epsilon/2,T\right)  \right]  .$ This then violates the
definition of $\tau$ and shows that Eq. (\ref{equ.4.22}) can not explode.
\end{proof}

\begin{theorem}
[Pushing forward solutions by Lie homorphisms]\label{the.4.21} Suppose that
$\rho:G\rightarrow H$ is a Lie group homomorphism and for $\mathbf{A}=\left(
a,\mathbb{A}\right)  \in WG_{p}\left(  \mathfrak{g,\omega}\right)  $ let
\begin{equation}
\mathbf{A}^{\rho}=\left(  d\rho\right)  _{\ast}\left(  \mathbf{A}\right)
=\left(  \left[  d\rho\right]  a,\left[  d\rho\otimes d\rho\right]
\mathbb{A}\right)  \in WG_{p}\left(  \mathfrak{h,\omega}\right)  .
\label{equ.4.23}%
\end{equation}
If $\mathbf{G}=\left(  g,\mathbb{G}\right)  \in WG_{p}\left(  G\right)  $ is
the unique global solution to the RDE (\ref{equ.4.22}) and $\mathbf{H}%
=\rho_{\ast}\left(  \mathbf{G}\right)  ,$ then%
\begin{equation}
d\mathbf{H}=\mathcal{Y}_{d\mathbf{A}^{\rho}}^{H}\left(  h\right)  \text{ with
}h_{0}=e_{H}\in H. \label{equ.4.24}%
\end{equation}
Moreover, if $\mathbb{G}_{s,t}^{T}:=P^{G}\left(  g_{s}\right)  \otimes
P^{G}\left(  g_{s}\right)  \mathbb{G}_{s,t}$ and $\mathbb{H}_{s,t}^{T}%
:=P^{H}\left(  h_{s}\right)  \otimes P^{H}\left(  h_{s}\right)  \mathbb{H}%
_{s,t}$ denote the tangential components of $\mathbb{G}$ and $\mathbb{H}$
respectively, then $\mathbf{H}$ may also be characterised by;%
\begin{equation}
h_{t}=\rho\left(  g_{t}\right)  \text{ and }\mathbb{H}_{s,t}^{T}\simeq\left[
\rho_{\ast}\otimes\rho_{\ast}\right]  \mathbb{G}_{s,t}^{T}. \label{equ.4.25}%
\end{equation}

\end{theorem}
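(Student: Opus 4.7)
The plan is to reduce Theorem \ref{the.4.21} to Theorem \ref{the.4.11} (push-forward of RDEs) combined with Proposition \ref{pro.3.38} (push-forward of rough paths). The essential algebraic input is that since $\rho$ is a Lie group homomorphism, the right-invariant vector fields on $G$ and $H$ are $\rho$-related in the sense of Definition \ref{def.4.10}.

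First I would introduce the auxiliary map $\tilde{Y}^H : \mathfrak{g} \to \Gamma(TH)$ defined by $\tilde{Y}^H_\xi := Y^H_{d\rho(\xi)}$. Differentiating the identity $\rho(e^{t\xi}g) = e^{td\rho(\xi)}\rho(g)$ at $t=0$ yields
\[
\rho_*\bigl(Y^G_\xi(g)\bigr) = Y^H_{d\rho(\xi)}(\rho(g)) = \tilde{Y}^H_\xi(\rho(g)),
\]
so $Y^G$ and $\tilde{Y}^H$ are $\rho$-related. Theorem \ref{the.4.11} then asserts that $\mathbf{H} := \rho_*(\mathbf{G})$ solves the intrinsic RDE $d\mathbf{H} = \tilde{\mathcal{Y}}^H_{d\mathbf{A}}(h)$ with $h_0 = \rho(e_G) = e_H$. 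A short verification shows that $\tilde{\mathcal{Y}}^H_{d\mathbf{A}}$ coincides with $\mathcal{Y}^H_{d\mathbf{A}^\rho}$: at level one, $\tilde{Y}^H_{a_{s,t}} = Y^H_{a^\rho_{s,t}}$ and $(\partial_{\tilde{Y}^H_a}\tilde{Y}^H_b)|_{a\otimes b = \mathbb{A}_{s,t}} = (\partial_{Y^H_{a'}} Y^H_{b'})|_{a'\otimes b' = \mathbb{A}^\rho_{s,t}}$, using that $d\rho\otimes d\rho$ sends $\mathbb{A}_{s,t}$ to $\mathbb{A}^\rho_{s,t}$; the level-two identity is analogous. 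This establishes (\ref{equ.4.24}).

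For (\ref{equ.4.25}), the identity $h_t = \rho(g_t)$ is immediate from item 1 of Proposition \ref{pro.3.38}. For the tangential second-level identity, the definition of the push-forward via Theorem \ref{the.2.14} gives
\[
\mathbb{H}_{s,t} \simeq \rho_*(g_s)\otimes \rho_*(g_s)\,\mathbb{G}_{s,t}.
\]
By Corollary \ref{cor.3.22} applied to $\mathbf{G}\in WG_p(G)$, one may replace $\mathbb{G}_{s,t}$ on the right by $\mathbb{G}^T_{s,t} = [P^G(g_s)\otimes P^G(g_s)]\mathbb{G}_{s,t}$. Since $\rho_*$ maps $T_{g_s}G$ into $T_{h_s}H$, the projection $P^H(h_s)\otimes P^H(h_s)$ acts as the identity on $[\rho_*\otimes \rho_*]\mathbb{G}^T_{s,t}$, yielding $\mathbb{H}^T_{s,t}\simeq [\rho_*\otimes\rho_*]\mathbb{G}^T_{s,t}$.

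I expect no serious obstacle, as the proof is essentially a conjunction of previously established push-forward results. The only mildly delicate point is the bookkeeping needed to match $\tilde{\mathcal{Y}}^H_{d\mathbf{A}}$ with $\mathcal{Y}^H_{d\mathbf{A}^\rho}$ at the second-order level, which reduces cleanly to the linearity of $d\rho$ and the universal property of $\mathcal{Y}$ recorded in Proposition \ref{pro.4.3}.
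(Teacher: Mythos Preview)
Your proposal is correct and follows essentially the same route as the paper: define the auxiliary $\mathfrak{g}$-indexed vector fields on $H$ (the paper calls them $W_\xi:=Y^H_{d\rho(\xi)}$, your $\tilde{Y}^H_\xi$), check $\rho$-relatedness, apply Theorem~\ref{the.4.11}, and then identify the resulting RDE with the one driven by $\mathbf{A}^\rho$ via linearity of $d\rho$. The only cosmetic difference is in the derivation of $\mathbb{H}^T_{s,t}\simeq[\rho_*\otimes\rho_*]\mathbb{G}^T_{s,t}$: the paper obtains it from the RDE level-two expansion $\mathbb{H}^T_{s,t}\simeq [W_{(\cdot)}(h_s)\otimes W_{(\cdot)}(h_s)]\mathbb{A}_{s,t}$ combined with $\mathbb{G}^T_{s,t}\simeq[Y^G_{(\cdot)}(g_s)\otimes Y^G_{(\cdot)}(g_s)]\mathbb{A}_{s,t}$, whereas you go directly through the push-forward integral formula and Corollary~\ref{cor.3.22}; both are valid and equally short.
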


\begin{proof}
For $\xi\in\mathfrak{g},$ let $W_{\xi}\in\Gamma\left(  TH\right)  $ be defined
by
\[
W_{\xi}\left(  h\right)  =Y_{d\rho\left(  \xi\right)  }^{H}\left(  h\right)
=\frac{d}{dt}|_{0}he^{-td\rho\left(  \xi\right)  }.
\]
A simple computation then shows $\rho_{\ast}Y_{\xi}^{G}=W_{\xi}\circ\rho$ and
therefore by Theorem \ref{the.4.11}, $\mathbf{H}\in WG_{p}\left(  H\right)  $
satisfies the RDE,%
\begin{equation}
d\mathbf{H}=\mathcal{W}_{d\mathbf{A}}\left(  h\right)  \text{ with }h_{0}%
=\rho\left(  e_{G}\right)  =e_{H}\in H. \label{equ.4.26}%
\end{equation}
Using $W_{a_{s,t}}=Y_{d\rho\left(  a_{s,t}\right)  }^{H}$ and%
\[
W_{a}W_{b}|_{a\otimes b=\mathbb{A}_{s,t}}=Y_{d\rho\left(  a\right)  }%
^{H}Y_{d\rho\left(  b\right)  }^{H}|_{a\otimes b=\mathbb{A}_{s,t}}=Y_{\alpha
}^{H}Y_{\beta}^{H}|_{\alpha\otimes\beta=d\rho\otimes d\rho\left[
\mathbb{A}_{s,t}\right]  }%
\]
along with Theorem \ref{the.4.5} one shows $\mathbf{H}$ also solves Eq.
(\ref{equ.4.24}). From Proposition \ref{pro.3.38} we know $h_{t}=\rho\left(
g_{t}\right)  $ and from Equation \ref{equ.4.2} and Corollary \ref{cor.3.22},
\[
\mathbb{H}_{s,t}^{T}\simeq W_{\left(  \cdot\right)  }\left(  h_{s}\right)
\otimes W_{\left(  \cdot\right)  }\left(  h_{s}\right)  \mathbb{A}%
_{s,t}=\left[  \rho_{\ast}Y_{\left(  \cdot\right)  }^{G}\left(  g_{s}\right)
\otimes\rho_{\ast}Y_{\left(  \cdot\right)  }^{G}\left(  g_{s}\right)  \right]
\mathbb{A}_{s,t}\simeq\left[  \rho_{\ast}\otimes\rho_{\ast}\right]
\mathbb{G}_{st}^{T}.
\]

\end{proof}

\section{Parallel Translation\label{sec.5}}

In subsection \ref{sub.5.1}, we recall the definition of parallel translation
along smooth curves in $M$ along with some of its basic properties. In order
to transfer these results to the rough path setting it is useful to introduce
the orthogonal frame bundle $\left(  O\left(  M\right)  \right)  $ over $M$
which is done in subsection \ref{sub.5.2}. The \textquotedblleft
lifting\textquotedblright\ of paths in $M$ to \textquotedblleft
horizontal\textquotedblright\ paths in $O\left(  M\right)  $ and the
relationship of these horizontal lifts to parallel translation is also
reviewed here. After this warm-up, we defined \textit{parallel translation}
along $\mathbf{X}\in WG_{p}\left(  M\right)  $ as an element $\mathbf{U}\in
WG_{p}\left(  O\left(  M\right)  \right)  $ solving a prescribed RDE on
$O\left(  M\right)  $ driven by\textbf{ }$\mathbf{X},$ see Definition
\ref{def.5.13} of subsection \ref{sub.5.3}. An alternative characterization of
the level one components of $\mathbf{U}$ is then given in Proposition
\ref{pro.5.15} which is then used to show that the RDE defining $\mathbf{U}$
exists on the full time interval, $\left[  0,T\right]  .$ It is then shown in
Theorems \ref{the.5.16} and \ref{the.5.17} that two natural classes of RDE's
on $O\left(  M\right)  $ give rise to an element $\mathbf{U}\in WG_{p}\left(
O\left(  M\right)  \right)  $ each of which is parallel translation along
$\mathbf{X}:=\pi_{\ast}\left(  \mathbf{U}\right)  $ where $\pi:O\left(
M\right)  \rightarrow M$ is the natural projection map on $O\left(  M\right)
.$

\subsection{Smooth Parallel Translation\label{sub.5.1}}

\begin{definition}
\label{def.5.1}Given smooth paths $x\left(  t\right)  \in M$ and $v\left(
t\right)  \in E$ such that $v\left(  t\right)  _{x\left(  t\right)  }\in
T_{x\left(  t\right)  }M$ for all $t,$ the\textbf{ covariant derivative }of
$v\left(  \cdot\right)  _{x\left(  \cdot\right)  }$ is defined as
\[
\frac{\nabla v\left(  t\right)  _{x\left(  t\right)  }}{dt}:=\left[  P\left(
x\left(  t\right)  \right)  \dot{v}\left(  t\right)  \right]  _{x\left(
t\right)  }=\left[  \dot{v}\left(  t\right)  +dQ\left(  \dot{x}\left(
t\right)  \right)  v\left(  t\right)  \right]  _{x\left(  t\right)  },
\]
wherein the last equality follows by differentiating the identity, $P\left(
x\left(  t\right)  \right)  v\left(  t\right)  =v\left(  t\right)  ,$ and
using $dQ=-dP.$ A path $v\left(  t\right)  _{x\left(  t\right)  }\in TM$ is
said to be \textbf{parallel }if $\frac{\nabla}{dt}\left[  v\left(  t\right)
_{x\left(  t\right)  }\right]  =0$ for all $t,$ i.e. $v\left(  t\right)  $
solves the differential equation,%
\begin{equation}
\dot{v}\left(  t\right)  +dQ\left(  \dot{x}\left(  t\right)  \right)  v\left(
t\right)  =0. \label{equ.5.1}%
\end{equation}

\end{definition}

If $v\left(  t\right)  $ solves Eq. (\ref{equ.5.1}) with $v\left(  0\right)
\in T_{x\left(  0\right)  }M$ then a simple calculation using Eq.
(\ref{equ.5.1}) and Lemma \ref{lem.3.11} shows%
\[
\frac{d}{dt}\left[  Q\left(  x\left(  t\right)  \right)  v\left(  t\right)
\right]  =dQ\left(  \dot{x}\left(  t\right)  \right)  \left[  Q\left(
x\left(  t\right)  \right)  v\left(  t\right)  \right]  \text{ with }Q\left(
x\left(  0\right)  \right)  v\left(  0\right)  =0
\]
which forces $Q\left(  x\left(  t\right)  \right)  v\left(  t\right)  =0$ by
the uniqueness theorem of linear ordinary differential equations. Moreover
using $PdQP=0$ (Lemma \ref{lem.3.11}),%
\begin{align*}
\frac{d}{dt}\left\Vert v\left(  t\right)  \right\Vert _{E}^{2}  &
=2\left\langle v\left(  t\right)  ,\dot{v}\left(  t\right)  \right\rangle
=-2\left\langle v\left(  t\right)  ,dQ\left(  \dot{x}\left(  t\right)
\right)  v\left(  t\right)  \right\rangle \\
&  =-2\left\langle P\left(  x\left(  t\right)  \right)  v\left(  t\right)
,dQ\left(  \dot{x}\left(  t\right)  \right)  P\left(  x\left(  t\right)
\right)  v\left(  t\right)  \right\rangle =0,
\end{align*}
which shows $\left\Vert v\left(  t\right)  \right\Vert _{E}=\left\Vert
v\left(  0\right)  \right\Vert _{E}.$

\begin{notation}
\label{not.5.2}Given two inner product spaces, $V$ and $W$, let $\mathrm{Iso}%
\left(  V,W\right)  $ denote the collection of isometries from $V$ to $W.$
\end{notation}

From the previous discussion, if $V$ is an inner product space and $g_{0}%
\in\mathrm{Iso}\left(  V,\tau_{x\left(  0\right)  }M\right)  ,$ then the
function $g\left(  t\right)  \in\operatorname{Hom}\left(  V,E\right)  $
solving,%
\begin{equation}
\dot{g}\left(  t\right)  +dQ\left(  \dot{x}\left(  t\right)  \right)  g\left(
t\right)  =0\text{ with }g\left(  0\right)  =g_{0}, \label{equ.5.2}%
\end{equation}
satisfies $g\left(  t\right)  \in\mathrm{Iso}\left(  V,\tau_{x\left(
t\right)  }M\right)  $ for $0\leq t\leq T.$

\begin{definition}
[Smooth Parallel Translation]\label{def.5.3}\textbf{Parallel translation}
along the smooth path $x\left(  \cdot\right)  \in M$ is the collection of
isometries, $//_{t}\left(  x\right)  :T_{x\left(  0\right)  }M\rightarrow
T_{x\left(  t\right)  }M,$ defined by%
\begin{equation}
//_{t}\left(  x\right)  v_{x\left(  0\right)  }=\left[  g\left(  t\right)
v\right]  _{x\left(  t\right)  } \label{equ.5.3}%
\end{equation}
where $g\left(  t\right)  $ solves Eq. (\ref{equ.5.2}) with $g_{0}%
=Id_{\tau_{x\left(  0\right)  }M}\in\operatorname{Hom}\left(  \tau_{x\left(
0\right)  }M,E\right)  .$
\end{definition}

\subsection{The Frame Bundle, $O\left(  M\right)  $\label{sub.5.2}}

\begin{definition}
\label{def.5.4}The \textbf{orthogonal frame bundle}, $O\left(  M\right)  ,$ is
the subset of $E\times\operatorname{Hom}\left(  \mathbb{R}^{d},E\right)  $
defined by,%
\begin{equation}
O\left(  M\right)  =\left\{  \left(  m,g\right)  :m\in M\text{ and }%
g\in\mathrm{Iso}\left(  \mathbb{R}^{d},\tau_{m}M\right)  \right\}  .
\label{equ.5.4}%
\end{equation}
Further, let $\pi:O\left(  M\right)  \rightarrow M$ be the restriction to
$O\left(  M\right)  $ of projection of $E\times\operatorname{Hom}\left(
\mathbb{R}^{d},E\right)  $ onto its first factor and set%
\begin{equation}
O_{m}\left(  M\right)  :=\pi^{-1}\left(  \left\{  m\right\}  \right)
=\left\{  m\right\}  \times\mathrm{Iso}\left(  \mathbb{R}^{d},\tau
_{m}M\right)  . \label{equ.5.5}%
\end{equation}

\end{definition}

\begin{theorem}
[Embedding the Frame Bundle]\label{the.5.5}The orthogonal frame bundle,
$O\left(  M\right)  ,$ is an embedded submanifold of $E\times
\operatorname{Hom}\left(  \mathbb{R}^{d},E\right)  .$ In fact, if
$F:U\rightarrow\mathbb{R}^{N-d}$ is a local defining function for $M,$ then
\[
G:U\times\operatorname{Hom}\left(  \mathbb{R}^{d},E\right)  \rightarrow
\mathbb{R}^{k}\times\operatorname{Hom}\left(  \mathbb{R}^{d},\mathbb{R}%
^{k}\right)  \times\mathcal{S}_{d}%
\]
defined by%
\begin{equation}
G\left(  x,g\right)  :=\left(  F\left(  x\right)  ,Q\left(  x\right)
g,g^{\ast}g-I_{d}\right)  , \label{equ.5.6}%
\end{equation}
where $\mathcal{S}_{d}$ denotes the linear subspace of $\operatorname*{End}%
\left(  \mathbb{R}^{d}\right)  $ consisting of symmetric $d\times d$ matrices
is a local defining function for $O\left(  M\right)  .$ Moreover, if $\left(
m,g\right)  \in O\left(  M\right)  ,$ then
\begin{equation}
T_{\left(  m,g\right)  }O\left(  M\right)  =\left\{  \left(  \xi,h\right)
_{\left(  m,g\right)  }:\xi\in\tau_{m}M,\text{ }Q\left(  m\right)
h=-dQ\left(  \xi_{m}\right)  g\text{ and }g^{\ast}h\in so\left(  d\right)
\right\}  , \label{equ.5.7}%
\end{equation}
where $so\left(  d\right)  $ is the vector space of $d\times d$ real skew
symmetric matrices.
\end{theorem}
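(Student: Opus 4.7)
The plan is to verify Definition~\ref{def.3.1} for $O(M)$ with $G$ as the proposed local defining function, after which the tangent space formula follows immediately by computing the kernel of $G'(m,g)$.

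First, I would check that $G^{-1}(0)=O(M)\cap(U\times\operatorname{Hom}(\mathbb{R}^d,E))$. The vanishing of the three components of $G$ translates respectively to $x\in M\cap U$, $g(\mathbb{R}^d)\subset\tau_xM$ (using that $\operatorname{Nul}(F'(x))=\operatorname{Nul}(Q(x))$ on $M$, via the identities in Remark~\ref{rem.3.5}), and $g$ being an isometry of $\mathbb{R}^d$ onto its image. A dimension count then forces $g\in\mathrm{Iso}(\mathbb{R}^d,\tau_xM)$, matching the fibre condition \ref{equ.5.5}.

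Next, I would compute
\[
G'(x,g)(\xi,h)=\bigl(F'(x)\xi,\;dQ(\xi_x)g+Q(x)h,\;g^{\ast}h+h^{\ast}g\bigr)
\]
and show surjectivity at each $(m,g)\in O(M)$ onto $\mathbb{R}^{N-d}\times\operatorname{Hom}(\mathbb{R}^d,\mathbb{R}^{N-d})\times\mathcal{S}_d$. The first slot is handled by surjectivity of $F'(m)$ (Definition~\ref{def.3.1}); given that choice of $\xi$, the normal part $Q(m)h$ is a free parameter of $h$ and can be set to hit any prescribed target in the second slot. The delicate point is the third slot: I would show that the map $\eta\mapsto g^{\ast}\eta+\eta^{\ast}g$, restricted to $\eta\in\operatorname{Hom}(\mathbb{R}^d,\tau_mM)$, admits the explicit right inverse $\gamma\mapsto\tfrac12 g\gamma$. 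Indeed, using $g^{\ast}g=I_d$ and $\gamma=\gamma^{\ast}$,
\[
g^{\ast}\bigl(\tfrac12 g\gamma\bigr)+\bigl(\tfrac12 g\gamma\bigr)^{\ast}g=\tfrac12\gamma+\tfrac12\gamma^{\ast}=\gamma,
\]
while the normal part of $h$ contributes nothing to this slot because $g=P(m)g$ and $PQ=0$ give $g^{\ast}Q(m)h=0$. Thus $G'$ is surjective on $O(M)$; surjectivity is an open condition on linear maps, so we may shrink the domain of $G$ to a neighborhood of $O(M)$ on which $G'$ is surjective everywhere, completing the proof that $G$ is a local defining function.

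Finally, the tangent space at $(m,g)$ is $\operatorname{Nul}(G'(m,g))$. Setting each of the three components to zero yields $\xi\in\operatorname{Nul}(F'(m))=\tau_mM$, the relation $Q(m)h=-dQ(\xi_m)g$, and $g^{\ast}h+(g^{\ast}h)^{\ast}=0$, the last being equivalent to $g^{\ast}h\in so(d)$; this is exactly \ref{equ.5.7}. The main obstacle is the surjectivity step, and specifically producing a preimage for a generic $\gamma\in\mathcal{S}_d$ without using up the tangential freedom needed elsewhere; the trick $\eta=\tfrac12 g\gamma$ resolves this cleanly once one exploits $g^{\ast}g=I_d$.
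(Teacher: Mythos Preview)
Your proposal is correct and follows essentially the same route as the paper's proof in Appendix~\ref{app.B}: compute $G'(m,g)(\xi,h)=(F'(m)\xi,\,dQ(\xi_m)g+Q(m)h,\,g^{\ast}h+h^{\ast}g)$, argue surjectivity at points of $O(M)$ componentwise, and read off the tangent space as the kernel. Your key device for the third slot---the right inverse $\gamma\mapsto\tfrac12 g\gamma$---is exactly the paper's suggestion to take $P(m)h=gB$ with $B\in\mathcal{S}_d$, and your observation that $g^{\ast}Q(m)h=0$ decouples the second and third slots is the paper's remark that $h^{\ast}g=(P(m)h)^{\ast}g$. You are in fact slightly more careful than the paper in two places: you explicitly verify $G^{-1}(0)=O(M)\cap(U\times\operatorname{Hom}(\mathbb{R}^d,E))$, and you note that surjectivity of $G'$ must hold on an open neighborhood (not just on $O(M)$) for $G$ to be a local defining function in the sense of Definition~\ref{def.3.1}, invoking openness of the surjectivity condition---a point the paper leaves implicit.
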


The proof of this standard theorem is given in Appendix \ref{app.B} for the
readers convenience. From Eq. (\ref{equ.5.7}), if $\left(  \xi,h\right)
_{\left(  m,g\right)  }\in T_{\left(  m,g\right)  }O\left(  M\right)  ,$ then
\[
h=Q\left(  m\right)  h+P\left(  m\right)  h=-dQ\left(  \xi_{m}\right)
g+P\left(  m\right)  h
\]
which leads to the decomposition of $T_{\left(  m,g\right)  }O\left(
M\right)  $ into its \emph{horizontal} and \emph{vertical} components,
\begin{equation}
\left(  \xi,h\right)  _{\left(  m,g\right)  }=\left(  \xi,-dQ\left(  \xi
_{m}\right)  g\right)  _{\left(  m,g\right)  }+\left(  0,P\left(  m\right)
h\right)  _{\left(  m,g\right)  }. \label{equ.5.8}%
\end{equation}

\begin{definition}
\label{def.5.6}The \textbf{vertical sub-bundle,} $T^{v}O\left(  M\right)  ,$
of $TO\left(  M\right)  $ is defined by;%
\begin{equation}
T_{\left(  m,g\right)  }^{v}O\left(  M\right)  =\operatorname*{Nul}\left(
\pi_{\ast\left(  m,g\right)  }\right)  =\left\{  \left(  0,h\right)  _{\left(
m,g\right)  }:\text{ }Q\left(  m\right)  h=0\text{ and }g^{\ast}h\in so\left(
d\right)  \right\}  . \label{equ.5.9}%
\end{equation}
The \textbf{horizontal sub-bundle, }$T^{\nabla}O\left(  M\right)  ,$
associated to the Levi-Civita covariant derivative, $\nabla,$ is defined by%
\begin{equation}
T_{\left(  m,g\right)  }^{\nabla}O\left(  M\right)  =\left\{  \left(
\xi,-dQ\left(  \xi\right)  g\right)  _{\left(  m,g\right)  }:\xi\in\tau
_{m}M\right\}  . \label{equ.5.10}%
\end{equation}

\end{definition}

According to Eq. (\ref{equ.5.8}),
\[
T_{\left(  m,g\right)  }O\left(  M\right)  =T_{\left(  m,g\right)  }%
^{v}O\left(  M\right)  \oplus T_{\left(  m,g\right)  }^{\nabla}O\left(
M\right)  \text{ for all }\left(  m,g\right)  \in O\left(  M\right)  .
\]

\begin{example}
[Horizontal Lifts]A smooth path $u\left(  t\right)  =\left(  x\left(
t\right)  ,g\left(  t\right)  \right)  \in O\left(  M\right)  $ is
\textbf{horizontal }if $\dot{u}\left(  t\right)  \in T_{u\left(  t\right)
}^{\nabla}O\left(  M\right)  $ which happens iff $g\left(  t\right)  $ solves
Eq. (\ref{equ.5.2}). Given a smooth path, $x\left(  \cdot\right)  ,$ in $M$
and $\left(  x\left(  0\right)  ,g_{0}\right)  \in O_{x\left(  0\right)
}\left(  M\right)  $, there is a unique horizontal path $u\left(  t\right)
\in O\left(  M\right)  $ (called the \textbf{horizontal lift }of $x)$ such
that $u\left(  0\right)  =\left(  x\left(  0\right)  ,g_{0}\right)  .$ The
relationship of parallel translation to horizontal lifts is given by
\[
//_{t}\left(  x\right)  v_{x\left(  0\right)  }=\left[  g\left(  t\right)
g_{0}^{-1}v\right]  _{x\left(  t\right)  }\text{ for all }v\in\tau_{x\left(
0\right)  }M.
\]

\end{example}

\begin{definition}
[Horizontal Lifts of Vector Fields]\label{def.5.8}If $W\in\Gamma\left(
TM\right)  $ and $u=\left(  m,g\right)  \in O\left(  M\right)  ,$ let%
\begin{equation}
W^{\nabla}\left(  m,g\right)  =\left(  W\left(  m\right)  ,-dQ\left(  W\left(
m\right)  \right)  g\right)  . \label{equ.5.11}%
\end{equation}
We may also describe $W^{\nabla}$ by%
\begin{equation}
W^{\nabla}\left(  u\right)  :=\frac{d}{dt}|_{0}//_{t}\left(  \sigma\right)
u\text{ where }\dot{\sigma}\left(  0\right)  =W\left(  \pi\left(  u\right)
\right)  \label{equ.5.12}%
\end{equation}
or alternatively as the unique horizontal vector field, $W^{\nabla}\in
\Gamma\left(  T^{\nabla}O\left(  M\right)  \right)  ,$ such that $\pi_{\ast
}W^{\nabla}=W\circ\pi.$
\end{definition}

\begin{lemma}
\label{lem.5.9}If $u\left(  t\right)  $ is the horizontal lift of a smooth
path $x\left(  \cdot\right)  $ in $M$ starting at $\left(  x\left(  0\right)
,g_{0}\right)  ,$ then $u\left(  t\right)  $ is the unique solution to the
ordinary differential equation,%
\begin{equation}
\dot{u}\left(  t\right)  =V_{\dot{x}\left(  t\right)  }^{\nabla}\left(
u\left(  t\right)  \right)  \text{ with }u\left(  0\right)  =\left(  x\left(
0\right)  ,g_{0}\right)  . \label{equ.5.13}%
\end{equation}
where $V_{z}\left(  m\right)  =P\left(  m\right)  z$ for all $z\in E$ and
$m\in M$ as in Example \ref{exa.3.7}.
\end{lemma}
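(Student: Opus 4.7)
The plan is to verify the ODE characterization by direct computation from the definitions, and then invoke standard ODE uniqueness; the statement is essentially a bookkeeping check matching the two descriptions of horizontality.

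First I would unwind what it means for $u(t) = (x(t), g(t))$ to be a horizontal lift of $x$. By Definition \ref{def.5.6} and the discussion following Theorem \ref{the.5.5}, horizontality amounts to $u(0) = (x(0), g_0)$ together with
\[
\dot u(t) = (\dot x(t), \dot g(t)) \in T^{\nabla}_{u(t)} O(M),
\]
which (comparing with Eq.\ \eqref{equ.5.10}) forces $\dot g(t) = -dQ(\dot x(t)) g(t)$, i.e.\ $g(\cdot)$ satisfies the parallel transport Eq.\ \eqref{equ.5.2}.

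Next I would unpack $V^{\nabla}_{\dot x(t)}(u(t))$. By Definition \ref{def.5.8} applied to the (time-dependent) vector field $V_{\dot x(t)}$ from Example \ref{exa.3.7},
\[
V^{\nabla}_{\dot x(t)}(m, g) = \bigl(P_m \dot x(t),\, -dQ(P_m \dot x(t)) g\bigr).
\]
Evaluating at $u(t) = (x(t), g(t))$ and using that $x(t) \in M$ so $\dot x(t) \in \tau_{x(t)} M$ and therefore $P_{x(t)} \dot x(t) = \dot x(t)$, this reduces to $(\dot x(t), -dQ(\dot x(t)) g(t))$, which matches $\dot u(t)$ by the previous paragraph. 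Hence the horizontal lift solves \eqref{equ.5.13}.

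For uniqueness I would view \eqref{equ.5.13} as an ODE on $O(M)$ (or, equivalently by Theorem \ref{the.5.5}, as an ODE in the ambient space $E \times \operatorname{Hom}(\mathbb{R}^d, E)$ whose right-hand side happens to be tangent to $O(M))$. The driving map $(u, t) \mapsto V^{\nabla}_{\dot x(t)}(u)$ is smooth in $u$ and continuous in $t$, so Picard--Lindel\"of gives existence and uniqueness of solutions with prescribed initial data. The only mildly technical point, which I would dispatch separately, is to confirm that $V^{\nabla}_{z}(m,g) \in T_{(m,g)} O(M)$ for all $z \in E$ and $(m,g) \in O(M)$; this amounts to verifying the three conditions of Eq.\ \eqref{equ.5.7}, and all three follow from the identities $PQ = 0$, $dP = -dQ$, $PdQ = dQP$ of Lemma \ref{lem.3.11}, together with $Q(m) g = 0$ and the self-adjointness of $dQ(\xi)$ (the last giving $g^{\ast} dQ(\xi) g = 0$, hence trivially skew). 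This is the only place where a little care is needed; the rest of the argument is a routine matching of the two descriptions of horizontal motion.
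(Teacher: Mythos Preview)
Your proof is correct and follows essentially the same approach as the paper: unpack $V^{\nabla}_{\dot x(t)}(u(t))$ using Definition~\ref{def.5.8} and the identity $P_{x(t)}\dot x(t)=\dot x(t)$, and observe that the resulting ODE for the second component is precisely Eq.~\eqref{equ.5.2}. The paper's proof is a one-line version of the same computation; your additional paragraphs on Picard--Lindel\"of uniqueness and on verifying $V^{\nabla}_z(m,g)\in T_{(m,g)}O(M)$ are sound but are taken for granted in the paper (the latter being implicit in Definition~\ref{def.5.8}, where $W^{\nabla}$ is declared to lie in $\Gamma(T^{\nabla}O(M))$).
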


\begin{proof}
A path $u\left(  t\right)  =\left(  x\left(  t\right)  ,g\left(  t\right)
\right)  \in O\left(  M\right)  $ solves Eq. (\ref{equ.5.13}) iff
\[
\left(  \dot{x}\left(  t\right)  ,\dot{g}\left(  t\right)  \right)  _{u\left(
t\right)  }=V_{\dot{x}\left(  t\right)  }^{\nabla}\left(  u\left(  t\right)
\right)  =\left(  V_{\dot{x}\left(  t\right)  }\left(  x\left(  t\right)
\right)  ,-dQ\left(  V_{\dot{x}\left(  t\right)  }\left(  x\left(  t\right)
\right)  \right)  g\left(  t\right)  \right)  =\left(  \dot{x}\left(
t\right)  ,-dQ\left(  \dot{x}\left(  t\right)  \right)  g\left(  t\right)
\right)  ,
\]
i.e. iff $g\left(  t\right)  $ solves Eq. (\ref{equ.5.2}).
\end{proof}

To end this subsection let us recall that the horizontal/vertical sub-bundle
decomposition of $TO\left(  M\right)  $ in Definition \ref{def.5.6} gives rise
to two \textquotedblleft canonical\textquotedblright\ vector fields and one
forms on $O\left(  M\right)  .$

\begin{definition}
\label{def.5.10}Let $u=\left(  m,g\right)  \in O\left(  M\right)  .$ The
\textbf{canonical vertical vector field} on $O\left(  M\right)  $ associated
to $A\in so\left(  d\right)  $ is defined by%
\begin{equation}
\mathcal{V}_{A}\left(  u\right)  :=\frac{d}{dt}|_{0}ue^{tA}=\left(
0,uA\right)  _{\left(  m,g\right)  }\in T_{u}^{v}O\left(  M\right)
\label{equ.5.14}%
\end{equation}
while the \textbf{horizontal vector field }associated to $a\in\mathbb{R}^{d}$
(determined by $\nabla)$ is defined by
\begin{equation}
B_{a}\left(  u\right)  =B_{a}^{\nabla}\left(  u\right)  =\left(  ga,-dQ\left(
ga\right)  g\right)  _{\left(  m,g\right)  }\in T_{u}^{\nabla}O\left(
M\right)  . \label{equ.5.15}%
\end{equation}

\end{definition}

\begin{definition}
\label{def.5.11}Let $u=\left(  m,g\right)  \in O\left(  M\right)  .$ The
\textbf{canonical }$\mathbb{R}^{d}$ -- valued\textbf{ one-form}, $\theta,$ on
$O\left(  M\right)  $ is defined by%
\begin{equation}
\theta\left(  \left(  \xi,h\right)  _{\left(  m,g\right)  }\right)
:=g^{-1}\xi=g^{\ast}\xi\text{ for all }\left(  \xi,h\right)  _{\left(
m,g\right)  }\in T_{u}O\left(  M\right)  . \label{equ.5.16}%
\end{equation}
The \textbf{connection one-form} on $O\left(  M\right)  $ determined by the
covariant derivative $\nabla$ is given by
\begin{equation}
\omega^{\nabla}\left(  \left(  \xi,h\right)  _{\left(  m,g\right)  }\right)
=g^{-1}\left[  h+dQ\left(  \xi_{m}\right)  g\right]  \in so\left(  d\right)  ,
\label{equ.5.17}%
\end{equation}
where $u\left(  t\right)  =\left(  \sigma\left(  t\right)  ,g\left(  t\right)
\right)  $ is any smooth curve in $O\left(  M\right)  $ such that $\dot
{u}\left(  0\right)  =\left(  \xi,h\right)  _{\left(  m,g\right)  }.$
\end{definition}

\begin{remark}
\label{rem.5.12}Since $g^{-1}=g^{\ast}$ and
\[
g^{\ast}dQ\left(  \xi_{m}\right)  g=g^{\ast}P\left(  m\right)  dQ\left(
\xi_{m}\right)  P\left(  m\right)  g=0.
\]
we may express $\omega^{\nabla}$ more simply as,%
\begin{equation}
\omega^{\nabla}\left(  \left(  \xi,h\right)  _{\left(  m,g\right)  }\right)
=g^{\ast}h. \label{equ.5.18}%
\end{equation}
Also, if $u\left(  t\right)  :=\left(  x\left(  t\right)  ,g\left(  t\right)
\right)  $ is a smooth path in $O\left(  M\right)  $ then%
\[
\frac{\nabla}{dt}\left[  \left(  x\left(  t\right)  ,g\left(  t\right)
a\right)  \right]  :=\left(  x\left(  t\right)  ,g\left(  t\right)
\omega^{\nabla}\left(  \dot{u}\left(  t\right)  \right)  a\right)  \text{ for
all }a\in\mathbb{R}^{d}%
\]
from which it follows that $u\left(  t\right)  $ is horizontal iff
$\frac{\nabla}{dt}\left[  \left(  x\left(  t\right)  ,g\left(  t\right)
a\right)  \right]  =0$ for all $a\in\mathbb{R}^{d}.$
\end{remark}

\subsection{Rough Parallel Translation on $O\left(  M\right)  $\label{sub.5.3}%
}

As in Proposition \ref{pro.3.12} we may choose to write $\Gamma$ for $dQ.$ The
following definition is motivated by Lemma \ref{lem.5.9} above.

\begin{definition}
[Parallel Translation on $M$]\label{def.5.13}Given $\mathbf{X}\in
WG_{p}\left(  M\right)  $ and $u_{0}\in O_{x_{0}}\left(  M\right)  ,$ we say
$\mathbf{U\in}WG_{p}\left(  O\left(  M\right)  \right)  $ is \textbf{parallel
translation }along $\mathbf{X}$ starting at $u_{0}$ if $\mathbf{U}$ solves the
RDE,
\begin{equation}
d\mathbf{U}=V_{d\mathbf{X}}^{\nabla}\left(  u\right)  \text{ with }u\left(
0\right)  =u_{0}, \label{equ.5.19}%
\end{equation}
where $V_{z}\left(  x\right)  :=P_{x}z$ as in Example \ref{exa.3.7} and
$V_{z}^{\nabla}$ is its horizontal lift as in Definition \ref{def.5.8}. [In
Proposition \ref{pro.5.15} below it will be shown that Eq. (\ref{equ.5.19})
has global solutions, i.e. $\mathbf{U}$ exists on $\left[  0,T\right]  .$]
\end{definition}

\begin{lemma}
\label{lem.5.14}If $\mathbf{U}$ is parallel translation along $\mathbf{X}$ as
in Definition \ref{def.5.13}, then $\pi_{\ast}\left(  \mathbf{U}\right)
=\mathbf{X}.$
\end{lemma}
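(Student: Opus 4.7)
The plan is to combine two earlier results: the push-forward theorem for RDEs along $\pi$-related dynamical systems (Theorem \ref{the.4.11}) and the consistency result for the projection RDE (Proposition \ref{pro.4.13}).

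First I would observe that the pair of linear maps $V:\mathbb{R}^{N}\to\Gamma(TM)$ (with $V_{z}(m)=P_{m}z$) and $V^{\nabla}:\mathbb{R}^{N}\to\Gamma(TO(M))$ are $\pi$-related in the sense of Definition \ref{def.4.10}. Indeed, the horizontal lift is characterized in Definition \ref{def.5.8} by the property $\pi_{\ast}W^{\nabla}=W\circ\pi$ for every $W\in\Gamma(TM)$, so applying this to $W=V_{z}$ gives $\pi_{\ast}V_{z}^{\nabla}=V_{z}\circ\pi$ for every $z\in\mathbb{R}^{N}$, which is exactly the $\pi$-relatedness condition.

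Next I would apply Theorem \ref{the.4.11} to the RDE in Eq. (\ref{equ.5.19}): since $\mathbf{U}\in WG_{p}(O(M))$ solves $d\mathbf{U}=\mathcal{V}^{\nabla}_{d\mathbf{X}}(u)$ with $u_{0}=u_{0}$ and $V^{\nabla}$ is $\pi$-related to $V$, the pushed-forward rough path $\mathbf{Y}:=\pi_{\ast}(\mathbf{U})\in WG_{p}(M)$ solves
\[
d\mathbf{Y}=\mathcal{V}_{d\mathbf{X}}(y)=P_{y}\,d\mathbf{X}\quad\text{with}\quad y_{0}=\pi(u_{0})=x_{0}.
\]
This is precisely the projection RDE from Example \ref{exa.4.12} driven by $\mathbf{X}\in WG_{p}(M)$ with initial condition $x_{0}$.

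Finally, since $\mathbf{X}\in WG_{p}(M)$, Proposition \ref{pro.4.13} (Consistency) asserts that the unique solution of this projection RDE with starting point $x_{0}$ is $\mathbf{X}$ itself. By the uniqueness part of Theorem \ref{the.4.2}, we conclude $\pi_{\ast}(\mathbf{U})=\mathbf{Y}=\mathbf{X}$. There is no substantive obstacle here; the lemma is essentially a bookkeeping corollary of Theorem \ref{the.4.11} and Proposition \ref{pro.4.13}, with the only point requiring care being the verification of $\pi$-relatedness, which is built into the definition of the horizontal lift.
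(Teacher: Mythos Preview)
Your proof is correct and follows essentially the same route as the paper's own proof: verify that $V^{\nabla}$ and $V$ are $\pi$-related via Definition~\ref{def.5.8}, apply Theorem~\ref{the.4.11} so that $\pi_{\ast}(\mathbf{U})$ solves the projection RDE, and then invoke Proposition~\ref{pro.4.13} together with uniqueness to identify it with $\mathbf{X}$.
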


\begin{proof}
From Definition \ref{def.5.8} we know that $V^{\nabla}$ and $V$ are $\pi$ --
related dynamical systems and therefore by Theorem \ref{the.4.11},
$\mathbf{\hat{X}}:=\pi_{\ast}\left(  \mathbf{U}\right)  $ solves the RDE,
\[
d\mathbf{\hat{X}}=V_{d\mathbf{X}}\left(  \hat{x}\right)  \text{ with }\hat
{x}_{0}=\pi\left(  u_{0}\right)  =x_{0}.
\]
On the other hand by the consistence Proposition \ref{pro.4.13} we know
$\mathbf{X}$ satisfies the same RDE and so by uniqueness of solutions to RDEs
we conclude that $\mathbf{X=\hat{X}}=\pi_{\ast}\left(  \mathbf{U}\right)  .$
\end{proof}

\begin{proposition}
\label{pro.5.15}Suppose that $\mathbf{X}\in WG_{p}\left(  M\right)  ,$
$\mathbf{A}:=\int\Gamma\left(  d\mathbf{X}\right)  ,$ where $\Gamma:=dQ$ and
$\mathbf{U}=\left(  u=\left(  x_{t},g_{t}\right)  ,\mathbb{U}\right)  \in
WG_{p}\left(  O\left(  M\right)  \right)  $ is parallel translation\textbf{
}along $\mathbf{X}$ starting at $u_{0}=\left(  x_{0},g_{0}\right)  .$ Then $g$
satisfies the level one component of the RDE,
\begin{equation}
d\mathbf{g}=\left(  -d\mathbf{A}\right)  g=Y_{d\mathbf{A}}^{G}\left(
g\right)  . \label{equ.5.20}%
\end{equation}
In particular, the RDE in Eq. (\ref{equ.5.19}) exists for all time that
$\mathbf{X}$ is defined.]
\end{proposition}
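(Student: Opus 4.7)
The plan is to derive the level-one condition of Definition \ref{def.4.1} for the $g$-coordinate of $\mathbf{U}$ directly from the defining equation (\ref{equ.5.19}), match it with the level-one condition for $d\mathbf{g} = Y_{d\mathbf{A}}^G(g)$, and then invoke linearity of $Y^G$ in $g$ to rule out blow-up. Since $\mathbf{U} = (u, \mathbb{U})$ with $u = (x, g)$ solves (\ref{equ.5.19}), Definition \ref{def.4.1} gives
\[
u_{s,t} \simeq V^\nabla_{x_{s,t}}(u_s) + \bigl(\partial_{V^\nabla_a} V^\nabla_b\bigr)(u_s)\big|_{a \otimes b = \mathbb{X}_{s,t}}.
\]
Using $V^\nabla_z(x,g) = (P_x z,\, -\Gamma(P_x z)\,g)$ and the chain rule, projection onto the second coordinate yields
\[
g_{s,t} \simeq -\Gamma(P_{x_s} x_{s,t})\,g_s + \bigl[-\partial_{P_{x_s}a}\bigl(\Gamma(P(\cdot))\,b\bigr)\,g_s + \Gamma(P_{x_s}b)\,\Gamma(P_{x_s}a)\,g_s\bigr]\big|_{a\otimes b = \mathbb{X}_{s,t}}.
\]

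The target level-one condition for $d\mathbf{g} = Y_{d\mathbf{A}}^G(g)$ with linear vector field $Y^G_\xi(g) = -\xi g$ reads $g_{s,t} \simeq -a_{s,t}\,g_s + \beta\alpha\,g_s|_{\alpha\otimes\beta = \mathbb{A}_{s,t}}$. By Theorem \ref{the.2.14} applied to $\mathbf{A} = \int \Gamma(d\mathbf{X})$, one has $a_{s,t} \simeq \Gamma(x_s) x_{s,t} + \Gamma'(x_s)\mathbb{X}_{s,t}$ and $\mathbb{A}_{s,t} \simeq \Gamma(x_s) \otimes \Gamma(x_s)\,\mathbb{X}_{s,t}$. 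Substituting, and using Corollary \ref{cor.3.22} to replace $\mathbb{X}_{s,t}$ by $[P_{x_s}\otimes P_{x_s}]\mathbb{X}_{s,t}$ in bilinear contractions together with Lemma \ref{lem.3.28} to rewrite $Q_{x_s} x_{s,t}$ as a contraction of $\mathbb{X}_{s,t}$, the $\Gamma'$-terms cancel and the difference between the two expressions reduces to a single contraction against $\mathbb{X}_{s,t}$ of the bilinear form $(a,b) \mapsto Q'(x_s)\bigl[P_{x_s}\,dQ(P_{x_s}a)\,P_{x_s}\,b\bigr]$. This form is identically zero because $P\,dQ\,P = 0$ on $M$ (a consequence of differentiating $QP = 0$ together with $P + Q = I$, as in Lemma \ref{lem.3.11}); hence the two level-one approximations agree modulo $\simeq$.

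Finally, $Y^G_\xi(g) = -\xi g$ is linear in $g$, so $d\mathbf{g} = Y_{d\mathbf{A}}^G(g)$ is a linear RDE on $\operatorname{Hom}(\mathbb{R}^d, E)$, which admits a unique global solution on $[0,T]$ by standard results for linear RDEs (see, e.g., Theorem 10.53 of \cite{FV}). Since the $g$-coordinate of any local solution to (\ref{equ.5.19}) is a local solution of this linear RDE, $g$ cannot blow up on the maximal interval of existence of $\mathbf{U}$; combined with the fact that $x$ is defined on $[0,T]$ as the trace of $\mathbf{X}$, the trace $u = (x,g)$ stays in a compact subset of $O(M)$, ruling out the explosion alternative of Theorem \ref{the.4.2}. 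The main obstacle is the algebraic matching in the previous paragraph: three a priori unrelated second-order corrections -- namely $\Gamma(Q_{x_s}x_{s,t})$ (from the normal component of $x_{s,t}$), $\Gamma'(x_s)\mathbb{X}_{s,t}$ (from expanding $a_{s,t}$ via Theorem \ref{the.2.14}), and $-\partial_{P_{x_s}a}[\Gamma(Pb)]$ (from the chain rule applied to the horizontal vector field) -- must conspire to cancel, and they do so precisely because of the tangential structure of $\mathbb{X}_{s,t}$ (Corollary \ref{cor.3.22}) together with $P\,dQ\,P = 0$.
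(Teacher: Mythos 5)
Your proof is correct, but it takes a noticeably different path from the paper's. The paper's proof first invokes the consistency result (Proposition \ref{pro.4.13}), which says every $\mathbf{X}\in WG_p(M)$ solves the projection RDE $d\mathbf{X}=V_{d\mathbf{X}}(x)$, and then applies the \emph{intrinsic} characterization of RDE solutions in Theorem \ref{the.4.5} (items 2 and 3) both to compute $\mathbf{A}=\int\Gamma(d\mathbf{X})$ and to expand $g_{s,t}=[f(u)]_{s,t}$ for the coordinate function $f(x,g)=g$. The point of that choice is that the resulting expansions of $a_{s,t}$, $\mathbb{A}_{s,t}$, and $g_{s,t}$ all arrive already written in terms of the projected vector fields $V_a$, so the matching
\[
g_{s,t}\simeq -a_{s,t}g_s + [\mathbb{A}_{s,t}]g_s
\]
is a term-by-term identity requiring no cancellation lemma. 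You instead expand $a_{s,t}$ and $\mathbb{A}_{s,t}$ via the extrinsic Theorem \ref{the.2.14}, expand $g_{s,t}$ via the extrinsic Definition \ref{def.4.1} applied to the $g$-coordinate of $V^{\nabla}$, and then reconcile the two by hand, deploying Corollary \ref{cor.3.22} to tangentialize $\mathbb{X}_{s,t}$, Lemma \ref{lem.3.28} to convert $Q_{x_s}x_{s,t}$ into a contraction against $\mathbb{X}_{s,t}$, and finally the identity $P\,dQ\,P=0$ to kill the residual bilinear form. I verified this chain: the residual you identify is indeed $\pm\Gamma\bigl(P_{x_s}\,dQ(P_{x_s}a)\,P_{x_s}b\bigr)$ contracted against $\mathbb{X}_{s,t}$, which vanishes since $P\,dQ\,P=0$ (the latter following from $P\,dQ=(dQ)Q$, obtained by differentiating $PQ=0$, composed with $QP=0$). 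So your argument is valid; it just pays, in algebraic bookkeeping and in the reliance on an "algebraic conspiracy," the price that the paper avoids by working intrinsically. Both treatments of the non-explosion step — linearity of $Y^G$, global existence for the linear group RDE, boundedness of the trace $u=(x,g)$, then the non-compactness dichotomy of Theorem \ref{the.4.2} (which wraps the lifting Lemma \ref{lem.2.18}) — are essentially the same.

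One presentational caveat worth flagging: you invoke Theorem \ref{the.2.14} for $a_{s,t}\simeq\Gamma(x_s)x_{s,t}+\Gamma'(x_s)\mathbb{X}_{s,t}$. Strictly speaking $\Gamma=dQ$ is defined on the open set $U\subset E$ rather than all of $E$; one should note that this is the integral of an extension per Definition \ref{def.3.24} (harmless, since $Q$ is given on a neighborhood of $M$ and $dQ$ is therefore a genuine one form there). Also, as a minor aside not affecting your proof, the identity $PdQ=dQP$ in Lemma \ref{lem.3.11} as stated in the paper is not what differentiation of $PQ=0$ gives; the derivable identities are $PdQ=(dQ)Q$ and $(dQ)P=Q\,dQ$, either of which yields $P\,dQ\,P=0$, which is what you actually use.
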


\begin{proof}
Using $d\mathbf{X}=V_{d\mathbf{X}}\left(  x\right)  $ along with item 2. of
Theorem \ref{the.4.5} implies%
\begin{align*}
a_{s,t}  &  =\left[  \int\Gamma\left(  d\mathbf{X}\right)  \right]  _{s,t}%
^{1}\simeq\Gamma\left(  V_{x_{s,t}}\left(  x_{s}\right)  \right)  +\left(
V_{a}\Gamma\left(  V_{b}\right)  \right)  \left(  x_{s}\right)  |_{a\otimes
b=\mathbb{X}_{s,t}}\text{ and}\\
\mathbb{A}_{st}  &  =\left[  \int\Gamma\left(  d\mathbf{X}\right)  \right]
_{s,t}^{2}\simeq\Gamma\left(  V_{a}\left(  x_{s}\right)  \right)
\otimes\Gamma\left(  V_{b}\left(  x_{s}\right)  \right)  |_{a\otimes
b=\mathbb{X}_{s,t}}%
\end{align*}
Now let $f:O\left(  M\right)  \rightarrow\operatorname*{End}\left(
\mathbb{R}^{d},E\right)  $ be the projection map, $f\left(  x,g\right)  =g.$
From Theorem \ref{the.4.5},
\[
g_{st}=\left[  f\left(  u\right)  \right]  _{s,t}\simeq\left(  V_{x_{s,t}%
}^{\nabla}f\right)  \left(  u_{s}\right)  +\left(  \mathcal{V}_{\mathbb{X}%
_{st}}^{\nabla}f\right)  \left(  u_{s}\right)  .
\]
Combing this equation with the identities,%
\begin{align*}
\left(  V_{b}^{\nabla}f\right)  \left(  x,g\right)   &  =-\Gamma\left(
V_{b}\left(  x\right)  \right)  g\text{ and }\\
\left(  V_{a}^{\nabla}V_{b}^{\nabla}f\right)  \left(  x,g\right)   &
=\Gamma\left(  V_{b}\left(  x\right)  \right)  \Gamma\left(  V_{a}\left(
x\right)  \right)  g-\left(  V_{a}\Gamma\left(  V_{b}\right)  \right)  \left(
x\right)  g,
\end{align*}
shows%
\begin{align*}
g_{st}  &  \simeq-\Gamma\left(  V_{x_{s,t}}\left(  x_{s}\right)  \right)
g_{s}+\left[  \Gamma\left(  V_{b}\left(  x_{s}\right)  \right)  \Gamma\left(
V_{a}\left(  x_{s}\right)  \right)  g_{s}-\left(  V_{a}\Gamma\left(
V_{b}\right)  \right)  \left(  x_{s}\right)  g_{s}\right]  |_{a\otimes
b=\mathbb{X}_{st}}\\
&  \simeq-a_{s,t}g_{s}+\left[  \mathbb{A}_{s,t}\right]  g_{s}.
\end{align*}
where $\left[  A\otimes B\right]  :=BA.$ Similarly if we let $\mathcal{I}%
\left(  g\right)  =g,$ the RDE in Eq. (\ref{equ.5.20}) is equivalent to
\begin{align*}
g_{s,t}  &  =\left(  Y_{a_{s,t}}^{G}\mathcal{I}\right)  \left(  g_{s}\right)
+\left(  Y_{a}^{G}Y_{b}^{G}\mathcal{I}\right)  \left(  g_{s}\right)
|_{a\otimes b=\mathbb{A}_{s,t}}\\
&  =-a_{s,t}g_{s}+bag_{s}|_{|_{a\otimes b=\mathbb{A}_{s,t}}}=-a_{s,t}%
g_{s}+\left[  \mathbb{A}_{s,t}\right]  g_{s}.
\end{align*}

From the theory of linear RDE \cite{LCL} or by a minor modification of the
results in Theorem \ref{the.4.20} we know that $\mathbf{G}$ solving Eq.
(\ref{equ.5.20}) does not explode. Therefore we may then conclude that
$u_{t}=\left(  x_{t},g_{t}\right)  $ has no explosion. Combining this result
with Lemma \ref{lem.2.18} then shows that the RDE of Eq. (\ref{equ.5.19}) also
does not explode.
\end{proof}

\begin{theorem}
\label{the.5.16}Let $\mathbb{R}^{n}\ni z\rightarrow Y_{z}\in\Gamma\left(
TM\right)  $ be a linear map, $\mathbf{Z\in}WG_{p}\left(  \mathbb{R}%
^{n}\right)  ,$ and $u_{0}\in O\left(  M\right)  $ be given. If $\mathbf{X}\in
WG_{p}\left(  M\right)  $ and $\mathbf{U\in}WG_{p}\left(  O\left(  M\right)
\right)  $ solve the RDEs
\begin{align}
d\mathbf{X}  &  =Y_{d\mathbf{Z}}\left(  x\right)  \text{ with }x_{0}%
:=\pi\left(  u_{0}\right)  \in M\text{ and}\label{equ.5.21}\\
d\mathbf{U}  &  =Y_{d\mathbf{Z}}^{\nabla}\left(  u\right)  \text{ with
}u\left(  0\right)  =u_{0}, \label{equ.5.22}%
\end{align}
then $\mathbf{U}$ is a parallel translation along $\mathbf{X},$ i.e.
$\mathbf{X}=\pi_{\ast}\left(  \mathbf{U}\right)  $ and $\mathbf{U}$ satisfies
Eq. (\ref{equ.5.19}).
\end{theorem}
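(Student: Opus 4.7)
The plan is to separate the theorem into two parts: showing $\pi_*\mathbf{U} = \mathbf{X}$, and then verifying that $\mathbf{U}$ satisfies the defining approximations for (5.19). The first part is immediate from the pushforward compatibility already proved. By Definition 5.8 we have $\pi_* Y^{\nabla}_a = Y_a \circ \pi$ for every $a \in \mathbb{R}^n$, so $Y^{\nabla}$ and $Y$ are $\pi$-related in the sense of Definition 4.10. Applying Theorem 4.11 to the RDE (5.22) produces a solution $\pi_*\mathbf{U}$ of $d(\pi_*\mathbf{U}) = Y_{d\mathbf{Z}}(\cdot)$ with starting point $\pi(u_0) = x_0$, and uniqueness of RDE solutions (Theorem 4.2) forces $\pi_*\mathbf{U} = \mathbf{X}$.

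For the second part I would use the one-form characterization in item 2 of Theorem 4.5. Since $\mathbf{U}$ already solves (5.22), that characterization describes the level-one and level-two components of $\int\alpha(d\mathbf{U})$ in terms of $Y^{\nabla}$, $z_{s,t}$ and $\mathbb{Z}_{s,t}$, for every $\alpha \in \Omega^1(O(M), W)$. To conclude that $\mathbf{U}$ also satisfies the defining approximations for (5.19), it suffices to show that the (5.19)-style right-hand sides are $\simeq$-close to those of (5.22). For this I would substitute the RDE approximations $x_{s,t} \simeq Y_{z_{s,t}}(x_s) + (\partial_{Y_c}Y_d)(x_s)|_{c \otimes d = \mathbb{Z}_{s,t}}$ and $\mathbb{X}_{s,t} \simeq Y_c(x_s) \otimes Y_d(x_s)|_{c \otimes d = \mathbb{Z}_{s,t}}$ (from (5.21)) into the (5.19)-style right-hand sides. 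The level-two identity then follows immediately from the pointwise fact that $V^{\nabla}_{Y_c(x_s)}(u_s) = Y^{\nabla}_c(u_s)$, since both are horizontal lifts at $u_s$ of the same tangent vector $Y_c(x_s) \in T_{x_s}M$.

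The level-one case reduces, after the same substitution and using linearity of $w \mapsto V^{\nabla}_w$, to the exact pointwise identity
\[
\bigl[Y^{\nabla}_c \alpha(Y^{\nabla}_d)\bigr](u_0) = \bigl[V^{\nabla}_{Y_c(x_0)} \alpha(V^{\nabla}_{Y_d(x_0)})\bigr](u_0) + \alpha_{u_0}\bigl(V^{\nabla}_{(\partial_{Y_c}Y_d)(x_0)}(u_0)\bigr)
\]
for all $u_0 = (x_0, g_0) \in O(M)$ and $c, d \in \mathbb{R}^n$. The main obstacle is the proof of this identity. My plan is to introduce the auxiliary vector field $W(u) := V^{\nabla}_{Y_d(x_0)}(u) - Y^{\nabla}_d(u)$, which vanishes at $u_0$ by the pointwise identity noted above. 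The product rule then annihilates the $(\nabla\alpha)(W(u_0))$ term and leaves $\partial_{Y^{\nabla}_c(u_0)} \alpha(W) = \alpha_{u_0}\bigl(\partial_{Y^{\nabla}_c(u_0)} W\bigr)$, so everything reduces to computing $\partial_{Y^{\nabla}_c(u_0)} W$. A direct calculation in the two ambient factors of $O(M) \subset E \times \operatorname{Hom}(\mathbb{R}^d, E)$, invoking the tangential relation $(\partial_v Y_d)(x) = dP_x(v) Y_d(x) + P_x(\partial_v Y_d)(x)$ for $v \in T_x M$ (obtained by differentiating $Y_d = PY_d$), together with Lemma 3.11, should yield $\partial_{Y^{\nabla}_c(u_0)} W = -V^{\nabla}_{(\partial_{Y_c}Y_d)(x_0)}(u_0)$. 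Matching signs then gives the required identity and completes the proof.
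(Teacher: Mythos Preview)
Your proposal is correct and follows essentially the same route as the paper. Both arguments first obtain $\pi_{\ast}\mathbf{U}=\mathbf{X}$ from Theorem~\ref{the.4.11}, then reduce the remaining claim to the pointwise identity $Y_{a}^{\nabla}=V_{Y_{a}}^{\nabla}$ together with a chain-rule expansion of $Y_{a}^{\nabla}Y_{b}^{\nabla}$; the paper writes this expansion directly as $V_{Y_{a}}^{\nabla}V_{\beta}^{\nabla}F\big|_{\beta=Y_{b}}+V_{(Y_{a}Y_{b})}^{\nabla}F$ and works with functions $F$ (item~3 of Theorem~\ref{the.4.5}), whereas you package the same computation via the auxiliary field $W=V_{Y_{d}(x_{0})}^{\nabla}-Y_{d}^{\nabla}$ and work with one-forms $\alpha$ (item~2 of Theorem~\ref{the.4.5}), which is only a cosmetic difference.
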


\begin{proof}
Since $Y^{\nabla}$ and $Y$ are $\pi$ related it follows from Theorem
\ref{the.4.11} that $\mathbf{X}=\pi_{\ast}\left(  \mathbf{U}\right)  .$ Using
Theorem \ref{the.4.5} and Remark \ref{rem.4.6}, Eq. (\ref{equ.5.22}) at the
first level is equivalent to
\begin{equation}
F\left(  u\right)  _{s,t}\simeq\left(  Y_{z_{s,t}}^{\nabla}F\right)  \left(
u_{s}\right)  +\left(  \mathcal{Y}_{\mathbb{Z}_{s,t}}^{\nabla}F\right)
\left(  u_{s}\right)  \label{equ.5.23}%
\end{equation}
while $\mathbf{U}$ solving Eq. (\ref{equ.5.19}) at the first level is
equivalent to
\begin{equation}
F\left(  u\right)  _{s,t}\simeq\left(  V_{x_{s,t}}^{\nabla}F\right)  \left(
u_{s}\right)  +\left(  \mathcal{V}_{\mathbb{X}_{s,t}}^{\nabla}F\right)
\left(  u_{s}\right)  , \label{equ.5.24}%
\end{equation}
where in each case $F$ is assumed to be an arbitrary smooth function on
$O\left(  M\right)  .$ Thus to complete the proof we must show Eq.
(\ref{equ.5.23}) implies Eq. (\ref{equ.5.24}) and show the second-order
condition
\begin{equation}
\mathbb{U}_{s,t}\simeq\left[  V_{\cdot}^{\nabla}\left(  u_{s}\right)  \otimes
V_{\cdot}^{\nabla}\left(  u_{s}\right)  \right]  \mathbb{X}_{s,t}.
\label{equ.5.25}%
\end{equation}

First recall that
\begin{align*}
Y_{z}^{\nabla}\left(  x,g\right)   &  =\left(  Y_{z}\left(  x\right)
,-\Gamma\left(  Y_{z}\left(  x\right)  \right)  g\right)  \text{ and}\\
V_{\xi}^{\nabla}\left(  x,g\right)   &  =\left(  V_{\xi}\left(  x\right)
,-\Gamma\left(  V_{\xi}\left(  x\right)  \right)  g\right)  =\left(  P\left(
x\right)  \xi,-\Gamma\left(  P\left(  x\right)  \xi\right)  g\right)
\end{align*}
so that $Y_{z}^{\nabla}=V_{Y_{z}}^{\nabla}$ from which (\ref{equ.5.25}) can be
deduced immediately, and also%
\begin{align*}
\mathcal{Y}_{a\otimes b}^{\nabla}F  &  =Y_{a}^{\nabla}Y_{b}^{\nabla}%
F=V_{Y_{a}}^{\nabla}V_{Y_{b}}^{\nabla}F\\
&  =V_{Y_{a}}^{\nabla}V_{\beta}^{\nabla}F|_{\beta=Y_{b}}+V_{\left(  Y_{a}%
Y_{b}\right)  }^{\nabla}F.
\end{align*}
Putting this together with Eq. (\ref{equ.5.23}) shows,
\[
F\left(  u\right)  _{s,t}\simeq dF\left(  V_{Y_{z_{s,t}}\left(  x_{s}\right)
+\nabla_{Y_{a}\left(  x_{s}\right)  }Y_{b}|_{a\otimes b=\mathbb{Z}_{s,t}}%
}^{\nabla}\left(  u_{s}\right)  \right)  +\left(  \mathcal{V}_{\alpha
\otimes\beta}^{\nabla}F\right)  \left(  u_{s}\right)  |_{\alpha\otimes
\beta=\left[  Y_{\left(  \cdot\right)  }\left(  x_{s}\right)  \otimes
Y_{\left(  \cdot\right)  }\left(  x_{s}\right)  \right]  \mathbb{Z}_{s,t}}.
\]
Thus to finish the proof we must show%
\begin{equation}
x_{s,t}\simeq Y_{z_{s,t}}\left(  x_{s}\right)  +Y_{a}\left(  x_{s}\right)
Y_{b}|_{a\otimes b=\mathbb{Z}_{s,t}}. \label{equ.5.26}%
\end{equation}
But we already know that $\mathbf{X}$ solves Eq. (\ref{equ.5.21}) which
applied to the identity function $\mathcal{I}$ on $\mathbb{R}^{N}$ shows%
\begin{align*}
x_{s,t}  &  =\mathcal{I}\left(  x\right)  _{s,t}\simeq\left(  Y_{z_{s,t}%
}\mathcal{I}\right)  \left(  x_{s}\right)  +\left(  Y_{\left(  \cdot\right)
}Y_{\left(  \cdot\right)  }\mathcal{I}\right)  \left(  x_{s}\right)
\mathbb{Z}_{s,t}\\
&  =Y_{z_{s,t}}\left(  x_{s}\right)  +\left(  Y_{\left(  \cdot\right)
}Y_{\left(  \cdot\right)  }\mathcal{I}\right)  \left(  x_{s}\right)
\mathbb{Z}_{s,t}%
\end{align*}
which is precisely Eq. (\ref{equ.5.26}).
\end{proof}

We will actually be more interested in the following variant of Theorem
\ref{the.5.16}.

\begin{theorem}
\label{the.5.17}Suppose that $\mathbf{Z}=\left(  z,\mathbb{Z}\right)  \in
WG_{p}\left(  \left[  0,T\right]  ,\mathbb{R}^{d},0\right)  $ and
$\mathbf{U\in}WG_{p}\left(  O\left(  M\right)  \right)  $ solves%
\begin{equation}
d\mathbf{U}=B_{d\mathbf{Z}_{t}}^{\nabla}\left(  u_{t}\right)  \text{ with
}u_{0}=u_{o}\text{ given.} \label{equ.5.27}%
\end{equation}
Then $\mathbf{U}$ is a parallel translation along $\mathbf{X}=\pi_{\ast
}\left(  \mathbf{U}\right)  ,$ i.e. $\mathbf{U}$ satisfies Eq. (\ref{equ.5.19}).
\end{theorem}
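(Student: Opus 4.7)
The plan is to verify the intrinsic RDE characterization (Theorem \ref{the.4.5}(3), Eq.~(\ref{equ.5.24})) for $\mathbf{U}$ with respect to both dynamical systems, $B^{\nabla}$ driven by $\mathbf{Z}$ (given) and $V^{\nabla}$ driven by $\mathbf{X} = \pi_{\ast}(\mathbf{U})$ (desired), and to show that the two right-hand sides coincide up to $\simeq$. The crucial pointwise identity is
\[
B_{a}(u) \;=\; (ga,\, -dQ(ga)g) \;=\; V^{\nabla}_{ga}(u) \qquad \text{for all } u = (m,g) \in O(M),\ a \in \mathbb{R}^{d},
\]
which holds because $ga \in \tau_{m}M$ implies $P_{m}(ga) = ga$. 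Thus $B_{a}$ is the horizontal lift $V^{\nabla}$ evaluated at the \emph{state-dependent} $E$-vector $ga$.

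First I apply Theorem \ref{the.4.5}(3) to $\mathbf{U}$ solving (\ref{equ.5.27}), obtaining, for every smooth $W$-valued $F$ on $O(M)$,
\[
F(u_{t}) - F(u_{s}) \simeq (B_{z_{s,t}}F)(u_{s}) + (B_{a}B_{b}F)(u_{s})\bigr|_{a \otimes b = \mathbb{Z}_{s,t}}.
\]
A chain-rule computation based on the identity above, together with Lemma \ref{lem.3.11}, expresses the composite derivation as
\[
(B_{a}B_{b}F)(u) = (V^{\nabla}_{ga}V^{\nabla}_{gb}F)(u) + (V^{\nabla}_{-dQ(ga)gb}F)(u),
\]
the first term coming from differentiating $F$ along the horizontal vector field with the $E$-arguments frozen, the second from the derivative of the fiber coordinate $g \mapsto gb$ along $B_{a}$.

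Next, applying the same characterization with $F = \pi : O(M) \to E$ identifies the trace and second level of $\mathbf{X} = \pi_{\ast}(\mathbf{U})$: one gets $(B_{a}\pi)(u) = ga$ and $(B_{a}B_{b}\pi)(u) = -dQ(ga)gb$, so that $x_{s,t} \simeq g_{s}z_{s,t} + (-dQ(g_{s}a)g_{s}b)\bigr|_{a \otimes b = \mathbb{Z}_{s,t}}$; the second level satisfies $(P_{x_{s}} \otimes P_{x_{s}})\mathbb{X}_{s,t} \simeq (g_{s} \otimes g_{s})\mathbb{Z}_{s,t}$ by Proposition \ref{pro.3.38} and the level-two part of (\ref{equ.5.27}). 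Substituting these into the intrinsic right-hand side of (\ref{equ.5.24}) and using linearity of $z \mapsto V^{\nabla}_{z}$ together with Corollary \ref{cor.3.22}, the two terms recombine precisely into $(B_{z_{s,t}}F + B_{a}B_{b}F\bigr|_{\mathbb{Z}_{s,t}})(u_{s})$, which by the first step equals $F(u_{t}) - F(u_{s})$. This establishes the level-one condition of (\ref{equ.5.19}); the level-two condition (\ref{equ.5.25}) then follows immediately from the level-two part of (\ref{equ.5.27}) combined with $\mathbb{X}_{s,t} \simeq (g_{s} \otimes g_{s})\mathbb{Z}_{s,t}$ and the vanishing of $V^{\nabla}_{Qz}$ on normal components, and Lemma \ref{lem.2.12} pins down $\mathbb{U}$. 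The main obstacle is the chain-rule bookkeeping for $B_{a}B_{b}F$: one must carefully separate the derivation of $F$ (producing $V^{\nabla}V^{\nabla}$ with frozen arguments) from the derivation of the state-dependent argument $gb$ (producing the single-layer $V^{\nabla}$ correction), and then verify that the latter exactly matches the normal component of $x_{s,t}$ delivered by $B_{a}B_{b}\pi$.
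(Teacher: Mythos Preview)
Your proposal is correct and follows essentially the same route as the paper's proof: the key identity $B_{a}(u)=V^{\nabla}_{ga}(u)$, the chain-rule expansion $(B_{a}B_{b}F)(u)=(V^{\nabla}_{\alpha}V^{\nabla}_{\beta}F)(u)\big|_{\alpha\otimes\beta=ga\otimes gb}+(V^{\nabla}_{-dQ(ga)gb}F)(u)$, the specialization $F=\pi$ to read off $x_{s,t}$ and $\mathbb{X}_{s,t}$, and the substitution back into the intrinsic characterization of (\ref{equ.5.19}) are exactly the steps the paper carries out in Eqs.~(\ref{equ.5.28})--(\ref{equ.5.32}). One small remark: the appeal to Lemma~\ref{lem.2.12} at the end is unnecessary, since verifying the level-one and level-two approximate identities directly for $\mathbf{U}$ is already the definition of $\mathbf{U}$ solving (\ref{equ.5.19}).
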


\begin{proof}
Working as above, Eq. (\ref{equ.5.27}) is equivalent to
\begin{equation}
F\left(  u\right)  _{s,t}\simeq\left(  B_{z_{s,t}}^{\nabla}F\right)  \left(
u_{s}\right)  +\left(  \mathcal{B}_{\mathbb{Z}_{st}}^{\nabla}F\right)  \left(
u_{s}\right)  \label{equ.5.28}%
\end{equation}
while $\mathbf{U}$ solving Eq. (\ref{equ.5.19}) is equivalent to Eq.
(\ref{equ.5.24}) where in each case $F$ is assumed to be an arbitrary smooth
function on $O\left(  M\right)  $. Thus to complete the proof we must show Eq.
(\ref{equ.5.28}) implies Eq. (\ref{equ.5.24}) and the correspondence of the
second-order pieces by the approximate identity%
\begin{equation}
\mathbb{U}_{s,t}\simeq\left[  V_{\cdot}^{\nabla}\left(  u_{s}\right)  \otimes
V_{\cdot}^{\nabla}\left(  u_{s}\right)  \right]  \mathbb{X}_{s,t}
\label{equ.5.29}%
\end{equation}

First recall that
\begin{align*}
B_{z}^{\nabla}\left(  x,g\right)   &  =\left(  \left(  gz\right)  _{x}%
,-\Gamma\left(  \left(  gz\right)  _{x}\right)  g\right)  \text{ and}\\
V_{\xi}^{\nabla}\left(  x,g\right)   &  =\left(  V_{\xi}\left(  x\right)
,-\Gamma\left(  V_{\xi}\left(  x\right)  \right)  g\right)  =\left(  P\left(
x\right)  \xi,-\Gamma\left(  P\left(  x\right)  \xi\right)  g\right)
\end{align*}
so that $B_{z}^{\nabla}\left(  x,g\right)  =V_{gz}^{\nabla}\left(  x,g\right)
.$ (\ref{equ.5.29}) is then immediate from the calculation%
\[
\mathbb{U}_{s,t}\simeq\left[  B_{\cdot}^{\nabla}\left(  u_{s}\right)  \otimes
B_{\cdot}^{\nabla}\left(  u_{s}\right)  \right]  \mathbb{Z}_{s,t}\simeq\left[
V_{\cdot}^{\nabla}\left(  u_{s}\right)  \otimes V_{\cdot}^{\nabla}\left(
u_{s}\right)  \right]  \mathbb{X}_{s,t}.
\]
Furthermore writing $u=\left(  x,g\right)  $ we have%
\begin{align*}
\left(  \mathcal{B}_{a\otimes b}^{\nabla}F\right)  \left(  u\right)   &
=\left(  B_{a}^{\nabla}B_{b}^{\nabla}F\right)  \left(  u\right)
=B_{a}^{\nabla}\left[  \left(  x,g\right)  \rightarrow\left(  V_{gb}^{\nabla
}F\right)  \left(  x,g\right)  \right] \\
&  =\left(  V_{-\Gamma\left(  \left(  ga\right)  _{x}\right)  gb}^{\nabla
}F\right)  \left(  x,g\right)  +\left(  \mathcal{V}_{ga\otimes gb}^{\nabla
}F\right)  \left(  u\right)
\end{align*}
and putting this together with Eq. (\ref{equ.5.28}) shows,
\begin{equation}
F\left(  u\right)  _{s,t}\simeq dF\left(  V_{g_{s}z_{s,t}-\Gamma\left(
\left(  ga\right)  _{x}\right)  gb|_{a\otimes b=\mathbb{Z}_{st}}}^{\nabla
}\left(  u_{s}\right)  \right)  +\left(  \mathcal{V}_{\alpha\otimes\beta
}^{\nabla}F\right)  \left(  u_{s}\right)  |_{\alpha\otimes\beta=\left[
g_{s}\otimes g_{s}\right]  \mathbb{Z}_{s,t}}. \label{equ.5.30}%
\end{equation}
Applying Eq. (\ref{equ.5.28}) to $F=\pi$ where $\pi\left(  x,g\right)  =x$
shows
\begin{equation}
x_{s,t}=\left[  \pi\left(  u\right)  \right]  _{s,t}\simeq\left(  B_{z_{s,t}%
}\pi\right)  \left(  u_{s}\right)  +\left(  \mathcal{B}_{\mathbb{Z}_{s,t}%
}^{\nabla}\pi\right)  \left(  u_{s}\right)  . \label{equ.5.31}%
\end{equation}
Using%
\begin{align*}
\left(  B_{b}\pi\right)  \left(  x,g\right)   &  =gb\text{ and }\\
\left(  B_{a}B_{b}\pi\right)  \left(  x,g\right)   &  =-\Gamma\left(  \left(
ga\right)  _{x}\right)  gb,
\end{align*}
in Eq. (\ref{equ.5.31}) gives%
\begin{equation}
x_{s,t}\simeq g_{s}z_{s,t}-\Gamma\left(  \left(  ga\right)  _{x}\right)
gb|_{a\otimes b=\mathbb{Z}_{s,t}} \label{equ.5.32}%
\end{equation}
which combined with Eq. (\ref{equ.5.30}) shows Eq. (\ref{equ.5.24}) does
indeed hold.
\end{proof}

\section{Rolling and Unrolling\label{sec.6}}

In this section we develop the rough path analogy of Cartan's rolling map. As
a consequence we will see that rough paths on a $d$ -dimensional manifold are
in one to one correspondence with rough paths on $d$ - dimensional Euclidean space.

\begin{definition}
\label{def.6.1}A manifold $M$ is said to \textbf{parallelizable }if there
exists a linear map, $Y:\mathbb{R}^{d}\rightarrow\Gamma\left(  TM\right)  $
such that the map
\[
Y_{\cdot}\left(  m\right)  :a\mapsto Y_{a}\left(  m\right)  \in T_{m}M
\]
is a linear isomorphism for all $m\in M.$ We refer to any choice of
$Y:\mathbb{R}^{d}\rightarrow\Gamma\left(  TM\right)  $ with this property as
a\textbf{ parallelism} of $M.$ Associated to a parallelism $Y$ is an
$\mathbb{R}^{d}$ -- valued one form on $M$ given by
\[
\theta^{Y}\left(  v_{m}\right)  :=Y\left(  m\right)  ^{-1}v.
\]

\end{definition}

It is easy to see that every vector space is parallelizable; we detail some
other not so trivial examples which will be useful later.

\begin{example}
\label{exa.6.2}Every Lie group $G$ is parallelizable. Indeed if we let $d=\dim
G$, so that the Lie-algebra $\mathfrak{g}:=\operatorname*{Lie}\left(
G\right)  \cong\mathbb{R}^{d},$ then $Y=Y^{G}$ of Eq. (\ref{equ.4.21}) defines
a parallelism on $G.$ In this case, the associated one form $\theta^{Y}$ is
known as the (right) Maurer--Cartan form on $G.$
\end{example}

\begin{example}
\label{exa.6.3}Let $M=M^{d}$ be any Riemannian manifold and $O\left(
M\right)  $ be the associated orthogonal frame bundle. Then $O\left(
M\right)  $ is parallelizable with $T_{u}O\left(  M\right)  \cong%
\mathbb{R}^{d}\times so\left(  d\right)  .$ In this case we can define a
parallelism by taking
\begin{equation}
Y^{O\left(  M\right)  }\left(  u\right)  \left(  a,A\right)  :=B_{a}\left(
u\right)  +\mathcal{V}_{A}\left(  u\right)  , \label{equ.6.1}%
\end{equation}
where $B_{a}$ and $\mathcal{V}_{A}$ were defined in Eqs. (\ref{equ.5.14}) and
(\ref{equ.5.15}). In this case the associated $\mathbb{R}^{d}\times so\left(
d\right)  $ -- valued one form $\theta^{Y^{O\left(  M\right)  }}:=\left(
\theta,\omega\right)  $ on $O\left(  M\right)  $ is determined by
\begin{equation}
\left(  \theta,\omega\right)  \left(  B_{a}\left(  u\right)  +\mathcal{V}%
_{A}\left(  u\right)  \right)  :=\left(  a,A\right)  \text{ for all }\left(
a,A\right)  \in\mathbb{R}^{d}\times so\left(  d\right)  \text{ and }u\in
O\left(  M\right)  \label{equ.6.2}%
\end{equation}
where $\theta$ and $\omega$ are as in Definition \ref{def.5.11}.
\end{example}

\subsection{Smooth Rolling and Unrolling\label{sub.6.1}}

The following \textquotedblleft rolling and unrolling\textquotedblright%
\ theorems in the smooth category are all relatively easy to prove and
therefore most proofs are omitted here. They are included as a warm-up to the
more difficult rough path versions which are appear in the next subsection.

\begin{theorem}
[Rolling and Unrolling I]\label{the.6.4}Let $M$ be a parallelizable manifold
and $Y:\mathbb{R}^{d}\rightarrow\Gamma\left(  TM\right)  $ be a parallelism
and $\theta^{Y}$ be the associated one form. Fix $o\in M.$ Then every $x\in
C_{o}^{1}\left(  \left[  0,T\right]  ,M\right)  $ determines a path in $z\in
C_{0}^{1}\left(  \left[  0,T\right]  ,\mathbb{R}^{d}\right)  $ by
\begin{equation}
C_{o}^{1}\left(  \left[  0,T\right]  ,M\right)  \ni x\rightarrow z:=\int%
_{0}^{\cdot}\theta^{Y}\left(  dx_{s}\right)  =\int_{0}^{\cdot}Y\left(
x_{s}\right)  ^{-1}\dot{x}_{s}ds \label{equ.6.3}%
\end{equation}
Conversely, given $z\in C_{0}^{1}\left(  \left[  0,T\right]  ,\mathbb{R}%
^{d}\right)  $ the solution to the differential equation%
\begin{equation}
\dot{x}_{t}=Y\left(  x_{t}\right)  \dot{z}_{t}\text{ with }x_{0}=o\in M,
\label{equ.6.4}%
\end{equation}
which may explode in finite time $\tau=\tau\left(  z\right)  <T,$ is such that
$x\in C_{o}^{1}\left(  \left[  0,\tau\right]  ,M\right)  $ and over
$[0,\tau)$
\begin{equation}
z=\int_{0}^{\cdot}\theta^{Y}\left(  dx_{s}\right)  . \label{equ.6.5}%
\end{equation}
\qquad\ The solution to (\ref{equ.6.4}) determines the inverse of the map
(\ref{equ.6.3}); that is, the solution to (\ref{equ.6.4}) satisfies
(\ref{equ.6.5}) and any $x\in$ $C_{o}^{1}\left(  [0,\tau),M\right)  $ agrees
with the solution $w$ to the differential equation%
\[
\dot{w}_{t}=Y\left(  w_{t}\right)  \dot{z}_{t}\text{ with }x_{0}=o\in M,
\]
until the explosion time of this equation.
\end{theorem}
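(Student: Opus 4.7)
The plan is to treat the forward map and the inverse map separately, and then observe that they really are inverses by essentially a single application of the chain rule, invoking nothing deeper than standard smooth ODE theory.

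First I would handle the forward direction. Given $x\in C_o^1([0,T],M)$, the integrand $s\mapsto Y(x_s)^{-1}\dot x_s$ is continuous (because $Y$ is a smooth parallelism, so $m\mapsto Y(m)^{-1}\in\operatorname{Hom}(T_mM,\mathbb R^d)$ is smooth in any local chart, and $\dot x$ is continuous), hence $z:=\int_0^{\cdot}\theta^Y(dx_s)$ is well-defined and lies in $C_0^1([0,T],\mathbb R^d)$. This is the easy half and requires no argument beyond noting that $\theta^Y$ is a smooth one form.

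Next I would construct the inverse map. Given $z\in C_0^1([0,T],\mathbb R^d)$, the time-dependent vector field $(t,m)\mapsto Y(m)\dot z_t$ on $M$ is smooth in $m$ and continuous in $t$, so by the standard local existence and uniqueness theorem for ODEs on a smooth manifold there exists a unique maximal $C^1$ solution $x:[0,\tau(z))\to M$ to \eqref{equ.6.4} with $x_0=o$, with $\tau(z)\in(0,T]$, and such that if $\tau(z)<T$ then $x$ leaves every compact subset of $M$. For this solution, the chain rule gives
\[
\theta^Y(\dot x_t)=Y(x_t)^{-1}\dot x_t=Y(x_t)^{-1}Y(x_t)\dot z_t=\dot z_t,
\]
so integrating in $t$ recovers \eqref{equ.6.5} over $[0,\tau(z))$.

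Finally, to see that solving \eqref{equ.6.4} really is the inverse of the development map \eqref{equ.6.3}, I would argue as follows. Given any $x\in C_o^1([0,\tau),M)$ (with $\tau\leq T$) and setting $z:=\int_0^{\cdot}\theta^Y(dx_s)$, differentiation yields $\dot z_t=Y(x_t)^{-1}\dot x_t$, hence $\dot x_t=Y(x_t)\dot z_t$ with $x_0=o$. Thus $x$ satisfies the same Cauchy problem as $w$ in the theorem, so by uniqueness of smooth ODE solutions $x$ and $w$ agree on their common interval of definition, and $w$ extends $x$ up to its own explosion time. This shows the two procedures are mutually inverse.

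I do not anticipate a genuine obstacle in the smooth case; the only subtlety worth flagging is that $\tau(z)$ may be strictly less than $T$ and the statement must (and does) explicitly allow for finite-time explosion on the $M$-side, since nothing in the hypotheses rules out $z$ whose lifted trajectory leaves every compact set. The substantive work is deferred to the rough-path analogue, where one has to replace smooth ODE theory by the RDE machinery of Section \ref{sec.4} and the pushforward constructions of Section \ref{sub.3.4}.
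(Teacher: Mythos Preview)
Your argument is correct and is exactly the standard one. The paper in fact omits the proof of this theorem entirely, remarking at the start of Subsection~\ref{sub.6.1} that the smooth rolling and unrolling theorems ``are all relatively easy to prove and therefore most proofs are omitted here''; your write-up supplies precisely the elementary ODE verification the authors had in mind.
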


\begin{corollary}
\label{cor.6.5}Fix $o\in M$ and $u_{o}$ an orthogonal frame at $o.$ Then every
$u\in C_{u_{o}}^{1}\left(  \left[  0,T\right]  ,O\left(  M\right)  \right)  $
determines an element of $C_{\left(  0,0\right)  }^{1}\left(  \left[
0,T\right]  ,\mathbb{R}^{d}\times so\left(  d\right)  \right)  $ by the map
\begin{equation}
C_{o}^{1}\left(  \left[  0,T\right]  ,O\left(  M\right)  \right)  \ni
u\rightarrow\int_{0}^{\cdot}\theta^{Y^{O\left(  M\right)  }}\left(  du\right)
=\int_{0}^{\cdot}\theta\left(  du\right)  +\int_{0}^{\cdot}\omega\left(
du\right)  \in C_{\left(  0,0\right)  }^{1}\left(  \left[  0,T\right]
,\mathbb{R}^{d}\times so\left(  d\right)  \right)  \label{equ.6.6}%
\end{equation}
Suppose that $\left(  a,A\right)  \in C_{\left(  0,0\right)  }^{1}\left(
\left[  0,T\right]  ,\mathbb{R}^{d}\times so\left(  d\right)  \right)  $ and
define $u$ to be the solution to the differential equation
\begin{equation}
\dot{u}_{t}=B_{\dot{a}_{t}}\left(  u_{t}\right)  +\mathcal{V}_{\dot{A}_{t}%
}\left(  u_{t}\right)  \text{ with }u_{0}=u_{o}\text{ given,} \label{equ.6.7}%
\end{equation}
which may explode in finite time $\tau:=\tau\left(  a,A\right)  <T.$ Then $u$
is in $C_{u_{o}}^{1}\left(  \left[  0,T\right]  ,O\left(  M\right)  \right)  $
and over $[0,\tau)$ we have
\begin{equation}
\int_{0}^{\cdot}\left(  \theta,\omega\right)  \left(  du\right)  =\left(
a_{\cdot},A_{\cdot}\right)  \label{equ.6.8}%
\end{equation}
\qquad\ 
\end{corollary}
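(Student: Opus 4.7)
The plan is to obtain Corollary \ref{cor.6.5} as an immediate specialisation of Theorem \ref{the.6.4} to the orthogonal frame bundle $O(M)$ equipped with the parallelism $Y^{O(M)}$ of Example \ref{exa.6.3}. First I would verify that $Y^{O(M)}$ really is a parallelism in the sense of Definition \ref{def.6.1}. By construction $Y^{O(M)}(u)(a,A) = B_a(u) + \mathcal{V}_A(u)$ is linear in $(a,A) \in \mathbb{R}^d \times so(d)$; fibrewise it sends the obvious basis of $\mathbb{R}^d \times so(d)$ into the horizontal--vertical direct sum $T_u O(M) = T_u^\nabla O(M) \oplus T_u^v O(M)$ from Eq.~(\ref{equ.5.8}), and so is a linear isomorphism at each $u$. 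Note that here the ``parameter space'' is $\mathbb{R}^d \times so(d)$ rather than $\mathbb{R}^d$, but Theorem \ref{the.6.4} is agnostic to the dimension of the parameter space provided it agrees with $\dim O(M) = d + d(d-1)/2$, which it does.

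Next I would identify the associated one-form $\theta^{Y^{O(M)}}$. By Definition \ref{def.6.1} it is the inverse of the fibrewise isomorphism $(a,A) \mapsto B_a(u) + \mathcal{V}_A(u)$, which by Eq.~(\ref{equ.6.2}) is precisely $(\theta, \omega)$ with $\theta$ and $\omega$ the canonical $\mathbb{R}^d$-valued and connection one-forms of Definition \ref{def.5.11}. Consequently the map in Eq.~(\ref{equ.6.6}) is literally the map $u \mapsto \int_0^\cdot \theta^{Y^{O(M)}}(du)$ of Theorem \ref{the.6.4}, and the ODE (\ref{equ.6.7}) is exactly $\dot u_t = Y^{O(M)}(u_t)(\dot a_t, \dot A_t)$, i.e., Eq.~(\ref{equ.6.4}) for this parallelism.

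With these identifications in place, the two assertions of the corollary are direct restatements of the corresponding assertions of Theorem \ref{the.6.4}: the forward map sends $u \in C^1_{u_o}([0,T], O(M))$ to $(a,A) \in C^1_{(0,0)}([0,T], \mathbb{R}^d \times so(d))$, and conversely, given $(a,A)$, the solution $u$ to Eq.~(\ref{equ.6.7}) lies in $O(M)$ up to its explosion time $\tau = \tau(a,A) \leq T$ and satisfies the inversion formula (\ref{equ.6.8}) on $[0,\tau)$, which is just Eq.~(\ref{equ.6.5}) rewritten. There is no substantive obstacle; the only thing worth being careful about is the translation of notation, particularly the identification $\theta^{Y^{O(M)}} = (\theta,\omega)$, which reduces to unpacking Eqs.~(\ref{equ.5.14})--(\ref{equ.5.18}) and (\ref{equ.6.2}).
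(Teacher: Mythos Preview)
Your proposal is correct and is precisely the intended approach: the paper labels this result a corollary of Theorem \ref{the.6.4} and explicitly omits the proof, noting that the smooth rolling/unrolling results are ``relatively easy to prove.'' Your verification that $Y^{O(M)}$ is a parallelism with associated one-form $(\theta,\omega)$ and that Eqs.~(\ref{equ.6.6})--(\ref{equ.6.8}) are the specialisations of Eqs.~(\ref{equ.6.3})--(\ref{equ.6.5}) fills in exactly the details the paper leaves to the reader.
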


\begin{theorem}
The solution to (\ref{equ.6.7}) determines the inverse to (\ref{equ.6.6})
until explosion; that is, the solution to (\ref{equ.6.7}) satisfies
(\ref{equ.6.8}), and any $u\in C_{u_{o}}^{1}\left(  \left[  0,T\right]
,O\left(  M\right)  \right)  $ agrees with $w\mathbf{,}$ the solution to the
differential equation
\[
\dot{w}_{t}=B_{\theta\left(  \dot{u}_{t}\right)  }\left(  w_{t}\right)
+\mathcal{V}_{\omega\left(  \dot{u}_{t}\right)  }\left(  w_{t}\right)  \text{
with }w_{0}=u_{o}\in O\left(  M\right)  ,
\]
until the explosion time of this equation.
\end{theorem}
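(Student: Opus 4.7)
The plan is to verify each of the two claims by essentially invoking the defining duality between the parallelism $Y^{O(M)}$ and the $\mathbb{R}^{d}\times so(d)$-valued one form $(\theta,\omega)$ recorded in Eq.~(\ref{equ.6.2}), followed by a standard ODE uniqueness argument on $O(M)$.

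For the first claim, I would simply apply the one form $(\theta,\omega)$ to both sides of the ODE (\ref{equ.6.7}) and use the defining relation (\ref{equ.6.2}), which gives $\theta\bigl(B_{\dot a_t}(u_t)+\mathcal{V}_{\dot A_t}(u_t)\bigr)=\dot a_t$ and $\omega\bigl(B_{\dot a_t}(u_t)+\mathcal{V}_{\dot A_t}(u_t)\bigr)=\dot A_t$. Hence $\theta(\dot u_t)=\dot a_t$ and $\omega(\dot u_t)=\dot A_t$ pointwise on $[0,\tau)$, and integrating from $0$ to $t$ together with $a_0=0$ and $A_0=0$ yields exactly (\ref{equ.6.8}).

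For the second claim, let $u\in C^{1}_{u_o}([0,T],O(M))$ be arbitrary. Because $Y^{O(M)}(v):\mathbb{R}^{d}\times so(d)\to T_{v}O(M)$ is a linear isomorphism at every $v$, the one form $(\theta,\omega)$ tautologically inverts the parallelism in the sense that
\[
\dot u_t \;=\; B_{\theta(\dot u_t)}(u_t)+\mathcal{V}_{\omega(\dot u_t)}(u_t)\qquad\text{for all }t\in[0,T].
\]
Thus $u$ itself is a $C^{1}$ solution of the very ODE defining $w$, with matching initial condition $u(0)=u_o$. Since $(a,A)\mapsto B_{a}+\mathcal{V}_{A}$ depends smoothly on its base point, the time-dependent vector field $(t,v)\mapsto B_{\theta(\dot u_t)}(v)+\mathcal{V}_{\omega(\dot u_t)}(v)$ is continuous in $t$ and smooth in $v$, so the Picard--Lindel\"of uniqueness theorem for time-dependent ODEs on a manifold applies and forces $u_t=w_t$ on the maximal interval on which $w$ is defined.

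The argument is essentially routine, and no genuine obstacle arises; the only point requiring a little care is the treatment of the maximal interval of existence, in particular checking that $u$ remains defined as long as $w$ does (automatic, because $u$ is given to be defined on $[0,T]$) and that $w$ may in principle explode before $T$ even when $u$ does not---this is consistent with the phrase \emph{until the explosion time of this equation}. The underlying content is simply that the rolling map $u\mapsto\int_{0}^{\cdot}(\theta,\omega)(du)$ and the unrolling map $(a,A)\mapsto u$ defined by (\ref{equ.6.7}) are mutual inverses, which follows once the parallelism and its dual coframe are paired against each other.
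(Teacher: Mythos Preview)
Your proposal is correct. The paper in fact omits the proof of this theorem entirely, remarking at the start of subsection~\ref{sub.6.1} that the smooth rolling and unrolling results ``are all relatively easy to prove and therefore most proofs are omitted here''; your argument---pairing the parallelism $Y^{O(M)}$ against its dual coframe $(\theta,\omega)$ via Eq.~(\ref{equ.6.2}) and then invoking ODE uniqueness---is precisely the routine verification the authors had in mind. One minor remark: since you exhibit $u$ itself as a solution of the ODE defining $w$ on all of $[0,T]$, the maximal solution $w$ cannot in fact explode before $T$, so your parenthetical caveat about $w$ possibly exploding earlier than $u$ is unnecessary (though harmless).
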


\begin{definition}
\label{def.6.7}We say a path $u\in C^{1}\left(  \left[  0,T\right]  ,O\left(
M\right)  \right)  $ is \textbf{horizontal (or parallel) }provided
$\omega\left(  \dot{u}_{t}\right)  =0,$ i.e. provided $\int_{0}^{\cdot}%
\omega\left(  du\right)  \equiv0.$ We let $\mathcal{H}C^{1}\left(  \left[
0,T\right]  ,O\left(  M\right)  \right)  $ denote the horizontal path in
$C^{1}\left(  \left[  0,T\right]  ,O\left(  M\right)  \right)  $.
\end{definition}

\begin{theorem}
\label{the.6.8}Let $u\in C^{1}\left(  \left[  0,T\right]  ,O\left(  M\right)
\right)  $ and $x:=\pi\left(  u\right)  \in C^{1}\left(  \left[  0,T\right]
,M\right)  $ be its projection to $M.$ Then $u$ is horizontal iff
$u_{t}=//_{t}\left(  x\right)  u_{0}$ for all $0\leq t\leq T.$
\end{theorem}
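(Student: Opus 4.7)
The plan is to reduce horizontality of $u=(x,g)$ to the parallel-transport ODE \eqref{equ.5.2} for $g$ and then invoke uniqueness of solutions to linear ODEs.

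First I would write $u_t=(x_t,g_t)$ with $\dot u_t=(\dot x_t,\dot g_t)_{u_t}\in T_{u_t}O(M)$. By the simplified form of the connection form in Remark \ref{rem.5.12} (Eq.~\eqref{equ.5.18}) the horizontal condition $\omega^{\nabla}(\dot u_t)\equiv 0$ becomes simply
\[
g_t^{\ast}\dot g_t=0 \quad \forall t\in[0,T].
\]
Next I would use the tangent-space condition of Theorem \ref{the.5.5}, namely $Q(x_t)\dot g_t=-dQ(\dot x_t)g_t$, to split
\[
\dot g_t=P(x_t)\dot g_t+Q(x_t)\dot g_t=P(x_t)\dot g_t-dQ(\dot x_t)g_t.
\]

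Applying $g_t^{\ast}$ and using that $g_t$ maps $\mathbb{R}^d$ isometrically onto $\tau_{x_t}M$ (so $g_t^{\ast}$ annihilates the normal bundle) together with Lemma \ref{lem.3.11} (which gives $P\,dQ\,P=0$, hence $g_t^{\ast}dQ(\dot x_t)g_t=0$), we deduce $g_t^{\ast}\dot g_t=g_t^{\ast}P(x_t)\dot g_t$. The key observation is then that the restriction $g_t^{\ast}|_{\tau_{x_t}M}$ is the inverse of $g_t:\mathbb{R}^d\to\tau_{x_t}M$ and is therefore injective; since $P(x_t)\dot g_t$ takes values in $\tau_{x_t}M$, the horizontal condition $g_t^{\ast}\dot g_t=0$ is equivalent to $P(x_t)\dot g_t=0$, i.e., to $\dot g_t=-dQ(\dot x_t)g_t$, which is exactly the parallel-transport ODE \eqref{equ.5.2}.

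Finally, I would invoke uniqueness of solutions to the linear ODE \eqref{equ.5.2} with initial condition $g_0$, which by Definition \ref{def.5.3} produces $g_t v = //_t(x)(g_0 v)$ for every $v\in\mathbb{R}^d$; under the natural identification of $O(M)$ with pairs $(m,g)$, this is precisely the statement $u_t=//_t(x)u_0$. The converse direction is obtained by reversing the computation: if $g_t=//_t(x)\circ g_0$ then $g_t$ satisfies \eqref{equ.5.2}, so $\dot g_t+dQ(\dot x_t)g_t=0$, and the same cancellation $g_t^{\ast}dQ(\dot x_t)g_t=0$ shows $\omega^{\nabla}(\dot u_t)=0$. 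There is no significant obstacle here; the only point requiring care is the implication $g_t^{\ast}P(x_t)\dot g_t=0\Rightarrow P(x_t)\dot g_t=0$, which relies on $g_t$ being an isometric isomorphism onto $\tau_{x_t}M$ and is an immediate consequence of the definition of $O(M)$.
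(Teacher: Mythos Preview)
Your argument is correct and follows essentially the same route as the paper: reduce horizontality to the vanishing of the connection form, translate this into the parallel-transport ODE \eqref{equ.5.2}, and invoke uniqueness. The only cosmetic difference is that the paper works directly with the full expression \eqref{equ.5.17}, $\omega^{\nabla}(\dot u_t)=g_t^{-1}\bigl[\dot g_t+dQ(\dot x_t)g_t\bigr]$ (equivalently the covariant-derivative formulation of Remark~\ref{rem.5.12}), so the equivalence with \eqref{equ.5.2} is immediate once one notes $g_t^{-1}$ is injective on $\tau_{x_t}M$; you instead start from the simplified form \eqref{equ.5.18} and then re-derive the $dQ(\dot x_t)g_t$ term via the tangent-space constraint and the identity $P\,dQ\,P=0$, which amounts to reproving that \eqref{equ.5.17} and \eqref{equ.5.18} agree.
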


\begin{proof}
If $u\in C^{1}\left(  \left[  0,T\right]  ,O\left(  M\right)  \right)  $ we
have from Eq. (\ref{equ.5.17}) that $\omega\left(  \dot{u}_{t}\right)
=u_{t}^{-1}\frac{\nabla u_{t}}{dt}$ which is zero iff $\frac{\nabla u_{t}}%
{dt}=0$ iff $u_{t}$ is parallel iff $u_{t}=//_{t}\left(  x\right)  u_{0}$ for
all $0\leq t\leq T.$
\end{proof}

\begin{corollary}
\label{cor.6.9}Let $M$ be a Riemannian manifold with $o\in M$ and $u_{o}\in
O\left(  M\right)  $ given. Then the map,%
\begin{equation}
\mathcal{H}C_{u_{o}}^{1}\left(  \left[  0,T\right]  ,O\left(  M\right)
\right)  \ni u\rightarrow\pi\circ u\in C_{o}^{1}\left(  \left[  0,T\right]
,M\right)  \label{equ.6.9}%
\end{equation}
is a bijection with inverse map given by,%
\begin{equation}
C_{o}^{1}\left(  \left[  0,T\right]  ,M\right)  \ni x\rightarrow u_{t}%
:=//_{t}\left(  x\right)  u_{o}\in\mathcal{H}C_{u_{o}}^{1}\left(  \left[
0,T\right]  ,O\left(  M\right)  \right)  . \label{equ.6.10}%
\end{equation}

\end{corollary}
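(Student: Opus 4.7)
The plan is to invoke Theorem \ref{the.6.8} twice, once in each direction, and then chase definitions to see that the two maps in (\ref{equ.6.9}) and (\ref{equ.6.10}) are mutual inverses.

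First, I would verify that the map in (\ref{equ.6.10}) is well defined and lands in $\mathcal{H}C_{u_{o}}^{1}\left(  \left[  0,T\right]  ,O\left(  M\right)  \right)$. Given $x\in C_{o}^{1}\left(  \left[  0,T\right]  ,M\right)$, the path $t\mapsto u_{t}:=//_{t}\left(  x\right)  u_{o}$ is obtained by solving the linear ODE (\ref{equ.5.2}) with initial condition $g_{0}=u_{o}$; classical ODE theory (together with the fact, noted after Eq.~(\ref{equ.5.2}), that the evolution preserves the property of being an isometry onto $\tau_{x_t}M$) produces a $C^1$ curve in $O(M)$ that projects to $x$ and starts at $u_{o}$. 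That $u$ is horizontal is the content of Theorem \ref{the.6.8}: since $u_{t}=//_{t}\left(  x\right)  u_{o}$, we have $\frac{\nabla u_{t}}{dt}=0$, hence $\omega\left(  \dot{u}_{t}\right)  \equiv0$, so $u \in \mathcal{H}C_{u_{o}}^{1}$.

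Next, I would show the two assignments compose to the identity in each order. For one direction, let $x\in C_{o}^{1}\left(  \left[  0,T\right]  ,M\right)$ and let $u_{t}:=//_{t}\left(  x\right)  u_{o}$; then by construction $\pi\left(u_{t}\right)=x_{t}$ for every $t$, so the composition (\ref{equ.6.10})~$\to$~(\ref{equ.6.9}) recovers $x$. For the other direction, suppose $u \in \mathcal{H}C_{u_{o}}^{1}\left(  \left[  0,T\right]  ,O\left(  M\right)  \right)$, set $x:=\pi\circ u \in C_{o}^{1}\left(  \left[  0,T\right]  ,M\right)$, and apply Theorem \ref{the.6.8} directly: horizontality of $u$ forces $u_{t}=//_{t}\left(  x\right)  u_{0}=//_{t}\left(  x\right)  u_{o}$. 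Hence the composition (\ref{equ.6.9})~$\to$~(\ref{equ.6.10}) recovers $u$. Bijectivity follows.

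There is really no main obstacle: the substance of the argument is packaged entirely in Theorem \ref{the.6.8}, which identifies horizontal lifts of $C^{1}$ curves with parallel translation of the initial frame. The only small bookkeeping point worth stating carefully is that the horizontal lift of a $C^{1}$ path in $M$ through a given frame exists globally on $\left[  0,T\right]$, which is immediate here because Eq.~(\ref{equ.5.2}) is a \emph{linear} ODE for $g(t)\in\operatorname{Hom}\left(  \mathbb{R}^{d},E\right)$ driven by the bounded continuous coefficient $t\mapsto dQ\left(  \dot{x}\left(  t\right)  \right)$, and therefore does not explode on $\left[  0,T\right]$.
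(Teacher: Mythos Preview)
Your proposal is correct. The paper actually omits the proof of this corollary (as announced at the start of Subsection~\ref{sub.6.1}, the smooth-category results are stated without proof as a warm-up), so there is nothing to compare against; your argument via Theorem~\ref{the.6.8} together with the global existence of the linear ODE~(\ref{equ.5.2}) is exactly the expected one.
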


\begin{corollary}
\label{cor.6.10}Let $M$ be a Riemannian manifold with $o\in M$ given. Then
there exists a one to one correspondence between $C_{o}^{1}\left(  \left[
0,T\right]  ,M\right)  $ and $C_{0}^{1}\left(  \left[  0,T\right]
,\mathbb{R}^{d}\right)  $ determined by,%
\[%
\begin{array}
[c]{ccccc}%
C_{o}^{1}\left(  \left[  0,T\right]  ,M\right)  & \rightarrow & \mathcal{H}%
C_{u_{o}}^{1}\left(  \left[  0,T\right]  ,O\left(  M\right)  \right)  &
\rightarrow & C_{0}^{1}\left(  \left[  0,T\right]  ,\mathbb{R}^{d}\right) \\
x & \rightarrow & //_{\cdot}\left(  x\right)  u_{o} & \rightarrow & \int%
_{0}^{\cdot}\theta\left(  d\left[  //_{s}\left(  x\right)  u_{o}\right]
\right)  =u_{o}^{-1}\int_{0}^{\cdot}//_{s}\left(  x\right)  ^{-1}dx_{s}%
\end{array}
\]

\end{corollary}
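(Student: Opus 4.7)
The plan is to factor the asserted bijection as a composition of two already established maps. First, Corollary \ref{cor.6.9} provides the bijection
\[
C_{o}^{1}\left(  \left[  0,T\right]  ,M\right) \;\longleftrightarrow\; \mathcal{H}C_{u_{o}}^{1}\left(  \left[  0,T\right]  ,O\left(  M\right)  \right),
\]
given by $x \mapsto //_{\cdot}(x)u_{o}$ with inverse $u \mapsto \pi \circ u$. So it remains to exhibit a bijection between $\mathcal{H}C_{u_{o}}^{1}\left(  \left[  0,T\right]  ,O\left(  M\right)  \right)$ and $C_{0}^{1}\left(  \left[  0,T\right]  ,\mathbb{R}^{d}\right)$, and then to identify the composition with the displayed formula.

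For the second bijection I would specialize Corollary \ref{cor.6.5} to the case where the $so(d)$-component vanishes. Given $u \in \mathcal{H}C_{u_{o}}^{1}$, by Definition \ref{def.6.7} we have $\omega(\dot{u}_{t}) \equiv 0$, so $\int_{0}^{\cdot}(\theta,\omega)(du) = (z,0)$ with $z := \int_{0}^{\cdot}\theta(du) \in C_{0}^{1}\left(  \left[  0,T\right]  ,\mathbb{R}^{d}\right)$. Conversely, given any $z \in C_{0}^{1}\left(  \left[  0,T\right]  ,\mathbb{R}^{d}\right)$, apply Corollary \ref{cor.6.5} with $(a,A) = (z,0)$ to obtain the solution $u$ to
\[
\dot{u}_{t} = B_{\dot{z}_{t}}(u_{t}), \qquad u_{0} = u_{o},
\]
whose vertical infinitesimal component $\mathcal{V}_{\dot{A}_{t}}$ is identically zero, hence $\omega(\dot{u}_{t}) \equiv 0$ and $u \in \mathcal{H}C_{u_{o}}^{1}$. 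That these two maps are mutually inverse on their common domain follows directly from Corollary \ref{cor.6.5}.

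Composing the two bijections gives the claimed correspondence. To verify the explicit formula, let $u_{t} := //_{t}(x)u_{o}$ be the horizontal lift. Since $\pi_{\ast}\dot{u}_{t} = \dot{x}_{t}$ and $u_{t} \in \mathrm{Iso}(\mathbb{R}^{d}, \tau_{x_{t}}M)$, by Eq.~(\ref{equ.5.16}) we have $\theta(\dot{u}_{t}) = u_{t}^{-1}\dot{x}_{t}$. Writing $u_{t} = //_{t}(x)u_{o}$ and using that $//_{t}(x):T_{o}M \to T_{x_{t}}M$ is an isometry, we get
\[
\int_{0}^{\cdot}\theta(du) = \int_{0}^{\cdot} u_{s}^{-1}\dot{x}_{s}\,ds = u_{o}^{-1}\int_{0}^{\cdot} //_{s}(x)^{-1}\dot{x}_{s}\,ds,
\]
which is the right-hand member of the diagram.

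The only delicate point is that the ODE in the inverse map can in principle explode before time $T$ unless $M$ is geodesically complete; so strictly speaking the second arrow in the diagram is defined globally only on the image of the forward map, and the forward map is always defined on all of $C_{o}^{1}\left(  \left[  0,T\right]  ,M\right)$ because parallel translation along a smooth curve is governed by a linear ODE in $g$ (Eq.~(\ref{equ.5.2})) and hence never explodes. The bijection is thus most naturally stated between $C_{o}^{1}\left(  \left[  0,T\right]  ,M\right)$ and the set of $z \in C_{0}^{1}\left(  \left[  0,T\right]  ,\mathbb{R}^{d}\right)$ for which the development ODE admits a solution up to time $T$; the displayed formulas then make both directions unambiguous, and no further argument beyond Corollaries \ref{cor.6.5} and \ref{cor.6.9} is needed.
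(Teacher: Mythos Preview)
Your proposal is correct and follows exactly the route the paper intends: the paper omits the proof of Corollary~\ref{cor.6.10} (these smooth results are stated as a warm-up with ``most proofs omitted''), but the displayed diagram in the statement itself signals that the bijection is to be read as the composition of Corollary~\ref{cor.6.9} with the horizontal specialization of Corollary~\ref{cor.6.5}, which is precisely what you do. Your explicit identification of $\theta(\dot u_t)=u_t^{-1}\dot x_t$ via Eq.~(\ref{equ.5.16}) and your caveat about explosion (which the paper handles only loosely at this point) are both accurate and go slightly beyond what the paper records.
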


\subsection{Rough Rolling and Unrolling\label{sub.6.2}}

\begin{theorem}
[Rough Rolling and Unrolling I]\label{the.6.11}Let $M$ be a parallelizable
manifold and $Y:\mathbb{R}^{d}\rightarrow\Gamma\left(  TM\right)  $ be a
parallelism and $\theta^{Y}$ be the associated one form. Fix $o\in M.$ Then
every $\mathbf{X\in}WG_{p}\left(  \left[  0,T\right]  ,M,o\right)  $
determines an element of $WG_{p}\left(  \left[  0,T\right]  ,\mathbb{R}%
^{d},0\right)  $ by the map%
\begin{equation}
WG_{p}\left(  \left[  0,T\right]  ,M,o\right)  \ni\mathbf{X}\rightarrow
\mathbf{Z}:=\int_{0}^{\cdot}\theta^{Y}\left(  d\mathbf{X}\right)  \in
WG_{p}\left(  \left[  0,T\right]  ,\mathbb{R}^{d},0\right)  . \label{equ.6.11}%
\end{equation}
Suppose that $\mathbf{Z=}\left(  z,\mathbb{Z}\right)  \in WG_{p}\left(
\left[  0,T\right]  ,\mathbb{R}^{d},0\right)  $ and let $\mathbf{X}$ denote
the solution to the RDE%
\begin{equation}
d\mathbf{X}=Y_{d\mathbf{Z}_{t}}\left(  x_{t}\right)  \text{ with }x_{0}=o\in
M, \label{equ.6.12}%
\end{equation}
with possible explosion time $\tau:=\tau\left(  Y,\mathbf{Z}\right)  <T.$ Then
$\mathbf{X}$ is in $WG_{p}\left(  [0,\tau),M,o\right)  $ and over $[0,\tau)$
we have%
\begin{equation}
\int_{0}^{\cdot}\theta^{Y}\left(  d\mathbf{X}\right)  =\mathbf{Z.}
\label{equ.6.13}%
\end{equation}
\qquad\ 

The solution to (\ref{equ.6.12}) determines the inverse to (\ref{equ.6.11})
until explosion; that is, both (\ref{equ.6.13}) holds, and any $\mathbf{X\in
}WG_{p}\left(  M,o\right)  $ agrees with $\mathbf{W}$ the solution to the RDE%
\begin{equation}
d\mathbf{W}_{t}=Y\left(  w_{t}\right)  d\left[  \int_{0}^{t}\theta^{Y}\left(
d\mathbf{X}\right)  \right]  \text{ with }w_{0}=o\in M \label{equ.6.14}%
\end{equation}
until the explosion time of this equation.
\end{theorem}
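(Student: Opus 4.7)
The statement packages four claims, which I would prove in turn.

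\textit{Step 1 (forward map well-defined).} Because $Y$ is a parallelism, $m\mapsto Y(m)^{-1}$ is smooth, so $\theta^{Y}\in\Omega^{1}(M,\mathbb{R}^{d})$. Definition \ref{def.3.24} (via any smooth extension to a neighborhood of $M$) therefore produces $\mathbf{Z}:=\int\theta^{Y}(d\mathbf{X})\in WG_{p}([0,T],\mathbb{R}^{d},0)$ for every $\mathbf{X}\in WG_{p}([0,T],M,o)$, giving the map (\ref{equ.6.11}).

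\textit{Step 2 (backward map well-defined).} Local existence, uniqueness, and the explosion dichotomy for the RDE (\ref{equ.6.12}) follow directly from Theorem \ref{the.4.2} applied with $n=d$ and the parallelism $Y$, producing $\mathbf{X}\in WG_{p}([0,\tau),M,o)$.

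\textit{Step 3 (the identity (\ref{equ.6.13})).} To show $\int\theta^{Y}(d\mathbf{X})=\mathbf{Z}$ on $[0,\tau)$, I would apply item 2 of Theorem \ref{the.4.5} with $\alpha=\theta^{Y}$. The defining property $\theta^{Y}_{m}Y_{a}(m)=a$ means $m\mapsto\theta^{Y}(Y_{b})(m)\equiv b$ is constant on $M$, so $Y_{a}[\theta^{Y}(Y_{b})]\equiv 0$. Consequently
\[
\left[\int\theta^{Y}(d\mathbf{X})\right]^{1}_{s,t}\simeq \theta^{Y}_{x_{s}}\left(Y_{z_{s,t}}(x_{s})\right)=z_{s,t},\qquad \left[\int\theta^{Y}(d\mathbf{X})\right]^{2}_{s,t}\simeq \theta^{Y}_{x_{s}}Y_{a}(x_{s})\otimes\theta^{Y}_{x_{s}}Y_{b}(x_{s})\big|_{a\otimes b=\mathbb{Z}_{s,t}}=\mathbb{Z}_{s,t},
\]
and Lemma \ref{lem.2.12} then upgrades $\simeq$ to equality.

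\textit{Step 4 (inverse identity).} Given $\mathbf{X}\in WG_{p}(M,o)$, set $\mathbf{Z}:=\int\theta^{Y}(d\mathbf{X})$. The plan is to verify directly that $\mathbf{X}$ itself solves the RDE $d\mathbf{X}=Y_{d\mathbf{Z}}(x)$, so that uniqueness in Theorem \ref{the.4.2} forces $\mathbf{X}=\mathbf{W}$. By Theorem \ref{the.4.5}(2) it suffices to check, for every $\alpha\in\Omega^{1}(M,W)$,
\[
[\textstyle\int\alpha(d\mathbf{X})]^{1}_{s,t}\simeq\alpha_{x_{s}}(Y_{z_{s,t}}(x_{s}))+Y_{a}(x_{s})[\alpha(Y_{b})]\big|_{a\otimes b=\mathbb{Z}_{s,t}},\qquad [\textstyle\int\alpha(d\mathbf{X})]^{2}_{s,t}\simeq\alpha_{x_{s}}Y_{a}(x_{s})\otimes\alpha_{x_{s}}Y_{b}(x_{s})\big|_{a\otimes b=\mathbb{Z}_{s,t}}.
\]
Expanding the left side using Proposition \ref{pro.3.29} and substituting the defining approximations for $z_{s,t}$ and $\mathbb{Z}_{s,t}$ into the right side reduces the second-order comparison to the identity $Y(x_{s})\theta^{Y}_{x_{s}}=P_{x_{s}}$, and reduces the first-order comparison to the \emph{Maurer--Cartan-type} relation
\[
(\nabla_{v}\theta^{Y})(Y_{b}(m))=-\theta^{Y}_{m}(\nabla_{v}Y_{b})\qquad\forall v\in T_{m}M,
\]
which is immediate from the product rule (Proposition \ref{pro.3.12}(2)) applied to the constant function $\theta^{Y}(Y_{b})\equiv b$, combined with the fact that $\nabla_{v}Y_{b}\in T_{m}M$ so $Y(m)\theta^{Y}_{m}(\nabla_{v}Y_{b})=\nabla_{v}Y_{b}$. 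The two sides then match after using Corollary \ref{cor.3.22} to replace $\mathbb{X}_{s,t}$ by $[P_{x_{s}}\otimes P_{x_{s}}]\mathbb{X}_{s,t}$ in the appropriate places.

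\textbf{Anticipated main obstacle.} The arithmetic in Step 4 is the only non-routine part: one must carefully align the covariant-derivative expansion of $\int\alpha(d\mathbf{X})$ (coming from Proposition \ref{pro.3.29}) with the RDE-style expansion dictated by Theorem \ref{the.4.5}(2), chaining the approximations $\simeq$ correctly. The conceptual content is the Maurer--Cartan identity above; once isolated, everything else is bookkeeping with $P$, $Q$, and the geometric character of $\mathbf{X}$.
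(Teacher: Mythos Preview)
Your proposal is correct. Steps 1--3 coincide with the paper's proof essentially verbatim (the paper also invokes item 2 of Theorem \ref{the.4.5} with $\alpha=\theta^{Y}$ and the identity $Y_{a}[\theta^{Y}(Y_{b})]=0$).

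For Step 4 you take a slightly different route than the paper. The paper verifies item 1 of Theorem \ref{the.4.5} (i.e.\ the defining approximations (\ref{equ.4.2})--(\ref{equ.4.3})) directly: it applies $Y(x_{s})$ to the Proposition \ref{pro.3.29} expansions of $z_{s,t},\mathbb{Z}_{s,t}$, uses the same Maurer--Cartan relation in the operator form $Y\nabla\theta^{Y}=-(\nabla Y)\theta^{Y}$ to recover $P_{x_{s}}x_{s,t}$, and then must separately handle the normal component $Q_{x_{s}}x_{s,t}$ via Lemma \ref{lem.3.28}. You instead verify item 2 of Theorem \ref{the.4.5}, testing against arbitrary $\alpha$ through the intrinsic formula of Proposition \ref{pro.3.29}; because that formula only sees $P_{x_{s}}x_{s,t}$ and $[P_{x_{s}}\otimes P_{x_{s}}]\mathbb{X}_{s,t}$, the $Q$-component never enters and Lemma \ref{lem.3.28} is not needed. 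The key analytic content---the product-rule identity $(\nabla_{v}\theta^{Y})(Y_{b})+\theta^{Y}(\nabla_{v}Y_{b})=0$---is the same in both arguments, so the difference is organizational: your route is a bit cleaner, while the paper's is more concrete (it literally reassembles $x_{s,t}$ from its tangential and normal parts).
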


\begin{proof}
Suppose that $\mathbf{Z}\in WG_{p}\left(  \mathbb{R}^{d},0\right)  $ and let
$\mathbf{X}$ solve Eq. (\ref{equ.6.12}). Since $\theta^{Y}\left(
Y_{a}\right)  =a$ for all $a\in\mathbb{R}^{d},$ it follows that $Y_{a}\left[
\theta^{Y}\left(  Y_{b}\right)  \right]  =Y_{a}\left[  b\right]  =0$ and hence
from item 2. of Theorem \ref{the.4.5}
\[
\left[  \int\theta^{Y}\left(  d\mathbf{X}\right)  \right]  _{s,t}^{1}%
\simeq\theta_{x_{s}}^{Y}\left(  Y\left(  x_{s}\right)  z_{s,t}\right)
=z_{s,t}%
\]
and
\[
\left[  \int\theta^{Y}\left(  d\mathbf{X}\right)  \right]  _{s,t}^{2}%
\simeq\left[  \theta_{x_{s}}Y\left(  x_{s}\right)  \otimes\theta_{x_{s}%
}Y\left(  x_{s}\right)  \right]  \mathbb{Z}_{s,t}=\mathbb{Z}_{s,t}.
\]

Conversely, suppose that $\mathbf{X\in}WG_{p}\left(  M,o\right)  $ and now
\textit{define} $\mathbf{Z=}\left(  z,\mathbb{Z}\right)  $ by $\mathbf{Z}%
=\int_{0}^{\cdot}\theta^{Y}\left(  d\mathbf{X}\right)  .$ We need to show,
making the usual caveat about explosion, that $\mathbf{X}$ is the solution to
(\ref{equ.6.14}). To this end, we first note
\[
\mathbb{Z}_{s,t}\simeq\left[  \theta_{x_{s}}\otimes\theta_{x_{s}}\right]
\left[  P_{x_{s}}\otimes P_{x_{s}}\right]  \mathbb{X}_{s,t}\text{ and
\ }z_{s,t}\simeq\theta_{x_{s}}P_{x_{s}}x_{s,t}+\nabla\theta\left[  P_{x_{s}%
}\otimes P_{x_{s}}\right]  \mathbb{X}_{s,t}.
\]
Since $Y\theta=Id_{TM}$ it follows from the last two equations that
\begin{align*}
Y\left(  x_{s}\right)  \otimes Y\left(  x_{s}\right)  \mathbb{Z}_{s,t}  &
\simeq Y\left(  x_{s}\right)  \otimes Y\left(  x_{s}\right)  \left[
\theta_{x_{s}}\otimes\theta_{x_{s}}\right]  \left[  P_{x_{s}}\otimes P_{x_{s}%
}\right]  \mathbb{X}_{s,t}\\
&  \simeq\left[  P_{x_{s}}\otimes P_{x_{s}}\right]  \mathbb{X}_{s,t}%
\end{align*}
and
\[
Y\left(  x_{s}\right)  z_{s,t}\simeq P_{x_{s}}x_{s,t}+Y\left(  x_{s}\right)
\nabla\theta\left[  P_{x_{s}}\otimes P_{x_{s}}\right]  \mathbb{X}_{s,t}\simeq
P_{x_{s}}x_{s,t}+Y\left(  x_{s}\right)  \nabla\theta\left[  Y\left(
x_{s}\right)  \otimes Y\left(  x_{s}\right)  \mathbb{Z}_{s,t}\right]
\]
or, equivalently, that
\begin{equation}
\mathbb{X}_{s,t}\simeq\left[  P_{x_{s}}\otimes P_{x_{s}}\right]
\mathbb{X}_{st}\simeq Y\left(  x_{s}\right)  \otimes Y\left(  x_{s}\right)
\mathbb{Z}_{s,t} \label{equ.6.15}%
\end{equation}
and%
\begin{equation}
P_{x_{s}}x_{s,t}\simeq Y\left(  x_{s}\right)  z_{s,t}-Y\left(  x_{s}\right)
\nabla\theta\left[  Y\left(  x_{s}\right)  \otimes Y\left(  x_{s}\right)
\mathbb{Z}_{s,t}\right]  . \label{equ.6.16}%
\end{equation}
Again using the fact that $Y\theta=Id_{TM}$ we see that
\[
0=\nabla Id_{TM}=\nabla\left[  Y\theta\right]  =\left(  \nabla Y\right)
\theta+Y\nabla\theta,
\]
which combined with Eq. (\ref{equ.6.16}) and the fact that $\theta Y_{a}=a$
for all $a\in\mathbb{R}^{d}$ implies%
\begin{align}
P_{x_{s}}x_{s,t}  &  \simeq Y\left(  x_{s}\right)  z_{s,t}+\left(
\nabla_{\left(  \cdot\right)  }Y\right)  \theta\left(  \cdot\right)  \left[
Y\left(  x_{s}\right)  \otimes Y\left(  x_{s}\right)  \mathbb{Z}_{s,t}\right]
\nonumber\\
&  =Y\left(  x_{s}\right)  z_{s,t}+\left(  \nabla_{Y\left(  \cdot\right)
}Y_{\left(  \cdot\right)  }\right)  \mathbb{Z}_{s,t}\nonumber\\
&  =Y_{z_{s,t}}\left(  x_{s}\right)  +P_{x_{s}}\left(  \partial_{Y_{a}}%
Y_{b}\right)  \left(  x_{s}\right)  |_{a\otimes b=\mathbb{Z}_{s,t}}.
\label{equ.6.17}%
\end{align}
It only remains to show%
\begin{equation}
Q_{x_{s}}x_{s,t}\simeq Q_{x_{s}}\left(  \partial_{Y_{a}}Y_{b}\right)  \left(
x_{s}\right)  |_{a\otimes b=\mathbb{Z}_{s,t}} \label{equ.6.18}%
\end{equation}
since adding Eqs. (\ref{equ.6.17}) and (\ref{equ.6.18}) gives Eq.
(\ref{equ.4.2}) while Eq. (\ref{equ.6.15}) is the same as Eq. (\ref{equ.4.3})
and these equations are equivalent to $\mathbf{X}\in WG_{p}\left(  M,o\right)
$ solving Eq. (\ref{equ.6.12}). However from Eq. (\ref{equ.3.15}) of Lemma
\ref{lem.3.28},%
\[
Q_{x_{s}}x_{s,t}\simeq Q_{x_{s}}\left(  \partial_{P_{x_{s}}\left(
\cdot\right)  }P\right)  \mathbb{X}_{s,t}\simeq Q_{x_{s}}\left(
\partial_{P_{x_{s}}\left(  \cdot\right)  }P\right)  \left[  Y\left(
x_{s}\right)  \otimes Y\left(  x_{s}\right)  \right]  \mathbb{Z}_{s,t}=\left[
Q_{x_{s}}\left(  \partial_{Y\left(  x_{s}\right)  }P\right)  Y\left(
x_{s}\right)  \right]  \mathbb{Z}_{s,t}.
\]
This gives Eq. (\ref{equ.6.18}) since $Q\left(  \partial_{Y}P\right)
Y=Q\partial_{Y}Y$ which is proved by applying $Q$ to the identity,%
\[
\partial_{Y}Y=\partial_{Y}\left[  PY\right]  =\left(  \partial_{Y}P\right)
Y+P\partial_{Y}Y.
\]

\end{proof}

\subsection{Rolling via the frame bundle\label{sub.6.3}}

We can specialize this result to $O\left(  M\right)  $. Making use of the
notation in Example \ref{exa.6.3}. we obtain the following.

\begin{corollary}
\label{cor.6.12}Fix $o\in M$ and $u_{o}$ an orthogonal frame at $o.$ Then
every $\mathbf{U\in}WG_{p}\left(  \left[  0,T\right]  ,O\left(  M\right)
,u_{o}\right)  $ determines an element of $WG_{p}\left(  \left[  0,T\right]
,\mathbb{R}^{d}\times so\left(  d\right)  ,\left(  0,0\right)  \right)  $ by
the map%
\begin{equation}%
\begin{array}
[c]{ccc}%
WG_{p}\left(  \left[  0,T\right]  ,O\left(  M\right)  ,u_{o}\right)  &
\longrightarrow & WG_{p}\left(  \left[  0,T\right]  ,\mathbb{R}^{d}\times
so\left(  d\right)  ,\left(  0,0\right)  \right) \\
\mathbf{U} & \longrightarrow & \int_{0}^{\cdot}\theta^{Y^{O\left(  M\right)
}}\left(  d\mathbf{U}_{t}\right)  :=\int_{0}^{\cdot}\left(  \theta
,\omega\right)  \left(  d\mathbf{U}_{t}\right)
\end{array}
\label{equ.6.19}%
\end{equation}
Suppose that $\mathbf{Z\in}WG_{p}\left(  \mathbb{R}^{d}\times so\left(
d\right)  ,\left(  0,0\right)  \right)  ,$ and let $\mathbf{a},\mathbf{A}$
\ denote the projections of $\mathbf{Z}$ to the elements of $WG_{p}\left(
\mathbb{R}^{d},0\right)  $ and $WG_{p}\left(  so\left(  d\right)  ,0\right)  $
respectively$.$ Define $\mathbf{U}$ to be the solution to the RDE
\begin{equation}
d\mathbf{U}=Y_{d\mathbf{Z}_{t}}^{O\left(  M\right)  }\left(  u_{t}\right)
\text{ with }u_{0}=u_{o}\text{ given,} \label{equ.6.20}%
\end{equation}
which may explode in finite time $\tau:=\tau\left(  \mathbf{Z}\right)  <T.$
Then $\mathbf{U}$ is in $WG_{p}\left(  [0,\tau),O\left(  M\right)
,u_{o}\right)  $ and over $[0,\tau)$ we have%
\begin{equation}
\int_{0}^{\cdot}\theta^{Y^{O\left(  M\right)  }}\left(  d\mathbf{U}\right)
=\mathbf{Z.} \label{equ.6.21}%
\end{equation}
\qquad\ 
\end{corollary}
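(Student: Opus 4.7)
The plan is to recognize Corollary \ref{cor.6.12} as the direct specialization of Theorem \ref{the.6.11} to the parallelizable manifold $O(M)$ equipped with the parallelism $Y^{O(M)}: \mathbb{R}^d \times so(d) \to \Gamma(TO(M))$ introduced in Example \ref{exa.6.3} by $Y^{O(M)}(u)(a,A) := B_a(u) + \mathcal{V}_A(u)$. Once this identification is in place, both halves of the corollary will follow from the two halves of Theorem \ref{the.6.11}, applied with $d$ replaced by $d + \dim so(d)$ and $M$ replaced by $O(M)$.

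First, I would verify that $Y^{O(M)}$ really is a parallelism, i.e. that $Y^{O(M)}(u)$ is a linear isomorphism from $\mathbb{R}^d \times so(d)$ onto $T_u O(M)$. This is immediate from the direct-sum decomposition $T_u O(M) = T^{\nabla}_u O(M) \oplus T^v_u O(M)$ in Eq. (\ref{equ.5.8}), together with the observations that $a \mapsto B_a(u)$ is a linear isomorphism from $\mathbb{R}^d$ onto $T^{\nabla}_u O(M)$ (by the definition (\ref{equ.5.15}) and the bijectivity of $g : \mathbb{R}^d \to \tau_{\pi(u)} M$) and that $A \mapsto \mathcal{V}_A(u)$ is a linear isomorphism from $so(d)$ onto $T^v_u O(M)$ (by (\ref{equ.5.14}) and (\ref{equ.5.9})).

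Next I would identify the associated $\mathbb{R}^d \times so(d)$-valued one form $\theta^{Y^{O(M)}}$ on $O(M)$, defined by $\theta^{Y^{O(M)}}(Y^{O(M)}(u)(a,A)) := (a,A)$, with the pair $(\theta, \omega)$ of Definition \ref{def.5.11}. But Eq. (\ref{equ.6.2}) is precisely the statement $\theta(B_a + \mathcal{V}_A) = a$ and $\omega(B_a + \mathcal{V}_A) = A$, and hence $\theta^{Y^{O(M)}} = (\theta, \omega)$. With this identification the integral $\int_0^\cdot \theta^{Y^{O(M)}}(d\mathbf{U})$ coincides with $\int_0^\cdot(\theta,\omega)(d\mathbf{U})$, which in turn is the natural pair $(\int_0^\cdot \theta(d\mathbf{U}), \int_0^\cdot \omega(d\mathbf{U}))$ when one regards $WG_p(\mathbb{R}^d \times so(d))$ as a product via Proposition \ref{pro.3.38}.

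Granted these two observations, Theorem \ref{the.6.11} applied to $(O(M), Y^{O(M)})$ with base point $u_o$ yields Eqs. (\ref{equ.6.19})--(\ref{equ.6.21}) verbatim; the splitting of $\mathbf{Z}$ into $\mathbf{a}$ and $\mathbf{A}$ is just the pushforward of $\mathbf{Z}$ by the linear coordinate projections $\mathbb{R}^d \times so(d) \to \mathbb{R}^d$ and $\mathbb{R}^d \times so(d) \to so(d)$ (Proposition \ref{pro.3.38}), and the RDE (\ref{equ.6.20}) is just (\ref{equ.6.12}) rewritten in terms of $(\mathbf{a}, \mathbf{A})$ using the definition of $Y^{O(M)}$. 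Since the entire argument is a bookkeeping reduction to Theorem \ref{the.6.11}, I do not anticipate any substantive obstacle; the only content-bearing step is the verification that $Y^{O(M)}$ is a parallelism and that $\theta^{Y^{O(M)}} = (\theta, \omega)$, both of which are essentially tautological consequences of the definitions recorded in Section \ref{sec.5}.
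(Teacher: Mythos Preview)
Your proposal is correct and matches the paper's approach exactly: the paper presents Corollary \ref{cor.6.12} with no separate proof, introducing it simply as the specialization of Theorem \ref{the.6.11} to $O(M)$ with the parallelism $Y^{O(M)}$ of Example \ref{exa.6.3}. Your verification that $Y^{O(M)}$ is a parallelism and that $\theta^{Y^{O(M)}} = (\theta,\omega)$ is precisely the bookkeeping the paper leaves implicit.
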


\begin{theorem}
\label{the.6.13}The solution to (\ref{equ.6.20}) determines the inverse to the
map (\ref{equ.6.19}) until explosion; that is, the solution to (\ref{equ.6.20}%
) satisfies (\ref{equ.6.21}), and any $\mathbf{U\in}WG_{p}\left(  O\left(
M\right)  ,u_{o}\right)  $ agrees with $\mathbf{W,}$ the solution to the RDE%
\[
d\mathbf{W}=Y^{O\left(  M\right)  }\left(  w_{t}\right)  d\left[  \int_{0}%
^{t}\theta^{Y^{O\left(  M\right)  }}\left(  d\mathbf{U}_{s}\right)  \right]
\text{ with }w_{0}=u_{o}\in O\left(  M\right)  ,
\]
until the explosion time of this equation.
\end{theorem}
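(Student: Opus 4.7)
The plan is to recognize Theorem~\ref{the.6.13} as the specialization of Theorem~\ref{the.6.11} to the parallelizable manifold $O(M)$. By Theorem~\ref{the.5.5}, $O(M)$ is an embedded submanifold of $E \times \operatorname{Hom}(\mathbb{R}^d, E)$, so the entire rough-path framework developed in Sections~\ref{sec.3} and~\ref{sec.4} (the notion of $WG_p$, constrained RDEs, integration of one-forms, existence/uniqueness in Theorem~\ref{the.4.2}, and the pushforward machinery of Proposition~\ref{pro.3.38}) applies to it without modification. By Example~\ref{exa.6.3}, $O(M)$ is parallelizable, with parallelism $Y^{O(M)}(u)(a,A) := B_a(u) + \mathcal{V}_A(u)$ and dual one-form $\theta^{Y^{O(M)}} = (\theta, \omega)$ as recorded in Eq.~(\ref{equ.6.2}).

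First I would verify the identifications needed to quote Theorem~\ref{the.6.11}. The RDE~(\ref{equ.6.20}) governing $\mathbf{U}$ is precisely the RDE~(\ref{equ.6.12}) for the parallelism $Y^{O(M)}$ driven by $\mathbf{Z} \in WG_p(\mathbb{R}^d \times so(d))$, since when $\mathbf{Z}$ decomposes as $(\mathbf{a}, \mathbf{A})$ we have
\[
Y_{d\mathbf{Z}}^{O(M)}(u) \;=\; B_{d\mathbf{a}}(u) + \mathcal{V}_{d\mathbf{A}}(u).
\]
Likewise the RDE defining $\mathbf{W}$ in the statement matches Eq.~(\ref{equ.6.14}) of Theorem~\ref{the.6.11}, once one substitutes $\theta^{Y^{O(M)}} = (\theta, \omega)$. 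With these identifications, the two conclusions of Theorem~\ref{the.6.13} --- Eq.~(\ref{equ.6.21}) (already recorded in Corollary~\ref{cor.6.12}) and the inverse claim that any $\mathbf{U} \in WG_p(O(M), u_o)$ agrees with the $\mathbf{W}$ in the statement until its explosion time --- fall out directly from the corresponding conclusions of Theorem~\ref{the.6.11}.

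The one bookkeeping step I would record explicitly is the equality $\theta^{Y^{O(M)}} = (\theta, \omega)$ as smooth one-forms on $O(M)$. This is immediate from Eq.~(\ref{equ.6.2}): every tangent vector at $u$ can be uniquely written as $B_a(u) + \mathcal{V}_A(u)$ for some $(a,A) \in \mathbb{R}^d \times so(d)$, so the definition of $\theta^Y$ in Definition~\ref{def.6.1} gives
\[
\theta^{Y^{O(M)}}\bigl(B_a(u) + \mathcal{V}_A(u)\bigr) \;=\; (a,A) \;=\; (\theta,\omega)\bigl(B_a(u) + \mathcal{V}_A(u)\bigr).
\]

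I do not anticipate any substantive obstacle: the theorem is an almost formal consequence of Theorem~\ref{the.6.11} once one has Theorem~\ref{the.5.5} (to realize $O(M)$ as an embedded submanifold) and Example~\ref{exa.6.3} (to exhibit the parallelism). The only potentially delicate point would be if the abstract one-form $\theta^{Y^{O(M)}}$ from Definition~\ref{def.6.1} differed from the intrinsically defined pair $(\theta, \omega)$ of Definition~\ref{def.5.11}, but Eq.~(\ref{equ.6.2}) is arranged exactly so that they coincide, and the explosion clause in the statement matches the explosion clause already present in Theorem~\ref{the.6.11}.
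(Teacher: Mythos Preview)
Your proposal is correct and matches the paper's approach exactly: the paper presents Corollary~\ref{cor.6.12} and Theorem~\ref{the.6.13} immediately after Theorem~\ref{the.6.11} with the sentence ``We can specialize this result to $O(M)$,'' offering no separate proof precisely because they are the specialization of Theorem~\ref{the.6.11} to the parallelism $Y^{O(M)}$ of Example~\ref{exa.6.3}. The identifications you spell out (that $\theta^{Y^{O(M)}} = (\theta,\omega)$ via Eq.~(\ref{equ.6.2}) and that the RDE~(\ref{equ.6.20}) is Eq.~(\ref{equ.6.12}) in this setting) are exactly the content the paper leaves implicit.
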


\begin{definition}
\label{def.6.14}We say a rough path $\mathbf{U}\in WG_{p}\left(  \left[
0,T\right]  ,O\left(  M\right)  ,u_{o}\right)  $ is \textbf{horizontal}
provided
\begin{equation}
\int\left(  \theta,\omega\right)  \left(  d\mathbf{U}_{t}\right)  =\int%
\theta^{Y^{O\left(  M\right)  }}\left(  d\mathbf{U}\right)  \in WG_{p}\left(
\left[  0,T\right]  ,\mathbb{R}^{d}\times\left\{  0_{so\left(  d\right)
}\right\}  ,\left(  0,0\right)  \right)  , \label{equ.6.22}%
\end{equation}
where $\omega$ is the connection one-form defined in (\ref{equ.5.17}) and
$\theta$ is the canonical one form on $O\left(  M\right)  $ defined in Eq.
(\ref{equ.5.16}). We use $\mathcal{H}WG_{p}\left(  \left[  0,T\right]
,O\left(  M\right)  ,u_{o}\right)  $ to denote the horizontal rough paths
$WG_{p}\left(  \left[  0,T\right]  ,O\left(  M\right)  ,u_{o}\right)  .$
\end{definition}

\begin{remark}
\label{rem.6.15}Another way to state Eq. (\ref{equ.6.22}) is that
$\mathbf{U}\in WG_{p}\left(  \left[  0,T\right]  ,O\left(  M\right)
,u_{o}\right)  $ is \textbf{horizontal }provided,%
\[
\int\left(  \theta,\omega\right)  \left(  d\mathbf{U}\right)  =\int\left(
\theta,\mathbf{0}\right)  \left(  d\mathbf{U}_{t}\right)
\]
where $\mathbf{0}\in\Omega^{1}\left(  O\left(  M\right)  ,so\left(  d\right)
\right)  $ is the identically zero one form on $O\left(  M\right)  $ with
values in $so\left(  d\right)  .$ Consequently $\mathbf{U}\in WG_{p}\left(
\left[  0,T\right]  ,O\left(  M\right)  ,u_{o}\right)  $ is horizontal implies
$\int\omega\left(  d\mathbf{U}\right)  =0.$ On the other hand it is not enough
to assume $\int\omega\left(  d\mathbf{U}\right)  =0$ in order to conclude
$\mathbf{U}$ is horizontal because the condition $\int\omega\left(
d\mathbf{U}\right)  =0$ does not rule out $\left[  \int\left(  \theta
,\omega\right)  \left(  d\mathbf{U}\right)  \right]  ^{2}$ having cross term
components, i.e. components in $\mathbb{R}^{d}\otimes so\left(  d\right)
\oplus so\left(  d\right)  \otimes\mathbb{R}^{d}.$
\end{remark}

\begin{proposition}
[Parallel implies horizontal]\label{pro.6.16}If $\mathbf{U}\in WG_{p}\left(
\left[  0,T\right]  ,O\left(  M\right)  ,u_{o}\right)  $ is parallel
translation along $\mathbf{X}:=\pi_{\ast}\left(  \mathbf{U}\right)  \in
WG_{p}\left(  M,o\right)  $, then $\mathbf{U}$ is horizontal.
\end{proposition}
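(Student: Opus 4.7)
The plan is to exploit the defining property of the connection one-form $\omega^{\nabla}$, namely that it annihilates horizontal vectors. Since $\mathbf{U}$ being parallel translation along $\mathbf{X}$ means $\mathbf{U}$ solves the RDE
\[
d\mathbf{U}=V^{\nabla}_{d\mathbf{X}}(u),\qquad V_z(x)=P_x z,
\]
and since each vector field $V^{\nabla}_z$ takes values in the horizontal sub-bundle $T^{\nabla}O(M)$, the formulas of Definitions \ref{def.5.8}, \ref{def.5.11} give directly $\omega^{\nabla}(V^{\nabla}_z(u))\equiv 0$ for every $z\in E$ and every $u\in O(M)$. This single identity will drive everything.

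To use it, I will invoke the integral representation for RDE solutions from Theorem \ref{the.4.5} (item 2) applied to the one-form $\alpha=(\theta,\omega)\in\Omega^1(O(M),\mathbb{R}^d\oplus so(d))$ along the solution $\mathbf{U}$ of $d\mathbf{U}=V^{\nabla}_{d\mathbf{X}}(u)$. This yields
\[
\bigl[\textstyle\int(\theta,\omega)(d\mathbf{U})\bigr]_{s,t}^{1}\simeq (\theta,\omega)_{u_s}\bigl(V^{\nabla}_{x_{s,t}}(u_s)\bigr)+\bigl[V^{\nabla}_a(u_s)(\theta,\omega)(V^{\nabla}_b)\bigr]\bigl|_{a\otimes b=\mathbb{X}_{s,t}},
\]
\[
\bigl[\textstyle\int(\theta,\omega)(d\mathbf{U})\bigr]_{s,t}^{2}\simeq\bigl[(\theta,\omega)_{u_s}V^{\nabla}_a(u_s)\otimes(\theta,\omega)_{u_s}V^{\nabla}_b(u_s)\bigr]\bigl|_{a\otimes b=\mathbb{X}_{s,t}}.
\]
The key identity $\omega(V^{\nabla}_z(\cdot))\equiv 0$ (as a function on $O(M)$, for every fixed $z$) immediately kills the $\omega$-components. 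In the second-order term, since each factor has vanishing $so(d)$-component, the result lies in $\mathbb{R}^d\otimes\mathbb{R}^d\subset(\mathbb{R}^d\oplus so(d))^{\otimes 2}$, so there are no $\mathbb{R}^d\otimes so(d)$ or $so(d)\otimes\mathbb{R}^d$ cross-terms either. In the first-order term the $so(d)$-slot of $(\theta,\omega)_{u_s}(V^{\nabla}_{x_{s,t}}(u_s))$ vanishes on the nose, while the second summand involves differentiating the identically-zero function $u\mapsto\omega_u(V^{\nabla}_b(u))$ along $V^{\nabla}_a$, which is again zero.

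Putting these approximate identities together with Lemma \ref{lem.2.12} shows $\int(\theta,\omega)(d\mathbf{U})=\int(\theta,\mathbf{0})(d\mathbf{U})$ with both the trace and the second-order level taking values in $\mathbb{R}^d\times\{0\}\subset\mathbb{R}^d\oplus so(d)$. By Remark \ref{rem.6.15} this is exactly the horizontality condition of Definition \ref{def.6.14}.

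I do not expect a significant obstacle: the entire argument reduces to the pointwise identity $\omega^{\nabla}\circ V^{\nabla}\equiv 0$, together with the observation that the derivative in the second-order correction term in Theorem \ref{the.4.5} also vanishes because one is differentiating a function that is identically zero on $O(M)$. The only item requiring mild care is to confirm that the vanishing of the $so(d)$-components in \emph{both} factors of the second-order tensor really rules out all cross-terms (not just a vanishing symmetric part), which is why the argument is performed for the full one-form $(\theta,\omega)$ rather than for $\omega$ alone — this matches the subtlety highlighted in Remark \ref{rem.6.15}.
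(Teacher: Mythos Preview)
Your proposal is correct and follows essentially the same approach as the paper: apply item 2 of Theorem \ref{the.4.5} to the one-form $(\theta,\omega)$ along the RDE $d\mathbf{U}=V^{\nabla}_{d\mathbf{X}}(u)$, and use that $\omega(V^{\nabla}_z)\equiv 0$ to see that both the first- and second-level approximations land in $\mathbb{R}^d\times\{0\}$ and $(\mathbb{R}^d\times\{0\})^{\otimes 2}$. The paper carries out the same computation more explicitly, writing $(\theta,\omega)(V_b^{\nabla}(u))=(g^*V_b(m),0)$ and $V_a^{\nabla}(u)[(\theta,\omega)(V_b^{\nabla})]=(g^*\nabla_{V_a(m)}V_b,0)$, but the logical structure is identical to yours.
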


\begin{proof}
Recall $\Gamma=dQ$ and that $\mathbf{U}$ solves (see Definition \ref{def.5.13}%
), $d\mathbf{U}=V_{d\mathbf{X}}^{\nabla}\left(  u\right)  $ where
$V_{a}^{\nabla}\left(  m,g\right)  =\left(  V_{a}\left(  m\right)
,-\Gamma\left(  V_{a}\left(  m\right)  \right)  g\right)  $ and $V_{a}\left(
m\right)  =P_{m}a$ for all $a\in E.$ Using these formulas we find for
$u=\left(  m,g\right)  \in O\left(  M\right)  $ and $a,b\in E$ that,%
\[
\theta^{Y^{O\left(  M\right)  }}\left(  V_{b}^{\nabla}\left(  u\right)
\right)  =\left(  \theta,\omega\right)  \left(  V_{b}^{\nabla}\left(
u\right)  \right)  =\left(  g^{\ast}V_{b}\left(  m\right)  ,0\right)
\]
and%
\begin{align*}
V_{a}^{\nabla}\left(  u\right)  \left[  \theta^{Y^{O\left(  M\right)  }%
}\left(  V_{b}^{\nabla}\right)  \right]   &  =V_{a}^{\nabla}\left(  u\right)
\left[  \left(  x,h\right)  \rightarrow\left(  h^{\ast}V_{b}\left(  x\right)
,0\right)  \right] \\
&  =\left(  g^{\ast}\left(  \partial_{V_{a}}V_{b}\right)  \left(  m\right)
-g^{\ast}\Gamma\left(  V_{a}\left(  m\right)  \right)  V_{b}\left(  m\right)
,0\right)  =\left(  g^{\ast}\nabla_{V_{a}\left(  m\right)  }V_{b},0\right)  ,
\end{align*}
wherein in the last line we have used $Pg=g$ so that $g^{\ast}=g^{\ast}P$ and
hence%
\[
g^{\ast}\Gamma\left(  V_{a}\left(  m\right)  \right)  V_{b}\left(  m\right)
=g^{\ast}P_{m}dQ\left(  V_{a}\left(  m\right)  \right)  P_{m}V_{b}\left(
m\right)  =0.
\]
From these identities and item 2. of Theorem \ref{the.4.5} we conclude,%
\[
\left[  \int\theta^{Y^{O\left(  M\right)  }}\left(  d\mathbf{U}\right)
\right]  _{s,t}^{1}\simeq\left(  g_{s}^{\ast}V_{x_{st}}\left(  x_{s}\right)
,0\right)  +\left(  g_{s}^{\ast}\nabla_{V_{a}\left(  x_{s}\right)  }%
V_{b},0\right)  |_{a\otimes b=\mathbb{X}_{s,t}}%
\]
and%
\[
\left[  \int\theta^{Y^{O\left(  M\right)  }}\left(  d\mathbf{U}\right)
\right]  _{s,t}^{2}\simeq\theta^{Y^{O\left(  M\right)  }}\left(  V_{a}%
^{\nabla}\left(  u_{s}\right)  \right)  \otimes\theta^{Y^{O\left(  M\right)
}}\left(  V_{b}^{\nabla}\left(  u_{s}\right)  \right)  |_{a\otimes
b=\mathbb{X}_{s,t}}=\left(  g_{s}^{\ast}V_{a}\left(  x_{s}\right)  ,0\right)
\otimes\left(  g_{s}^{\ast}V_{b}\left(  x_{s}\right)  ,0\right)
\]
from which it follows that
\[
\int\theta^{Y^{O\left(  M\right)  }}\left(  d\mathbf{U}\right)  \in
WG_{p}\left(  \left[  0,T\right]  ,\mathbb{R}^{d}\times\left\{  0_{so\left(
d\right)  }\right\}  ,\left(  0,0\right)  \right)  .
\]

\end{proof}

\begin{theorem}
\label{the.6.17}Let $\mathbf{U}\in WG_{p}\left(  \left[  0,T\right]  ,O\left(
M\right)  ,u_{o}\right)  $ and $\mathbf{X}:=\pi_{\ast}\left(  \mathbf{U}%
\right)  \in WG_{p}\left(  M,o\right)  $ be its push-forward under the
projection $\pi:O\left(  M\right)  \rightarrow M.$ Then the following are equivalent;

\begin{enumerate}
\item $\mathbf{U}$ is horizontal.

\item there exist $\mathbf{a}\in WG_{p}\left(  \left[  0,T\right]
,\mathbb{R}^{d},0\right)  $ such that
\begin{equation}
d\mathbf{U}=B_{d\mathbf{a}_{t}}\left(  u_{t}\right)  \text{ with }u_{0}%
=u_{o}\text{ given,} \label{equ.6.23}%
\end{equation}

\item and $\mathbf{U}$ is parallel translation along $\mathbf{X}$ starting at
$u_{0}.$
\end{enumerate}
\end{theorem}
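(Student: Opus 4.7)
The plan is to establish the cyclic chain of implications $3 \Rightarrow 1 \Rightarrow 2 \Rightarrow 3$, using results already in hand. The first implication $3 \Rightarrow 1$ is immediate: it is precisely the content of Proposition \ref{pro.6.16}, which asserts that a parallel translation $\mathbf{U}$ along $\mathbf{X} = \pi_{\ast}(\mathbf{U})$ satisfies $\int \theta^{Y^{O(M)}}(d\mathbf{U}) \in WG_{p}\left([0,T], \mathbb{R}^{d} \times \{0_{so(d)}\}, (0,0)\right)$, which is exactly the definition of horizontality.

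For $1 \Rightarrow 2$, suppose $\mathbf{U}$ is horizontal. Then by Definition \ref{def.6.14}, the rough path $\mathbf{Z} := \int \theta^{Y^{O(M)}}(d\mathbf{U}) = \int (\theta, \omega)(d\mathbf{U})$ lies in $WG_{p}\left([0,T], \mathbb{R}^{d} \times \{0_{so(d)}\}, (0,0)\right)$, so we may write $\mathbf{Z} = (\mathbf{a}, \mathbf{0})$ for some $\mathbf{a} \in WG_{p}([0,T], \mathbb{R}^{d}, 0)$, where $\mathbf{0}$ denotes the trivial rough path in $so(d)$. By Theorem \ref{the.6.13}, $\mathbf{U}$ agrees with the unique solution $\mathbf{W}$ of the RDE
\[
d\mathbf{W} = Y^{O(M)}(w_{t})\, d\mathbf{Z} = B_{d\mathbf{a}_{t}}(w_{t}) + \mathcal{V}_{d\mathbf{0}_{t}}(w_{t}) = B_{d\mathbf{a}_{t}}(w_{t}), \quad w_{0} = u_{o},
\]
where the last equality uses that the canonical vertical contribution vanishes since its driver is the zero rough path (and hence the mixed cross terms in $\mathbb{R}^{d} \otimes so(d)$ and $so(d) \otimes \mathbb{R}^{d}$ in the second level also vanish, as guaranteed by the horizontality condition). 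This produces the desired $\mathbf{a}$ and verifies Eq. (\ref{equ.6.23}).

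Finally, $2 \Rightarrow 3$ is a direct application of Theorem \ref{the.5.17}: assuming $\mathbf{U}$ solves $d\mathbf{U} = B_{d\mathbf{a}_{t}}(u_{t}) = B^{\nabla}_{d\mathbf{a}_{t}}(u_{t})$ (these agree by Definition \ref{def.5.10}) with $\mathbf{a} \in WG_{p}(\mathbb{R}^{d})$, the theorem directly concludes that $\mathbf{U}$ is parallel translation along $\mathbf{X} = \pi_{\ast}(\mathbf{U})$ in the sense of Definition \ref{def.5.13}.

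The main potential obstacle is in the implication $1 \Rightarrow 2$: one must be sure that being horizontal, which is phrased as the vanishing of the entire $so(d)$-valued component of $\int \theta^{Y^{O(M)}}(d\mathbf{U})$ (including the cross terms at the second level, see Remark \ref{rem.6.15}), is genuinely sufficient to identify the driving rough path as purely $\mathbb{R}^{d}$-valued. This is exactly why Definition \ref{def.6.14} is formulated in terms of the full tensor-algebra condition and not merely the weaker $\int \omega(d\mathbf{U}) = 0$; once we have this stronger statement, the extraction of $\mathbf{a}$ and the application of Theorem \ref{the.6.13} proceed without further difficulty.
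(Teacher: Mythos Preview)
Your proposal is correct and follows essentially the same approach as the paper: the same cycle of implications $3 \Rightarrow 1$ via Proposition~\ref{pro.6.16}, $2 \Rightarrow 3$ via Theorem~\ref{the.5.17}, and $1 \Rightarrow 2$ by applying the rolling/unrolling correspondence on $O(M)$ to recognize that a horizontal $\mathbf{U}$ is driven through $Y^{O(M)}$ by a rough path whose $so(d)$-component vanishes, reducing the RDE to $d\mathbf{U}=B_{d\mathbf{a}}(u)$. The only cosmetic difference is that the paper cites Corollary~\ref{cor.6.12} for this last step whereas you invoke Theorem~\ref{the.6.13}; these encode the same inverse relationship.
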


\begin{proof}
From Theorem \ref{the.5.17} we know $2.\implies3.$ and from Proposition
\ref{pro.6.16} we know $3.\implies1.$ So to finish the proof it suffices to
show $1.\implies2.$ For the proof of this assertion let
\[
\mathbf{Z}:=\int\theta^{Y^{O\left(  M\right)  }}\left(  d\mathbf{U}\right)
\in WG_{p}\left(  \left[  0,T\right]  ,\mathbb{R}^{d}\times so\left(
d\right)  ,\left(  0,0\right)  \right)
\]
and
\[
\mathbf{a}:=\left(  a,\mathbb{A}\right)  =\int P_{\mathbb{R}^{d}}%
d\mathbf{Z}=\left(  P_{\mathbb{R}^{d}}z_{st},P_{\mathbb{R}^{d}}\otimes
P_{\mathbb{R}^{d}}\mathbb{Z}_{st}\right)
\]
where $P_{\mathbb{R}^{d}}:\mathbb{R}^{d}\times so\left(  d\right)
\rightarrow\mathbb{R}^{d}$ is the linear projection onto the first factor.

$\left(  1.\implies2.\right)  $ By definition $\mathbf{U}$ is horizontal iff
$\mathbf{Z}:=\int\theta^{Y^{O\left(  M\right)  }}\left(  d\mathbf{U}\right)
\in WG_{p}\left(  \left[  0,T\right]  ,\mathbb{R}^{d}\times\left\{  0\right\}
,\left(  0,0\right)  \right)  .$ Corollary \ref{cor.6.12} then asserts that
\[
d\mathbf{U}=\mathcal{Y}_{d\mathbf{Z}}^{O\left(  M\right)  }\left(
u_{t}\right)  \text{ with }u_{0}=u_{o}.
\]
As $\mathbf{Z}\in WG_{p}\left(  \left[  0,T\right]  ,\mathbb{R}^{d}%
\times\left\{  0\right\}  ,\left(  0,0\right)  \right)  $ one easily verifies
that $\mathcal{Y}_{\mathbf{Z}_{st}}^{O\left(  M\right)  }=B_{\mathbf{a}_{st}}$
from which it follows that the previously displayed RDE is equivalent to the
RDE in Eq. (\ref{equ.6.23}).
\end{proof}

\begin{theorem}
\label{the.6.18}Let $M$ be a Riemannian manifold with $o\in M$ and $u_{o}\in
O_{o}\left(  M\right)  $ given. Then the map,%
\begin{equation}
\mathcal{H}WG_{p}\left(  \left[  0,T\right]  ,O\left(  M\right)
,u_{o}\right)  \ni\mathbf{U}\rightarrow\pi_{\ast}\left(  \mathbf{U}\right)
\in WG_{p}\left(  \left[  0,T\right]  ,M,o\right)  \label{equ.6.24}%
\end{equation}
is a bijection with inverse map given by,%
\begin{equation}
WG_{p}\left(  \left[  0,T\right]  ,M,o\right)  \ni\mathbf{X}\rightarrow
\mathbf{H}u_{0}\in\mathcal{H}WG_{p}\left(  \left[  0,T\right]  ,O\left(
M\right)  ,u_{o}\right)  , \label{equ.6.25}%
\end{equation}
where $\mathbf{H}u_{0}:=\mathbf{U}$ is parallel translation along $\mathbf{X}$
starting at $u_{0}$ as in Definition \ref{def.5.13} and Proposition
\ref{pro.5.15}.
\end{theorem}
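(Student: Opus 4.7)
The plan is to verify both maps are well-defined and mutually inverse by assembling results already established. First, the map in (\ref{equ.6.25}) is well-defined: given $\mathbf{X}\in WG_p([0,T],M,o)$, Proposition \ref{pro.5.15} together with Definition \ref{def.5.13} guarantees that parallel translation $\mathbf{U}=\mathbf{H}u_0\in WG_p([0,T],O(M),u_o)$ exists globally, and Proposition \ref{pro.6.16} shows that any such parallel translation is horizontal. Conversely, the map in (\ref{equ.6.24}) is well-defined since for $\mathbf{U}\in\mathcal{H}WG_p([0,T],O(M),u_o)$ the pushforward $\pi_\ast(\mathbf{U})$ lies in $WG_p([0,T],M,o)$ by Proposition \ref{pro.3.38}, and the starting point is $\pi(u_o)=o$.

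Next I would check that (\ref{equ.6.25}) is a right inverse to (\ref{equ.6.24}), i.e.\ that $\pi_\ast(\mathbf{H}u_0)=\mathbf{X}$. This is precisely the content of Lemma \ref{lem.5.14}: since $V^\nabla$ and $V$ are $\pi$-related, Theorem \ref{the.4.11} shows $\pi_\ast(\mathbf{U})$ solves the same projection RDE (\ref{equ.4.15}) as $\mathbf{X}$ does by Proposition \ref{pro.4.13}, and uniqueness in Theorem \ref{the.4.2} forces the two to agree.

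The remaining direction -- that (\ref{equ.6.25}) is also a left inverse, i.e.\ that every horizontal $\mathbf{U}$ recovers as the parallel translation of its own projection -- is where the real content lies. Given $\mathbf{U}\in\mathcal{H}WG_p([0,T],O(M),u_o)$, set $\mathbf{X}:=\pi_\ast(\mathbf{U})$. By the implication $1.\implies 3.$ of Theorem \ref{the.6.17}, $\mathbf{U}$ is parallel translation along $\mathbf{X}$ starting at $u_0$. Since parallel translation along $\mathbf{X}$ is the unique solution of the RDE (\ref{equ.5.19}) in $WG_p(O(M))$ (Theorem \ref{the.4.2} applied on $O(M)$, together with the non-explosion result of Proposition \ref{pro.5.15}), we conclude $\mathbf{U}=\mathbf{H}u_0$, establishing bijectivity.

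The main obstacle is not any single routine calculation but rather the correct invocation of Theorem \ref{the.6.17}: one must be careful that being horizontal as defined in Definition \ref{def.6.14} -- i.e.\ that the full rough integral $\int(\theta,\omega)(d\mathbf{U})$ lands in $WG_p(\mathbb{R}^d\times\{0\})$, not merely that $\int\omega(d\mathbf{U})=0$ -- is what is needed to rewrite the driving RDE in the form (\ref{equ.6.23}) and thereby invoke Theorem \ref{the.5.17}. This subtlety is the one flagged in Remark \ref{rem.6.15}, and it is the reason Definition \ref{def.6.14} is formulated using the joint one-form rather than just $\omega$. Once this point is handled, all other steps are immediate citations of the preceding results.
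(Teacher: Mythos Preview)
Your proposal is correct and matches the paper's approach: the paper in fact gives no explicit proof of Theorem \ref{the.6.18}, treating it as an immediate consequence of Theorem \ref{the.6.17} (the equivalence $1.\Leftrightarrow 3.$) together with Lemma \ref{lem.5.14} and Proposition \ref{pro.5.15}, which is precisely the assembly of results you carry out. Your additional remarks on well-definedness via Propositions \ref{pro.3.38} and \ref{pro.6.16}, and on the subtlety flagged in Remark \ref{rem.6.15}, are accurate and make explicit what the paper leaves implicit.
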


\begin{corollary}
\label{cor.6.19}Let $M$ be a Riemannian manifold with $o\in M$ given. Then
there exists a one-to-one correspondence between $WG_{p}\left(  \left[
0,T\right]  ,M,o\right)  $ and $WG_{p}\left(  \mathbb{R}^{d},0\right)  $
determined for any choice of initial frame $u_{o}\in O_{o}\left(  M\right)  $
by%
\begin{equation}%
\begin{array}
[c]{ccccc}%
WG_{p}\left(  \left[  0,T\right]  ,M,o\right)  & \rightarrow & \mathcal{H}%
WG_{p}\left(  \left[  0,T\right]  ,O\left(  M\right)  ,u_{o}\right)  &
\rightarrow & WG_{p}\left(  \mathbb{R}^{d},0\right) \\
\mathbf{X} & \rightarrow & \mathbf{U}=\mathbf{H}u_{o} & \rightarrow & \int%
_{0}^{\cdot}\theta\left(  d\mathbf{H}u_{o}\right)  ,
\end{array}
\label{equ.6.26}%
\end{equation}
where $\theta$ is the canonical one-form.
\end{corollary}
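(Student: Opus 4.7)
The plan is to realise the correspondence in (6.26) as the composition of two bijections: the first between $WG_{p}([0,T],M,o)$ and the horizontal class $\mathcal{H}WG_{p}([0,T],O(M),u_{o})$, the second between the latter space and $WG_{p}([0,T],\mathbb{R}^{d},0)$. The first bijection is exactly the content of Theorem \ref{the.6.18}, so only the second requires genuine work.

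For the second bijection, the forward direction $\Theta(\mathbf{U}):=\int_{0}^{\cdot}\theta(d\mathbf{U})$ is well-defined because Definition \ref{def.6.14} and Remark \ref{rem.6.15} force $\int_{0}^{\cdot}(\theta,\omega)(d\mathbf{U})$ to lie in $WG_{p}(\mathbb{R}^{d}\times\{0_{so(d)}\},(0,0))$ whenever $\mathbf{U}$ is horizontal; projection onto the first factor then lands in $WG_{p}(\mathbb{R}^{d},0)$. For the inverse direction, given $\mathbf{a}\in WG_{p}(\mathbb{R}^{d},0)$ I would embed it trivially as $\mathbf{Z}:=(\mathbf{a},\mathbf{0})\in WG_{p}(\mathbb{R}^{d}\times so(d),(0,0))$ and solve the rolling RDE (\ref{equ.6.20}); since $Y^{O(M)}_{(a,0)}(u)=B_{a}(u)$, this reduces to $d\mathbf{U}=B_{d\mathbf{a}}(u)$ with $u_{0}=u_{o}$, and Theorem \ref{the.6.17} guarantees that the resulting $\mathbf{U}$ is horizontal. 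That these two maps are mutually inverse then follows from identity (\ref{equ.6.21}) of Corollary \ref{cor.6.12} and the reverse statement in Theorem \ref{the.6.13} applied to $\mathbf{Z}=(\mathbf{a},\mathbf{0})$.

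The main obstacle is that Corollary \ref{cor.6.12} only yields $\mathbf{U}$ up to a possibly finite explosion time $\tau\leq T$, so one must rule out $\tau<T$ to land in $WG_{p}([0,T],O(M),u_{o})$ as required. I would attack this by combining Theorem \ref{the.4.2} with the structure already developed: on $[0,\tau)$ the solution $\mathbf{U}$ is horizontal, hence by Theorem \ref{the.6.17} it coincides with parallel translation along $\mathbf{X}:=\pi_{\ast}(\mathbf{U})\in WG_{p}([0,\tau),M,o)$, and Proposition \ref{pro.5.15} then tells us that the fibre component $g$ is governed by a \emph{linear} RDE in $\int\Gamma(d\mathbf{X})$ which cannot explode. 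Thus the sole remaining mechanism for blow-up is the trace $x=\pi(u)$ leaving every compact subset of $M$, and excluding this -- the rough-path analogue of geodesic completeness -- is the step with genuine content. Once $\tau=T$ is established, the composition $\mathbf{X}\mapsto\mathbf{H}u_{o}\mapsto\int_{0}^{\cdot}\theta(d[\mathbf{H}u_{o}])$ and its inverse produce the two-sided inverse claimed in (\ref{equ.6.26}) and conclude the proof.
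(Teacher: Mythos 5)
Your decomposition into two bijections---first $WG_{p}\left(  \left[ 0,T\right]  ,M,o\right)  \leftrightarrow \mathcal{H}WG_{p}\left(  \left[ 0,T\right]  ,O\left(  M\right)  ,u_{o}\right)$ via Theorem \ref{the.6.18}, then $\mathcal{H}WG_{p}\left(  \left[ 0,T\right]  ,O\left(  M\right)  ,u_{o}\right)  \leftrightarrow WG_{p}\left(  \mathbb{R}^{d},0\right)$ via rolling and unrolling on $O\left(  M\right)$---is the right structure and does match the way the corollary is assembled from Theorem \ref{the.6.18}, Corollary \ref{cor.6.12}, Theorem \ref{the.6.13}, and Theorem \ref{the.6.17}. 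The identification of the forward map as $\mathbf{U}\mapsto\int_{0}^{\cdot}\theta\left(  d\mathbf{U}\right)$ and the candidate inverse as solving $d\mathbf{U}=B_{d\mathbf{a}}\left(  u\right)$, i.e. Eq.~(\ref{equ.6.20}) with $\mathbf{Z}=\left(  \mathbf{a},\mathbf{0}\right)$, is also correct.

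However there is a genuine gap. You correctly observe that the rolling RDE may explode before time $T$, correctly reduce the problem (via Proposition \ref{pro.5.15}) to ruling out the trace $x=\pi\left(  u\right)$ escaping every compact subset of $M$, and then stop, labelling this ``the step with genuine content'' without supplying it. This is precisely the step on which surjectivity stands or falls and it cannot be deferred. In fact the corollary as stated is false without strengthening the hypotheses: take $M=\left(  0,1\right)  \subset\mathbb{R}$, $o=1/2$; rolling any $\mathbf{a}\in WG_{p}\left(  \mathbb{R},0\right)$ whose trace leaves $\left(  -1/2,1/2\right)$ must explode before $T$, so no $\mathbf{X}\in WG_{p}\left(  \left[  0,T\right]  ,M,o\right)$ maps onto it. The statement (as in the classical Cartan development / Eells--Elworthy--Malliavin theory) tacitly needs $M$ to be metrically complete (or compact). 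With completeness made explicit, the remaining step is: since $\pi_{\ast}B_{a}\left(  u\right)  =\left(  ga\right)  _{x}$ and $g\in\mathrm{Iso}\left(  \mathbb{R}^{d},\tau_{x}M\right)$, one has $\left\vert \left(  ga\right)  _{x}\right\vert _{E}=\left\vert a\right\vert$, so the $p$-variation of $x$ on $\left[  0,\tau\right)$ is controlled by that of $\mathbf{a}$ via the a~priori estimate (\ref{equ.2.11}); the trace then remains in a bounded subset of $M$, which is precompact when $M$ is complete, and the explosion dichotomy of Theorem \ref{the.4.2} forces $\tau=T$. Until you either add the completeness hypothesis and carry out this last step, or downgrade the conclusion to an injection, the proposal does not establish the corollary.
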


\appendix

\section{Some additional rough path results\label{app.A}}

In this section we gather some additional results and notation of the theory
of rough paths on Banach spaces. The literature on Banach space valued rough
paths is now so well-established as to be classical; the reader seeking more
background has a great many choices: \cite{LQ02},\cite{LCL},\cite{FV}%
,\cite{Lejay}$,$\cite{gubinelli} and \cite{FH}. As in Section \ref{sec.2}, let
$V,W$ and $U$ denote Banach spaces. In addition we assume $p\in\lbrack2,3)$ is
a fixed number and $\omega$ a control in the sense of Definition
\ref{def.2.3}. Recall the definition of a $p-$rough path and $R_{p}\left(
V\right)  ,$ the set of $p$-rough paths on $V$ from Definition \ref{def.2.4}.

We can define a metric on $R_{p}\left(  V\right)  $ by setting%
\begin{equation}
\rho_{p,\omega}\left(  \mathbf{X,Y}\right)  :=\sup_{0\leq s<t\leq T}%
\text{\ }\frac{\left\vert x_{s,t}-y_{s,t}\right\vert _{V}}{\omega\left(
s,t\right)  ^{1/p}}+\text{ }\sup_{0\leq s<t\leq T}\text{\ }\frac{\left\vert
\mathbb{X}_{s,t}-\mathbb{Y}_{s,t}\right\vert }{\omega\left(  s,t\right)
^{2/p}}, \label{equ.A.1}%
\end{equation}
for $\mathbf{X=}\left(  x,\mathbb{X}\right)  $, $\mathbf{Y=}\left(
y,\mathbb{Y}\right)  \in R_{p}\left(  V\right)  .$ Note that endowed with this
metric $R_{p}\left(  V\right)  $ is a complete metric space.

\subsection{Concatenation of local rough paths on $M$\label{sub.A.1}}

Localisation plays an important role in the manifold setting, and we need
results which will allow us to glue together locally constructed rough paths
on $M.$ The following elementary lemma (compare \cite{Cass2012a}) allows us to
concatenate a finite number of rough paths.

\begin{lemma}
[Concatenating rough paths]\label{lem.A.1}Suppose that $\Pi=\left\{
0=t_{0}<t_{1}<\dots<t_{n}=T\right\}  $ is a partition of $\left[  0,T\right]
.$For $k\in\left\{  1,...,n\right\}  $ let $J_{k}:=\left[  t_{k-1}%
,t_{k}\right]  ,$ and for each $k$ assume we are given $\mathbf{X}\left(
k\right)  \in WG_{p}\left(  J_{k},W\right)  .$Then there exists a unique
$\mathbf{X}\in WG_{p}\left(  \left[  0,T\right]  ,W\right)  $ such that
$x\left(  0\right)  =0$ and for all $1\leq k\leq n,$%
\begin{equation}
\mathbf{X}\left(  k\right)  _{s,t}=\mathbf{X}_{s,t}\text{ for all }s,t\in
J_{k}. \label{equ.A.2}%
\end{equation}

\end{lemma}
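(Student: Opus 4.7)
The proof is essentially a verification exercise built on Chen's identity. The first step is to produce the level-one trace. Since each $\mathbf{X}(k)$ is really a two-parameter object on $\Delta_{J_k}$, I would define $x:[0,T]\to W$ by declaring $x_0=0$ and requiring $x_t-x_s = x(k)_{s,t}$ whenever $s,t\in J_k$; this determines $x$ uniquely and continuously by telescoping through the partition. Next I would define the two-parameter object $\mathbf{X}_{s,t}\in T_2(W)$ by forced multiplicativity. For $s\in J_i$, $t\in J_j$ with $i\le j$, set
\[
\mathbf{X}_{s,t}:=\mathbf{X}(i)_{s,t_i}\,\mathbf{X}(i{+}1)_{t_i,t_{i+1}}\cdots\mathbf{X}(j{-}1)_{t_{j-2},t_{j-1}}\,\mathbf{X}(j)_{t_{j-1},t},
\]
where the product is taken in $T_2(W)$ as in Definition~\ref{def.2.1}. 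This manifestly agrees with $\mathbf{X}(k)$ on $\Delta_{J_k}$, and expanding the product recovers the formula $\mathbb{X}_{s,t}=\sum\mathbb{X}(\cdot)+\sum x_{\cdot,\cdot}\otimes x_{\cdot,\cdot}$ whose cross terms are precisely those demanded by Chen's identity.

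\noindent Then I would verify the three defining properties in order. Chen's identity $\mathbf{X}_{s,t}=\mathbf{X}_{s,u}\mathbf{X}_{u,t}$ for arbitrary $0\le s\le u\le t\le T$ follows from associativity in $T_2(W)$ and the internal multiplicativity of each $\mathbf{X}(k)$ by a case split on which subintervals $s,u,t$ lie in. The weakly geometric condition propagates under such products: if $\mathbb{X}_{s,u}^s=\tfrac12 x_{s,u}^{\otimes 2}$ and $\mathbb{X}_{u,t}^s=\tfrac12 x_{u,t}^{\otimes 2}$, then taking symmetric parts of Chen's identity gives
\[
\mathbb{X}_{s,t}^s=\tfrac12 x_{s,u}^{\otimes 2}+\tfrac12 x_{u,t}^{\otimes 2}+\tfrac12\bigl(x_{s,u}\otimes x_{u,t}+x_{u,t}\otimes x_{s,u}\bigr)=\tfrac12 x_{s,t}^{\otimes 2},
\]
so iterating over the partition yields the weakly geometric property globally from its validity on each $J_k$.

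\noindent The only step requiring genuine work is the $p$-variation regularity: I need a single control $\tilde{\omega}$ dominating both $|x_{s,t}|^p$ and $|\mathbb{X}_{s,t}|^{p/2}$ across interval boundaries. The natural candidate is $\tilde{\omega}(s,t):=\sum_{k}\omega_k(s\vee t_{k-1},t\wedge t_k)$, where $\omega_k$ is a control for $\mathbf{X}(k)$ on $J_k$ (with the convention that $\omega_k=0$ outside $\Delta_{J_k}$); this is superadditive since each summand is, and it restricts to $\omega_k$ on $\Delta_{J_k}$. The cross-interval estimate then reduces to bounding the explicit finite sum defining $x_{s,t}$ and $\mathbb{X}_{s,t}$ by $\tilde{\omega}(s,t)^{1/p}$ and $\tilde{\omega}(s,t)^{2/p}$ respectively; for the level-one piece this is the triangle inequality combined with $(\sum a_k^{1/p})\le n^{1-1/p}(\sum a_k)^{1/p}$ (with $n$ bounded by the fixed partition size), and for the second level the additional cross term $x_{s,t_i}\otimes x_{t_i,t}$ is handled by the product of two such level-one bounds, which gives exactly the required $\tilde{\omega}(s,t)^{2/p}$ growth. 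The main obstacle here is just the bookkeeping required to keep all $n$ partition pieces simultaneously under control by one superadditive function---everything else follows from the algebra of $T_2(W)$.

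\noindent Finally, uniqueness is immediate: any $\mathbf{X}\in WG_p([0,T],W)$ satisfying $\left(\ref{equ.A.2}\right)$ must agree with the constructed object on each $\Delta_{J_k}$, and Chen's identity applied iteratively then forces agreement on all of $\Delta_{[0,T]}$; alternatively this is a direct application of Lemma~\ref{lem.2.12}.
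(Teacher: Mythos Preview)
Your proposal is correct and follows essentially the same route as the paper: define $\mathbf{X}_{s,t}$ by the forced product in $T_2(W)$, read off Chen's identity from associativity, check the weakly geometric property (the paper invokes the free nilpotent group characterisation where you do the explicit symmetric-part computation), and establish the $p$-variation bound by producing a single control with a constant depending on the partition size $n$. Your treatment is in fact more explicit than the paper's on the control construction and the H\"older step; the paper simply asserts that $C_{p,n}\omega(s,t)$ works.
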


\begin{proof}
Let $x\left(  0\right)  =0$. For $0\leq s\leq t\leq T$ with $s\in J_{k}$ and
$t\in J_{\ell},$ we define
\begin{equation}
\mathbf{X}_{s,t}:=\mathbf{X}\left(  k\right)  _{s,t_{k}}\mathbf{X}\left(
k+1\right)  _{t_{k},t_{k+1}}\dots\mathbf{X}\left(  \ell\right)  _{t_{\ell
-1},t} \label{equ.A.3}%
\end{equation}
where we now view $\mathbf{X}\left(  k\right)  _{u,v}\in1\oplus W\oplus
W\otimes W$ and the multiplication is the usual multiplication in the
truncated tensor algebra (see e.g. \cite{LCL}). We now need to check that
$\mathbf{X}\in WG_{p}\left(  \left[  0,T\right]  ,W\right)  .$

The multiplicative property of rough paths follows directly from Eq.
(\ref{equ.A.3}). The weakly geometric property can either be verified by
direct calculation or one just observes that a rough path is weakly geometric
if and only if it has finite $p$-variation and takes values in the free
nilpotent group of step $\lfloor p\rfloor$ (see e.g. \cite{LCL} p. 53). We
finally check that $\mathbf{X}$ satisfies the correct variation conditions. To
this end observe that if $\omega$ is a control so that
\[
\left\vert x_{u,v}\right\vert =\left\vert x_{u,v}\left(  k\right)  \right\vert
\leq\omega\left(  u,v\right)  ^{1/p}\text{ and }\left\vert \mathbf{X}%
_{u,v}^{2}\right\vert \text{ }\leq\omega\left(  u,v\right)  ^{2/p}\text{ for
}u,v\in J_{k},\text{ }1\leq k\leq n
\]
a straightforward calculation shows that there exists a constant $C_{p,n}$
such that $C_{p,n}\omega\left(  s,t\right)  $ controls the concatenated path.
\end{proof}

The following lemma allows us to compose the flows of rough differential
equations (RDEs).

\begin{lemma}
[RDE Concatenation Lemma]\label{lem.A.2}Let $\tau\in\lbrack0,T],$
$\mathbf{Z\in}WG_{p}\left(  W,\left[  0,T\right]  \right)  $ and
$Y:V\rightarrow\operatorname{Hom}\left(  W,V\right)  $ be a smooth map.
Suppose $\widetilde{\mathbf{X}}\mathbf{\in}WG_{p}\left(  V\right)  $ solves
\begin{equation}
d\mathbf{X}_{t}=Y_{d\mathbf{Z}_{t}}\left(  x_{t}\right)  \text{ }
\label{equ.A.4}%
\end{equation}
with initial data $\widetilde{x}_{0}=e\in V$ for $t\in$ $\left[
0,\tau\right]  $ and $\widehat{\mathbf{X}}\mathbf{\in}WG_{p}\left(  V\right)
.$ solves $\left(  \ref{equ.A.4}\right)  $ with initial data $\widehat{x}%
_{\tau}=\widetilde{x}_{\tau}$ for $t\in$ $\left[  \tau,T\right]  .$ Then the
rough path path obtained by concatenating $\widetilde{\mathbf{X}}_{t}$ and
$\widehat{\mathbf{X}}_{t}$ in the sense of Lemma \ref{lem.A.1} solves the
rough differential equation $\left(  \ref{equ.A.4}\right)  $ with initial data
$x_{0}=e$ for $t\in\left[  0,T\right]  .$
\end{lemma}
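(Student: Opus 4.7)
The plan is to produce the candidate concatenated rough path $\mathbf{X}$ by applying Lemma \ref{lem.A.1} (with $n=2$, $t_1=\tau$, and the two pieces $\widetilde{\mathbf{X}}$, $\widehat{\mathbf{X}}$), and then verify that $\mathbf{X}$ satisfies the RDE identities of Definition \ref{def.2.15} over the full interval $[0,T]$. By Lemma \ref{lem.A.1} we already have $\mathbf{X}\in WG_{p}([0,T],V)$, and by construction the RDE identities
\[
x_{s,t}\simeq Y(x_{s})z_{s,t}+Y'(x_{s})Y(x_{s})\mathbb{Z}_{s,t},\qquad
\mathbb{X}_{s,t}\simeq [Y(x_{s})\otimes Y(x_{s})]\mathbb{Z}_{s,t}
\]
hold for $(s,t)$ with $s,t\in[0,\tau]$ (because there $\mathbf{X}=\widetilde{\mathbf{X}}$) and for $(s,t)$ with $s,t\in[\tau,T]$ (because there $\mathbf{X}=\widehat{\mathbf{X}}$). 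By the definition of $\simeq$ (Notation \ref{not.2.10}), it suffices to check the remaining case $s\le\tau\le t$ with $|t-s|<\delta$ for some sufficiently small $\delta>0$. For such pairs, both $|s-\tau|$ and $|\tau-t|$ are at most $\delta$, so the individual RDE identities for $\widetilde{\mathbf{X}}$ and $\widehat{\mathbf{X}}$ apply to $(s,\tau)$ and $(\tau,t)$ respectively.

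Next I would expand $x_{s,t}$ and $\mathbb{X}_{s,t}$ via additivity and Chen, using $x_{s,t}=x_{s,\tau}+x_{\tau,t}$ and $\mathbb{X}_{s,t}=\mathbb{X}_{s,\tau}+\mathbb{X}_{\tau,t}+x_{s,\tau}\otimes x_{\tau,t}$, and on the other side expand the target expressions via $z_{s,t}=z_{s,\tau}+z_{\tau,t}$ and Chen for $\mathbb{Z}$. Subtracting, the level-one identity reduces to checking
\[
[Y(x_{\tau})-Y(x_{s})]z_{\tau,t} + [Y'(x_\tau)Y(x_\tau)-Y'(x_s)Y(x_s)]\mathbb{Z}_{\tau,t} - Y'(x_{s})Y(x_{s})[z_{s,\tau}\otimes z_{\tau,t}] \simeq 0.
\]
A first-order Taylor expansion of $Y$ around $x_s$, combined with the already-available $x_{s,\tau}\simeq Y(x_{s})z_{s,\tau}$ (modulo a term of order $\omega^{2/p}$ which when multiplied by $z_{\tau,t}$ of order $\omega^{1/p}$ yields $O(\omega^{3/p})$), converts the first bracket into $Y'(x_{s})Y(x_{s})[z_{s,\tau}\otimes z_{\tau,t}]+O(\omega^{3/p})$, which cancels the last bracket. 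The middle bracket is directly $O(\omega^{1/p}\cdot\omega^{2/p})=O(\omega^{3/p})$ since $Y'Y$ is Lipschitz (by $M_Y<\infty$ on a neighborhood of the compact trace, plus a cutoff argument if needed). Thus the level-one RDE identity holds across the junction.

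For the level-two identity, the same decomposition reduces the problem to the estimate
\[
[Y(x_{\tau})\otimes Y(x_{\tau})-Y(x_s)\otimes Y(x_s)]\mathbb{Z}_{\tau,t}+ x_{s,\tau}\otimes x_{\tau,t} - [Y(x_s)\otimes Y(x_s)][z_{s,\tau}\otimes z_{\tau,t}]\simeq 0.
\]
The first term is $O(\omega^{1/p}\cdot\omega^{2/p})$ by Lipschitz continuity of $Y\otimes Y$. For the remaining two, I would substitute $x_{s,\tau}=Y(x_s)z_{s,\tau}+O(\omega^{2/p})$ and $x_{\tau,t}=Y(x_\tau)z_{\tau,t}+O(\omega^{2/p})=Y(x_s)z_{\tau,t}+O(\omega^{2/p})$; the product then equals $[Y(x_s)\otimes Y(x_s)][z_{s,\tau}\otimes z_{\tau,t}]$ up to cross terms of order $\omega^{2/p}\cdot\omega^{1/p}=\omega^{3/p}$, which yields the desired cancellation.

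Finally I would use Lemma \ref{lem.2.12} to conclude: the concatenated $\mathbf{X}$ satisfies the RDE identities with the common control $C_{p}\omega$ produced by the concatenation in Lemma \ref{lem.A.1}. The main obstacle is the bookkeeping of the constant $\delta$ (which may be smaller than the individual $\delta$'s from each piece) and keeping track of which Lipschitz bounds on $Y$ and $Y'$ are invoked; once one fixes a compact neighborhood containing the joint trace of $\widetilde{x}$ and $\widehat{x}$, the derivative bounds $M_Y$ of \eqref{equ.2.10} are uniform and the estimates above are routine. Uniqueness of the concatenation is immediate from Lemma \ref{lem.A.1}.
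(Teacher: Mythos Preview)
Your proposal is correct and follows essentially the same approach as the paper: split $x_{s,t}$ and $\mathbb{X}_{s,t}$ via additivity and Chen's identity at the junction point $\tau$, substitute the RDE approximations on each subinterval, Taylor-expand $Y(x_\tau)$ around $x_s$, and then invoke Chen's identity for $\mathbf{Z}$ to collapse everything to the desired form. The paper's writeup is slightly more streamlined (it chains approximate equalities directly rather than isolating a remainder to bound), but the content is identical. One small remark: your final appeal to Lemma~\ref{lem.2.12} is unnecessary---once the $\simeq$ relations of Definition~\ref{def.2.15} are verified, the RDE is solved by definition; Lemma~\ref{lem.2.12} concerns showing two rough paths coincide, which is not what is needed here.
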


\begin{proof}
We only have to check the definition of an RDE solution for time $s<\tau<t,$
i.e. times $s<t$ which straddle $\tau.$ We write $\mathbf{X}=\left(
x,\mathbb{X}\right)  $ for the concatenated path and $G\left(  x\right)  $ for
$Y^{\prime}\left(  x\right)  Y\left(  x\right)  .$ We have%
\begin{align*}
x_{s,t}  &  =x_{s,\tau}+x_{\tau,t}\\
&  \simeq Y\left(  x_{s}\right)  z_{s,\tau}+G\left(  x_{s}\right)
\mathbb{Z}_{s,\tau}+Y\left(  x_{\tau}\right)  z_{\tau,t}+G\left(  x_{\tau
}\right)  \mathbb{Z}_{\tau,t}\\
&  \simeq Y\left(  x_{s}\right)  z_{s,\tau}+\left[  Y\left(  x_{s}\right)
+Y^{\prime}\left(  x_{s}\right)  x_{s,\tau}\right]  z_{\tau,t}+G\left(
x_{s}\right)  \mathbb{Z}_{s,\tau}+G\left(  x_{s}\right)  \mathbb{Z}_{\tau,t}\\
&  \simeq Y\left(  x_{s}\right)  \left[  z_{s,\tau}+z_{\tau,t}\right]
+\left[  Y^{\prime}\left(  x_{s}\right)  Y\left(  x_{s}\right)  z_{\tau
,t}\right]  z_{\tau,t}+G\left(  x_{s}\right)  \mathbb{Z}_{s,\tau}+G\left(
x_{s}\right)  \mathbb{Z}_{\tau,t}\\
&  =Y\left(  x_{s}\right)  z_{s,t}+G\left(  x_{s}\right)  \left[  z_{\tau
t}\otimes z_{\tau,t}+\mathbb{Z}_{s,\tau}+\mathbb{Z}_{\tau,t}\right] \\
&  =Y\left(  x_{s}\right)  z_{s,t}+G\left(  x_{s}\right)  \mathbb{Z}%
_{s,t}\text{\qquad}\left(  \text{Chen's identity}\right)  .
\end{align*}
The second order term is simpler, we have%
\begin{align*}
\mathbb{X}_{s,t}  &  =\mathbb{X}_{s,\tau}+\mathbb{X}_{\tau,t}+x_{s,\tau
}\otimes x_{\tau,t}\\
&  \simeq Y\left(  x_{s}\right)  \otimes Y\left(  x_{s}\right)  \mathbb{Z}%
_{s,\tau}+Y\left(  x_{\tau}\right)  \otimes Y\left(  x_{\tau}\right)
\mathbb{Z}_{\tau,t}+Y\left(  x_{s}\right)  \otimes Y\left(  x_{\tau}\right)
\left[  z_{s,\tau}\otimes z_{\tau,t}\right] \\
&  \simeq Y\left(  x_{s}\right)  \otimes Y\left(  x_{s}\right)  \left[
\mathbb{Z}_{s,\tau}+\mathbb{Z}_{\tau,t}+z_{s,\tau}\otimes z_{\tau,t}\right]
=\left[  Y\left(  x_{s}\right)  \otimes Y\left(  x_{s}\right)  \right]
\mathbb{Z}_{s,t}%
\end{align*}
as desired.
\end{proof}

\subsection{Push forwards of rough paths\label{sub.A.2}}

In this subsection introduce the notion of a push forward of a rough path
between two Banach spaces and record its elementary properties (cf. also
\cite{Cass2012a}).

\begin{definition}
\label{def.A.3}Suppose that $\varphi\in C^{2}\left(  W,V\right)  $ and
$\mathbf{Z}\in WG_{p}\left(  W\right)  ,$ then the \textbf{push-forward} of
$\mathbf{Z}$ by $\varphi$ is defined by
\[
\varphi_{\ast}\mathbf{Z}:=\varphi\left(  z_{0}\right)  +\int d\varphi\left(
d\mathbf{Z}\right)  .
\]
In more detail we are letting
\[
\left[  \varphi_{\ast}\mathbf{Z}\right]  _{0}^{1}:=\varphi\left(
z_{0}\right)  ,\quad\left[  \varphi_{\ast}\mathbf{Z}\right]  _{s,t}%
^{1}=\left[  \int d\varphi\left(  d\mathbf{Z}\right)  \right]  _{s,t}%
^{1},\text{and }\left[  \varphi_{\ast}\mathbf{Z}\right]  _{s,t}^{2}=\left[
\int d\varphi\left(  d\mathbf{Z}\right)  \right]  _{s,t}^{2}.
\]

\end{definition}

Note that $\varphi_{\ast}\mathbf{Z\in}WG_{p}\left(  V\right)  .$ The first
level of the push forward of a rough path has a more explicit representation.

\begin{lemma}
\label{lem.A.4}For $\varphi\in C^{2}\left(  W,V\right)  $ and $\mathbf{Z}\in
WG_{p}\left(  W\right)  $ we have $\left[  \varphi_{\ast}\mathbf{Z}\right]
_{s,t}^{1}=\varphi\left(  z_{t}\right)  -\varphi\left(  z_{s}\right)  .$ In
particular, Definition \ref{def.A.3} may also be stated as;%
\[
\left[  \varphi_{\ast}\mathbf{Z}\right]  _{s}^{1}=\varphi\left(  z_{s}\right)
\text{ and }\left[  \varphi_{\ast}\mathbf{Z}\right]  _{s,t}^{2}=\left[  \int
d\varphi\left(  d\mathbf{Z}\right)  \right]  _{s,t}^{2}\simeq\left[
\varphi^{\prime}\left(  z_{s}\right)  \otimes\varphi^{\prime}\left(
z_{s}\right)  \right]  \mathbb{Z}_{s,t}.
\]

\end{lemma}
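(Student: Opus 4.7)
The plan is to unpack Definition \ref{def.A.3} using Theorem \ref{the.2.14} applied to the one-form $\alpha = d\varphi\in C^{2}(W,\operatorname{End}(W,V))$, to Taylor-expand $\varphi(z_t)-\varphi(z_s)$ to second order, and then to cancel the two expressions using the weakly geometric hypothesis on $\mathbf{Z}$ together with the symmetry of $\varphi''$. The difference of the two additive functions of $(s,t)$ will be shown to be $\simeq 0$ and hence identically zero by Remark \ref{rem.2.11}.

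In more detail, I would first invoke Theorem \ref{the.2.14} with the one-form $\alpha=d\varphi$: recognising $\alpha(z)=\varphi'(z)$ and $\alpha'(z)=\varphi''(z)$, this gives
\[
\Bigl[\int d\varphi\left(d\mathbf{Z}\right)\Bigr]^{1}_{s,t}\simeq \varphi'(z_{s})\, z_{s,t}+\varphi''(z_{s})\,\mathbb{Z}_{s,t}.
\]
Next, since $\varphi\in C^{2}(W,V)$ and $|z_{s,t}|\le C\,\omega(s,t)^{1/p}$ (so $|z_{s,t}|^{3}\le C^{3}\omega(s,t)^{3/p}$), Taylor's theorem applied to $\varphi$ between $z_{s}$ and $z_{t}=z_{s}+z_{s,t}$ gives
\[
\varphi(z_{t})-\varphi(z_{s})=\varphi'(z_{s})\,z_{s,t}+\tfrac{1}{2}\varphi''(z_{s})\bigl[z_{s,t}\otimes z_{s,t}\bigr]+O\!\bigl(|z_{s,t}|^{3}\bigr),
\]
so the remainder term is $\simeq 0$ in the sense of Notation \ref{not.2.10}.

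The key cancellation then comes from the weakly geometric hypothesis: by Definition \ref{def.2.8} item 2, the symmetric part of $\mathbb{Z}_{s,t}$ is exactly $\tfrac{1}{2}z_{s,t}\otimes z_{s,t}$, and because $\varphi''(z_{s})$ is a symmetric bilinear form it annihilates the antisymmetric part, so
\[
\varphi''(z_{s})\,\mathbb{Z}_{s,t}=\varphi''(z_{s})\,\mathbb{Z}_{s,t}^{s}=\tfrac{1}{2}\varphi''(z_{s})\bigl[z_{s,t}\otimes z_{s,t}\bigr].
\]
Combining the three displays gives
\[
\Bigl[\int d\varphi\left(d\mathbf{Z}\right)\Bigr]^{1}_{s,t}-\bigl(\varphi(z_{t})-\varphi(z_{s})\bigr)\simeq 0.
\]

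Finally, I would upgrade $\simeq 0$ to honest equality. Both $(s,t)\mapsto\bigl[\int d\varphi(d\mathbf{Z})\bigr]^{1}_{s,t}$ (by Chen's identity for the rough integral) and $(s,t)\mapsto \varphi(z_{t})-\varphi(z_{s})$ are increments of continuous paths, so their difference is the increment $g_{s,t}$ of a continuous path $g$ with $g_{s,t}\simeq 0$. By Remark \ref{rem.2.11}, $g$ is constant, and since $g_{0,0}=0$, the difference vanishes identically. The "in particular" clause is then a restatement using Definition \ref{def.A.3} together with the approximation for $\bigl[\int d\varphi(d\mathbf{Z})\bigr]^{2}_{s,t}\simeq[\varphi'(z_{s})\otimes\varphi'(z_{s})]\mathbb{Z}_{s,t}$ from Eq.~(\ref{equ.2.8}). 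I expect no real obstacle here; the only subtle point is remembering to exploit symmetry of $\varphi''$ against the weakly geometric symmetric-part identity, which is precisely why the lemma fails for general $p$-rough paths that are not weakly geometric.
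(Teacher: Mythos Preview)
Your proposal is correct and follows essentially the same approach as the paper: expand the level-one rough integral via Theorem \ref{the.2.14}, use the weakly geometric identity $\mathbb{Z}_{s,t}^{s}=\tfrac12 z_{s,t}\otimes z_{s,t}$ together with the symmetry of $\varphi''$ to match the second-order Taylor expansion of $\varphi(z_t)-\varphi(z_s)$, and then upgrade $\simeq$ to equality by additivity (Remark \ref{rem.2.11}). The only cosmetic difference is the order in which you present the Taylor expansion and the symmetry argument.
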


\begin{proof}
From the symmetry of $\varphi^{\prime\prime}$ and the fact that $\mathbf{Z}\in
WG_{p}\left(  W\right)  $ we may conclude that $\varphi^{\prime\prime}\left(
z_{s}\right)  \mathbb{Z}_{s,t}=\frac{1}{2}\varphi^{\prime\prime}\left(
z_{s}\right)  \left[  z_{s,t}\otimes z_{s,t}\right]  .$ Using this observation
along with Taylor's Theorem shows that%
\begin{align*}
\left[  \varphi_{\ast}\mathbf{Z}\right]  _{s,t}^{1}  &  =\varphi^{\prime
}\left(  z_{s}\right)  z_{s,t}+\varphi^{\prime\prime}\left(  z_{s}\right)
\mathbb{Z}_{s,t}\\
&  =\varphi^{\prime}\left(  z_{s}\right)  z_{s,t}+\frac{1}{2}\varphi
^{\prime\prime}\left(  z_{s}\right)  \left[  z_{s,t}\otimes z_{s,t}\right]
=\varphi\left(  z_{t}\right)  -\varphi\left(  z_{s}\right)  +O\left(
\left\vert z_{s,t}\right\vert ^{3}\right) \\
&  \simeq\varphi\left(  z_{t}\right)  -\varphi\left(  z_{s}\right)  .
\end{align*}
As both ends of this equation are continuous additive functionals we may
conclude using Remark \ref{rem.2.11}\ that $\left[  \varphi_{\ast}%
\mathbf{Z}\right]  _{s,t}^{1}=\varphi\left(  z_{t}\right)  -\varphi\left(
z_{s}\right)  .$
\end{proof}

\begin{theorem}
[Integration of push forwards]\label{the.A.5}Suppose that $\mathbf{Z}\in
WG_{p}\left(  W\right)  ,$ $\varphi\in C^{2}\left(  W,V\right)  ,$ and
$\alpha\in C^{2}\left(  V,\operatorname*{End}\left(  V,U\right)  \right)  $ is
a one form on $V$ with values in $U.$ Then
\[
\int\left(  \varphi^{\ast}\alpha\right)  \left(  d\mathbf{Z}\right)
=\int\alpha\left(  d\left[  \varphi_{\ast}\mathbf{Z}\right]  \right)  .
\]

\end{theorem}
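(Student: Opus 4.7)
The plan is to verify that both sides of the claimed identity are the unique rough path associated to the same almost rough path in the sense of Theorem \ref{the.2.13}, then conclude by Lemma \ref{lem.2.12}. Both integrals are, by Theorem \ref{the.2.14}, the unique $WG_p(U)$-lifts of their level-one/level-two approximations, so it suffices to show that for some $\delta > 0$ the two integrals satisfy the same $\simeq$-approximations at both levels. I would therefore unfold the formulas provided by Theorem \ref{the.2.14} for each side and match them termwise.

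First, I would write $\mathbf{Y} := \varphi_{\ast}\mathbf{Z}$ and collect, from Lemma \ref{lem.A.4}, that $y_{s,t} = \varphi(z_t) - \varphi(z_s)$ and $\mathbb{Y}_{s,t} \simeq [\varphi'(z_s) \otimes \varphi'(z_s)]\mathbb{Z}_{s,t}$. Next, applying Theorem \ref{the.2.14} to the right-hand side, its first-level component satisfies
\[
\Bigl[\int\alpha(d\mathbf{Y})\Bigr]^{1}_{s,t} \simeq \alpha(\varphi(z_s))\, y_{s,t} + \alpha'(\varphi(z_s))\, \mathbb{Y}_{s,t}.
\]
Here I would use a second-order Taylor expansion and the weakly geometric identity $\mathbb{Z}_{s,t}^{s} = \tfrac{1}{2} z_{s,t}\otimes z_{s,t}$, together with symmetry of $\varphi''$, to replace $y_{s,t}$ by $\varphi'(z_s) z_{s,t} + \varphi''(z_s)\mathbb{Z}_{s,t}$ modulo $O(|z_{s,t}|^3) \simeq 0$. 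Substituting and using the approximation for $\mathbb{Y}_{s,t}$ yields
\[
\Bigl[\int\alpha(d\mathbf{Y})\Bigr]^{1}_{s,t} \simeq \alpha(\varphi(z_s))\varphi'(z_s) z_{s,t} + \alpha(\varphi(z_s))\varphi''(z_s)\mathbb{Z}_{s,t} + \alpha'(\varphi(z_s))[\varphi'(z_s)\otimes\varphi'(z_s)]\mathbb{Z}_{s,t}.
\]

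For the left-hand side, I would first compute the derivative of the pulled-back one form $\beta := \varphi^{\ast}\alpha \in C^{2}(W, \operatorname{End}(W,U))$, for which the usual product rule gives
\[
\beta'(z)[\xi\otimes\eta] = \alpha'(\varphi(z))[\varphi'(z)\xi \otimes \varphi'(z)\eta] + \alpha(\varphi(z))\, \varphi''(z)[\xi\otimes\eta].
\]
Feeding this into Theorem \ref{the.2.14} produces exactly the same three terms appearing in the approximation derived above; hence the level-one components agree under $\simeq$. At level two, the right-hand side gives $[\alpha(\varphi(z_s))\otimes\alpha(\varphi(z_s))]\mathbb{Y}_{s,t}$, which via the approximation for $\mathbb{Y}_{s,t}$ becomes $[\alpha(\varphi(z_s))\varphi'(z_s)\otimes\alpha(\varphi(z_s))\varphi'(z_s)]\mathbb{Z}_{s,t}$, and this is precisely the level-two approximation $[\beta(z_s)\otimes\beta(z_s)]\mathbb{Z}_{s,t}$ for the left-hand side.

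Having established the two $\simeq$-matches at both levels, the uniqueness clause of Theorem \ref{the.2.14} (equivalently Lemma \ref{lem.2.12} applied to the difference) forces the two rough paths to coincide. The only nonroutine step will be the Taylor-type replacement of $y_{s,t}$ by $\varphi'(z_s)z_{s,t} + \varphi''(z_s)\mathbb{Z}_{s,t}$: it is where the hypothesis $\mathbf{Z}\in WG_p(W)$ is used essentially (through $\mathbb{Z}^s = \tfrac12 z\otimes z$) and where one must check that the $O(|z_{s,t}|^3)$ remainder is genuinely $\simeq 0$ with respect to the control $\omega$, which follows from the $p$-variation bounds of Definition \ref{def.2.4} since $|z_{s,t}|^3 \le \omega(s,t)^{3/p}$.
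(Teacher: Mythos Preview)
Your proposal is correct and follows essentially the same approach as the paper: both arguments unfold the two sides via the Theorem~\ref{the.2.14} approximations, use the product rule to compute $(\varphi^{\ast}\alpha)'$, invoke the weakly geometric identity $\mathbb{Z}^{s}_{s,t}=\tfrac12 z_{s,t}\otimes z_{s,t}$ together with Taylor to handle the $y_{s,t}$ term, and then conclude by the uniqueness in Lemma~\ref{lem.2.12}. The only difference is cosmetic---you start from the right-hand side and work toward the left, while the paper starts from the left and recognizes the pushforward on the right.
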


\begin{proof}
By definition $\beta:=\varphi^{\ast}\alpha$ is a $U$-valued one form on $W$
which is determined by
\[
\beta\left(  z\right)  v=\alpha\left(  \varphi\left(  z\right)  \right)
\varphi^{\prime}\left(  z\right)  v\in U\text{ for all }z,v\in W.
\]
Therefore,%
\begin{align*}
\left[  \int\left(  \varphi^{\ast}\alpha\right)  \left(  d\mathbf{Z}\right)
\right]  _{s,t}=  &  \left[  \int\beta\left(  d\mathbf{Z}\right)  \right]
_{s,t}\cong\left[  \beta\left(  z_{s}\right)  Z_{s,t}^{1}+\beta^{\prime
}\left(  z_{s}\right)  \mathbb{Z}_{s,t}\right]  \oplus\left[  \beta\left(
z_{s}\right)  \otimes\beta\left(  z_{s}\right)  \mathbb{Z}_{s,t}\right] \\
=  &  \left[  \alpha\left(  \varphi\left(  z_{s}\right)  \right)
\varphi^{\prime}\left(  z_{s}\right)  Z_{s,t}^{1}+\alpha^{\prime}\left(
\varphi\left(  z_{s}\right)  \right)  \varphi^{\prime}\left(  z_{s}\right)
\otimes\varphi^{\prime}\left(  z_{s}\right)  \mathbb{Z}_{s,t}+\alpha\left(
\varphi\left(  z_{s}\right)  \right)  \varphi^{\prime\prime}\left(
z_{s}\right)  \mathbb{Z}_{s,t}\right] \\
&  \qquad\oplus\left[  \alpha\left(  \varphi\left(  z_{s}\right)  \right)
\varphi^{\prime}\left(  z_{s}\right)  \otimes\alpha\left(  \varphi\left(
z_{s}\right)  \right)  \varphi^{\prime}\left(  z_{s}\right)  \right]
\mathbb{Z}_{s,t}\\
\simeq &  \left[  \alpha\left(  \varphi\left(  z_{s}\right)  \right)  \left[
\varphi_{\ast}\mathbf{Z}\right]  _{s,t}^{2}+\alpha^{\prime}\left(
\varphi\left(  z_{s}\right)  \right)  \varphi^{\prime}\left(  z_{s}\right)
\otimes\varphi^{\prime}\left(  z_{s}\right)  \mathbb{Z}_{s,t}\right] \\
&  \qquad\oplus\alpha\left(  \varphi\left(  z_{s}\right)  \right)
\otimes\alpha\left(  \varphi\left(  z_{s}\right)  \right)  \left[
\varphi^{\prime}\left(  z_{s}\right)  \otimes\varphi^{\prime}\left(
z_{s}\right)  \right]  \mathbb{Z}_{s,t}\\
\simeq &  \left[  \alpha\left(  \varphi\left(  z_{s}\right)  \right)  \left[
\varphi_{\ast}\mathbf{Z}\right]  _{s,t}^{2}+\alpha^{\prime}\left(
\varphi\left(  z_{s}\right)  \right)  \left[  \varphi_{\ast}\mathbf{Z}\right]
_{s,t}^{2}\right]  \oplus\alpha\left(  \varphi\left(  z_{s}\right)  \right)
\otimes\alpha\left(  \varphi\left(  z_{s}\right)  \right)  \left[
\varphi_{\ast}\mathbf{Z}\right]  _{s,t}^{2}\\
\simeq &  \left[  \int\alpha\left(  d\left[  \varphi_{\ast}\mathbf{Z}\right]
\right)  \right]  _{s,t}%
\end{align*}
which suffices to complete the proof.
\end{proof}

\begin{corollary}
[Functoriality of push forwards.]\label{cor.A.6}Let $\mathbf{Z}\in
WG_{p}\left(  W\right)  ,$ $\varphi\in C^{2}\left(  W,V\right)  ,$ $\psi\in
C^{2}\left(  V,U\right)  ,$ then $\left(  \psi\circ\varphi\right)  _{\ast
}\left(  \mathbf{Z}\right)  =\psi_{\ast}\left(  \varphi_{\ast}\left(
\mathbf{Z}\right)  \right)  .$
\end{corollary}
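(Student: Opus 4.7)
The plan is to reduce the claim to Theorem \ref{the.A.5} by invoking the chain rule at the level of one-forms. By Definition \ref{def.A.3} we have
\[
\left(\psi\circ\varphi\right)_{\ast}\left(\mathbf{Z}\right)=\left(\psi\circ\varphi\right)(z_{0})+\int d\left(\psi\circ\varphi\right)\left(d\mathbf{Z}\right),
\]
so the first step is to observe that, pointwise on $W$, the classical chain rule gives $d(\psi\circ\varphi)(z)=d\psi(\varphi(z))\circ d\varphi(z)$, which is precisely the pullback one-form $\varphi^{\ast}(d\psi)\in C^{1}(W,\operatorname*{End}(W,U))$.

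Next I would apply Theorem \ref{the.A.5} with the choice $\alpha:=d\psi\in C^{1}(V,\operatorname*{End}(V,U))$. That theorem yields
\[
\int d\left(\psi\circ\varphi\right)\left(d\mathbf{Z}\right)=\int\left(\varphi^{\ast}d\psi\right)\left(d\mathbf{Z}\right)=\int d\psi\left(d\left[\varphi_{\ast}\mathbf{Z}\right]\right),
\]
and by Definition \ref{def.A.3} the right-hand side is exactly $\psi_{\ast}(\varphi_{\ast}\mathbf{Z})-\psi(\varphi(z_{0}))$. Adding the starting point $(\psi\circ\varphi)(z_{0})=\psi(\varphi(z_{0}))$ to both sides then gives $(\psi\circ\varphi)_{\ast}(\mathbf{Z})=\psi_{\ast}(\varphi_{\ast}(\mathbf{Z}))$ as rough paths (with a common base point), and Lemma \ref{lem.A.4} confirms that the level-one traces of both sides agree at every $s$, namely $(\psi\circ\varphi)(z_{s})$.

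The only substantive verification is that the regularity hypotheses line up so that Theorem \ref{the.A.5} is applicable: we need $\varphi\in C^{2}(W,V)$ (so that $\varphi_{\ast}\mathbf{Z}$ is a well-defined element of $WG_{p}(V)$) and $\psi\in C^{2}(V,U)$ (so that $d\psi$ is a $C^{1}$ one-form and both integrals above are defined). Both are assumed in the hypothesis. I do not anticipate a genuine obstacle here; the whole content of the corollary is that the functoriality of the classical pullback of one-forms survives in the rough setting, and this survival is exactly what Theorem \ref{the.A.5} encodes.
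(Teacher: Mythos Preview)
Your proof is correct and follows essentially the same route as the paper's: both invoke the chain-rule identity $d(\psi\circ\varphi)=\varphi^{\ast}(d\psi)$ and then apply Theorem~\ref{the.A.5} with $\alpha=d\psi$, together with Definition~\ref{def.A.3} and Lemma~\ref{lem.A.4} to match base points and level-one traces. One small caveat (shared by the paper's own argument): Theorem~\ref{the.A.5} is stated for $\alpha\in C^{2}$, whereas $\psi\in C^{2}$ only gives $d\psi\in C^{1}$; for $p\in[2,3)$ this regularity is in fact enough for the rough integral and for the approximations used in the proof of Theorem~\ref{the.A.5}, so the application is legitimate, but strictly speaking one is using a slight sharpening of the theorem as stated.
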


\begin{proof}
By definition,%
\[
\left[  \psi_{\ast}\left(  \varphi_{\ast}\left(  \mathbf{Z}\right)  \right)
\right]  _{t}^{1}=\psi\left(  \left[  \varphi_{\ast}\left(  \mathbf{Z}\right)
\right]  _{t}^{1}\right)  =\psi\left(  \varphi\left(  z_{t}\right)  \right)
=\left[  \left(  \psi\circ\varphi\right)  _{\ast}\left(  \mathbf{Z}\right)
\right]  _{t}^{1}.
\]
Moreover since%
\[
\left(  \varphi^{\ast}d\psi\right)  \left(  v_{x}\right)  =d\psi\left(
\varphi_{\ast}v_{x}\right)  =d\left[  \psi\circ\varphi\right]  \left(
v_{x}\right)
\]
we have from Theorem \ref{the.A.5} that
\begin{align*}
\left[  \psi_{\ast}\left(  \varphi_{\ast}\left(  \mathbf{Z}\right)  \right)
\right]  _{s,t}  &  =\left[  \int d\psi\left(  d\varphi_{\ast}\left(
\mathbf{Z}\right)  \right)  \right]  _{s,t}=\left[  \int\left(  \varphi^{\ast
}d\psi\right)  \left(  d\mathbf{Z}\right)  \right]  _{s,t}\\
&  =\left[  \int d\left[  \psi\circ\varphi\right]  \left(  d\mathbf{Z}\right)
\right]  _{s,t}=\left[  \left(  \psi\circ\varphi\right)  _{\ast}\left(
\mathbf{Z}\right)  \right]  _{s,t}.
\end{align*}

\end{proof}

\section{Proof of Theorem \ref{the.5.5} \label{app.B}}

\begin{proof}
[Proof of Theorem \ref{the.5.5}.]The conditions in Eq. (\ref{equ.5.4}) may be
restated as; $\left(  m,g\right)  \in M\times\operatorname*{End}\left(
\mathbb{R}^{d},E\right)  $ is in $O_{m}\left(  M\right)  $ iff $Q\left(
m\right)  g\equiv0$ and $g^{\ast}g=I_{\mathbb{R}^{d}}.$ This observation shows
that the function, $G,$ in Eq. (\ref{equ.5.6}) has been manufactured so that
$\pi^{-1}\left(  U\right)  =G^{-1}\left(  \left\{  \left(  0,0,0\right)
\right\}  \right)  .$ So in order to finish the proof it suffices to show the
differential, $G^{\prime},$ of $G$ is surjective at all point $\left(
m,g\right)  \in\pi^{-1}\left(  U\right)  \subset O\left(  M\right)  .$ In
order to simplify notation, let $q:=Q\left(  m\right)  ,$ $p=P\left(
m\right)  ,$ and
\[
\dot{q}:=\left(  \partial_{\xi}Q\right)  \left(  m\right)  =\frac{d}{dt}%
|_{0}Q\left(  m+t\xi\right)  .
\]

Given $\left(  \xi,h\right)  \in\operatorname{Hom}\left(  \mathbb{R}%
^{d},E\right)  $ and $\left(  m,g\right)  \in O\left(  M\right)  $ a simple
computation shows%
\begin{align}
G^{\prime}\left(  m,g\right)  \left(  \xi,h\right)   &  =\left(
\partial_{\left(  \xi,h\right)  }G\right)  \left(  m,g\right)  =\frac{d}%
{dt}|_{0}G\left(  m+t\xi,g+th\right) \nonumber\\
&  =\left(  \left(  \partial_{\xi}F\right)  \left(  m\right)  ,\left(
\partial_{\xi}Q\right)  \left(  m\right)  g,0\right)  +\left(  0,Q\left(
m\right)  h,h^{\ast}g+g^{\ast}h\right) \nonumber\\
&  =\left(  F^{\prime}\left(  m\right)  \xi,\dot{q}g+qh,h^{\ast}g+g^{\ast
}h\right)  . \label{equ.B.1}%
\end{align}
Since $pg=g$ and $qg=0$ we know that $h^{\ast}g=h^{\ast}pg=\left(  ph\right)
^{\ast}g$ and hence
\[
h^{\ast}g+g^{\ast}h=h^{\ast}g+\left(  h^{\ast}g\right)  ^{\ast}=\left(
ph\right)  ^{\ast}g+g^{\ast}\left(  ph\right)
\]
and so we may rewrite Eq. (\ref{equ.B.1}) as%
\begin{align*}
G^{\prime}\left(  m,g\right)  \left(  \xi,h\right)   &  =\left(  F^{\prime
}\left(  m\right)  \xi,\dot{q}pg+qh,\left(  ph\right)  ^{\ast}g+g^{\ast
}\left(  ph\right)  \right) \\
&  =\left(  F^{\prime}\left(  m\right)  \xi,q\dot{q}g+qh,\left(  ph\right)
^{\ast}g+g^{\ast}\left(  ph\right)  \right)  .
\end{align*}
From this expression and the observations; 1) $ph$ and $qh$ may be chosen to
be arbitrary linear transformation from $\mathbb{R}^{d}$ to $\tau_{m}M$ and
$\tau_{m}M^{\perp}$ respectively, 2) $\operatorname*{Nul}\left(  g^{\ast
}\right)  ^{\perp}=\operatorname*{Ran}\left(  g\right)  =\operatorname*{Ran}%
\left(  p\right)  ,$ and 3) $F^{\prime}\left(  m\right)  $ is surjective, it
is now easily verified (take $ph=gB$ where $B\in\mathcal{S}_{d})$ that
$G^{\prime}\left(  m,g\right)  $ is surjective as well. As a consequence of
$O\left(  M\right)  $ being an embedded sub-manifold with local defining
function $G,$ it follows that%
\begin{align*}
\tau_{\left(  m,g\right)  }O\left(  M\right)   &  =\operatorname*{Nul}\left(
G^{\prime}\left(  m,g\right)  \right) \\
&  =\left\{  \left(  \xi,h\right)  :\left(  F^{\prime}\left(  m\right)
\xi,\left(  \partial_{\xi}Q\right)  \left(  m\right)  g+Q\left(  m\right)
h,h^{\ast}g+g^{\ast}h\right)  =\left(  0,0,0\right)  \right\} \\
&  =\left\{  \left(  \xi,h\right)  _{\left(  m,g\right)  }:\xi\in\tau
_{m}M,\text{ }Q\left(  m\right)  h=-\left(  \partial_{\xi}Q\right)  \left(
m\right)  g\text{ and }g^{\ast}h\in so\left(  d\right)  \right\}  .
\end{align*}

\end{proof}

\bibliographystyle{amsplain}
\bibliography{constrained}

\end{document}